\newcommand{\mbR}{\mathbb{R}}
\newcommand{\mbC}{\mathbb{C}}
\newcommand{\mbZ}{\mathbb{Z}}
\newcommand{\mbQ}{\mathbb{Q}}
\def\mbN{\mathbb{N}}
\def\mbP{\mathbb{P}}
\newcommand{\<}{\leq}
\def\>{\geq}
\def\ve{\varepsilon}
\def\vphi{\varphi}
\def\subset{\subseteq}
\newcommand{\lrd}{\lfloor}
\newcommand{\rrd}{\rfloor}
\newcommand{\lru}{\lceil}
\newcommand{\rru}{\rceil}
\newcommand{\num}{\equiv}
\newcommand{\lin}{\sim}
\newcommand{\bir}{\dashrightarrow}
\newcommand{\D}{\Delta}
\def\mcA{\mathcal{A}}
\def\mcO{\mathcal{O}}
\def\mcC{\mathcal{C}}
\def\mcK{\mathcal{K}}
\def\mcP{\mathcal{P}}
\def\msI{\mathscr{I}}
\def\msL{\mathscr{L}}
\def\injective{\hookrightarrow}
\newtheorem{theorem}{Theorem}[section]
\newtheorem{lemma}[theorem]{Lemma}
\newtheorem{proposition}[theorem]{Proposition}
\newtheorem{corollary}[theorem]{Corollary}
\theoremstyle{remark}
\newtheorem{remark}[theorem]{Remark}
\theoremstyle{definition}
\newtheorem{definition}[theorem]{Definition}
\theoremstyle{definition}
\numberwithin{equation}{section}
\theoremstyle{definition}
\newtheorem{claim}[theorem]{Claim}
\newtheorem{assumption}[theorem]{Assumption}
\def\div{\operatorname{div}}
\def\Supp{\operatorname{Supp}}
\def\dim{\operatorname{dim}}
\def\codim{\operatorname{codim}}
\def\Ex{\operatorname{Ex}}
\def\NS{\operatorname{NS}}
\def\NE{\overline{\operatorname{NE}}}
\def\max{\operatorname{max}}
\def\Nef{\operatorname{Nef}}
\def\NA{\operatorname{\overline{NA}}}
\def\pt{\operatorname{pt}}
\def\hor{\operatorname{hor}}
\def\ver{\operatorname{ver}}
\def\Bs{\operatorname{Bs}}
\def\SBs{\operatorname{SBs}}
\def\mult{\operatorname{mult}}
\def\sing{\operatorname{\textsubscript{sing}}}
\def\red{\operatorname{red}}
\def\ex{\operatorname{ex}}
\def\nex{\operatorname{nex}}
\def\Center{\operatorname{Center}}
\def\gcd{\operatorname{gcd}}
\def\discrep{\operatorname{discrep}}
\def\pr{\operatorname{pr}}
\def\sm{\operatorname{\textsubscript{sm}}}
\def\sing{\operatorname{\textsubscript{sing}}}
\author{Omprokash Das}
\address{School of Mathematics\\
Tata Institute of Fundamental Research\\
Homi Bhabha Road, Navy nagar\\
Colaba, Mumbai 400005, India}
\email{omprokash@gmail.com}
\email{omdas@math.tifr.res.in}
\thanks{Omprokash Das is supported by the Start--Up Research Grant(SRG), Grant No. \# SRG/2020/000348 of the Science and Engineering Research Board (SERB), Govt. Of India. }
\author{Wenhao Ou}
\address{Institute of Mathematics\\
 Academy of Mathematics and Systems Science\\
  Chinese Academy of Sciences\\
   Beijing 100190, CHINA}
\email{wenhaoou@amss.ac.cn}
\date{}
\begin{document}
\title{On the Log Abundance for Compact K\"ahler $3$-folds}
\maketitle
\begin{abstract}
        In this article we show that if $(X, \Delta)$ is a log canonical compact K\"ahler $3$-fold  such that $K_X+\Delta$ is nef and the numerical dimension $\nu(K_X+\Delta)\neq 2$, then $K_X+\Delta$ is semi-ample.
\end{abstract}

\tableofcontents

\part{Introduction }

\section{Introduction}
The Minimal Model Program is one of the fundamental tools for birational classification of algebraic varieties. For a smooth projective variety $X$, it predicts that after finitely many elementary birational contractions we can find a variety $X'$ with mild singularities such that it is birational to $X$ and satisfies exactly one of the following two properties: 
\begin{enumerate}
	\item $K_{X'}$ is nef,
	\item there is a fibration $g:X'\to Z$ such that $\dim Z<\dim X'$ and the general fibers of $g$ are (log) Fano varieties.
\end{enumerate}
Observe that $K_{X'}$ being nef is a numerical property, so it doesn't reveal much geometric information about $X'$. The Abundance Conjecture says that when $K_{X'}$ is nef, it is in fact semi-ample, i.e. there is a $m\in\mbN$ such that line bundle bundle $\mcO_{X'}(mK_{X'})$ is generated by its global sections. This leads to a fibration of $X'$ whose general fibers are (log) Calabi-Yau varieties. So one can hope to classify $X'$ by studying these fibers.\\
For complex projective varietes, the abundance conjecture was proved in dimension $3$ by Miyaoka and Kawamata \cite{Miy87, Miy88, Miy88b}, \cite{Kaw92a}. The more general case of the log abundance was settled in \cite{KMM94} for $3$-dimensional complex projective varieties.\\
Between $2015$ and $2016$, Campana, Peternell and H\"oring established the fundamental results of the minimal model program for K\"ahler $3$-folds in \cite{HP15, HP16, CHP16}. More specifically, in \cite{CHP16} they showed that if $X$ is a compact K\"ahler $3$-fold with terminal singularities and $K_X$ is nef, then $K_X$ semi-ample. Building upon their results, in this article we prove the following case of the log abundance conjeture.
\begin{theorem}
\label{thm:lc-log-abundance}
Let $(X,\Delta)$ be a compact K\"ahler $3$-fold log canonical pair. Assume that $K_X+\Delta$ is nef. Then the following hold:
\begin{enumerate}
	\item (Non-Vanishing) There is a sufficiently divisible positive integer $m\in\mbN$ such that $H^0(X, \mcO_X(m(K_X+\Delta)))\neq \{0\}$.
	\item If the Kodaira dimension $\kappa(X, K_X+\Delta)>0$, then $K_X+\Delta$ is semi-ample.
	\item If the numerical dimension $\nu(X, K_X+\Delta)=0, 1 \text{ or } 3$, then $K_X+\Delta$ is semi-ample.
\end{enumerate}
\end{theorem}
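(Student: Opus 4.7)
The strategy is to treat the three parts separately, leveraging the K\"ahler MMP machinery of HP15, HP16 and CHP16. A preliminary reduction is to pass to a dlt modification of $(X,\Delta)$, which exists in the K\"ahler category by those works; since semi-ampleness is compatible with such modifications, one may assume $(X,\Delta)$ is dlt.

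For the non-vanishing assertion (1), I would argue by induction on the number of components of $\lfloor \Delta\rfloor$. When $\lfloor\Delta\rfloor=0$ (the klt case), passing to a terminalization reduces the problem to $(X,0)$ with $X$ terminal and $K_X$ nef, which is the main theorem of \cite{CHP16}. When a prime divisor $S\subset \lfloor\Delta\rfloor$ exists, I would use the standard lifting argument via the exact sequence
\[
0\longrightarrow \mcO_X(m(K_X+\Delta)-S)\longrightarrow \mcO_X(m(K_X+\Delta))\longrightarrow \mcO_S(m(K_X+\Delta)|_S)\longrightarrow 0,
\]
combined with adjunction $(K_X+\Delta)|_S \sim_{\mbQ} K_S+\Delta_S$ and K\"ahler vanishing results to lift sections produced by lower-dimensional non-vanishing on the surface $S$.

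For part (2), assume $\kappa(X,K_X+\Delta)\geq 1$. On a sufficiently high log resolution the Iitaka fibration becomes a morphism $f\colon \widetilde X \to Z$ with $1\leq \dim Z\leq 3$. Applying the Fujino--Mori canonical bundle formula in its analytic incarnation yields
\[
K_{\widetilde X}+\widetilde\Delta \sim_{\mbQ} f^*(K_Z+B_Z+M_Z),
\]
with $(Z,B_Z)$ an lc pair and moduli part $M_Z$ semi-ample by lower-dimensional abundance. Semi-ampleness of $K_Z+B_Z+M_Z$, which lies in dimension $\leq 2$ and hence is covered by abundance for K\"ahler surfaces, pulls back to $K_X+\Delta$.

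For part (3), I would split by numerical dimension. When $\nu=3$, the divisor $K_X+\Delta$ is nef and big, and the K\"ahler basepoint-free theorem for lc $3$-folds directly yields semi-ampleness. When $\nu=0$, the log K\"ahler analogue of Kawamata--Nakayama gives $K_X+\Delta\equiv 0$, and one then deduces $K_X+\Delta\sim_{\mbQ}0$ via the dlt structure together with Beauville--Bogomolov-type decomposition results for compact K\"ahler varieties with numerically trivial log canonical class. When $\nu=1$, I would construct the nef reduction map $\phi\colon X\dashrightarrow Y$ associated to $K_X+\Delta$; after resolution this is a fibration onto a smooth curve $Y$, and on a general fiber $F$ (a surface) the restriction is numerically trivial, hence $\mbQ$-trivial by the surface case handled above. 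The canonical bundle formula then transfers the problem to a nef $\mbQ$-divisor on $Y$, which is automatically semi-ample.

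The main obstacles I anticipate are twofold. First, establishing non-vanishing for lc K\"ahler $3$-folds in the absence of ample line bundles requires careful invocation of analytic vanishing and extension results in place of the standard projective tools. Second, the $\nu=1$ case, where turning the nef reduction into an honest morphism and justifying the canonical bundle formula in the K\"ahler category --- including semi-ampleness of the moduli part on the base after a suitable base change --- demands the most delicate analysis. The exclusion of $\nu=2$ from the theorem is consistent with that case requiring genuinely new ideas, since both the fibration-over-a-curve and the nef--big reductions are unavailable.
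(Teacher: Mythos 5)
Your proposal has genuine gaps at the two places where the real work of the paper happens. For non-vanishing (1), the claimed base case is wrong: passing to a terminalization of a klt pair $(X,\Delta)$ with $\Delta\neq 0$ does \emph{not} reduce to $(X,0)$ with $K_X$ nef. When $X$ is uniruled, $K_X$ is typically not even pseudo-effective although $K_X+\Delta$ is, so \cite{CHP16} cannot be invoked. This is precisely the case the paper spends Sections 9--10 on: one must pass to the MRC fibration (whose base is a non-uniruled surface when $X$ is non-algebraic, by Lemma \ref{l-proj}), run a relative MMP to a $K_X$-Mori fiber space $f:X'\to Y$ with $K_{X'}+\Delta'\sim_{\mbQ,f}0$, and prove a canonical bundle formula $K_{X'}+\Delta'\sim_{\mbQ}f^*(K_Y+B)$ with $(Y,B)$ klt (Theorem \ref{thm-base-boundary-effective}), which requires the covering trick, the combinatorial Lemma \ref{lem:combinatorix}, and the delicate multiplicity estimates of Lemma \ref{lem-base-pure-sub-boundary}. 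Your inductive lifting step is also unsupported: surjectivity of $H^0(X,m(K_X+\Delta))\to H^0(S,\cdot)$ would need an $H^1$-vanishing that does not hold for a merely nef $K_X+\Delta$, and even in the projective case the lc non-vanishing is not proved this way.

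For $\nu=1$, the nef reduction map does not do what you claim: numerical dimension $1$ does not imply nef dimension $1$, so there is no fibration onto a curve to work with, and even granting one, the descent of $K_X+\Delta$ to the base would again need an unestablished K\"ahler canonical bundle formula. The actual content of this case in the paper is the implication $\nu=1\Rightarrow\kappa\geq 1$ (Theorem \ref{thm-nu=1-implies-kappa>0}), proved \`a la Koll\'ar's Chapter 13: reduce via Theorem \ref{thm-reduction-2-cases} to either a Mori fiber space (handled by the canonical bundle formula over a surface) or the case where $K_{X'}+(1-\epsilon)\Delta'$ stays nef, then build lc models with reduced boundary equal to the support of an effective representative (Proposition \ref{prop-lc-modification}, Lemma \ref{lem-isolating-trivial-component}) and isolate an irreducible boundary component before invoking the argument of \cite[Theorem 8.1]{CHP16}. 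None of this is present in your sketch. Finally, in part (2) your appeal to a Fujino--Mori formula with semi-ample moduli part is not available in the K\"ahler category; the paper circumvents it by showing $\mu^*(K_X+\Delta)$ is $\mbQ$-trivial on general Iitaka fibers via surface MMP and abundance, deducing $\nu=\kappa$ from Lemma \ref{lem:generic-isomorphism}, and concluding by \cite[Theorem 4.8]{Fuj11}. (Your $\nu=0$ and $\nu=3$ steps reach correct conclusions, but the paper's routes---$K_X+\Delta\sim_{\mbQ}0$ directly from non-vanishing, and projectivity of Moishezon spaces with rational singularities---are what is actually justified by the cited literature.)
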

Our strategy is similar to the proof of the log abundance theorem for projective $3$-folds as presented in \cite{Kol92} and \cite{KMM94}. Note that the proofs in \cite{Kol92, KMM94} make rigorous use of the log minimal model program for various reduction steps. Unfortunately, the log MMP for compact K\"ahler $3$-folds is not established in full generality in \cite{HP15, HP16, CHP16}. More specifically, if $(X, \D)$ is a $\mbQ$-factorial compact K\"ahler $3$-dimensional klt pair such that $X$ is uniruled and $K_X+\Delta$ is pseudo-effective, then the cone theorem and the existence of log minimal model for $(X, \D)$ is not established in the papers of Campana, Peternell and H\"oring. This is one of first obstacles we overcome in this article. Of course our proof of these results are inspired from and quite similar to those in \cite{HP15, HP16, CHP16}. There are numerous other technical difficulties we encounter while trying to follow the strategy as in \cite{Kol92} and \cite{KMM94}, many of these have to do with the fact that we work with varieties which are not necessarily projective.\\

\begin{remark}\label{rmk:missing-case}
	Note that in our main Theorem \ref{thm:lc-log-abundance} above we excluded the case $\nu(K_X+\D)=2$. For projective varieties this case is established in \cite{Kaw92a}, \cite{Kol92} and \cite{KMM94, KMM94c}. When $X$ is a compact K\"ahler $3$-fold with $\mbQ$-factorial terminal singularities and $K_X$ is nef and $\nu(K_X)=2$, it is claimed in \cite[Theorem 8.2, page 1013]{CHP16} that $K_X$ is semi-ample. However, we were not able to follow the proof in \cite[Theorem 8.2]{CHP16}.  The issue arises in the `Step 1' of the proof of \cite[Theorem 8.2]{CHP16} on page 1016, where a result of Kawamata, namely \cite[Theorem 9.6]{Kaw88} is incorrectly applied.\\
	 We note that the authors of \cite{CHP16} recently informed us that there is a more direct proof of \cite[Theorem 8.2]{CHP16} for the following cases (see \cite{CHP21}): (i) the algebraic dimension $a(X)\>1$, and (ii) $X$ is covered by a family of curves. So the only remaining case is when $a(X)=0$ and there are no subvarieties through very general points of $X$. In this case $X$ is conjectured to be bimeromorphic to a quotient of a torus.\\
	 It appears possible that the original proof of Kawamata in \cite{Kaw92a} could still lead to a proof in the K\"ahler case. One of the main tools in Kawamata's original proof (and also the one in \cite[Chapter 14]{Kol92}) is the use of Orbifold Chern classes on klt $3$-folds and an analog of the Bogomolov-Miyaoka-Yau (BMY) inequality for these Chern classes, see \cite[Lemma 2.7]{Kaw92a} and \cite[Theorem 10.13]{Kol92}. The proof of this new BMY inequality uses ample divisors in an essential way, namely, cutting down the dimension of the ambient variety to reduce all the computations to a surface. Obviously such a trick does not work for compact K\"ahler spaces. At the moment we do not know whether this inequality can be proved more directly by some analytic methods on a compact K\"ahler $3$-fold with klt singularities.	 
	 	
\end{remark}~\\

{\bf Acknowledgement.} We would like to thank Burt Totaro, Christopher Hacon, Andreas H\"oring and Fr\'ed\'eric Campana for many fruitful conversations. We would also like to thank the referee for carefully reading the manuscript, pointing out numerous typos and suggesting various improvements.

\section{Preliminaries}
An \emph{analytic variety} or simply a \emph{variety} is a reduced and irreducible complex space. A \emph{morphism} $f:X\to Y$ between two complex spaces is a \emph{holomorphic map} between complex spaces. An open set $U\subset X$ of a complex space $X$ is called \emph{Zariski open} if the complement $X\setminus U$ is a closed \emph{analytic} subset of $X$, i.e. there is a sheaf of ideals $\msI\subset\mcO_X$ such that $X\setminus U=\Supp(\mcO_X/\msI)$. Let $Z\subset X$ be a closed analytic subset of $X$. We say that an open set $U\subset X$ contains the \emph{general points} of an irreducible analytic subset $Z\subset X$ if $\emptyset\neq U\cap Z$ contains a Zariski open subset of $Z$. Let $f:X\to Y $ be a morphism between two irreducible complex spaces and $\mcP$ is a property. We say that the \emph{general} fibers  of $f$ satisfy $\mcP$ if the set $Z:=\{y\in Y: X_y=f^{-1}(y)\mbox{ does not  satisfy } \mcP\}\subset Y$ is contained in a proper closed analytic subset of $Y$ or equivalently, there is a Zariski open dense subset $U\subset Y$ such that all the fibers of $f$ over $U$ satisfy $\mcP$. Similarly, we say that  the \emph{very general} fibers of $f$ satisfy $\mcP$, if the set $Z$ is contained in an at most countable union of proper closed analytic subsets of $Y$.\\

A proper morphism $f:X\to Y$ between two irreducible complex spaces is called \emph{generically finite} if there exists a Zariski open dense subset $V\subset Y$ such that $f|_{f^{-1}(V)}:f^{-1}(V)\to V$ is a finite morphism. Let $f:X\to Y$ be a proper morphism from a normal variety $X$ and $D$ is a prime Weil divisor on $X$. We say that $D$ is \emph{horizontal} over $Y$ if $f(D)=Y$, otherwise $D$ is called \emph{vertical} over $Y$. For any $\mbR$-Cartier divisor $\Delta$ on $X$, we can uniquely decompose $\Delta$ into horizontal and vertical parts over $Y$ as: $\Delta=\Delta_{\hor}+\Delta_{\ver}$, where $\Delta_{\hor}$ corresponds to the horizontal part of $\D$ over $Y$ and $\Delta_{\ver}$ corresponds to the vertical part.\\

Let $f:X\to Y$ be a morphism of normal varieties and $D$ is a $\mbQ$-Cartier $\mbQ$-divisor on $X$. We will write $D\sim_{\mbQ, f}0$ or $D\sim_{\mbQ, Y}0$ to mean that there is a $\mbQ$-Cartier divisor $L$ on $Y$ such that $D\sim_{\mbQ} f^*L$.\\

{\bf Canonical Divisor.} Let $X$ be a normal analytic variety. Then the canonical sheaf $\omega_X$ is defined as $\omega_X:=(\wedge^{\dim X}\Omega_X^1)^{**}$. Note that, in general $\omega_X$ does not correspond to an Weil divisor on $X$. However, for simplicity and to keep notations as similar as possible to the projective MMP, we will abuse notation in this article and will denote the canonical sheaf by the standard divisorial notation $K_X$. We will often write $\mcO_X(K_X)$ to mean $\omega_X$. We note that this notational abuse does not create any complication in this article, except one case, see the proof of Claim \ref{clm:cbf}. This is the only place where we had to recognize that $K_X$ is truly not represented by an Weil divisor and thus we can not push it forward by a morphism as a cycle.\\

 An $\mbR$-divisor $D$ on $X$ is called a \emph{boundary} (resp. \emph{pure boundary}, resp. \emph{sub-boundary}, resp. \emph{pure sub-boundary}) if the coefficients of $D$ belong to the interval $[0, 1]$ (resp. $[0, 1)$, resp. $(-\infty, 1])$, resp. $(-\infty, 1)$).\\
 
 Let $X$ be a normal variety and $\Delta$  a $\mbQ$-divisor on $X$ such that $K_X+\D$ is $\mbQ$-Cartier. We say that the pair $(X, \D)$ has \emph{terminal} (resp. \emph{canonical}, resp. \emph{klt}, resp. \emph{plt}, resp. \emph{lc}) singularities if $\Delta$ is \emph{effective} and the pair satisfies the defintion as in \cite[Defintion 2.8]{Kol13}. Moreover, if $\Delta$ is not necessarily effective, then we will refer the corresponding singularities as \emph{sub-terminal} (resp. \emph{sub-canonical}, resp. \emph{sub-klt}, resp. \emph{sub-plt}, resp. \emph{sub-lc}). We say that $(X, \Delta)$ has \emph{dlt} singularities if $\Delta$ is effective and the pair satisfy the defintion as in \cite[Def. 2.8]{Kol13}.

In the following we collect some important technical definitions. For a more detailed discussion, we encourage the reader to look at \cite{HP16, HP15, CHP16} and the references therein.
\begin{definition}\label{def:lots-of-definitions}
	
\begin{enumerate}[label=(\roman*)]
	\item An analytic variety $X$ is  called \emph{K\"ahler} if there exists a closed positive $(1, 1)$-form $\omega\in\mcA_{\mbR}^{1, 1}(X)$ such that the following holds: for every point $x\in X$ there exists an open neighborhood $x\in U$ and a closed embedding $\iota_U:U\to V$ into an open set $V\subset\mbC^N$, and a strictly plurisubharmonic $C^\infty$ function $f:V\to\mbR$ such that $\omega|_{U\cap X_{\sm}}=(i\partial\bar{\partial}f)|_{U\cap X_{\sm}}$. Here $X_{\sm}$ is the smooth locus of $X$.
	
	\item\label{item:c-manifolds} A compact analytic variety $X$ is said to belong to \emph{Fujiki's class} $\mcC$ if one of the following equivalent conditions are satisfied:
	\begin{enumerate}
		\item $X$ is a meromorphic image of a compact K\"ahler variety $Y$, i.e., there exists a dominant meromorphic map $f:Y\bir X$ from a compact K\"ahler variety $Y$  (see \cite[4.3, page 34]{Fuj78}).
		\item $X$ is a holomorphic image of compact K\"ahler manifold, i.e., there is a surjective morphism $f:Y\to X$ from a compact K\"ahler manifold $Y$ (see \cite[Lemma 4.6]{Fuj78}).
		\item $X$ is bimeromorphic to a compact K\"ahler manifold (see \cite[Theorem 3.2, page 51]{Var89}).\\
	\end{enumerate}
	
	\item On a normal compact analytic variety $X$ we replace the use of N\'eron-Severi group $\NS(X)_{\mbR}$ by $H^{1, 1}_{\rm BC}(X)$, the Bott-Chern cohomology of real closed $(1, 1)$-forms with local potentials or equivalently, the closed bi-degree $(1, 1)$-currents with local potentials. See \cite[Definition 3.1 and 3.6]{HP16} for more details. More specifically, we define
\[ N^1(X):=H^{1,1}_{\rm BC}(X).\]

	\item If $X$ is in Fujiki's class $\mcC$ and has \emph{rational singularities}, then from \cite[Eqn. (3)]{HP16} we know that $N^1(X)=H^{1, 1}_{\rm BC}(X)\subset H^2(X,\mathbb R)$. In particular, the intersection product can be defined in $N^1(X)$ via the cup product of $H^2(X, \mbR)$.
	
	\item Let $X$ be a normal compact analytic variety contained in Fujiki's class $\mcC$. We define $N_1(X)$ to be the vector space of real closed currents of bi-dimension $(1, 1)$ modulo the following equivalence relation: $T_1\num T_2$ if and only is 
	\[
		T_1(\eta)=T_2(\eta)
	\]
for all real closed $(1, 1)$-form with local potentials.

	\item We define $\NA(X)\subset N_1(X)$ to be the closed cone generated by the classes of positive closed currents. The Mori cone $\NE(X)\subset\NA(X)$ is defined as the closure of  the cone of currents of integration $T_C$, where $C\subset X$ is an irreducible curve. 
	
	\item Let $X$ be a normal compact analytic variety and $u\in H^{1,1}_{\rm BC}(X)$. Then $u$ is called \emph{pseudo-effective} if it can be represented by a bi-degree $(1, 1)$-current $T\in \mathcal D ^{1,1}(X)$ which is locally of the form $\partial \bar \partial f$ for some psh function $f$. It is called \emph{nef} if it can be represented by a form $\alpha$ with local potentials such that for some K\"ahler form $\omega $ on $X$ and for every $\epsilon >0$, there exists a $\mathcal C ^\infty$ function $f_\epsilon\in\mcA^0(X)$ such that $\alpha + i\partial \bar \partial f_\epsilon \geq -\epsilon \omega$. 
	
	\item The {\it nef cone} ${\rm Nef}(X)\subset N^1(X)$ is the cone generated by nef cohomology classes. Let $\mcK$ be the open cone in $N^1(X)$ generated by the classes of K\"ahler forms. By \cite[Proposition 6.1.(iii)]{Dem92} (also see \cite[Remark 3.12]{HP16}) it follows that the nef cone $\Nef(X)$ is the closure of $\mcK$, i.e. $\Nef(X)=\overline{\mcK}$.
	
\item We say that a normal variety $X$  is $\mbQ$-factorial if for every prime Weil divisor $D\subset X$, there is a positive integer $k>0$ such that $kD$ is a Cartier divisor, and for the canonical sheaf $\omega_X:=\wedge^{\dim X}\Omega^1_X$, there is a positive integer $m>0$ such that $(\omega _X^{\otimes m})^{**}$ is a line bundle. It is well known that if $X$ is a $\mbQ$-factorial 3-fold and $X\dasharrow X'$ is a flip or divisorial contraction, then $X'$ is also $\mbQ$-factorial.

\item A $\mbQ$-Cartier divisor $D$ is called \emph{$\mbQ$-effective} if   $mD$ is a Cartier divisor and the linear system $|mD|$ is not empty for some $m\in\mbN$. 

\item Let $f:X\to Y$ be a projective fibration  between two normal K\"ahler varieties. Then $f$ is called a $K_X$-Mori fiber space if the following conditions are satisfied:
                \begin{enumerate}
                        \item $X$ and $Y$ are $\mbQ$-factorial varieties and $X$ has terminal singularities,
                        \item $-K_X$ is $f$-ample, and
                        \item the relative Picard number $\rho(X/Y)=1$. 
                \end{enumerate}

\item Let $(X, B)$ be a log canonical pair and $f:X\to Y$ a contraction of a $(K_X+B)$-negative extremal ray $R$ of $\NA(X)$. Then we say that $f:X\to Y$ is a $(K_X+B)$-divisorial contraction (resp. $(K_X+B)$-flipping contraction) if $f$ is bimeromorphic and $\codim_X\Ex(f)=1$ (resp. bimeromorphic and $\codim_X \Ex(f)\>2$).  
\end{enumerate}

\end{definition}~\\

The following are some easy but useful results which will be used throughout the paper.  

\begin{lemma}[Negativity Lemma]\label{lem:negativity}\cite[Lemma 1.3]{Wan21}
	Let $f:X\to Y$ be a proper bimeromorphic morphism between two normal analytic varieties. Let $B$ be a $\mbQ$-Cartier $\mbQ$-divisor on $X$ such that $-B$ is $f$-nef. Then $B$ is effective if and only if $f_*B$ is effective.
\end{lemma}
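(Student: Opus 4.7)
The forward implication is immediate: the pushforward of an effective $\mathbb Q$-divisor is effective by definition. So the content lies in proving the converse. My plan is to decompose, localize on the base, and reduce to the classical negative-definiteness statement for exceptional curves on a normal surface (Mumford--Grauert).

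\textbf{Step 1: Decompose and locate the negative part.} Write $B=B_+-B_-$ with $B_+,B_-\geq 0$ effective $\mathbb Q$-divisors having no common components. Since $f_*B=f_*B_+-f_*B_-\geq 0$, I claim every prime component of $B_-$ is $f$-exceptional. Indeed, if a prime component $D$ of $B_-$ with coefficient $a>0$ were not $f$-exceptional, then because $f$ is bimeromorphic, $D$ is the unique component of $\Supp(B)$ mapping onto the divisor $f(D)\subset Y$ (no component of $B_+$ maps onto $f(D)$ because $B_+$ and $B_-$ share no components). Then the coefficient of $f(D)$ in $f_*B$ equals $-a<0$, contradicting $f_*B\geq 0$. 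Hence $\Supp(B_-)\subseteq\Ex(f)$, and in particular $Z:=f(\Supp(B_-))$ has codimension $\geq 2$ in $Y$. Assume for contradiction $B_-\neq 0$.

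\textbf{Step 2: Localize and cut down to a surface.} The statement is local on $Y$, so I replace $Y$ by a Stein open neighborhood of a general point $y_0$ of some top-dimensional component of $Z$, and $X$ by its preimage. Let $n=\dim X=\dim Y$. Choose $n-2$ sufficiently general holomorphic functions $h_1,\dots,h_{n-2}$ on $Y$ vanishing at $y_0$ so that the common zero locus $H:=\{h_1=\dots=h_{n-2}=0\}$ meets $Z$ in a finite set containing $y_0$, and so that $H$ is smooth at $y_0$. Bertini-type arguments in the analytic Stein setting let me further arrange that the preimage $S:=f^{-1}(H)$ is an irreducible normal surface in a neighborhood of $f^{-1}(y_0)$ (shrinking $Y$ around $y_0$), that $f_S:=f|_S\colon S\to H$ is again a proper bimeromorphic morphism, that $B|_S$ makes sense as a $\mathbb Q$-Cartier $\mathbb Q$-divisor, that $-B|_S$ is $f_S$-nef, that $(B|_S)_-=B_-|_S\neq 0$ (using that $y_0$ lies in $Z$), and that the components of $(B|_S)_+$ and $(B|_S)_-$ are still disjoint as divisors.

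\textbf{Step 3: Apply the Mumford--Grauert negative definiteness on $S$.} By Mumford--Grauert, the intersection form on the prime $f_S$-exceptional curves $E_1,\dots,E_r$ of the normal surface $S$ is negative definite. Writing $B_-|_S=\sum a_i E_i$ with $a_i\geq 0$ not all zero, the hypothesis that $-B|_S$ is $f_S$-nef gives $B|_S\cdot E_j\leq 0$ for every $j$, i.e.\ $B_-|_S\cdot E_j\geq B_+|_S\cdot E_j\geq 0$, where the last inequality uses that $B_+|_S$ shares no components with $E_j$. Summing against the $a_j$ yields $(B_-|_S)^2=\sum_j a_j(B_-|_S\cdot E_j)\geq 0$, contradicting negative definiteness since $B_-|_S\neq 0$.

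\textbf{Main obstacle.} The only delicate part is Step 2: arranging, in the analytic (non-projective) Stein setting, a Bertini-type slice $S$ that is simultaneously normal, irreducible around the fiber over $y_0$, still admits a proper bimeromorphic map $f_S$ to $H$, and meets $\Supp(B_-)$ nontrivially. The surface step in Step 3 is purely classical, and Step 1 is a formal diagram chase, so essentially all of the real work is in setting up this slicing correctly.
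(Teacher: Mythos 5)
The paper offers no proof of this lemma (it simply cites Wang), so your argument must stand on its own. Steps 1 and 3 are correct and follow the classical route (reduce to a surface, apply Mumford--Grauert negative definiteness), but Step 2 contains a genuine gap, and it sits exactly where you flagged the "main obstacle." Because you force the slice $H=\{h_1=\cdots=h_{n-2}=0\}$ to pass through $y_0\in f(\Supp B_-)$, the preimage $f^{-1}(H)$ contains the \emph{entire} fiber $f^{-1}(y_0)$. If $T$ is the component of $f(\Supp B_-)$ containing $y_0$ and $D$ is a component of $B_-$ dominating $T$, then $\dim f^{-1}(y_0)\geq\dim D-\dim T=n-1-\dim T$. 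Hence as soon as $\dim T\leq n-3$ --- in particular whenever a component of $B_-$ is contracted to a point, e.g.\ $B$ supported on the exceptional $\mbP^{\,n-1}$ of the blow-up of a point --- the fiber has dimension $\geq 2$ and $f^{-1}(H)$ is reducible (or not even pure $2$-dimensional) in every neighborhood of $f^{-1}(y_0)$. No genericity of the $h_i$ can prevent this: $f^{-1}(y_0)$ lies in the base locus of the pulled-back linear system, which is precisely where Bertini-type theorems are silent, so irreducibility and normality of $f^{-1}(H)$ along the fiber are the properties you cannot arrange for free. Since a divisor contracted to a point is the most basic instance of the lemma, Step 2 fails on a central case, not a degenerate one.

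The standard repair splits the reduction in two. When $\dim f(\Supp B_-)\geq 1$, cut $Y$ by a single general hyperplane of the ambient $\mbC^N$ with \emph{no} incidence condition: then $f^*H$ is a general member of a base-point-free system, hence irreducible and normal by Bertini--Seidenberg, $f^*H\to H$ is again proper bimeromorphic, $H$ still meets every positive-dimensional component of $f(\Supp B_-)$, the two parts of $B$ restrict without acquiring common components, and one inducts on $\dim Y$. This isolates the case where $f(\Supp B_-)$ is finite, which genuinely requires a different idea: either restrict to the strict transform of $H$ (the unique component of $f^{-1}(H)$ dominating $H$) and then verify separately that the negative part survives as a nonzero divisor there --- this is not automatic, since a priori the strict transform could miss a given fiber component --- or slice $X$ rather than $Y$ by general relatively ample divisors. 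The latter presupposes that $f$ is projective, which in the analytic category of this paper is not part of the hypothesis, so one must first dominate $f$ by a projective modification via Hironaka's Chow lemma and check that the negative part is not absorbed in the process. That residual case is the actual mathematical core of the lemma in the K\"ahler setting, and your proposal currently defers it rather than resolving it.
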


\begin{lemma}\label{lem:finite-pullback}
        Let $f:X\to Y$ be a finite surjective   morphism between two normal varieties. Let $D\>0$ be an effective integral Weil divisor on $Y$. Then $f^*D$ is a well defined effective integral Weil divisor on $X$ such that $\Supp (f^*D)=f^{-1}(\Supp D)$.
\end{lemma}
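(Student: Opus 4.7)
The plan is to reduce to the case of a prime Weil divisor on $Y$ and then define the pullback multiplicities at generic points of codimension-one subvarieties of $X$ using the DVR structure guaranteed by normality.

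First, by $\mbZ$-linearity I would write $D = \sum_i a_i D_i$ with $a_i \in \mbN_{>0}$ and $D_i$ distinct prime Weil divisors on $Y$, and reduce to constructing $f^*D_i$ for each prime $D_i$. So assume $D$ is a prime Weil divisor. Since $f\colon X \to Y$ is finite and surjective, it is equidimensional: for any irreducible closed analytic subset $Z \subseteq Y$, each irreducible component of $f^{-1}(Z)$ is finite over $Z$ and therefore has dimension equal to $\dim Z$. In particular every irreducible component $E$ of $f^{-1}(D)$ is a prime Weil divisor on $X$, and since $f|_E \colon E \to f(E) \subseteq D$ is a finite surjection with $\dim f(E) = \dim E = \dim D$, one has $f(E) = D$. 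Let $E_1, \dots, E_r$ denote these prime components.

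For each $i$, let $\eta_i \in X$ and $\xi \in Y$ be the generic points of $E_i$ and $D$ respectively. Because $X$ and $Y$ are normal and $\eta_i$, $\xi$ have codimension $1$, the local rings $\mcO_{X,\eta_i}$ and $\mcO_{Y,\xi}$ are discrete valuation rings. Choosing a uniformizer $t \in \mcO_{Y,\xi}$, its image $f^\sharp(t) \in \mcO_{X,\eta_i}$ has a well-defined positive valuation $e_i$ (independent of $t$, since any two uniformizers differ by a unit). Set $f^*D := \sum_{i=1}^r e_i E_i$, which is an effective integral Weil divisor on $X$, and by construction $\Supp(f^*D) = \bigcup_i E_i = f^{-1}(D)$ as closed subsets. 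For general effective $D = \sum_i a_i D_i$, define $f^*D := \sum_i a_i f^*D_i$, and the support identity $\Supp(f^*D) = f^{-1}(\Supp D)$ follows from the prime case by taking unions.

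The statement is essentially an assembly of standard facts, so I do not anticipate a serious obstacle. The only subtlety worth flagging is that $D$ need not be Cartier near the singular locus of $Y$, so one cannot naively pull back a global local equation for $D$; this is bypassed because the construction only uses local data at the DVRs $\mcO_{Y,\xi}$ and $\mcO_{X,\eta_i}$ in codimension one, where normality is enough. A secondary point is the equidimensionality claim for $f^{-1}(Z)$, which in the analytic setting follows from the local description of finite morphisms as finite ring extensions via going-up and the invariance of dimension under integral extensions.
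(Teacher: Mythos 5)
Your proof is correct, but it takes a different route from the paper. The paper's argument shrinks to a Zariski open $V\subset Y$ contained in the smooth locus of $Y$ with $f^{-1}(V)$ contained in the smooth locus of $X$ and with both complements of codimension $\geq 2$; on $V$ the divisor $D$ is Cartier, so one pulls it back by local equations and then extends uniquely across the codimension-$2$ complement. You instead reduce to a prime divisor and define the coefficient of each component $E_i$ of $f^{-1}(D)$ as the valuation of a local equation of $D$ in the discrete valuation ring attached to $E_i$. The two constructions agree (Cartier pullback computes exactly those valuations), and yours has the advantage of producing the multiplicities $e_i$ explicitly as ramification indices, which is what Lemma \ref{lem:ramification-formula} uses next; the paper's version is shorter because it never names the coefficients. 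Two points to tighten in your write-up. First, ``generic points'' and the local rings $\mcO_{X,\eta_i}$, $\mcO_{Y,\xi}$ are scheme-theoretic; in the analytic category you should instead compute the order of vanishing of $t\circ f$ along $E_i$ at a general point of $E_i$ lying in $X_{\sm}$ over a point of $D\cap Y_{\sm}$ where $D$ is principal --- which is essentially the paper's shrinking step in disguise, and normality guarantees such points exist outside a codimension-$2$ set. Second, your equidimensionality claim is the one place where care is needed: finiteness of $f|_E$ only gives $\dim E\leq\dim D$, and the reverse inequality (no component of $f^{-1}(D)$ of codimension $\geq 2$) comes from going-\emph{down}, which is where normality of the base $Y$ enters; this is not cosmetic, since without it the support identity $\Supp(f^*D)=f^{-1}(\Supp D)$ could fail. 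The same fact is silently used in the paper's proof, so you are not worse off, but you should cite going-down rather than going-up.
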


\begin{proof}
        First we explain how to take pullback of a Weil divisor under a finite map. Since $f$ is a finite surjective proper morphism and $X$ and $Y$ are both normal, there exists a dense Zariski open set $V$ contained in the smooth locus of $Y$ such that $f^{-1}(V)$ is also contained in the smooth locus of $X$, and that $\codim_Y (Y\backslash V)\>2$ and $\codim_X(X \backslash f^{-1}(V))\>2$. Then by restricting $D$ to $V$ we see that the irreducible components of $D|_V$ are in bijection with that of $D$. Since $V$ is smooth, $D|_V$ is a Cartier divisor on $V$. Since $\codim_X(X \backslash f^{-1}(V))\>2$, $f^*(D|_V)$ extends to a unique divisor on $X$ which we define to be the $f^*D$. By construction, we have $\Supp (f^*D)=f^{-1}(\Supp D)$.
\end{proof}~\\

\begin{lemma}\label{lem:ramification-formula}
Let $f:X\to Y$ be a finite surjective   morphism between two normal varieties. Let $B=\sum b_iB_i$ be a $\mbQ$-divisor on $Y$ such that $\sum B_i$ contains the branch locus of $f$ (here we allow $b_i=0$). Let $(f^*\sum B_i)_{\red}=\sum D_{ij}$, where $f(D_{ij})=B_i$ for all $j$ and $i$. Let $e_{ij}$ be the branching order of $f$ around general points of $D_{ij}$. Then we have the following ramification formula
\begin{equation}\label{eqn:ramification-formula-with-boundary}
        f^*(K_Y+B)= K_X+\sum_{i, j} (1-(1-b_i)e_{ij})D_{ij}.
\end{equation} 
Moreover, if we set $B':=\sum_{i, j} (1-(1-b_i)e_{ij})D_{ij}$, then $(X, B')$ is  sub-klt (resp. sub-lc, resp. sub-lc) if and only if $(Y, B)$ is sub-klt (resp. sub-lc, resp. sub-lc).         
\end{lemma}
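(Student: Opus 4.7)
The ramification formula is a statement about $\mathbb Q$-Cartier divisors on the normal variety $X$, so by Hartogs' lemma (or more precisely, since divisorial sheaves agree in codimension $1$) it suffices to verify the identity on a suitable Zariski open subset whose complement has codimension $\geq 2$ in both $X$ and $Y$. Concretely, I would pick a dense Zariski open $V\subseteq Y_{\sm}$ with $f^{-1}(V)\subseteq X_{\sm}$ and $\codim_Y(Y\setminus V)\geq 2$, $\codim_X(X\setminus f^{-1}(V))\geq 2$, and such that $V$ meets the generic point of every $B_i$ and $f^{-1}(V)$ meets the generic point of every $D_{ij}$.

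On this smooth locus, the classical Riemann--Hurwitz formula (tame ramification, as we are in characteristic zero) gives $K_{f^{-1}(V)} = (f|_{f^{-1}(V)})^*K_V + R$ with ramification divisor $R=\sum_{i,j}(e_{ij}-1)D_{ij}|_{f^{-1}(V)}$; here we use the hypothesis that $\sum B_i$ contains the branch locus, so every prime divisor of $X$ with $e>1$ is some $D_{ij}$. On the other hand, Lemma 1.3 together with the local computation at the generic point of $D_{ij}$ (where $f$ looks like $t\mapsto t^{e_{ij}}$) gives $f^*B_i = \sum_j e_{ij}D_{ij}$ as Weil divisors. Putting these together,
\begin{align*}
f^*(K_Y+B)
&= K_X - \sum_{i,j}(e_{ij}-1)D_{ij} + \sum_{i,j} b_i\,e_{ij}D_{ij} \\
&= K_X + \sum_{i,j}\bigl(1-(1-b_i)e_{ij}\bigr)D_{ij},
\end{align*}
which is the desired formula on $f^{-1}(V)$; by normality it extends to all of $X$.

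For the \emph{moreover} part I would work on a common resolution. Let $\mu\colon\widetilde Y\to Y$ be a log resolution of $(Y, B+\mathrm{Branch}(f))$, and let $\widetilde X$ be the normalization of the main component of $X\times_Y \widetilde Y$, giving a finite surjective $\widetilde f\colon\widetilde X\to\widetilde Y$ and a proper bimeromorphic $\nu\colon\widetilde X\to X$ which factors through a log resolution of $(X,B')$ after further blow-ups. Writing $\mu^*(K_Y+B)=K_{\widetilde Y}+B_{\widetilde Y}$ and applying the first half of the lemma to $\widetilde f$ with respect to (a divisor containing the support of) $B_{\widetilde Y}$, the coefficient on any prime divisor $\widetilde D$ of $\widetilde X$ lying over a prime $\widetilde B$ of $\widetilde Y$ with coefficient $\widetilde b$ and ramification index $\widetilde e$ is $1-(1-\widetilde b)\widetilde e$. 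Since $\widetilde e\geq 1$, the elementary inequalities
\[
1-(1-\widetilde b)\widetilde e \leq 1 \iff \widetilde b\leq 1,\qquad 1-(1-\widetilde b)\widetilde e < 1 \iff \widetilde b<1
\]
show that the coefficients of $\nu^*(K_X+B')$ are all $\leq 1$ (resp. $<1$) if and only if the same holds for $\mu^*(K_Y+B)$. Checking sub-lc (resp. sub-klt) on this pair of log resolutions therefore yields the equivalence.

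The routine computation is the local ramification identity and the algebraic inequality for $1-(1-b)e$; the place that requires the most care is the construction of the compatible log resolutions on $X$ and $Y$ via normalization of the base change, since one must ensure that every exceptional/strict-transform divisor over $X$ is picked up by some divisor over $Y$ after a further resolution, so that the discrepancy bookkeeping really is governed by the formula above.
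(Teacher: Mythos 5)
Your argument is correct and follows the paper's proof essentially verbatim for the formula: discard codimension-two loci, apply Riemann--Hurwitz on the smooth locus, and combine $R=\sum(e_{ij}-1)D_{ij}$ with $f^*B=\sum b_ie_{ij}D_{ij}$. For the ``moreover'' clause the paper simply cites \cite[Proposition 5.20]{KM98}, and your sketch is just the standard proof of that proposition (including the one genuinely delicate point, which you correctly flag: every divisor over $X$ must be realized on a normalized base change of a sufficiently high log resolution of $Y$ so that no discrepancy is missed), so there is no substantive divergence.
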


\begin{proof}
        Since in the first part we are only interested about divisors, and $X$ and $Y$ are normal, by removing some appropriate closed analytic subsets of codimension at least $2$   from $X$ and $Y$, we may assume that $X$ and $Y$ are both smooth. Then we have
        \begin{equation}\label{eqn:ramification-formula}
                K_X=f^*K_Y+R,
        \end{equation}
where $R$ is the ramification divisor.\\
Now observe that we have $f^*B=\sum_{i, j} b_ie_{ij}D_{ij}$, and $R=\sum_{i, j}(e_{ij}-1)D_{ij}$. Then \eqref{eqn:ramification-formula-with-boundary} follows from \eqref{eqn:ramification-formula}.

The second part follows from \cite[Proposition 5.20]{KM98}.

\end{proof}~\\

The following result is very useful in practice. The proof below is suggested by Burt Totaro.
\begin{lemma}[Galois Closure]\label{lem:galois-closure}
 Let $f:X\to Y$ to be a finite surjective morphism between normal varieties. Then there exists a normal variety $\widetilde X$, a finite morphism $\tilde f:\widetilde X\to Y$ and a finite group $G$ acting on $\widetilde X$ such that $\tilde f$ factorizes through $f$ and $Y\cong \widetilde X/G$.   
\end{lemma}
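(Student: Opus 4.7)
The plan is to construct the Galois closure analytically by taking the normalization of an appropriate irreducible component of an iterated fiber product, imitating the standard algebraic construction. First I would set $n := \deg f$ and form the $n$-fold fiber product
\[
W := X \times_Y X \times_Y \cdots \times_Y X \quad (n \text{ factors}),
\]
which exists as a complex space finite over $Y$ and carries a natural $S_n$-action by permutation of factors commuting with $W \to Y$. Since $f$ is finite surjective between normal varieties, there is a nonempty Zariski open $U \subseteq Y$ over which $f$ is étale of degree $n$, and the open subset $W^\circ \subseteq W|_U$ obtained by removing the big diagonals (the loci where at least two coordinates coincide) is étale of degree $n!$ over $U$.

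Next, I would pick any irreducible (equivalently connected) component $\widetilde W$ of $W^\circ$ and let $G \subseteq S_n$ be its stabilizer. A direct combinatorial check — using that the $\pi_1(U)$-monodromy acts diagonally and commutes with the $S_n$-action, while $S_n$ acts freely on ordered tuples of distinct points — shows that $S_n$ acts transitively on the set of connected components of $W^\circ$ and that $\widetilde W \to U$ is an étale Galois cover with group $G$ (whose order equals that of the monodromy group of $f|_{f^{-1}(U)}$). Let $Z \subseteq W$ be the closure of $\widetilde W$; then $Z$ is $G$-stable, and its normalization $\widetilde X$ inherits a $G$-action by functoriality of normalization. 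Composing with the first projection and with $f$ gives the required finite morphism
\[
\tilde f \colon \widetilde X \longrightarrow Z \hookrightarrow W \xrightarrow{\mathrm{pr}_1} X \xrightarrow{f} Y,
\]
which by construction factors through $f$.

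To conclude $\widetilde X / G \cong Y$, I would invoke Cartan's theorem to see that the analytic quotient $\widetilde X/G$ exists and is a normal complex space, and observe that the induced finite morphism $\widetilde X/G \to Y$ is an isomorphism over the Zariski dense open $U$ by the Galois property above. The analytic version of Zariski's main theorem then says that any finite bimeromorphic morphism to a normal complex variety is an isomorphism, so $\widetilde X/G \cong Y$ globally. The main obstacle is primarily foundational rather than substantial: one needs the analytic analogues of standard algebro-geometric tools — finite fiber products, normalization (Grauert--Remmert), finite group quotients (Cartan), and the analytic Zariski main theorem — to all function in the complex-analytic category, so that the combinatorics on the étale locus can be promoted to a global statement on $Y$. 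Each of these is classical, and their combination yields the lemma.
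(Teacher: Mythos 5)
Your construction is correct, but it implements the key extension step differently from the paper, so a comparison is worth recording. Both arguments begin the same way: pass to the Zariski open set $U\subset Y$ over which $f$ is \'etale and build the Galois closure of the covering space $f^{-1}(U)\to U$ there (your ``combinatorial check'' with the commuting $\pi_1(U)$- and $S_n$-actions on ordered fibers is exactly the covering-space theory that the paper phrases via the normal core $\widetilde H=\bigcap_\gamma\gamma^{-1}H\gamma$ of $H=g_*\pi_1(V,v_0)$). The divergence is in how the Galois cover of $U$ is promoted to a finite cover of $Y$ carrying the $G$-action: the paper cites \cite[Theorem XII.5.4]{SGA1} (the Grauert--Remmert extension theorem) once, which simultaneously produces the unique finite normal extension $\tilde f:\widetilde X\to Y$, the extended $G$-action, and the identification $\widetilde X/G\cong Y$; you instead realize the cover concretely as a component $\widetilde W$ of the diagonal-free locus of the $n$-fold fiber product, take its closure $Z$ in $W$ and normalize, and then recover the quotient statement from Cartan's theorem plus the analytic Zariski main theorem applied to the degree-one finite map $\widetilde X/G\to Y$. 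Your route is more self-contained and constructive, at the cost of two small verifications you should make explicit: (i) the closure $Z=\overline{\widetilde W}$ meets the big diagonal over $U$ only in a proper analytic subset (a dimension count, since $Z$ is irreducible of dimension $\dim Y$ and is not contained in the diagonal), so that $\widetilde X\to Y$ really has degree $|G|$ and $\widetilde X/G\to Y$ has degree one over a dense open set; and (ii) the analytic inputs you list (finite fiber products, Grauert--Remmert normalization, Cartan quotients, analytic ZMT) are indeed all available, which they are. With those points spelled out, your proof is a valid alternative to the paper's.
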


\begin{proof}
    Let $Z\subset Y$ be the branch locus of $f$. Then $Z$ is an analytic subset of $Y$ and $g:=f|_{f^{-1}U}: f^{-1}U\to U$ is a finite \'etale morphism, where $U:=Y\setminus Z$. In particular, $g$ is a topological covering space. Let $d$ be the degree of $g$ and $V=f^{-1}U$. We fix a point $v_0\in V$.  Then $H:=g_*\pi_1(V, v_0)$ is a subgroup of $\pi_1(U, g(v_0))$ of index $d$. Now define $\widetilde H:=\cap_{\gamma\in \pi_1(U, g(v_0))}\gamma^{-1}H\gamma$. Then $\widetilde H$ is the largest normal subgroup of $\pi_1(U, g(v_0))$ contained in $H$ and the index $[\pi_1(U, g(v_0)):\widetilde H]$ is finite. Let $\tilde g:\widetilde V\to U$ be the covering space corresponding to $\widetilde H$. 
    Then $\widetilde{V}$ is naturally an analytic space, and $\tilde{g}$ is the Galois closure of  $g:V\to U$. 
    Let $G$ the group of deck transformations of $\tilde g:\widetilde V\to U$. 
    Then $\widetilde V/G\cong U$. By \cite[Theorem XII.5.4]{SGA1}, $\tilde g:\widetilde V\to U$ extends to a unique finite morphism $\tilde f:\widetilde X\to Y$ of analytic varieties. Furthermore, from the proof of \cite[Theorem XII.5.4]{SGA1}, we see that $G$ acts naturally on $\widetilde{X}$ and  $\widetilde X/G\cong Y$.
\end{proof}

\begin{lemma}\label{lem:semiample-preserved}
	Let $f:X\to Y$ be a proper surjective morphism between two normal varieties and $\msL$ is a line bundle on $Y$. Then $\msL$ is semi-ample if and only if $f^*\msL$ is semi-ample. 
\end{lemma}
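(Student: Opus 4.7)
The ``only if'' direction is formal: if $H^0(Y,\mathscr{L}^{\otimes m})\otimes\mathcal{O}_Y\twoheadrightarrow\mathscr{L}^{\otimes m}$ is surjective, pulling back via $f$ gives a surjection that factors through $H^0(X,f^*\mathscr{L}^{\otimes m})\otimes\mathcal{O}_X\twoheadrightarrow f^*\mathscr{L}^{\otimes m}$, so $(f^*\mathscr{L})^{\otimes m}$ is globally generated. For the ``if'' direction, my plan is to apply the analytic Stein factorization $f=g\circ h$, where $h:X\to W$ is proper with connected fibers (so $h_*\mathcal{O}_X=\mathcal{O}_W$) and $g:W\to Y$ is finite and surjective; here $W$ is normal because $X$ is. It then suffices to treat these two factors separately.

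For $h$ with connected fibers: assume $h^*M^{\otimes m}$ is globally generated for a line bundle $M$ on $W$. The projection formula combined with $h_*\mathcal{O}_X=\mathcal{O}_W$ yields $h_*(h^*M^{\otimes m})\cong M^{\otimes m}$ and hence the identification $H^0(X,h^*M^{\otimes m})\cong H^0(W,M^{\otimes m})$. Given $w\in W$, choose $x\in h^{-1}(w)$; by global generation there is a section $s$ of $h^*M^{\otimes m}$ not vanishing at $x$, and the corresponding section of $M^{\otimes m}$ does not vanish at $w$. Thus $M^{\otimes m}$ is globally generated on $W$.

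For $g$ finite: apply Lemma \ref{lem:galois-closure} to obtain a Galois closure $\tilde g:\widetilde W\to Y$ factoring through $g$, with a finite group $G$ acting on $\widetilde W$ and $\widetilde W/G\cong Y$. By hypothesis $\tilde g^*\mathscr{L}^{\otimes m}$ is globally generated. Given $y\in Y$, let $T\subset\widetilde W$ be the (finite) $G$-orbit of a chosen preimage of $y$; since a finite union of proper linear subspaces over $\mathbb{C}$ cannot cover the ambient space, there exists a section $s$ of $\tilde g^*\mathscr{L}^{\otimes m}$ non-vanishing at every point of $T$. The norm $N(s):=\prod_{\sigma\in G}\sigma^*s$ is then a $G$-invariant section of $\tilde g^*\mathscr{L}^{\otimes m|G|}$ non-vanishing on $T$. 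Because $(\tilde g_*\mathcal{O}_{\widetilde W})^G=\mathcal{O}_Y$, $G$-invariant sections descend to sections of $\mathscr{L}^{\otimes m|G|}$ on $Y$, producing a section non-vanishing at $y$. Hence $\mathscr{L}^{\otimes m|G|}$ is globally generated, and $\mathscr{L}$ is semi-ample.

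The only ingredients beyond formalities are the existence of an analytic Stein factorization for the proper surjective morphism $f$ (standard via Grauert's coherence theorem) and Lemma \ref{lem:galois-closure}; I do not anticipate any step being a genuine obstacle.
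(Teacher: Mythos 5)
Your proposal is correct and follows essentially the same route as the paper: Stein factorization into a connected-fiber part (handled via $h_*\mathcal{O}_X=\mathcal{O}_W$ and the projection formula) and a finite part (handled by passing to the Galois closure and taking the norm/product of conjugates to produce a $G$-invariant, hence descending, section). Your explicit justification that a single section can be chosen non-vanishing on the whole finite orbit $T$ (a finite union of proper subspaces cannot exhaust $H^0$) is a small point the paper glosses over, but it is the same argument in substance.
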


\begin{proof}
	Clearly if $\msL$ is semi-ample, then $f^*\msL$ is semi-ample. So assume that $f^*\msL$ is semi-ample. Replacing $f^*\msL$ by a suitable multiple we may assume that $f^*\msL$ is generated by global sections. By Stein factorization it is enough to consider two cases: (i) $f$ has connected fibers, i.e. $f_*\mcO_X=\mcO_Y$, and (ii) $f$ is a finite morphism.\\
In the case (i) we have $H^0(Y, \msL)\cong H^0(X, f^*\msL)\neq \{0\}$. Choose $y\in Y$ and $x\in f^{-1}(y)$. Since $f^*\msL$ is globally generated, there is a global section $s\in H^0(X, f^*\msL)$ such that $s$ does not vanish at $x$. Let $t\in H^0(Y, \msL)$ be the unique global section of $\msL$ such that $f^*t=s$. Then $t$ does not vanish at $y$, and hence $\msL$ is globally generated.\\

In case (ii) $f$ is a finite morphism. Replacing $f$ by its Galois closure (see Lemma \ref{lem:galois-closure}) we may assume that $f:X\to Y$ is  Galois  with Galois group $G$. Choose $y\in Y$ and let $s\in H^0(X, f^*\msL)$ be a section such that $s$ does not vanish at any point of $f^{-1}(y)$. Let $s_1, s_2,\ldots, s_d$ be the set of all Galois conjugates of $s$. Then $s_i$ does not vanish at any point of $f^{-1}(y)$ for any $i\in\{1,\ldots, d\}$. Now observe that $\otimes_{i=1}^d s_i$ is a $G$-invariant global section of $f^*\msL^d$. Therefore there exists a section $t\in H^0(Y, \msL^d)$ such that $f^*t=\otimes_{i=1}^d s_i$. Then clearly $t$ does not vanish at $y\in Y$, and hence $\msL^d$ is globally generated.  
\end{proof}~\\

\begin{lemma}\label{lem:mmp-preserving-terminal-singularities}
        Let $X$ be a variety with $\mbQ$-factorial terminal singularities such that $(X, \Delta)$ has canonical singularities for some $\Delta\>0$. Let $f:X\to Z$ be a projective morphism. Suppose that we run a $(K_X+\Delta)$-MMP over $Z$ which ends with a minimal model $g:(X', \Delta')\to Z$ over $Z$, i.e., $K_{X'}+\Delta'$ is $g$-nef. Further, assume that this MMP does not contract any component of $\Delta$. Then $X'$ has terminal singularities.  
\end{lemma}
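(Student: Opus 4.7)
The plan is to proceed by induction on the number of MMP steps, reducing to a single step $\phi:(X,\Delta)\dashrightarrow (X^+,\Delta^+)$, which by hypothesis does not contract any component of $\Delta$. The induction is valid because $\Delta^+=\phi_*\Delta$ preserves the effective components of $\Delta$, and because $(X^+,\Delta^+)$ remains canonical---a standard consequence of Lemma \ref{lem:negativity}, which yields the weak monotonicity $a(E,X,\Delta)\leq a(E,X^+,\Delta^+)$ of $(K+\Delta)$-discrepancies along any MMP step. For any divisorial valuation $E$ exceptional over $X^+$, the effectivity $\Delta^+\geq 0$ forces $a(E,X^+,0)\geq a(E,X^+,\Delta^+)$, so it suffices to show $a(E,X^+,0)>0$ for each such $E$.

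For a divisorial contraction $\phi:X\to X^+$ contracting a prime divisor $D$, the hypothesis $D\not\subset\Supp\Delta$ implies that a general curve $C$ in the contracted extremal ray lies in $D$ but not in $\Supp\Delta$; hence $\Delta\cdot C\geq 0$ and $K_X\cdot C\leq(K_X+\Delta)\cdot C<0$. Intersecting the identities
\[K_X+\Delta=\phi^*(K_{X^+}+\Delta^+)+a(D,X^+,\Delta^+)\,D\quad\text{and}\quad K_X=\phi^*K_{X^+}+a(D,X^+,0)\,D\]
with $C$ and using $D\cdot C<0$ yields $a(D,X^+,\Delta^+)>0$ and $a(D,X^+,0)>0$. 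For any exceptional $E\neq D$ over $X^+$, $E$ is automatically exceptional over $X$ (otherwise its image on $X^+$ would be a divisor), so $a(E,X,0)>0$ by terminality. Passing to a resolution $p:Y\to X$ that realises $E$ and applying $p^*$ to the second identity gives
\[a(E,X^+,0)=a(E,X,0)+a(D,X^+,0)\cdot\mathrm{ord}_E(p^*D)\geq a(E,X,0)>0.\]

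For a flip $\phi:X\dashrightarrow X^+$ with flipping and flipped loci $\Xi\subset X$ and $\Xi^+\subset X^+$, $X$ and $X^+$ are isomorphic in codimension one, so every $E$ exceptional over $X^+$ is also exceptional over $X$. If $c_{X^+}(E)\not\subset\Xi^+$, the local isomorphism $X\setminus\Xi\cong X^+\setminus\Xi^+$ around $c_{X^+}(E)$ forces $a(E,X^+,0)=a(E,X,0)>0$. Otherwise $c_{X^+}(E)\subset\Xi^+$: on a common resolution $p:Y\to X$, $q:Y\to X^+$, set $F=p^*(K_X+\Delta)-q^*(K_{X^+}+\Delta^+)$. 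Since $q_*F=0$, $F$ is $q$-exceptional; for any curve $C$ contracted by $q$ but not by $p$, $p(C)$ is a flipping curve with $(K_X+\Delta)\cdot p(C)<0$, so $F\cdot C<0$, showing $-F$ is $q$-nef and strictly $q$-positive along every $q$-exceptional divisor with center in $\Xi^+$. Lemma \ref{lem:negativity} then gives $F\geq 0$, with strict positivity on every such divisor, yielding $a(E,X^+,\Delta^+)>a(E,X,\Delta)\geq 0$, the last because $(X,\Delta)$ is canonical. Hence $a(E,X^+,0)\geq a(E,X^+,\Delta^+)>0$. The principal technical obstacle is this flip sub-case, where the strict monotonicity of $(K+\Delta)$-discrepancies over the flipped locus must be extracted carefully from the Negativity Lemma in the K\"ahler setting; the divisorial case instead reduces to transparent intersection computations that crucially use the hypothesis that no component of $\Delta$ gets contracted.
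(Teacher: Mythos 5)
Your argument is correct and follows the paper's skeleton: reduce to a single step, split into divisorial contractions and flips, and control discrepancies across the step. The flip case is essentially the paper's proof run in the contrapositive (the paper assumes $a(E,X')\le 0$, forces $a(E,X',\Delta')=0$ by canonicity, and contradicts the strict monotonicity of $(K+\Delta)$-discrepancies over the flipping/flipped loci), with one caveat: the ``strict positivity on every such divisor'' you extract does not follow from the weak Negativity Lemma \ref{lem:negativity} as stated in the paper, which only yields $F\ge 0$; what you need is the strong form \cite[Lemma 3.38(2) or Lemma 3.39(2)]{KM98} (for each fiber of $q$, either the fiber lies in $\Supp F$ or misses it), which is exactly what the paper cites, so you should invoke that reference rather than attribute the strictness to Lemma \ref{lem:negativity}. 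Where you genuinely diverge is the divisorial case: the paper uses \cite[Corollary 4.3(2)]{KM98} to conclude that $(X^+,\Delta^+)$ stays canonical -- this is where the no-contracted-component hypothesis enters for them -- and then repeats the same two-case discrepancy argument, whereas you intersect the two crepant identities with a general contracted curve $C\subset D$, $C\not\subset\Supp\Delta$, to get $a(D,X^+,0)>0$ directly and then propagate positivity to every other exceptional divisor via the pullback formula $a(E,X^+,0)=a(E,X,0)+a(D,X^+,0)\,\mathrm{ord}_E(p^*D)$. Your computation is more elementary and makes the role of the hypothesis completely transparent; the paper's route is shorter on the page but outsources more to \cite{KM98}. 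Both are valid.
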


\begin{proof}
        By induction it is enough to show that every $(K_X+\Delta)$-flip or divisorial contraction over $Z$ preserves the singularity type of $X$. 
        Let $f:X\to Y$ be a $(K_X+\Delta)$-flipping contraction over $Z$ and $f':X'\to Y$ its flip over $Z$. Set $\Delta':=\phi_*\Delta$, where $\phi:X\bir X'$ is the induced bimeromorphic map. Then $(X', \Delta')$ has canonical singularities. We will show that $X'$ has terminal singularities. To the contrary assume that there exists an exceptional divisor $E$ over $X$ such that $a(E, X')\<0$. Then $a(E, X', \Delta')=0$, since $(X', \Delta')$ is canonical. Note that $E$ is also an exceptional divisor over $X$, since $\phi:X\bir X'$ is an isomorphism in codimension $1$. We consider two cases below.\\

\textbf{Case I:} $\Center_X(E)\subset \Ex(f)$ or $\Center_{X'}(E)\subset \Ex(f')$. In this case from \cite[Lemma 3.38]{KM98} it follows that $a(E, X, \Delta)<a(E, X', \Delta')=0$. This is a contradiction, since $(X, \Delta)$ is canonical.\\

\textbf{Case II:} $\phi$ is an isomorphism around some neighborhoods of general points of $\Center_X(E)$ and $\Center_{X'}(E)$. In this case $a(E, X)=a(E, X')\<0$, this is a contradiction since $X$ has terminal singularities.\\

Now we will consider the case of divisorial contraction. Assume that $\psi:X\to Y$ is a $(K_X+\Delta)$-divisorial contraction over $Z$. Set $\Delta_Y:=\psi_*\Delta$. Since $(X, \Delta)$ is canonical and $\psi$ does not contract any component of $\Delta$, by \cite[Corollary 4.3(2)]{KM98} $(Y, \Delta_Y)$ has canonical singularities. The rest of the argument works similarly as above.

\end{proof}

\section{Some MMP Results}
In this section we will collect some results of MMP which are known for projective varieties but not for K\"ahler spaces. One such result is the termination of flips for dlt pairs; we will show that Kawamata's proof in \cite{Kaw92} works of K\"ahler spaces after some modifications.\\
In \cite{Sho92} Shokurov stated and proved the existence of flips for $3$-folds klt pairs where the underlying variety $X$ is allowed to be analytic. Based on this result one can easily deduce the following version of the existence of dlt flips; alternatively, see the arguments in \cite[Theorem 4.3]{CHP16}.  
\begin{theorem}[Existence of flips]\cite{Sho92}\cite[Theorem 4.3]{CHP16}\label{thm:existence-of-flips}
        Let $(X, \Delta)$ be a $\mbQ$-factorial compact K\"ahler $3$-fold dlt pair. Let $\vphi:X\to Y$ be a $(K_X+\Delta)$-flipping contraction. Then the flip $\vphi^+:X^+\to Y$ exists. Moreover, $X^+$  is a normal $\mbQ$-factorial compact K\"ahler 3-fold and $(X^+, \Delta^+)$ is dlt, where $\Delta^+=(\vphi^+)^{-1}_*(\vphi_*\Delta)$.
\end{theorem}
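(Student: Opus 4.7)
The plan is to reduce to Shokurov's existence theorem for klt flips in the analytic category (cited above) via a standard perturbation of the boundary, and then to upgrade the resulting analytic flip to one with the required compact K\"ahler, $\mbQ$-factorial, and dlt structure by following the arguments of \cite[Theorem~4.3]{CHP16}.

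First I would perturb $(X,\Delta)$ to a klt pair with the same flipping contraction. For $0<\epsilon\ll 1$, set $\Delta':=(1-\epsilon)\Delta$; since $(X,\Delta)$ is dlt, every coefficient of $\Delta'$ lies in $[0,1)$ and a log resolution computation shows that the discrepancy of any exceptional divisor with respect to $(X,\Delta')$ strictly exceeds $-1$, so $(X,\Delta')$ is klt. Because $\varphi$ contracts an extremal ray $R\subset\NA(X)$, the relative ample cone over $Y$ is one-dimensional and generated by the class of $-(K_X+\Delta)$; therefore $-(K_X+\Delta')=-(K_X+\Delta)+\epsilon\Delta$ remains $\varphi$-ample for all sufficiently small $\epsilon$, and $\varphi$ is also a $(K_X+\Delta')$-flipping contraction.

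By Shokurov's theorem applied to $(X,\Delta')$, the flip $\varphi^+\colon X^+\to Y$ exists as a small proper bimeromorphic morphism to $Y$ with $X^+$ a normal analytic $3$-fold such that $K_{X^+}+\Delta'_{X^+}$ is $\varphi^+$-ample, where $\Delta'_{X^+}$ denotes the strict transform of $\Delta'$ under the induced bimeromorphic map $\phi\colon X\bir X^+$. Set $\Delta^+:=\phi_*\Delta$, so $\Delta'_{X^+}=(1-\epsilon)\Delta^+$. Since $\phi$ is an isomorphism in codimension one, the strict transform induces a linear isomorphism between the one-dimensional relative Picard spaces of $X/Y$ and $X^+/Y$; as it sends $-(K_X+\Delta')$ (which is $\varphi$-ample) to $-(K_{X^+}+\Delta'_{X^+})$ (which is $\varphi^+$-anti-ample), this identification is orientation-reversing. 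Applying it to the $\varphi$-ample class $-(K_X+\Delta)$ yields $\varphi^+$-ampleness of $K_{X^+}+\Delta^+$, so $\varphi^+$ is the desired $(K_X+\Delta)$-flip. The pair $(X^+,\Delta^+)$ is dlt because discrepancies only increase under a flip (cf.\ \cite[Lemma~3.38]{KM98}) and the dlt property can be tested on a common log resolution dominating both $X$ and $X^+$; the $\mbQ$-factoriality of $X^+$ is inherited from that of $X$ by the standard argument in which the $\mbQ$-Cartier class $K_{X^+}+\Delta^+$ is used to correct the exceptional contribution in the strict transform of any Weil divisor on $X^+$.

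The main obstacle is proving that $X^+$ is compact K\"ahler, since Shokurov's analytic construction produces $X^+$ only as a normal complex variety. Here I would follow the strategy of \cite[Theorem~4.3]{CHP16}: starting from a K\"ahler form $\omega$ on $X$ and a $\varphi$-ample Cartier class $\alpha$ on $X$ (which exists because the flipping contraction is projective), one forms the class $\omega+t\alpha$ for $t\gg 0$, transports it to $X^+$ via strict transform on a common resolution $p\colon W\to X$, $q\colon W\to X^+$, and verifies positivity on all irreducible analytic subsets of $X^+$ by a Demailly--P\u{a}un-type criterion. The smallness of $\Ex(\varphi^+)$ and the projection formula reduce this positivity check to the corresponding positivity statements on $X$ together with a careful control of integrals over the exceptional loci of $p$ and $q$.
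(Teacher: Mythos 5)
Your proposal is correct and follows essentially the same route the paper intends: the paper gives no written proof but explicitly says the statement is deduced from Shokurov's analytic klt flips (via exactly the perturbation $\Delta\mapsto(1-\epsilon)\Delta$ you use, together with the $\rho(X/Y)=1$ orientation-reversal argument) or alternatively from the arguments of \cite[Theorem 4.3]{CHP16}, which is where the K\"ahlerness of $X^+$ via a Demailly--P\u{a}un-type criterion is carried out. Your filled-in details (klt perturbation, uniqueness of the flip forcing $K_{X^+}+\Delta^+$ to be $\varphi^+$-ample, preservation of dlt and of $\mbQ$-factoriality, and the positivity check for the transported K\"ahler class) are the standard ones and are consistent with the paper's citation-level proof.
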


Special termination follows similarly from \cite{Sho92}, more specifically, the proof as explained in \cite{Fuj07} works since the MMP for dlt K\"ahler surfaces is known (see Section \ref{sec:surface-mmp}).
\begin{theorem}[Special Termination]\cite[Theorem 4.20]{CHP16}\cite{Fuj07, Sho92, Kol92}\label{thm:special-termination}
Let $(X, \Delta)$ be a $\mbQ$-factorial compact K\"ahler $3$-fold dlt pair. Set $(X_0, \Delta_0)=(X, \Delta)$ and let
\[
        (X_0, \Delta_0)\bir (X_1, \Delta_1)\bir\cdots (X_i, \Delta_i)\bir\cdots
\]
be a sequence of $(K_X+\Delta)$-flips. Then for all $i\gg 0$, the flipping locus (and the flipped locus) is disjoint from $\lrd\Delta_i\rrd$. In particular, if for every $i$ the flipping locus is contained in the support of $\lrd\Delta_i\rrd$, then the sequence of $(K_{X_i}+\Delta_i)$-flips terminates.
\end{theorem}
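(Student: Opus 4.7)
The plan is to adapt Shokurov's and Fujino's proof of special termination (cf.\ \cite{Sho92, Fuj07, Kol92}) to the compact K\"ahler setting. The essential inputs are: (i) adjunction for dlt pairs, which is local/analytic and thus transfers directly; (ii) normality of each component of $\lrd \Delta_i \rrd$, a general fact about dlt pairs; and (iii) termination of the MMP for compact K\"ahler dlt surfaces, available from Section \ref{sec:surface-mmp}. Existence of the flips in question is already provided by Theorem \ref{thm:existence-of-flips}.

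The first step is to record the standard discrepancy behaviour: a flip $\vphi_i : X_i \bir X_{i+1}$ is an isomorphism in codimension one, preserves dltness, and satisfies $a(E, X_i, \Delta_i) \leq a(E, X_{i+1}, \Delta_{i+1})$ for every geometric valuation $E$, with strict inequality whenever the centre of $E$ meets the flipping or flipped locus (via the Negativity Lemma \ref{lem:negativity} applied to a common log resolution). Because the discrepancies lie in $[-1, \infty)$ and take values in a DCC set (their denominators are controlled by the fixed coefficients of $\Delta$), the set $\mcD := \{E : a(E, X_i, \Delta_i) < 1\}$ together with the numerical values $a(E, X_i, \Delta_i)$ on $\mcD$ must stabilize after finitely many steps. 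From here on I would restrict attention to indices $i \gg 0$ for which stabilization has occurred.

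The core of the argument is an induction on the dimension $d$ of strata of $\lrd \Delta_i \rrd$. For $d = 0$: if a flipping or flipped curve passed through a zero-dimensional stratum $p \in \lrd \Delta_i \rrd$ infinitely often, then an appropriate exceptional divisor $E$ with $\Center_{X_i}(E) = \{p\}$ (obtained by blowing up a point on the flipping curve at $p$) would have discrepancy strictly increasing under the flip, contradicting stabilization; hence after finitely many further flips the flipping/flipped curves avoid all zero-dimensional strata. For $d = 1$: once $d = 0$ is handled, the induced bimeromorphic maps between the (normal) components of $\lrd \Delta_i \rrd$ are local isomorphisms near the one-dimensional strata, and the same discrepancy stabilization argument applied to divisors centred along such strata eliminates them after finitely many further flips. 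For $d = 2$: for each component $S$ of $\lrd \Delta_i \rrd$, adjunction gives
\[
(K_{X_i} + \Delta_i)|_S = K_S + \mathrm{Diff}_S(\Delta_i - S),
\]
and $(S, \mathrm{Diff}_S(\Delta_i - S))$ is a compact K\"ahler dlt surface pair. The restriction of the flip to $S$ is dominated by a $(K_S + \mathrm{Diff}_S)$-negative surface MMP; by termination of the K\"ahler surface MMP from Section \ref{sec:surface-mmp}, after finitely many flips no flipping or flipped curve can meet $S$. As $\lrd \Delta_i \rrd$ has only finitely many components, this yields the first assertion.

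The second assertion is then immediate: if every flipping locus is contained in $\Supp \lrd \Delta_i \rrd$, then by the first assertion the flipping locus must eventually be empty, forcing termination. The principal obstacle I anticipate is the surface-level step ($d = 2$): the restriction of a 3-fold flip to $S$ is not itself a flip, and I must verify carefully that it is dominated by a well-defined $(K_S + \mathrm{Diff}_S)$-negative surface MMP sequence in the K\"ahler category, so that the available termination for K\"ahler surface pairs can be applied. Once this analytic bookkeeping is in place, the remainder of the proof is essentially Fujino's argument transcribed to our setting.
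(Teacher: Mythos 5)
Your proposal follows exactly the route the paper takes: its proof of Theorem \ref{thm:special-termination} consists precisely of the observation that the Shokurov--Fujino special-termination argument (adjunction to the normal components of $\lrd\Delta_i\rrd$, monotonicity of discrepancies, induction on the dimension of the strata) goes through verbatim once existence of dlt flips (Theorem \ref{thm:existence-of-flips}) and the MMP with termination for dlt K\"ahler surfaces (Section \ref{sec:surface-mmp}) are available, which is the reduction you describe. The one correction to make is that the stabilizing invariant must be Shokurov's difficulty function---a finite count of divisors with discrepancy below prescribed thresholds and with suitably restricted centres---rather than the literal set $\{E : a(E,X_i,\Delta_i)<1\}$, which is infinite for a dlt pair with nonempty boundary; with that standard adjustment your outline is exactly the argument the paper invokes.
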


 \subsection{Termination of dlt flips} In this subsection we will show that Kawamata's proof	\cite{Kaw92} of termination of dlt flips (for algebraic varieties) works for compact K\"ahler $3$-folds after appropriate modifications. First note that Kawamata's notion of `weakly log terminal' singularity is what we call \emph{divisorially log terminal} or \emph{dlt} singularity in this article. 
\begin{theorem}\label{thm:termination}
        Let $(X, B)$ be a $\mbQ$-factorial compact K\"ahler $3$-fold dlt pair. Let
        \begin{equation}\label{eqn:termination}
               \xymatrix{ (X_0, B_0)\ar@{-->}[rr]\ar[dr]_{\vphi_0} && (X_1,B_1)\ar@{-->}[rr]\ar[dl]^{\vphi_0^+}\ar[dr]_{\vphi_1} && (X_2, B_2)\ar@{-->}[rr]\ar[dl]^{\vphi_1^+}\ar[dr]_{\vphi_2} &&\\
			   & Y_0 & & Y_1 && Y_2
			    }
        \end{equation}
        be a sequence of $(K_X+B)$-flips, where $(X_0, B_0):=(X, B)$. Then this is a finite sequence, i.e., every sequence of $(K_X+B)$-flips terminates.
\end{theorem}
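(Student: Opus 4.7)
The plan is to adapt Kawamata's \cite{Kaw92} difficulty-function argument to the compact K\"ahler setting. The proof has four main ingredients: a reduction via special termination; the definition and finiteness of Kawamata's difficulty; a strict-decrease step using the Negativity Lemma; and an integer-descent conclusion.

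First, by Theorem \ref{thm:special-termination}, after replacing the sequence by a tail, I may assume that $\Ex(\vphi_i)$ is disjoint from $\lrd B_i\rrd$ for every $i$, so that $(X_i, B_i)$ is klt on an open neighborhood of $\Ex(\vphi_i)\cup\Ex(\vphi_i^+)$. Each $X_i$ in the truncated sequence is still $\mbQ$-factorial dlt and compact K\"ahler by Theorem \ref{thm:existence-of-flips}, so the induction is stable. Following Kawamata, I would then introduce the difficulty function $d(X_i, B_i)\in\mbZ_{\>0}$, which counts exceptional divisors $E$ over $X_i$ whose discrepancy $a(E, X_i, B_i)$ lies in certain rational subintervals of $[-1, 0)$. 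For a $\mbQ$-factorial dlt pair with rational coefficients, this count is finite because any contributing $E$ must appear on a log resolution of $(X_i, B_i)$; the existence of such a resolution in the K\"ahler category is guaranteed by Hironaka's theorem for complex analytic varieties.

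The crucial step is to show $d(X_{i+1}, B_{i+1})<d(X_i, B_i)$. I would take a common log resolution $p:W\to X_i$, $q:W\to X_{i+1}$ of the flip and consider
\[
        F := p^*(K_{X_i}+B_i) - q^*(K_{X_{i+1}}+B_{i+1}).
\]
Since $\vphi_i$ is a projective bimeromorphic morphism with $-(K_{X_i}+B_i)$ being $\vphi_i$-ample, and $\vphi_i^+$ is projective with $K_{X_{i+1}}+B_{i+1}$ being $\vphi_i^+$-ample, the Negativity Lemma (Lemma \ref{lem:negativity}) applied relatively over $Y_i$ to $\vphi_i\circ p$ and $\vphi_i^+\circ q$ forces $F\>0$, with strict positivity precisely along the divisors on $W$ whose center on $X_{i+1}$ lies in the flipped locus $\Ex(\vphi_i^+)$. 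Translating via the discrepancy formula, $a(E, X_i, B_i)\<a(E, X_{i+1}, B_{i+1})$ for every divisorial valuation $E$ exceptional over $X_i$ (the flip is an isomorphism in codimension one, so $X_i$ and $X_{i+1}$ share the same set of such valuations), with strict inequality whenever the center of $E$ on $X_{i+1}$ meets $\Ex(\vphi_i^+)$. Running through Kawamata's counting argument, the strict jump at any exceptional divisor extracted over a flipped curve drops $d$ by at least one. Since $d\in\mbZ_{\>0}$, the sequence must terminate.

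The principal obstacle I anticipate is the application of the Negativity Lemma across the K\"ahler flip diagram: in general the base $Y_i$ of a flipping contraction need not be projective or even K\"ahler, so one cannot simply quote a global projective statement. However, the flipping contractions $\vphi_i$ and $\vphi_i^+$ are themselves projective bimeromorphic morphisms, so the question reduces to a purely relative statement over $Y_i$, for which Lemma \ref{lem:negativity} is stated in precisely the required generality. A secondary point is the existence of a common log resolution of $X_i$ and $X_{i+1}$ in the K\"ahler category, which reduces to Hironaka's resolution theorem applied to the graph of the bimeromorphic map $X_i\bir X_{i+1}$. With these two ingredients in place, the rest of the proof is a formal translation of Kawamata's discrepancy bookkeeping into the K\"ahler setting.
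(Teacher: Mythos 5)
Your setup is fine as far as it goes: special termination to reduce to the klt case (though note the paper cannot literally pass to an open neighborhood of the flipping loci, since the MMP is not available for open K\"ahler spaces, and instead replaces $B$ by its fractional part $\{B\}$ using $\mbQ$-factoriality), the existence of common log resolutions via Hironaka, and the monotonicity $a(E, X_i, B_i)\leq a(E, X_{i+1}, B_{i+1})$ with strict inequality exactly when the center of $E$ meets the flipping or flipped locus (this is \cite[Lemma 3.38]{KM98}, and the relative Negativity Lemma does apply over $Y_i$ as you say). The genuine gap is your crucial step: the claim that $d(X_{i+1}, B_{i+1})<d(X_i, B_i)$ at \emph{every} flip is false for klt/dlt pairs. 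Monotonicity of discrepancies only makes the count of divisors with discrepancy below a fixed threshold \emph{non-increasing}; for it to drop you need some valuation whose discrepancy actually crosses the threshold at that step. In the classical terminal $3$-fold argument this is supplied by blowing up a flipped curve at a general point, which is a \emph{smooth} point of $X_{i+1}$, producing a divisor with $a(E,X_{i+1})=1$ exactly at the threshold while $a(E,X_i)<1$. For a klt pair there is no analogous canonical divisor crossing the relevant threshold, and in general the invariant can stabilize for infinitely many steps. Your sentence ``the strict jump at any exceptional divisor extracted over a flipped curve drops $d$ by at least one'' is precisely the assertion that fails.

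This is why Kawamata's proof, and the paper's adaptation of it, is a multi-layered induction rather than a single monotone quantity. The paper inducts on $e(X,B)$, the number of exceptional divisors with $a(E,X,B)\leq 0$; the base case $e=0$ is the terminal-type argument (Theorem \ref{thm:terminal-termination}), the induction requires the existence of terminal models (Theorem \ref{thm:terminal-model}) and a Cartier-index bound to force the discrepancies to live in $\frac1q\mbZ$, and the heart of the matter is exactly the case your argument skips: when $e(X_n,B_n)=e$ is constant for all $n$ and the discrepancies $a(F_j,X_n,B_n)$ stabilize. There one extracts the divisor $F_1$ of largest $|a|$ over each $X_n$, lifts the entire flip sequence to a new sequence on the extractions $W_n$ with invariant $e-1$ (Claim \ref{clm:termination}), and concludes by induction. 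None of this machinery can be replaced by the difficulty-function bookkeeping you describe, so the proposal as written does not prove the theorem.
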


\begin{remark}\label{rmk:kawamata's-proof}
Note that in Kawamata's proof in \cite{Kaw92} there are some terminologies, arguments and tools which are either not available on analytic varieties or hard to work with. We list them out below.
	\begin{enumerate}
		\item By special termination (see Theorem \ref{thm:special-termination}) after a finitely many steps we may assume that $\lrd B_i\rrd\cap\Ex(\vphi_i)=\emptyset$ for $i\gg 0$. In Kawamata's proof he replaces $X$ by $X\setminus\lrd B_i\rrd$ to assume that $(X, B)$ is klt for all $i\gg 0$. However, we can not do the same since the MMP is not known for open K\"ahler spaces. Instead in our case we will replace $B$ by $B_i-\lrd B_i\rrd$ so that $(X, B_i)$ is klt for $i\gg 0$; note that we can do this since $X$ is $\mbQ$-factorial.
		
		\item For a klt pair $(X, B)$, the number of exceptional divisors $E$ over $X$ with discrepancy $a(E, X, B)\<0$ is an invariant of the pair $(X, B)$. Kawamata denotes this invariant by $e(X, B)$. In page 655 of \cite{Kaw92} he redefines the invariant $e(X, B)$ using \emph{exceptional discrete valuations} of the function field $\mbC(X)$. However, for an analytic variety $X$, the field of meromorphic functions $\mbC(X)$ has transcendental degree over $\mbC\<\dim X$ in general, consequently, we can not characterize $e(X, B)$ using valuations in the analytic case. However, this does not create any problem for us, since Kawamata only used valuative characterization of $e(X, B)$ for convenience.
		
		\item We note that there are other more subtle technical difficulties which appear when we try to use Kawamata's arguments in the analytic case. They are discussed in the proof of Theorem \ref{thm:termination} below.
	\end{enumerate} 
\end{remark}

Now we will start proving Theorem \ref{thm:termination} step by step following Kawamata's idea as in \cite{Kaw92}. We will start with the special case of $e(X, B)=0$ as in \cite[Lemma 4]{Kaw92}.

\begin{theorem}\label{thm:terminal-termination}\cite[Lemma 4]{Kaw92}
	Theorem \ref{thm:termination} holds if $(X, B)$ is klt and $e(X, B)=e(X_0, B_0)=0$.
\end{theorem}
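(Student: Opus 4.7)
The plan is to follow Kawamata's proof in \cite{Kaw92}, adapted to the compact K\"ahler setting by counting prime divisors on bimeromorphic models rather than valuations on the function field (see Remark~\ref{rmk:kawamata's-proof}(ii)).

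Since $(X_0, B_0)$ is klt and $e(X_0, B_0)=0$, every exceptional divisor $E$ over $X_0$ satisfies $a(E, X_0, B_0) > 0$, so $(X_0, B_0)$ is terminal. I would first check that terminality, $\mathbb{Q}$-factoriality, and the vanishing of $e$ are preserved along the sequence \eqref{eqn:termination}. For a flip $X_i \dashrightarrow X_{i+1}$ over $Y_i$, applying the negativity lemma (Lemma~\ref{lem:negativity}) on a common log resolution of $(X_i, B_i)$ and $(X_{i+1}, B_{i+1})$ yields $a(E, X_i, B_i) \le a(E, X_{i+1}, B_{i+1})$ for every prime divisor $E$ over $Y_i$, with strict inequality whenever the $Y_i$-center of $E$ is contained in the flipping locus $\vphi_i(\Ex(\vphi_i))$. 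In particular each $(X_i, B_i)$ remains terminal with $e(X_i, B_i) = 0$.

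The central tool is the difficulty function
\[
d_i := \#\bigl\{\, E : E \text{ is a prime divisor on some bimeromorphic model of } X_i \text{ with } a(E, X_i, B_i) < 1 \,\bigr\},
\]
where two divisors are identified when they coincide on a common higher model. Finiteness of $d_i$ is a local statement at each singular point of $X_i$: for $\mathbb{Q}$-factorial terminal threefold singularities only finitely many exceptional divisors on the minimal log resolution have discrepancy less than $1$, and since such singularities are isolated analytic hypersurface singularities by Reid's classification, the algebraic finiteness transfers to the K\"ahler case. The previous step already gives $d_{i+1} \le d_i$, because any $E$ counted in $d_{i+1}$ satisfies $a(E, X_i, B_i) \le a(E, X_{i+1}, B_{i+1}) < 1$ and is therefore counted in $d_i$ as well. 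For the strict inequality $d_{i+1} < d_i$, I would exhibit on a common log resolution a prime divisor $E$ whose $Y_i$-center lies in the flipping locus and satisfies $a(E, X_i, B_i) < 1 \le a(E, X_{i+1}, B_{i+1})$, so that $E$ is counted in $d_i$ but not in $d_{i+1}$; this is the content of Kawamata's argument in the projective case, and it adapts once the local structure of terminal threefold singularities is in hand. Since $\{d_i\}$ is a strictly decreasing sequence of non-negative integers, the sequence of flips must terminate.

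The main obstacle is producing the distinguished divisor $E$ witnessing strict decrease purely in terms of prime divisors on bimeromorphic models of $X_i$ and $X_{i+1}$, without recourse to valuations on the field of meromorphic functions, which may have transcendence degree less than $\dim X_i$ in the K\"ahler setting. The remaining ingredients---existence and functoriality of log resolutions for complex spaces, Reid's local classification of terminal threefold singularities, and the negativity lemma---transfer to the K\"ahler setting without modification.
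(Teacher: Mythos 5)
Your overall strategy --- a difficulty function that is non-increasing along the sequence and drops strictly at each flip --- is indeed the strategy behind Kawamata's Lemma~4, which is what the proof of Theorem~\ref{thm:terminal-termination} ultimately invokes. The problem is the specific difficulty you chose. With $d_i=\#\{E: a(E,X_i,B_i)<1\}$, the step where you ``exhibit a divisor $E$ with $a(E,X_i,B_i)<1\le a(E,X_{i+1},B_{i+1})$'' fails whenever the flipped locus is contained in $\Supp B_{i+1}$. If a flipped curve $C^+$ lies in a component $S_j$ of $B_{i+1}$ with coefficient $b_j>0$, the natural witness --- the divisor $E$ obtained by blowing up a general point of $C^+$, where $X_{i+1}$ is smooth --- satisfies $a(E,X_{i+1},B_{i+1})=1-b_j-(\text{contributions of other components})\le 1-b_j<1$, so $E$ is still counted in $d_{i+1}$ and no strict decrease is obtained; and such configurations genuinely occur (e.g.\ if $S_j\cdot C>0$ for a flipping curve $C$, then the strict transform of $S_j$ meets $C^+$ negatively, forcing $C^+\subset S_j$ on $X_{i+1}$). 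This is exactly why Kawamata's (and Shokurov's) difficulty uses thresholds shifted by the boundary coefficients --- counting divisors with $a(E,X,B)<1-\alpha$ for the relevant nonnegative combinations $\alpha=\sum n_jb_j$, and/or sorting divisors by which component of $B$ contains their center --- rather than the single threshold $1$. A secondary gap: your finiteness argument for $d_i$ is purely local at the isolated singular points of $X_i$, but when $B_i\neq 0$ there are divisors with $a(E,X_i,B_i)<1$ centered at smooth points of $X_i$ inside $\Supp B_i$ (successive blow-ups of curves in a boundary component), so finiteness must be proved for the terminal \emph{pair}; it does hold, but not by the argument you give.

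For comparison, the paper's own proof is a short reduction followed by a deferral: it observes that either every component of $B$ is eventually disjoint from the flipping loci --- in which case one replaces $(X_n,B_n)$ by $(X_n,0)$ for $n\gg 0$ and concludes by termination for terminal $3$-folds without boundary --- or some component $S_1$ satisfies $S_1^n\cap\Ex(\vphi_n)\neq\emptyset$ for infinitely many $n$, and then the combinatorial argument of \cite[Lemma 4]{Kaw92} is applied verbatim. So both you and the paper lean on Kawamata for the core count; the difference is that the explicit difficulty you wrote down is not the one that makes his argument work, and as stated your strict-decrease step is false.
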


\begin{proof}
	Write $B=\sum_i b_{i}S_i$, $0\<b_{i}<1$ for all $i$ and let $S^n_i$ be the pushforward of $S_i$ on $X_n$. Note that we may assume that either $B=0$ or there is at least one component, say $S_1$ of $B$ such that $S^n_1\cap\Ex(\vphi_n)\neq\empty$ for infinitely many $n$, since otherwise we can remove all the components $S_i$ of $B$ (as $X$ is $\mbQ$-factorial) and replace the pairs $(X_n, B_n)$ by $(X_n, 0)$ for all $n\gg 0$. The rest of the proof works as in the proof of \cite[Lemma 4]{Kaw92}.\\
\end{proof}

Now we will show the existence of a terminal model for klt pairs as in \cite[Theorem 5]{Kaw92}. Note that Kawamata's original proof works in our case with minor changes, however, since this result is of independent interest to us and will be repeatedly used in other sections, we will present a detailed proof here. 
\begin{theorem}[Terminalization]\label{thm:terminal-model}\cite[Theorem 5]{Kaw92}
Let $(X, B)$ be a compact K\"ahler $3$-fold klt pair. Then there exists a $\mbQ$-factorial terminal pair $(Y, B_Y)$ and a projective bimeromorphic morphism $f:Y\to X$ such that 
\begin{enumerate}
	\item $K_Y+B_Y=f^*(K_X+B)$, and
	\item the number of exceptional divisors of $f$ is equal to $e(X, B)$.
\end{enumerate}	
\end{theorem}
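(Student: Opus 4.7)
The plan is to realize $Y$ as the output of a carefully chosen relative minimal model program starting from a log resolution, following Kawamata's strategy.

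First, take a log resolution $g \colon W \to X$ of $(X, B)$ chosen so that every exceptional prime divisor $E$ over $X$ with discrepancy $a(E, X, B) \leq 0$ actually appears as a divisor on $W$. This is possible because for a klt pair the set of such valuations is finite (of cardinality $e(X, B)$). Label the $g$-exceptional prime divisors as $E_1, \ldots, E_k$ with $a(E_i, X, B) \leq 0$ (so $k = e(X, B)$) and $F_1, \ldots, F_\ell$ with $a(F_j, X, B) > 0$. Fix a small rational $\epsilon > 0$ and set
\[
    B_W \;:=\; g^{-1}_* B \;+\; \sum_{i=1}^k (-a(E_i, X, B))\, E_i \;+\; \epsilon \sum_{j=1}^\ell F_j.
\]
Since $(X, B)$ is klt, $-a(E_i, X, B) \in [0, 1)$ for every $i$, so $(W, B_W)$ is a $\mathbb{Q}$-factorial compact K\"ahler klt pair. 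The discrepancy identity gives
\[
    K_W + B_W \;=\; g^*(K_X + B) \;+\; \sum_{j=1}^\ell \bigl(a(F_j, X, B) + \epsilon\bigr) F_j,
\]
and the second summand is an effective $g$-exceptional $\mathbb{Q}$-divisor whose support is exactly $F_1 + \cdots + F_\ell$.

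Next, run a $(K_W + B_W)$-MMP over $X$. Since $g$ is projective, every step is projective over $X$, so the relative cone and contraction theorems and the existence of klt flips (Theorem \ref{thm:existence-of-flips}) are available. The negativity lemma (Lemma \ref{lem:negativity}) forces every $(K_W + B_W)$-negative extremal ray over $X$ to lie in $\bigcup_j F_j$, so the MMP never touches the strict transform of $g^{-1}_* B$ or of any $E_i$. Let $f \colon Y \to X$ denote the resulting relative minimal model, with induced boundary $B_Y := f^{-1}_* B + \sum_i (-a(E_i, X, B))\, E_i^Y$, where $E_i^Y$ is the strict transform of $E_i$. Since $K_W + B_W - g^*(K_X + B)$ is effective, $g$-exceptional and fully contracted by this MMP, a second application of the negativity lemma yields $K_Y + B_Y = f^*(K_X + B)$ crepantly. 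Any exceptional valuation over $Y$ is also exceptional over $X$ with the same discrepancy by the crepant relation, and by construction every valuation with discrepancy $\leq 0$ has already been realized as one of the $E_i^Y$; hence every remaining exceptional valuation over $Y$ has strictly positive discrepancy, so $(Y, B_Y)$ is terminal with exactly $k = e(X, B)$ exceptional divisors.

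The main obstacle I expect is termination of the MMP in the compact K\"ahler setting. Because $\lfloor B_W \rfloor = 0$, special termination (Theorem \ref{thm:special-termination}) does not apply directly; instead one has to adapt the termination argument from \cite[Theorem 5]{Kaw92}, exploiting that all contractions and flips occur inside the fixed finite collection $\{F_1, \ldots, F_\ell\}$, and combining Theorem \ref{thm:terminal-termination} (klt termination when $e = 0$) with the monotonicity of $e$ along the MMP. As the authors note in Remark \ref{rmk:kawamata's-proof}, the valuative characterization of $e$ fails on general analytic varieties, but the divisorial characterization is sufficient to push Kawamata's induction through, which is the ``minor change'' alluded to in the statement.
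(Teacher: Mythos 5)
Your overall architecture (log resolution, crepant boundary, relative $(K+B)$-MMP over $X$, negativity lemma at the end) is the same as the paper's, and your endgame for terminality and for counting the exceptional divisors is fine. But there is a genuine gap exactly where you suspect one: termination. At the point in the paper where this theorem is proved, the only termination statement available is Theorem \ref{thm:terminal-termination}, which applies only to pairs with $e=0$, i.e.\ \emph{terminal} pairs; the general klt termination (Theorem \ref{thm:termination}) is proved \emph{afterwards} by an induction that uses the present terminalization theorem, so invoking it or ``adapting Kawamata's induction, combining Theorem \ref{thm:terminal-termination} with the monotonicity of $e$'' here would be circular. Your pair $(W,B_W)$ is only klt, not terminal: the added term $\epsilon\sum_j F_j$ puts every positive-discrepancy exceptional divisor into the boundary, and in any case the components of $g^{-1}_*B$ and the $E_i$ may intersect, so blow-ups of intersection loci produce discrepancies $1-d_i-d_j\leq 0$. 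Hence $e(W,B_W)$ can be positive and Theorem \ref{thm:terminal-termination} does not apply to your MMP. (Special termination, Theorem \ref{thm:special-termination}, is also unavailable since $\lrd B_W\rrd=0$, as you note.)

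The fix is to set up the resolution so that the pair you run the MMP on is already terminal, which is what the paper does: take $B_0:=g^{-1}_*B+\sum_i(-a(E_i,X,B))E_i$ with \emph{no} $\epsilon$-perturbation, so that $K_{Y_0}+B_0=g^*(K_X+B)+F$ with $B_0,F\geq 0$ effective without common components, and choose the log resolution (via \cite[Proposition 2.36(1)]{KM98}) so that $\Supp B_0$ is \emph{smooth}, i.e.\ its components are disjoint. Since the coefficients of $B_0$ lie in $[0,1)$ and its components are disjoint, $(Y_0,B_0)$ is terminal, Theorem \ref{thm:terminal-termination} gives termination, and one then checks (as the paper does, using the fiber-curve argument you also sketch, plus \cite[Corollary 3.43]{KM98}) that terminality is preserved at each step. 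Two smaller points: what prevents the MMP from destroying the $E_i$ is not the negativity lemma but the observation that any contracted curve over $X$ must meet the effective exceptional part $F$ negatively, hence any contracted divisor is a component of $F$ (your phrase ``never touches'' is too strong — flipping loci inside $\Supp F$ may well meet the $E_i$; what matters is that no $E_i$ is contracted); and the identity $K_Y+B_Y=f^*(K_X+B)$ at the end comes from applying Lemma \ref{lem:negativity} to the pushforward $F_N$ of $F$, which is both $\leq 0$ (negativity) and $\geq 0$ (effective), hence zero.
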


\begin{proof}
	Let $g_0:Y_0\to X$ be a log resolution of $(X, B)$ such that $K_{Y_0}+B_0=g^*_0(K_X+B)+F$, where $B_0$ and $F$ are both effective $\mbQ$-divisors without common components, and $\Supp B_0$ is smooth (see \cite[Proposition 2.36(1)]{KM98}). Then ${g_0}_*B_0=B$ and $(Y_0, B_0)$ is terminal (see \cite[Corollary 2.31(3)]{KM98}). Now we will run $(K_{Y_0}+B_0)$-MMP over $X$. Note that since $g_0$ is a projective morphism, $X$ is compact and $(Y_0, B_0)$ is klt, the existence of $(K_{Y_0}+B_0)$-negative extremal rays over $X$ and the corresponding contractions (over $X$) are guaranteed by the Cone and Contraction theorem of Nakayama \cite[Theorem 4.12]{Nak87}. Existence of flips follows from Theorem \ref{thm:existence-of-flips}, and termination from Theorem \ref{thm:terminal-termination}, since $(Y_0, B_0)$ has terminal singularities. So we only need to show that the terminal singularities are preserved at every step of this MMP. By \cite[Corollary 3.43]{KM98} it is enough to show that if $(Y_i, B_i)$ is terminal and $\vphi_i:(Y_i, B_i)\to (Y_{i+1}, B_{i+1})$ is a divisorial contraction associated to a $(K_{Y_0}+B_0)$-MMP over $X$, then $\Ex(\vphi_i)\not\subset \Supp(B_i)$. To the contrary assume that $E$ is an irreducible component of $B_i$ which is contracted by $\vphi_i$. Then $a(E, X, B)<0$. Let $g_i:Y_i\to X$ be the induced projective bimeromorphic morphism. Then we have $K_{Y_i}+B_i=g^*_i(K_X+B)+F_i$, where $F_i\>0$ is the pushforward of $F$ on $Y_i$. Let $C\subset Y_i$ be a curve contained in a fiber of $\vphi_i$. Then $-(K_{Y_i}+B_i)\cdot C>0$ and $g_i(C)=\pt$. In particular, $F_i\cdot C<0$ and thus $E$ is a component of $F_i$, and hence $a(E, X, B)>0$, which is a contradiction.\\
	
	Finally assume that this MMP ends with $(Y_N, B_N)$, i.e., $K_{Y_N}+B_N$ is $g_N$-nef and $K_{Y_N}+B_N=g^*_N(K_X+B)+F_N$. Then by the Negativity Lemma \ref{lem:negativity} $F_N\<0$, and thus $F_N=0$, since it is effective. In particular, we have $K_{Y_N}+B_Y=g^*_N(K_X+B)$. Set $(Y, B_Y):=(Y_N, B_N)$ and $f:=g_N$ and we are done.
		
\end{proof}

Now we will prove Theorem \ref{thm:termination} as in \cite[Theorem 1]{Kaw92}.

\begin{proof}[Proof of Theorem \ref{thm:termination}]
   First we reduce the problem to the klt case. By special termination (Theorem \ref{thm:special-termination}), after finitely many steps the flipping locus does not intersect the support of $\lrd B_i\rrd$. Thus reindexing $X_i$'s we may assume that the flipping locus does not intersect the support of $\lrd\Delta_i\rrd$ for all $i\>0$. In particular, any $(K_X+B)$-flip is also a $(K_X+\{B\})$-flip, where $\{B\}$ is the fractional part of $B$. Therefore replacing $B$ by $\{B\}$ we may assume that $(X, B)$ is klt and the given sequence is a sequence of klt flips.  Now we follow the steps $0, 1, 2$ and $3$ as in Kawamata's proof of \cite[Theorem 1]{Kaw92}.\\
  \noindent 
   \textbf{Step 0:} This step works exactly as in the proof of \cite[Theorem 1]{Kaw92} and we obtain that \cite[Lemma 7]{Kaw92} holds for $e(X, B)=0$.\\
 \textbf{Step 1:} This step also works exactly as in the proof of \cite[Theorem 1]{Kaw92} and we obtain that for every $e\in\mbZ^+$, Theorem \ref{thm:terminal-model} for all pairs $(X, B)$ such that $e(X, B)\<e-2$ implies \cite[Lemma 6]{Kaw92} for all pairs $(X, B)$ with $e(X, B)=e$.\\
 \textbf{Step 2:} This step also works as in the proof of \cite[Theorem 1]{Kaw92} and we obtain that for every $e\in\mbZ^+$, \cite[Lemma 6]{Kaw92} for all $\mbQ$-factorial klt pair $(X, B)$ with $e(X, B)=e$ and \cite[Lemma 7]{Kaw92} with $e(X, B)=e-1$ together imply \cite[Lemma 7]{Kaw92} in the case of $e(X, B)=e$.\\

\noindent        
\textbf{Step 3:} For a fixed $e\in\mbZ^+$, \cite[Lemma 7]{Kaw92} for $e(X, B)=e$ and Theorem \ref{thm:termination} for $e(X, B)<e$ together imply Theorem \ref{thm:termination} for all $\mbQ$-factorial klt pairs $(X, B)$ with $e(X, B)=e$.

This step also works as in the proof of \cite{Kaw92} for the most part, but there is a non-trivial component to it. So here we discuss the proof in full detail.\\

Let $(X, B)$ be a $\mbQ$-factorial klt pair with $e(X, B)=e$. If $e(X_n, B_n)<e$ for some $n\>0$, then the sequence of the flips terminates by induction hypothesis, so we may assume that $e(X_n, B_n)=e$ for all $n\>0$. Now we will define an invariant $\discrep(X, B)^+$ for compact K\"ahler pairs $(X, B)$ as follows:
\[
	\discrep(X, B)^+:=\inf \{a(E, X, B)>0: E \mbox{ is an exceptional divisor over } X \}.
\]
Since in our case $(X, B)=(X_0, B_0)$ is klt, by passing to a log resolution of $(X, B)$ and then using \cite[Corollary 2.31(3)]{KM98} and its proof, it follows that $\discrep(X, B)^+=c>0$ for some positive rational number $c>0$. Then from \cite[Lemma 3.38]{KM98} it follows that $\discrep(X_n, B_n)^+\>c$ for all $n\>0$. Note that under our induction hypothesis, \cite[Lemma 7]{Kaw92} holds, and hence there exists a positive integer $q>0$ such that $qD$ is Cartier and $\mcO_{X_n}(qK_{X_n})$ is a line bundle, for every Weil divisor $D$ on $X_n$ and for all $n\>0$. Now replacing $q$ by a suitable multiple so that $qB_0$ is Cartier, we may assume that $q(K_{X_n}+B_n)$ is Cartier for all $n\>0$. In particular, it follows that for every divisor $E$ over $X_n$ and for all $n\>0$, the discrepancy $a(E, X_n, B_n)$ is contained the discrete set $\frac{1}{q}\mbZ$.\\

  Let $F_1, F_2,\ldots, F_e$ be the divisors over $X$ such that $a(F_j, X, B)\<0$. Now we claim that $a(F_j, X_n, B_n)\<0$ for all $1\<j\<e$ and for every $n\>0$. Indeed, if not, then since $e(X_n, B_n)=e$ for all $n\>0$, there is an integer $n_0>0$ and a divisor $E$ over $X_{n_0}$ distinct from $F_j$ for all $1\<j\<e$ such that $a(E, X_{n_0}, B_{n_0})\<0$. Then from \cite[Lemma 3.38]{KM98} it follows that $a(E, X_{n_0-1}, B_{n_0-1})\<a(E, X_{n_0}, B_{n_0})\<0$. This implies that $e(X, B)\>e+1$, a contradiction.\\
 Now if there is a $j\in\{1,\ldots, e\}$ and $n_0\>0$ such that the center $C_{jn_0}$ of $F_j$ on $X_{n_0}$ is contained $\Ex(\vphi_{n_0})$, then $\lambda_{n}:=\sum_{j=1}^e a(F_j, X_{n}, B_{n})$ strictly increases by \cite[Lemma 3.38]{KM98} for all $n\>n_0$, i.e. $\lambda_n<\lambda_{n+1}$ for all $n\>n_0$. But since $\lambda_n\<0$ for all $n\>0$, we have $\lambda_n\in [\lambda_{n_0}, 0]$ for all $n\>n_0$. However, recall that $\lambda_n\in\frac{1}{q}\mbZ$ for all $n\>0$, and thus $\lambda_n\in[\lambda_{n_0}, 0]\cap\frac{1}{q}\mbZ$ for all $n\>n_0$; this is a contradiction, since  $[\lambda_{n_0}, 0]\cap\frac{1}{q}\mbZ$ is a finite set.
  So after reindexing if necessary, we may assume that $C_{jn}$ (the center of $F_j$ on $X_n$) is not contained in $\Ex(\vphi_n)$ for any $n\>0$ and any $1\<j\<e$. In particular, $|a(F_j, X_n, B_n)|=c_j$ is constant for all $n\>0$ and each $1\<j\<e$ (as it is an increasing sequence in $\frac{1}{q}\mbZ$ bound above by $0$). Suppose that $c_1\>c_2\>\cdots\>c_e$. Recall that $\Center_{X_n}(F_1)=C_{1n}$ and $a(F_1, X_n, B_n)=c_1$ for all $n\>0$.
  Next we make the following claim. 
  
  \begin{claim}\label{clm:termination}
	If $C_{1n}\cap\Ex(\vphi_n)=\emptyset$ for all $n\gg 0$, then Theorem \ref{thm:termination} holds.
  \end{claim}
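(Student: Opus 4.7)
The plan is to reduce the invariant $e(\cdot,\cdot)$ by one by passing to a partial terminalization that extracts precisely the divisor $F_1$, transfer the flip sequence to that model, and then invoke the inductive hypothesis of Step 3 (termination for pairs with $e$-invariant $<e$). The hypothesis $C_{1n}\cap\Ex(\vphi_n)=\emptyset$ for $n\gg 0$ is exactly what will let us lift the flip sequence through this extraction.

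First I would construct a $\mbQ$-factorial partial terminalization $f:\widetilde X\to X$ whose unique exceptional divisor is $F_1$ and which satisfies $K_{\widetilde X}+\widetilde B=f^*(K_X+B)$ with $\widetilde B=f^{-1}_*B+c_1F_1$. Starting from the full terminal model $g:Y\to X$ of Theorem \ref{thm:terminal-model} (which extracts all $e$ divisors $F_1,\dots,F_e$), I would run a relative MMP over $X$ on the terminal pair $(Y,\,g^{-1}_*B+c_1F_1+\sum_{j\geq 2}(c_j+\delta)F_j)$ for small $\delta>0$; since this log divisor equals $g^*(K_X+B)+\delta\sum_{j\geq 2}F_j$, each $g$-negative extremal contraction meets some $F_j$ with $j\geq 2$ in a curve and not $F_1$, so the MMP contracts only $F_2,\dots,F_e$ and preserves $F_1$, with termination provided by Theorem \ref{thm:terminal-termination} applied over the compact base $X$ and existence of contractions by Nakayama's Cone and Contraction Theorem. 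One then verifies that $(\widetilde X,\widetilde B)$ is $\mbQ$-factorial klt with $e(\widetilde X,\widetilde B)=e-1$, because $F_1$ is no longer exceptional while $F_2,\dots,F_e$ still are, and every other divisor over $X$ has positive discrepancy with respect to $(X,B)$, hence also with respect to $(\widetilde X,\widetilde B)$ by the Negativity Lemma \ref{lem:negativity}.

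Performing the same construction on each $X_n$ yields $f_n:\widetilde X_n\to X_n$. By hypothesis the flipping diagram $X_n\to Y_n\leftarrow X_{n+1}$ is an isomorphism in open neighborhoods of $C_{1n}$ and $C_{1,n+1}$, so the extractions $f_n$ and $f_{n+1}$ glue and induce a bimeromorphic map $\psi_n:\widetilde X_n\dashrightarrow\widetilde X_{n+1}$ over $Y_n$, which is an isomorphism in codimension one. Since $\widetilde X_n\to Y_n$ and $\widetilde X_{n+1}\to Y_n$ are both projective and $K_{\widetilde X_n}+\widetilde B_n$ is relatively antiample over $Y_n$ (pulled back from $X_n\to Y_n$) while $K_{\widetilde X_{n+1}}+\widetilde B_{n+1}$ is relatively ample over $Y_n$, a standard uniqueness argument for relative log canonical models (running a relative $(K_{\widetilde X_n}+\widetilde B_n)$-MMP over $Y_n$ and identifying its output with $\widetilde X_{n+1}$) shows that $\psi_n$ decomposes into a finite sequence of $(K+\widetilde B)$-flips. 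Concatenating over $n$ produces an infinite sequence of $(K+\widetilde B)$-flips on pairs of $e$-invariant $e-1$, contradicting the inductive hypothesis of Step 3 unless the sequence eventually stabilizes; but then $\psi_n$, and hence $\vphi_n$, is an isomorphism for all $n\gg 0$, giving termination.

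The main obstacle I expect is the construction of the partial extraction $f:\widetilde X\to X$ in the K\"ahler setting, since running a $(K+B)$-MMP over a non-projective base with precise control of which divisors are contracted is not formally part of the MMP machinery developed earlier in the paper. Fortunately we only need the terminal case over a compact base, where termination reduces to Theorem \ref{thm:terminal-termination} and existence of the relative contractions to Nakayama's Cone and Contraction Theorem; the remaining bookkeeping — that $F_1$ survives and that the residual discrepancies match — is handled by the Negativity Lemma \ref{lem:negativity} and the explicit coefficient choice $c_1$.
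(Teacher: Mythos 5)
Your proposal follows essentially the same route as the paper: extract $F_1$ over each $X_n$ (via an MMP whose termination is covered by the inductive hypothesis) to obtain models with $e$-invariant $e-1$, lift the flip sequence to these models, apply induction, and descend. The only places where the paper does noticeably more work than you indicate are (i) verifying that the lifted map $\psi_n$ over $Y_n$ really is a single $(K+\widetilde B)$-flip whose subsequent contraction recovers $X_{n+1}$ — note that $K_{\widetilde X_n}+\widetilde B_n$ is only relatively anti-nef over $Y_n$ (trivial along $F_1$), not anti-ample as you assert, so the identification uses the relative base-point-free theorem and uniqueness of flips rather than uniqueness of relative log canonical models — and (ii) the final descent from ``$\psi_n$ is an isomorphism'' to ``$\vphi_n$ is an isomorphism'', which you assert with a ``hence'' but which the paper establishes by a graph argument exploiting $\rho(W_n/X_n)=1$.
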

  
 \begin{proof}[Proof of Claim \ref{clm:termination}]
Our strategy is to extract $F_1$ over each $X_n$ and construct a sequence of flips $(W_n, B_{W_n})\bir (W_{n+1}, B_{W_{n+1}})$ over $(X_n, B_n)\bir (X_{n+1}, B_{n+1})$ for all $n\>0$. Then this new sequence terminates by induction as the invariant $e(W_n, B_{W_n})=e-1$ for $n\>0$. From this we argue that the original sequence must terminate.\\ 
	To this end, let $f_n:W_n\to X_n$ be the extraction of the divisor $F_1$ with $\Center_X(F_1)=C_{1n}$ by running an appropriate MMP over $X$; note that this MMP terminates by our induction hypothesis in this step that Theorem \ref{thm:termination} holds for $e(X, B)<e$. Then $\rho(W_n/X_n)=1$, since $X_n$ is $\mbQ$-factorial, and hence $\rho(W_n/Y_n)=2$. Let $K_{W_n}+B_{W_n}=f^*_n(K_{X_n}+B_n)$. Since $f_n(F_1)\cap\Ex{\vphi_n}=\emptyset$, the strict transform of the curves in $\Ex(\vphi_n)$ generate a $(K_{W_n}+B_{W_n})$-negative extremal ray $R_n$ of the relative Mori cone $\NE(W_n/Y_n)$. Let $g_n:W_n\to Z_n$ be the flipping contraction over $Y_n$ corresponding to extremal ray $R_n$ (see \cite[Theorem 4.12(2)]{Nak87}) and $g^+_n:W_{n+1}\to Z_n$ is the associated flip over $Y_n$ (see Theorem \ref{thm:existence-of-flips}), and $h_n:Z_n\to Y_n$ is the induced morphism. Let $B_{W_{n+1}}$ be the pushforward of $B_{W_n}$ on $W_{n+1}$. Then $K_{W_{n+1}}+B_{W_{n+1}}$ is $g^+_n$-ample and $\rho(W_{n+1}/Z_n)=1$. Now observe that, since $f_n(F_1)\cap \Ex(\vphi_n)=\emptyset$, from our construction above it follows that $F_1\cong g_n(F_1)\cong {g^+}^{-1}_n(g_n(F_1))=:F^{n+1}_1$. Moreover, since $\Ex(h_n\circ g_n)=\Ex(\vphi_n\circ f_n)$, it follows that $K_{W_{n+1}}+B_{W_{n+1}}$ is $(h_n\circ g^+_n)$-nef. We claim that if $C\subset W_{n+1}$ is a curve such that $(h_n\circ g^+_n)(C)=\pt$, then $(K_{W_{n+1}}+B_{W_{n+1}})\cdot C=0$ if and only if $C\subset F^{n+1}_1$. Indeed, the if part is clear, so assume that for a $h_n\circ g^+_n$-vertical curve $C\subset W_{n+1}$ we have $(K_{W_{n+1}}+B_{W_{n+1}})\cdot C=0$. To the contrary assume that $C\not\subset F^{n+1}_1$. Then $C\not\subset F^{n+1}_1\cup\Ex(g^+_n)$, as $(K_{W_{n+1}}+B_{W_{n+1}})\cdot C=0$ and $K_{W_{n+1}}+B_{W_{n+1}}$ is $g^+_n$-ample. In particular, $g^+_n(C)\neq \pt$ and $h_n(C)=\pt$. Let $C_{W_n}$ be the strict transform of $g^+_n(C)$ under $g_n$; then $C_{W_n}\not\subset F^n_1\cup\Ex(g_n)$. Consequently, $f_n(C_{W_n})\neq \pt$ and $\vphi_n(f_n(C_{W_n}))=\pt$. This is a contradiction to the fact that $g_n(C_{W_n})=g^+_n(C)\neq \pt$, as $\vphi_n(f_n(C_{W_n}))=\pt$ implies that the numerical class of $C_{W_n}$ in $\NE(W_n/Y_n)$ is contained the ray $R_n$ defined above, and hence $C_{W_n}$ is contracted by $g_n$ according to our construction.
 Then from the relative Base-point free theorem for projective morphism as in \cite[Theorem 4.8]{Nak87}, it follows that $K_{W_{n+1}}+B_{W_{n+1}}$ is semi-ample over $Y_n$. Then there exist projective bimeromorphic morphisms $f_{n+1}:W_{n+1}\to V_{n+1}$ and $\vphi_{V_{n+1}}:V_{n+1}\to Y_n$ such that $K_{V_{n+1}}+B_{V_{n+1}}:=f_{n+1*}(K_{W_{n+1}}+B_{W_{n+1}})$ is $\vphi_{V_{n+1}}$-ample and $K_{W_{n+1}}+B_{W_{n+1}}=f^*_{n+1}(K_{V_{n+1}}+B_{V_{n+1}})$. Note that $f_{n+1}$ contracts the divisor $F^{n+1}_j$.  
	
\begin{equation*}
	\xymatrixcolsep{3pc}\xymatrixrowsep{3pc}\xymatrix{
	W_n\ar@{-->}[rr]^{\psi_n}\ar[dr]_{g_n}\ar[dd]_{f_n} && W_{n+1}\ar[dl]^{g^+_n}\ar[ddr]^{f_{n+1}} &\\
	& Z_n\ar[dd]_(.3){h_n} &&\\
	X_n\ar@{-->}[rr]^(.3){\phi_n}\ar[dr]_{\vphi_n} && X_{n+1}\ar[dl]_{\vphi^+_n}\ar@{-->}[r]^{\pi_{n+1}} & V_{n+1}\ar[dll]^{\vphi_{V_{n+1}}}\\
	& Y_n &&
	}
\end{equation*}	
	
From the diagram above it follows that $(\pi_{n+1}\circ\phi_n)_*(K_{X_n}+B_n)=\pi_{n+1 *}(K_{X_{n+1}}+B_{n+1})=K_{V_{n+1}}+B_{V_{n+1}}$ and $\codim_{V_{n+1}} \Ex(\vphi_{V_{n+1}})\>2$. Thus $\vphi_{V_{n+1}}:V_{n+1}\to Y_n$ is a flip of $\vphi_n:X_n\to Y_n$, and since flip of a flipping contraction is unique, 
it follows that $V_{n+1}\cong X_{n+1}$ and $\vphi_{V_{n+1}}=\vphi^+_n$.\\
The above construction gives us a sequence of klt-flips $\psi_n: (W_n, B_{W_n})\bir (W_{n+1}, B_{W_{n+1}})$ such that $e(W_n, B_{W_n})=e-1$ for all $n\>0$. Thus by induction this is a finite sequence, i.e. there exists an integer $n_0\>0$ such that $\psi_n:W_n\bir W_{n+1}$ is an isomorphism for all $n\>n_0$. 
We claim that $\phi_n:X_n\bir X_{n+1}$ is an isomorphism for all $n\>n_0$. Indeed, let $\Gamma_n$ be the normalization of the graph of $\phi_n$ for $n\>n_0$, and $p_n:\Gamma_n\to X_n$ and $q_n:\Gamma_n\to X_{n+1}$ are the induced bimeromorphic morphisms. Then by the universal property of graph it follows that there is a proper bimeromorphic morphism $\theta_n:W_n\to \Gamma_n$ for $n\>n_0$ such that $f_n=p_n\circ\theta_n$ and $f_{n+1}=q_n\circ\theta_n$ (note that here we are identifying $W_n$ and $W_{n+1}$ via the isomorphism $\psi_n$). Now since $\rho(W_n/X_n)=1$, either $\theta_n$ is finite or $p_n$ is finite. If $\theta_n$ is finite and hence isomorphism, then identifying $W_n$ with $\Gamma_n$ and $f_n$ with $p_n$ for $n\>n_0$ we see that $p_n(F_1)=\Ex(\vphi_n)$, this is a contradiction, since $p_n(F_1)\cap\Ex(\vphi_n)=f_n(F_1)\cap \Ex(\vphi_n)=\emptyset$.  On the other hand, if $p_n$ is finite, then it is in fact an isomorphism, and thus $\phi_n$ is a morphism. Then since $X_{n+1}$ is $\mbQ$-factorial and $\phi_n$ is an isomorphism is codimension $1$, it follows that $\phi_n$ is an isomorphism for all $n\>n_0$. This completes the proof of the claim.\\
		
 \end{proof}

 Thus we may assume that $C_{1n}\cap\Ex(\vphi_n)\neq\emptyset$ for all $n\>0$. Then the rest of the proof works exactly as in the proof of Step $3$ of \cite[Theorem 1]{Kaw92}.
 \end{proof}~\\

Using the results above we can run a relative MMP for projective morphism between K\"ahler varieties of dimension $\<3$. 

\begin{proposition}[MMP for Projective Morphisms]\label{pro:relative-projective-mmp}
	Let $(X, \Delta)$ be a $\mbQ$-factorial dlt pair and $f:X\to Y$ a projective surjective morphism between two normal compact K\"ahler varieties. If $\dim X\<3$, then we can run a relative $(K_X+\Delta)$-MMP over $Y$ which terminates with either a minimal model or a Mori fiber space, according to whether $K_X+\Delta$ is pseudo-effective over $Y$ or not.
Moreover, if $X=X_0\bir X_1\bir\cdots \bir X_n\cdots$ are the steps of a $(K_X+\Delta)$-MMP over $Y$, then every $X_i$ is a K\"ahler variety for $i\>0$; additionally, if $\psi:X_n\to Y'$ is a Mori fiber space over $Y$, then $Y'$ is also K\"ahler.
\end{proposition}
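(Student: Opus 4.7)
The strategy is to assemble the building blocks already established: Nakayama's relative cone and contraction theorem \cite[Theorem 4.12, 4.8]{Nak87} for projective morphisms between compact K\"ahler varieties, the existence of flips (Theorem \ref{thm:existence-of-flips}), and the termination of dlt flips (Theorem \ref{thm:termination}). At stage $i$, if $K_{X_i}+\Delta_i$ is not $f_i$-nef (where $f_i:X_i\to Y$), I pick a $(K_{X_i}+\Delta_i)$-negative extremal ray of $\overline{\mathrm{NE}}(X_i/Y)$ and consider its contraction $\vphi_i:X_i\to Z_i$ over $Y$. If $\vphi_i$ is divisorial I set $X_{i+1}:=Z_i$; if it is flipping I take the flip $X_{i+1}\to Z_i$ via Theorem \ref{thm:existence-of-flips}; and if it is of fiber type I stop and set $Y':=Z_i$. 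If we never hit a fiber-type contraction, the sequence halts with $K_{X_n}+\Delta_n$ being $f_n$-nef, which is exactly the case when $K_X+\Delta$ is pseudo-effective over $Y$.

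For the preservation of $\mbQ$-factoriality and of the dlt property along each divisorial step and each flip, I appeal to the standard arguments (cf.\ \cite[Ch.\ 3]{KM98} and Theorem \ref{thm:existence-of-flips}). Termination is handled in two pieces: divisorial contractions can occur only finitely many times since each drops $\rho(X_i/Y)$ by one, and any infinite tail of $(K_{X_i}+\Delta_i)$-flips is ruled out by Theorem \ref{thm:termination}.

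The K\"ahler assertion is the main point of the proposition, and I plan to reduce it to a single observation: by induction, the structural morphism $f_i:X_i\to Y$ is projective at every stage. The initial step is the hypothesis. For a divisorial contraction $\vphi_i:X_i\to X_{i+1}$ over $Y$, the contraction is manufactured by the relative base-point free theorem \cite[Theorem 4.8]{Nak87}, which simultaneously produces an $f_{i+1}$-ample Cartier divisor on $X_{i+1}$. For a flip with intermediate $Z_i$, the morphism $X_{i+1}\to Z_i$ is projective by construction and $Z_i\to Y$ is projective by the divisorial case, so $f_{i+1}$ is projective. Once projectivity of $f_i$ is in hand, K\"ahler-ness of $X_i$ is automatic: if $\omega_Y$ is a K\"ahler form on $Y$ and $A_i$ is an $f_i$-ample Cartier class on $X_i$, then $N f_i^*\omega_Y + c_1(A_i)$ represents a K\"ahler class on $X_i$ for $N\gg 0$.

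For the base of the Mori fiber space $\psi:X_n\to Y'$ over $Y$, the same kind of argument works: starting from an $f_n$-ample Cartier divisor $A$ on $X_n$, I choose positive rationals $a,b$ so that $L:=aA+b(K_{X_n}+\Delta_n)$ is trivial on the extremal ray contracted by $\psi$ and $f_n$-nef with $L-(K_{X_n}+\Delta_n)$ $f_n$-big, and then invoke the relative base-point freeness to conclude that $L$ descends to a class on $Y'$ which is ample over $Y$. Hence $Y'$ is projective over the K\"ahler base $Y$, and the preceding paragraph gives a K\"ahler form on $Y'$. The principal obstacle in the whole proof is really Theorem \ref{thm:termination}, which has already been established; what remains is the careful bookkeeping of the relative projective (hence K\"ahler) structure through each MMP step.
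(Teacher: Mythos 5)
Your proposal is correct and follows essentially the same route as the paper: Nakayama's relative cone/contraction and base-point-free theorems, Shokurov's existence of flips, Theorem \ref{thm:termination} for termination, and an induction showing each $X_i\to Y$ (and $Y'\to Y$) stays projective, from which K\"ahlerness of the total space follows. The only cosmetic difference is that where you argue directly that $Nf_i^*\omega_Y+c_1(A_i)$ is K\"ahler for $N\gg 0$, the paper simply cites \cite[Proposition 1.3.1, page 24]{Var89} for the fact that a projective morphism onto a K\"ahler space has K\"ahler source — which is the careful version of your argument needed on singular complex spaces.
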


\begin{proof}
	Since $f$ is projective, the (relative) cone and contraction theorems are known due to \cite[Theorem 4.12]{Nak87}. Since $\dim X\<3$, the existence of flips (over $Y$) follows from \cite{Sho92}. The termination of flips (over $Y$) follow from Theorem \ref{thm:termination}. The proof of the fact that a $(K_X+\Delta)$-MMP over $Y$ terminates either with a minimal model or a Mori fiber space according to whether $K_X+\Delta$ is pseudo-effective over $Y$ or not, works exactly as in the algebraic case, since $f:X\to Y$ is a projective morphism.
	 If $g_i:X_i\to Z_i$ is a contraction of a $(K_{X_i}+\Delta_i)$-negative extremal ray of $\NE(X_i/Y)$, then from the relative base-point free theorem \cite[Theorem 4.10]{Nak87} it follows that the induced morphism $h_i:Z_i\to Y$ is projective. Then from \cite[Proposition 1.3.1, page 24]{Var89} it follows that $Z_i$ is K\"ahler. If $g_i$ is a flipping contraction and $g_i^+:X_{i+1}\to Z_i$ is the flip, then again $X_{i+1}$ is K\"ahler by the same argument. In the Mori fiber space case again by a similar argument it follows that $Y'$ is K\"ahler.
\end{proof}

\begin{corollary}\label{c-rccres} Let $X$ be a $\mathbb{Q}$-factorial compact K\"ahler $3$-fold with klt singularities and $\mu:X'\to X$ a proper bimeromorphic morphism. Then every fiber of $\mu$ is rationally chain connected. 
\end{corollary}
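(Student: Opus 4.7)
The plan is to reduce to the case where $X'=Y$ is a smooth log resolution of $X$ that is projective over $X$, run a relative $(K_Y+B)$-MMP over $X$ for a carefully chosen klt boundary $B$ supported on the exceptional locus, and conclude by a downward induction through the steps of this MMP using that every $(K+B)$-negative extremal contraction in our relative $3$-fold K\"ahler setting has rationally chain connected fibers.

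For the reduction, I apply the analytic Chow lemma together with Hironaka's resolution of singularities to produce a projective bimeromorphic morphism $g:Y\to X'$ with $Y$ smooth (hence compact K\"ahler) and $f:=\mu\circ g:Y\to X$ also projective. Since $\mu^{-1}(x)=g(f^{-1}(x))$ and proper continuous images of rationally chain connected sets are rationally chain connected, it suffices to prove every fiber of $f$ is rationally chain connected. So I may assume $X'=Y$ is a smooth log resolution of $X$ with $f:Y\to X$ projective. Write $K_Y=f^*K_X+\sum a_iE_i$ with $E_i$ the $f$-exceptional prime divisors and $a_i>-1$ (by the klt hypothesis), pick rational numbers $c_i\in(\max(0,-a_i),1)$, and set $B:=\sum c_iE_i$. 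Then $(Y,B)$ is klt with $\operatorname{Supp}B=\operatorname{Ex}(f)$, and
\[
K_Y+B=f^*K_X+F,\qquad F:=\sum(a_i+c_i)E_i>0.
\]
Proposition~\ref{pro:relative-projective-mmp} then allows me to run a relative $(K_Y+B)$-MMP over $X$; since $K_Y+B\equiv_f F\geq 0$ is pseudo-effective over $X$, this terminates with a minimal model $\phi_N:Y_N\to X$. Denoting by $F_N,B_N$ the strict transforms, the divisor $F_N=(K_{Y_N}+B_N)-\phi_N^*K_X$ is $\phi_N$-nef and $\phi_N$-exceptional with zero pushforward; Lemma~\ref{lem:negativity} applied to $-F_N$ then gives $F_N=0$ and hence $B_N=0$. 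Thus $\phi_N$ contracts every $f$-exceptional divisor and is small, and a standard $\phi_N$-ample-divisor argument (using $\mathbb{Q}$-factoriality of both $X$ and of $Y_N$, which the MMP preserves) forces $\phi_N$ to be an isomorphism $Y_N\cong X$.

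The MMP therefore factors $f$ as a chain $Y=Y_0\dashrightarrow Y_1\dashrightarrow\cdots\dashrightarrow Y_N=X$ of divisorial contractions and flips, each associated to a $(K+B)$-negative extremal ray. By downward induction on $i$ I would show every fiber of $\phi_i:Y_i\to X$ is rationally chain connected, the base case $i=N$ being trivial. For a divisorial step $\pi_i:Y_i\to Y_{i+1}$, the fibers of $\pi_i$ are rationally chain connected (rationally connected log Fano surfaces when the contracted divisor maps to a point, connected unions of rational curves when it maps to a curve), so $\phi_i^{-1}(x)=\pi_i^{-1}(\phi_{i+1}^{-1}(x))$ is rationally chain connected by the inductive hypothesis. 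For a flipping step $Y_i\overset{\psi_i}{\to}Z_i\overset{\psi_{i+1}}{\leftarrow}Y_{i+1}$ with induced morphism $\theta_i:Z_i\to X$, we have $\theta_i^{-1}(x)=\psi_{i+1}(\phi_{i+1}^{-1}(x))$ rationally chain connected as a proper image of an RCC set, and the flipping contraction $\psi_i$ has fibers that are connected unions of rational curves, so $\phi_i^{-1}(x)=\psi_i^{-1}(\theta_i^{-1}(x))$ is rationally chain connected. The main obstacle is justifying that the extremal contractions from Proposition~\ref{pro:relative-projective-mmp} retain the standard $3$-fold structure with fibers covered by rational curves in the relative K\"ahler setting; since these contractions are projective morphisms, this follows from the classical cone-theoretic description of $(K+B)$-negative contractions on projective $3$-folds together with the cone theorem guaranteeing that such extremal rays are generated by rational curves.
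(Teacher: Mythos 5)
Your overall strategy coincides with the paper's: reduce via Chow's lemma and resolution to a projective morphism $f:Y\to X$ from a smooth model, run a relative MMP over $X$ with a boundary supported on $\Ex(f)$, observe that the MMP terminates at $X$ itself, and propagate rational chain connectedness of fibers backwards through the divisorial contractions and flips. (The paper uses the reduced divisor $\Ex(f)$ as a dlt boundary rather than your fractional klt boundary $B=\sum c_iE_i$; this is a cosmetic difference, since in either case one perturbs coefficients at each step before applying adjunction.)

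There is, however, a genuine gap at the step you yourself flag as the main obstacle, and the justification you offer does not close it. The appeal to ``the classical cone-theoretic description of $(K+B)$-negative contractions on projective $3$-folds'' is not available here: the varieties $Y_i$ are compact K\"ahler and in general non-projective, so Mori-type classification results for projective $3$-folds do not apply, and Nakayama's relative cone/contraction theorem for projective morphisms produces the contractions but says nothing about the internal structure of their fibers. Moreover, even granting that each negative extremal ray is spanned by the class of \emph{some} rational curve, this does not imply that \emph{every} contracted curve is rational, which is what your inductive step actually uses (for $\psi_i^{-1}(\theta_i^{-1}(x))$ to be RCC you need all flipping curves to be rational, and for a divisorial contraction to a point you need the whole contracted surface to be rationally connected). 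The paper closes this gap by a local argument that you should substitute for your last paragraph: since $K_{Y_i}+B_i\equiv_{\phi_i}F_i$ with $\Supp F_i=\Supp B_i$, any curve on which $K_{Y_i}+B_i$ is negative over $X$ meets $F_i$ negatively and hence lies in a component $E$ of the exceptional boundary; raising the coefficient of $E$ to $1$ (still extremal-negative) and applying adjunction yields a klt/dlt surface pair $(E,B_E)$ with $-(K_E+B_E)$ relatively ample over the image of $E$. Relative rational chain connectedness of $E$ then follows from \cite[Corollary 1.3]{HM07} in the divisorial case, and in the flipping case the rationality of each flipping curve $C\subset E$ follows from $(K_E+B_E)\cdot C<0$ together with $C^2<0$ on the surface $E$, whence $(K_E+C)\cdot C<0$. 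With this substitution your argument is complete and agrees with the paper's proof.
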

\begin{proof} 
Let $\nu:\tilde X\to X$ be a resolution of singularities of $X$ dominating $X'$. Using Chow's lemma for bimeromorphic morphism (see \cite[Corollary 2]{Hir75}) and then passing to a higher resolution we may assume that $\mu\circ \nu:\tilde X\to X$ is a projective morphism. 
 It suffices to show that  every fiber of $\nu$ is rationally chain connected. Thus replacing $X'$ by $\tilde{X}$ and $\mu$ by $\mu\circ\nu$ we may assume that $X'$ is smooth and $\mu:X'\to X$ is projective. We run a $(K_{X'}+{\rm Ex}(\mu ))$-MMP over $X$ via Proposition \ref{pro:relative-projective-mmp}:
	
	\begin{equation}\label{eqn:rcc-mmp-digram}
	\xymatrixcolsep{3pc}\xymatrix{ X'=X'_0\ar@{-->}[r] & X'_1\ar@{-->}[r] & \cdots\ar@{-->}[r] & X'_n. }
	\end{equation}
Since $X$ is $\mbQ$-factorial and has klt singularities, we have $X'_n=X$. Let $\phi_i:X'\bir X'_i$ and $E_i$ be the push-forward of ${\rm Ex}(\mu )$ by $\phi_i$.

If $f_i:X'_i\to X'_{i+1}$ is a $(K_{X'_i}+E_i)$-divisorial contraction of a divisor $E$ in the above MMP \eqref{eqn:rcc-mmp-digram}, then by construction $E$ is contained in the support of $E_i$ . In this case $f_i$ is also a $(K_{X'_i}+E)$-divisorial contraction. Now by adjunction $K_E+B_E=(K_{X'_i}+E)|_E$ and  $(E,B_E)$ is a klt surface. Note that $-(K_E+B_E)$ is relatively ample with respect to the morphism $f_i|_E: E\to f_i(E)$. It is well known that $E$ is relatively rationally chain connected, for example, see \cite[Corollary 1.3]{HM07}.\\ 

If instead we have $f_i:X'_i\bir X'_{i+1}$ a $(K_{X'_i}+E_i)$-flip, then the flipping locus is contained in a component $E$ of $E_i$. Let $F=E+(1-\epsilon)(E_i-E)$ for some $0<\epsilon \ll 1$. Then $f_i$ is also a $(K_{X'_i}+F)$-flip. Now by adjunction $(K_{X'_i}+F)|_E=K_E+B_E$ such that $(E,B_E)$ has klt singularities, and in particular, the surface $E$ is $\mbQ$-factorial. Note that if $g:X'_i\to Z$ is the $(K_{X'_i}+F)$-flipping contraction associated to the flip $f_i$, then $-(K_E+B_E)$ is relatively ample with respect to the morphism $g|_E: E\to g(E)$.  
Let $C$ be any flipping curve, then $(K_E+B_E)\cdot C<0$ and $C^2<0$ (since it is an exceptional curve contained in the surface $E$). It follows that $(K_E+C).C<0$ and hence that $C$ is rational. Finally, we argue that every flipped curve $C^+$ is rational. Note that $C^+$ is exceptional over $X$ and hence must be contained in a component say $E^+$ of $E_{i+1}:=f_{i, *}(E_i)$ (since $E_{i+1}$ is the exceptional locus of $X'_{i+1}\to X$). Let $g^+: X'_{i+1}\to Z$ be the flipped contraction and $E^+_Z=g^+_*(E^+)$. 
Let $E^-=(f_i^{-1})_*E^+$ be its strict transform on $X_i$ and $K_{E^-}+B_{E^-}=(K_{X_i'}+E_i)|_{E^-}$. Since $(E^-,B_{E^-})$ is dlt and $-(K_{E^-}+B_{E^-})$ is $g|_{E^-}$ ample, it follows that $(E_Z,(g|_{E^-})_*B_{E^-})$ is dlt, where $E_Z:=g_*E^-$, and in particular $E_Z$ has rational $\mbQ$-factorial singularities. Thus $C^+$ is a rational curve.

From what we have seen above, it follows easily that the fibers of $X'_i\to X$ are rationally chain connected if and only if the fibers of $X'_{i+1}\to X$ are rationally chain connected. Since $X'_n=X$, this concludes the proof.
\end{proof}

In the next result (Lemma \ref{l-proj}) we will show that if $X$ is a uniruled compact K\"ahler 3-folds with $\mbQ$-factorial klt singularities such that the base of the MRC(C) fibration of $X$ has dimension $\<1$, then $X$ is a projective variety. In fact we prove something stronger: if $\phi:X\bir X'$ is a bimeromorphic contraction, for example, a divisorial contraction or a flip, then $X'$ is also projective. This is very useful in running MMP.

\begin{lemma}\label{l-proj} 
	Let $X$ be a uniruled normal $\mbQ$-factorial compact K\"ahler $3$-fold with klt singularities and $\phi:X\bir X'$ a bimeromorphic map to a $\mbQ$-factorial compact K\"ahler $3$-fold $X'$ with klt singularities such that $\phi$ does not extract any divisor, i.e. $\phi^{-1}:X'\bir X$ does not contract any divisor. If the base of the MRC(C) fibration of $X$ has dimension less than $2$, then $X'$ is a projective variety. In particular, if $X$ is non-algebraic, then the base of the MRC(C) fibration of $X$ has dimension $2$.
\end{lemma}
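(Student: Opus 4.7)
My plan is to show that $X$ is Moishezon, using the hypothesis on the MRC base, and then conclude that $X'$ is projective via the bimeromorphic invariance of the algebraic dimension together with the K\"ahler structure and klt singularities of $X'$.

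Since $X$ is uniruled and three-dimensional, its MRC fibration has base dimension at most $2$, and by hypothesis $0$ or $1$. Pass to a smooth resolution $\tilde X\to X$ dominating the MRC quotient and resolve the indeterminacy to obtain a morphism $q:\tilde X\to Z$ onto a smooth base $Z$ of dimension $\leq 1$ realizing the MRC fibration of the smooth compact K\"ahler $3$-fold $\tilde X$. By the descent of holomorphic forms along an MRC fibration in the K\"ahler setting (the K\"ahler analogue of the Graber--Harris--Starr theorem, established by Campana), every holomorphic $p$-form on $\tilde X$ is pulled back from $Z$; since $\dim Z\leq 1$ there are no non-zero holomorphic $2$-forms on $Z$, hence $H^0(\tilde X,\Omega^2_{\tilde X})=0$. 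K\"ahler Hodge symmetry then yields $H^2(\tilde X,\mcO_{\tilde X})=0$.

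From $H^{0,2}(\tilde X)=0$, a standard density argument using the exponential sequence shows that integral $(1,1)$-classes are dense in $H^{1,1}(\tilde X,\mbR)$, so the K\"ahler cone of $\tilde X$ contains an integral class, and Kodaira's embedding theorem gives that $\tilde X$ is projective. Hence $X$, being bimeromorphic to the projective manifold $\tilde X$, is Moishezon; since the algebraic dimension is a bimeromorphic invariant of compact complex spaces, $X'$ is Moishezon as well. To upgrade Moishezon to projective for $X'$, observe that $X'$ is a $\mbQ$-factorial compact K\"ahler Moishezon $3$-fold with klt, hence rational, singularities: a smooth resolution $\tilde X'\to X'$ is K\"ahler and Moishezon, hence projective by the smooth case, and rational singularities give $H^2(X',\mcO_{X'})\cong H^2(\tilde X',\mcO_{\tilde X'})=0$, so the same density argument applied to $H^{1,1}_{\rm BC}(X')$ produces an integral K\"ahler class on $X'$, which is ample by the singular Kodaira-type criterion. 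Therefore $X'$ is projective, and the ``in particular'' assertion is just the contrapositive applied to $X'=X$.

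The principal delicate point is the descent of holomorphic $2$-forms along an MRC fibration in the compact K\"ahler setting, replacing the projective Graber--Harris--Starr theorem; once this is granted, the rest of the argument is a formal density statement together with the principle that a K\"ahler Moishezon variety with rational singularities is projective, both of which are by now standard in the K\"ahler MMP literature.
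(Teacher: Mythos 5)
Your argument for the core of the lemma --- pass to a smooth model, deduce $H^0(\tilde X,\Omega^2_{\tilde X})=0$ from the rational connectedness of the MRC fibers and $\dim Z\le 1$, then apply Hodge symmetry and Kodaira's projectivity criterion --- is essentially the paper's argument; the paper just derives the vanishing explicitly from the relative cotangent sequence restricted to a general fiber together with $H^0(F,\Omega^i_F)=0$ for the rationally connected fiber $F$, which is all that is really needed, rather than invoking a full descent-of-forms statement along the MRC fibration. Where you genuinely diverge is the transfer of projectivity to $X'$: the paper runs a relative $(K_{\tilde X}+\Ex(\phi))$-MMP over $X'$, which recovers $X'$ because $X'$ is $\mbQ$-factorial klt and $\phi$ extracts no divisor, whereas you use bimeromorphic invariance of the Moishezon property plus Namikawa's criterion (compact K\"ahler, Moishezon, rational singularities implies projective). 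Your route is cleaner and consistent with tools the paper uses elsewhere (it already cites Namikawa for exactly this implication), at the cost of not exhibiting $X'$ as the output of an MMP from a projective model. One step you elide does need justification: you assert that the resolution $\tilde X$ has MRC base of dimension at most one, but the hypothesis only concerns the MRC(C) fibration of the singular $X$. To transfer this to $\tilde X$ one must know that the fibers of $\tilde X\to X$ are rationally chain connected --- this is precisely Corollary \ref{c-rccres}, which is proved by an MMP argument and is not automatic --- and that the resulting smooth rationally chain connected general fiber (or all of $\tilde X$ when the base is a point) is in fact rationally connected in the K\"ahler setting. With that reference supplied, your proof is complete.
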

	
\begin{proof} Let $\pi:X\bir Z$ be the MRC(C) fibration (see \cite[Remark 6.10]{CH20}). By assumption $\dim Z=0$ or $1$. 
First note that, since $X$ has rational singularities, by \cite[Corollary 1.7]{Nam02}, $X$ is projective if and only if any resolution $\tilde X$ of the  singularities of $X$ is projective. 
Note also that by Corollary \ref{c-rccres}, the fibers of $\nu:\tilde X\to X$ are rationally chain connected, thus it follows that $\tilde X \bir Z$ is also a MRC(C) fibration. Note that as $\tilde X$ is smooth, then its MRC and MRCC fibrations coincide.
Thus replacing $X$ by a resolution of singularities we may assume that $X$ is a compact K\"ahler manifold and $\phi:X\to X'$ is a morphism.   Possibly replacing $X$ further by a higher resolution we may assume that $\pi$ and $\phi$ are both morphisms.  Since the general fibers of $\pi$ are rationally connected, by \cite[Corollary 4.18]{Deb01} $H^0(F, \Omega ^i _F)=0$ for all $i\>1$ where $F$ is a general fiber. We claim that $H^0(X, \Omega^2_X)=0$. If $\dim Z=0$, then this is clear, so assume that $\dim Z=1$. Then observe that the following exact sequence 
	\begin{equation*}
		\xymatrixcolsep{3pc}\xymatrix{ \pi^*\Omega_Z\ar[r] & \Omega_X\ar[r] & \Omega_{X/Z}\ar[r] & 0 }
	\end{equation*}
is left exact over Zariski open dense subset of $Z$. This follows from the generic smoothness of $\pi$ and the fact that the MRC fibration is an almost holomorphic map
Restricting this sequence to a general fiber $F$ of $\pi$ we get the following short exact sequence
	\begin{equation}\label{eqn:mcc-forms-1}
		\xymatrixcolsep{3pc}\xymatrix{ 0\ar[r] & \mcO_F\ar[r] & \Omega_X|_F\ar[r] & \Omega_F\ar[r] & 0. }
	\end{equation}
	Thus we have a short exact sequence
	\begin{equation}\label{eqn:mcc-forms-2}
		\xymatrixcolsep{3pc}\xymatrix{ 0\ar[r] & \Omega_F\ar[r] & \Omega ^2_X|_F\ar[r] & \Omega ^2_F\ar[r] & 0. }
	\end{equation}
 It follows from this exact sequence that $H^0(X, \Omega^2_X)=0$. Then $H^2(X, \mcO_X)=\overline{H^0(X, \Omega^2_X)}=0$, and by the Kodaira's projectivity criterion we have that $X$ is projective. 
	
		Finally, since $X'$ has $\mbQ$-factorial klt singularities, running a $(K_X+\Ex(\phi))$-MMP over $X'$ will recover $X'$, and hence $X'$ is also projective.\\

  Now if $X$ is non-algebraic, then set $X':=X$ and $\phi$ as the identity morphism. Thus from the previous part it follows that the base of the MRC fibration of $X$ must be $2$ in this case.  
 \end{proof}

\begin{lemma}\label{lem:analytic-to-projectivity}
Any compact analytic variety (possibly singular) of dimension $1$ is a projective algebraic curve.
\end{lemma}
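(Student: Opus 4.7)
My plan is to reduce to the smooth case via normalization and then deduce projectivity of $C$ from projectivity of its normalization. Let $\nu : \widetilde{C} \to C$ be the normalization; since $C$ is reduced, irreducible, and of dimension $1$, the space $\widetilde{C}$ is a smooth, connected, compact $1$-dimensional complex manifold, i.e., a compact Riemann surface, and $\nu$ is a finite, surjective, bimeromorphic morphism.

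The first step is to invoke the classical result that every compact Riemann surface is projective algebraic. This can be proved via the Riemann--Roch theorem: for any divisor $\widetilde{D}$ on $\widetilde{C}$ of degree at least $2g+1$ (where $g$ is the genus), the complete linear system $|\widetilde{D}|$ is very ample and embeds $\widetilde{C}$ into projective space. In particular, $\widetilde{C}$ carries an ample line bundle $\widetilde{L}$.

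The second step is to descend projectivity to $C$. Since $\dim C = 1$, the singular locus of $C$ is a finite set of points $\{p_1, \ldots, p_k\}$, and near each $p_i$ the local ring $\mathcal{O}_{C, p_i}$ sits inside $(\nu_*\mathcal{O}_{\widetilde{C}})_{p_i} = \prod_{q \in \nu^{-1}(p_i)} \mathcal{O}_{\widetilde{C}, q}$ as a subring of finite colength (it contains the conductor ideal, so the quotient is a finite-dimensional skyscraper sheaf). For $N$ sufficiently large, those sections of $\widetilde{L}^{\otimes N}$ which satisfy the finitely many linear gluing conditions at each $p_i$ descend to sections of a line bundle $M$ on $C$ with $\nu^*M \cong \widetilde{L}^{\otimes N}$; since $h^0(\widetilde{C}, \widetilde{L}^{\otimes N})$ grows linearly with $N$ while the gluing conditions impose a fixed codimension, for $N$ sufficiently large these sections separate points and tangent vectors on $C$, giving an embedding $C \hookrightarrow \mathbb{P}^m$.

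The main technical point is the descent of the line bundle and its sections through the finite gluing data at the singular points. A more streamlined alternative is to observe that the fields of meromorphic functions of $C$ and $\widetilde{C}$ coincide (as $\nu$ is bimeromorphic) and that this field has transcendence degree $1$ over $\mathbb{C}$ by the smooth case, so $C$ is a Moishezon space of dimension $1$; every $1$-dimensional Moishezon space is projective, which closes the argument.
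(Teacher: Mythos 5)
Your argument is correct in substance but takes a genuinely different route from the paper. The paper's proof is a one\nobreakdash-liner: pick a smooth point $p\in C_{\mathrm{sm}}$, note that $\mcO_C(p)$ is a line bundle on all of $C$ (since $p$ is a Cartier divisor, being a point of the smooth locus) with a section vanishing exactly at $p$, and invoke Grauert's projectivity criterion \cite[Theorem I.19.3]{BHPV04}; since $C$ itself is the only positive-dimensional irreducible subvariety, this single section already certifies ampleness. Your normalization-plus-descent argument avoids Grauert's criterion at the cost of the gluing step, which is where the real work is hidden: the existence of a line bundle $M$ on $C$ with $\nu^*M\cong\widetilde{L}^{\otimes N}$ is not automatic from a subspace of sections --- it is cleanest to represent $\widetilde{L}^{\otimes N}$ by a divisor supported away from $\nu^{-1}$ of the singular points (possible by very ampleness upstairs) and push it forward as a Cartier divisor supported in $C_{\mathrm{sm}}$ --- and separating points and tangent vectors at the singular points requires sections with prescribed values modulo the conductor, which your fixed-codimension count does ultimately supply. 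What each approach buys: the paper's is shorter and needs no normalization, at the price of quoting a nontrivial criterion; yours is more self-contained, resting only on Riemann--Roch plus classical descent along a finite bimeromorphic map. One caution: your ``streamlined alternative'' is essentially circular, since the assertion that every one-dimensional Moishezon space is projective is (the Moishezon property being automatic in dimension one) precisely the lemma being proved, and the Moishezon-to-projective criteria available in the literature (e.g.\ Namikawa's, used elsewhere in this paper) assume hypotheses such as rational singularities and are much heavier tools than needed here; rely on the descent argument, not on this shortcut.
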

\begin{proof}
        
  Let $C$ be a compact analytic curve and $p\in C_\textsubscript{sm}$ a smooth point of $C$. Then $\mcO_C(p)$ is a line bundle on $C$ with a section  $0\neq s\in H^0(C, \mcO_C(p))$ such that $\div(s)_0=p$, and hence by Grauert's criteria \cite[Theorem I.19.3]{BHPV04}, $C$ is projective. 
\end{proof}
\begin{remark}
        The above lemma holds more generally for any \emph{reduced} compact complex space of dimension $1$ with finitely many irreducible components. The proof follows similarly by working on each component separately. 
\end{remark}

\section{Length of Extremal rays and MMP with scaling}
\subsection{Length of extremal rays} In this section following Kawamata's article \cite{Kaw91} we will show that for a compact K\"ahler $3$-fold klt pair $(X, \Delta)$, the length of the rational curves generating the $(K_X+\Delta)$-negative extremal rays is bounded above by a universal constant.

\begin{lemma}\cite[Lemma]{Kaw91}\label{lem:hodge-type-inequality}
        Let $f:X\to Y$ be projective bimeromorphic morphism of normal analytic varieties, $H$ an $f$-ample Cartier divisor on $X$ and $E$ an irreducible component of $\Ex(f)$ of maximal dimension. Let $e=\dim E$ and $\nu:\bar{E}\to E$ be the normalization morphism. Further suppose that there is an effective $\mbQ$-divisor $\Delta\>0$ such that $(X, \Delta)$ is klt, and $f(E)$ is a point in $Y$. Then
        \begin{equation*}
                (H^{e-1}\cdot(K_X+\Delta)\cdot E)>((\nu^*H)^{e-1}\cdot K_{\bar{E}}).
        \end{equation*}
\end{lemma}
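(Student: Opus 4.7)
The plan is to adapt Kawamata's argument in \cite{Kaw91} to the analytic K\"ahler setting. The three key ingredients are: (i) adjunction on the normalization $\nu:\bar E \to E$, (ii) the klt hypothesis on $(X,\Delta)$, and (iii) a negativity estimate for the contracted divisor $E$. Since $H$ is $f$-ample and $f(E)$ is a point, $H|_E$ is ample, so $E$ (and hence $\bar E$) is projective, and classical intersection theory applies. I will focus on the main case $e = \dim X - 1$; the case $e=1$ of a flipping curve is handled analogously and more directly.

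First I would write $\Delta = d_E E + \Delta'$ with $d_E = \mult_E(\Delta) \in [0,1)$ (by klt) and $E \not\subset \Supp(\Delta')$. Applying Shokurov--Koll\'ar adjunction along $E$ and pulling back via $\nu$ gives
\[
\nu^*\bigl((K_X+E+\Delta')|_E\bigr) = K_{\bar E} + \operatorname{Diff}_{\bar E}(\Delta'),
\]
with $\operatorname{Diff}_{\bar E}(\Delta') \geq 0$: the conductor of $\nu$ is effective, the codimension-$1$-in-$\bar E$ singular contributions of $X$ take the form $(1-1/m)D$ with $m\geq 1$, and $\nu^*(\Delta'|_E) \geq 0$ since $\Delta' \geq 0$ and $E\not\subset\Supp(\Delta')$. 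Intersecting with $(\nu^*H)^{e-1}$ (which is nef/ample on $\bar E$) and applying the projection formula yields
\[
K_{\bar E}\cdot(\nu^*H)^{e-1} \;\leq\; (K_X+E+\Delta')\cdot H^{e-1}\cdot E.
\]
Using the identity $K_X+\Delta = (K_X+E+\Delta') - (1-d_E)E$, this rearranges to
\[
(K_X+\Delta)\cdot H^{e-1}\cdot E \;-\; K_{\bar E}\cdot(\nu^*H)^{e-1}
\;\geq\; -(1-d_E)\bigl(E^2\cdot H^{e-1}\bigr).
\]

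The argument thus reduces to establishing the negativity $E^2\cdot H^{e-1} < 0$, which I expect to be the main obstacle. My plan here is to pass to a small Stein open neighborhood $U$ of $f(E)$ and cut $f^{-1}(U)$ by $e-1$ sufficiently general members of $|mH|$ for $m\gg 0$ (using $f$-ampleness, which on $U$ provides an abundance of relative sections), so as to produce a compact normal analytic surface $\Sigma$ containing the curve $C := E\cap\Sigma$. Since $C$ is contracted to a point by $f|_\Sigma$, Grauert's contractibility criterion for compact analytic surfaces gives $C_\Sigma^2 < 0$, hence $E^2\cdot H^{e-1} < 0$ after accounting for the factor $m^{e-1}$. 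The delicate point is the analytic Bertini-type argument guaranteeing $\Sigma$ is normal and $C$ is a curve; this requires care in the relative analytic setting but should follow from genericity of the chosen sections near $f(E)$. Combined with $1 - d_E > 0$ from the klt hypothesis, this gives the desired strict inequality.
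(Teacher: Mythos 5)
Your argument, where it applies, is essentially sound: in the divisorial case $e=\dim X-1$ the combination of Shokurov--Koll\'ar adjunction along $E$ with the negativity $E^2\cdot H^{e-1}<0$ (obtained by cutting to a normal surface and invoking Grauert/Mumford) does give the inequality, and it is a genuinely different and more elementary route than the one the paper takes. But there is a real gap: the lemma is \emph{not} restricted to divisorial $E$. The hypothesis is only that $E$ is a maximal-dimensional component of $\Ex(f)$ contracted to a point, so $E$ may have codimension $\geq 2$ in $X$ --- in the paper's main application (Theorem \ref{thm:length-of-extremal-rays}, hence Proposition \ref{pro:mmp-with-scaling} and every MMP with scaling in the paper) the crucial instance is a flipping curve $E=C$ in a $3$-fold, where $e=1$ and the claim reads $(K_X+\Delta)\cdot C>\deg K_{\bar C}$. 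Your assertion that this case is ``handled analogously and more directly'' is not tenable: there is no adjunction/different along a codimension-$2$ subvariety in the form you use, the decomposition $\Delta=d_EE+\Delta'$ is vacuous, and the quantity $E^2\cdot H^{e-1}$ is meaningless when $E$ is not a divisor. This codimension-$\geq 2$ case is precisely the content that forces Kawamata's actual argument --- and the paper's proof --- to proceed by localizing over a Stein neighbourhood of $f(E)$, cutting $X$ by $e-1$ general members of $|mH|$ (with Manaresi's Bertini theorem to preserve normality and irreducibility) so as to reduce to a contracted \emph{curve} in an ambient space of arbitrary dimension, and then applying relative Kawamata--Viehweg vanishing (\cite[Theorem 3.7]{Nak87}) to control $K_{\bar E}$. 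Your Grauert-negativity step is a substitute for that vanishing argument only when the cut-down ambient variety is a surface, i.e.\ only when $E$ was a divisor to begin with.

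A secondary point: even in the divisorial case your decomposition $K_X+\Delta=(K_X+E+\Delta')-(1-d_E)E$ and the intersection number $E^2\cdot H^{e-1}$ implicitly require $E$ (equivalently $K_X+E+\Delta'$) to be $\mathbb{Q}$-Cartier, which is not among the hypotheses --- the lemma only assumes $X$ normal and $K_X+\Delta$ $\mathbb{Q}$-Cartier, and in the paper it is applied to hyperplane cuts $X_0$ of a $\mathbb{Q}$-factorial $3$-fold, which need not remain $\mathbb{Q}$-factorial. You would either need to add this hypothesis, work with Mumford's rational intersection theory after cutting to the surface, or pass to a small $\mathbb{Q}$-factorialization; as written the step is not justified.
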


\begin{proof}
 This proof is essentially same as Kawamata's original proof in \cite{Kaw91} with minor modifications, which we will discuss now. First observe that the question is local on the base $Y$ and $f(E)$ is a point in $Y$. Moreover, we have a $f$-ample Cartier divisor $H$ on $X$. So we can choose a small Stein open subset $U\subset Y$  containing $f(E)$ and positive integers $m, n>0$ such that $mH$ gives a closed embedding $f^{-1}U\injective \mbP^n\times U$. Then replacing $Y$ by $U$ and $X$ by $f^{-1}U$ we may assume that $\iota: X\injective \mbP^n\times Y$ is a closed embedding such that $\pr_2\circ\iota=f$. Then $\pr_1^*\mcO(1)$ is a globally generated line bundle on $\mbP^n\times Y$. 
We can use the global sections of ($\pr_1\circ\;\iota)^*\mcO(1)$ to cut down the dimension of $X$ as in the proof of \cite[Lemma]{Kaw91}. We will need a Bertini-type theorem here to guaranteed that the general hyperplane sections of $X$ are irreducible and normal. This is achieved by \cite[Corollary II.5]{Man82}. We will also need a relative Kawamata-Viehweg vanishing theorem in this proof. Such a statement is proved in \cite[Theorem 3.7]{Nak87}. With these tools Kawamata's arguments in \cite[Lemma]{Kaw91} work in our settings. Interested reader may also consult \cite[Lemma 7.47]{Deb01} for a more detailed account of this proof in the algebraic case. 
\end{proof}~\\

\begin{theorem}\label{thm:length-of-extremal-rays}
        Let $(X, \Delta\>0)$ be a klt pair and $f:X\to Y$ is a a projective morphism to a Stein variety $Y$ such that $Y$ embeds into an open subset of $\mbC^N$ as an analytic subset. Let $E$ be an irreducible component of $\Ex(f)$ of maximal dimension, and $n=\dim E-\dim f(E)$. Assume further that $-(K_X+\Delta)$ is $f$-ample. Then for a general point $p\in f(E)$, $E_p=E\cap f^{-1}(p)$ is covered by a family of rational curves $\{\Gamma_\lambda\}_{\lambda\in\Lambda}$ such that $-(K_X+\Delta)\cdot \Gamma_\lambda\<2n$.

\end{theorem}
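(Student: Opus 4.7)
The plan is to adapt Kawamata's argument from \cite{Kaw91} to the analytic K\"ahler setting, using Lemma \ref{lem:hodge-type-inequality} as the replacement for his Hodge-type inequality, together with Mori's classical bend-and-break.

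The first step is to reduce to the case where $f(E)$ is a point. Set $d := \dim f(E)$ and fix a general point $p \in f(E)$. Since $Y$ embeds as a closed analytic subset of an open $U \subset \mathbb{C}^N$, I would choose $d$ very general affine hyperplanes $L_1,\ldots,L_d \subset \mathbb{C}^N$ through $p$, shrink $Y$ to a small Stein neighborhood of $p$, and replace $Y$ by $Y' := Y \cap L_1 \cap \cdots \cap L_d$ and $X$ by $X' := f^{-1}(Y')$. Applying the analytic Bertini theorem \cite[Corollary II.5]{Man82} to the composition $X \to Y \hookrightarrow \mathbb{C}^N$, for a very general choice of hyperplanes $X'$ will be a normal irreducible variety, $(X',\Delta|_{X'})$ will remain klt (by standard inversion of adjunction for a general Cartier divisor), and $-(K_{X'}+\Delta|_{X'})$ will stay $f|_{X'}$-ample. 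Moreover $E$ gets replaced by an irreducible component $E' \subset X'$ of dimension $n$ lying entirely over $p$. This reduces matters to the case $\dim f(E)=0$.

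In this reduced setting, with $E$ projective of dimension $n$ contracted to a point, I would pick an $f$-ample Cartier divisor $H$ on $X$. Since $E$ lies in the single fiber $f^{-1}(p)$, the 1-cycle $H^{n-1}\cdot E$ is represented by a curve contained in that fiber, so the $f$-ampleness of $-(K_X+\Delta)$ forces
\[
\bigl(H^{n-1}\cdot (K_X+\Delta)\cdot E\bigr) < 0.
\]
Lemma \ref{lem:hodge-type-inequality} then yields $\bigl((\nu^*H)^{n-1}\cdot K_{\bar E}\bigr) < 0$, which means $K_{\bar E}$ has negative degree against a general 1-dimensional complete intersection in $\bar E$. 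Consequently $\bar E$ is uniruled.

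Finally, with $\bar E$ projective and uniruled, I would apply Mori's bend-and-break (in its log/singular formulation due to Miyaoka--Mori and Koll\'ar) to produce, through every point of $\bar E$, a rational curve $\Gamma$ with a controlled $-K_{\bar E}$-degree; pushing forward to $E \subset X$ and converting back from a bound on $-K_{\bar E}\cdot\Gamma$ to a bound on $-(K_X+\Delta)\cdot\Gamma$ via the comparison built into Lemma \ref{lem:hodge-type-inequality} yields a covering family $\{\Gamma_\lambda\}$ of $E_p$ with $-(K_X+\Delta)\cdot\Gamma_\lambda \leq 2n$, where the factor of $2$ is Mori's classical sharpening of the naive $n+1$ bound for covering families on an $n$-dimensional variety. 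I expect the main obstacle to be the first step: verifying, in the analytic category, that the hyperplane-cutting preserves normality and irreducibility of the relevant component of $E$, retains the klt property of the pair, and keeps $-(K_X+\Delta)$ relatively ample. Once these analytic Bertini-type statements are in place, the rest is essentially classical Mori theory applied inside the projective fiber $E$.
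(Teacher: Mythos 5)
Your proposal follows essentially the same route as the paper: cut down by $d=\dim f(E)$ hyperplane sections of $\mathbb{C}^N$ using the analytic Bertini theorem of Manaresi to reduce to the case where $E$ is contracted to a point, apply Lemma \ref{lem:hodge-type-inequality} to a general complete-intersection curve $C$ in the normalization $\bar{E}$ to conclude $K_{\bar{E}}\cdot C<0$, and then run Mori--Koll\'ar bend-and-break through the points of $C$, the strict inequality of Lemma \ref{lem:hodge-type-inequality} guaranteeing that the ratio in Koll\'ar's degree bound is $<1$ and hence that the covering family satisfies $-(K_X+\Delta)\cdot\Gamma_\lambda\leq 2n$.

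The one place where your execution differs, and where it would run into trouble as written, is the choice of hyperplanes \emph{through} the point $p$: the linear system of such hyperplanes acquires $f^{-1}(p)$ in its base locus after pulling back to $X$, which is exactly where Bertini gives no control, and it is precisely along $E_p\subset f^{-1}(p)$ that you need $X'$ normal, $(X',\Delta|_{X'})$ klt and $E'$ irreducible. The paper avoids this by taking $d$ \emph{very general} hyperplanes of $\mathbb{C}^N$ with no imposed base point; then $Y_0\cap f(E)$ is a nonempty finite set of general points of $f(E)$, and one works at one of those points. With that adjustment your argument matches the paper's.
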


\begin{proof}
        First replacing $f$ by its Stein factorization we may assume that $f$ has connected fibers and $Y$ is normal. There are two cases now depending on whether $f$ is bimeromorphic or not. The bimeromorphic case is more involved and we show the details here, the other case is left as an exercise to the reader.

    Assume that $f$ is bimeromorphic. Let $d=\dim f(E)$ and $Y_0$ is the intersection of $Y$ by $d$ very general hyperplanes of $\mbC^N$.  Then $Y_0$ is normal by \cite[Corollary II.5]{Man82}. Moreover, $Y_0\cap f(E)$ is a set of finite points. Shrinking $Y$ further if necessary we may assume that $Y_0\cap f(E)$ is a single point. Let $X_0=f^{-1}Y_0, \Delta_0=\Delta\cap X_0, E_0=E\cap X_0$ and $f_0=f|_{X_0}$. Then again by \cite[Theorem II.5]{Man82} and \cite[Lemma 5.17]{KM98} it follows that $X_0$ is normal and $(X_0, \Delta_0)$ is klt. Moreover, we also have $f_0$ is projective and $-(K_{X_0}+\Delta_0)$ is $f_0$-ample. Let $\nu:\bar{E}_0\to E_0$ be the normalization morphism, and $H$ a $f_0$-very ample Cartier divisor on $X_0$. Note that $E_0$ is a projective algebraic variety, since $f_0$ is a projective morphism and $f_0(E_0)$ is a point in $Y_0$. Let $C$ be the intersection of $n-1$ general members of the linear system $|\nu^*H|$ (note that $\dim E_0=n$). Then by Bertini's theorem, $C$ is a smooth curve contained in the smooth locus of $\bar{E}_0$. Then by Lemma \ref{lem:hodge-type-inequality} applied to $f_0$, we get 
        \begin{equation}\label{eqn:hodge-type-inequality}       
                (H^{n-1}\cdot(K_{X_0}+\Delta_0)\cdot E_0)>((\nu^*H)^{n-1}\cdot K_{\bar{E}_0}).
        \end{equation}  

From the left hand side of \eqref{eqn:hodge-type-inequality} we get $H^{n-1}\cdot(K_{X_0}+\Delta_0)\cdot E_0=\nu^*(K_{X_0}+\Delta_0)\cdot C<0$, since $-(K_{X_0}+\Delta_0)$ is ample on $E_0$. Thus from the right hand side of \eqref{eqn:hodge-type-inequality} it follows that $K_{\bar{E}_0}\cdot C<0$. Then by \cite[Theorem 5.8]{Kol96} through every point of $x\in C$ there is a rational curve $\bar{\Gamma}_x$ in $\bar{E}_0$ such that
\[
-\nu^*(K_X+\Delta)\cdot \bar{\Gamma}_x\<2\dim \bar{E}_0\frac{-\nu^*(K_X+\Delta)\cdot C}{-K_{\tilde{E}_0}\cdot C}.
\]
 Taking the images of $\bar{\Gamma}_x$'s in $E_0$ we see that  $E_0$ is covered by a family of rational curves $\{\Gamma_\lambda\}_{\lambda\in\Lambda}$ such that $-(K_X+\Delta)\cdot\Gamma_\lambda<2n$, where $n=\dim E-\dim f(E)$.\\

When $f$ is not bimeromorphic, the general fiber $F$ of $f$ is a projective variety of positive dimension, and thus the usual argument of Mori's Bend and Break work on $F$ (see \cite[Theorem 5.8]{Kol96}). The details of this proof is left as an exercise to the reader.

\end{proof}~\\

In the following we discuss how to run a MMP with scaling of an effective divisor.\\
\begin{proposition}\label{pro:mmp-with-scaling}
	Let $(X, \Delta)$ be a $\mbQ$-factorial compact K\"ahler $3$-fold klt pair. Let $D$ be a nef $\mbQ$-Cartier divisor and $K_X+\Delta$ is not nef. Further assume that the  cone theorem and contraction theorem hold for the $(K+\Delta)$-MMP. Set
\[
\lambda:=\sup\{t\>0: D+t(K_X+\Delta)\text{ is nef }\}.
\]
Then $\lambda$ is a rational number and there is a $(K_X+\Delta)$-negative extremal ray $R$ such that
\[
(D+\lambda(K_X+\Delta))\cdot R=0.
\]
\end{proposition}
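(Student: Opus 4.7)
The plan is to adapt the standard projective argument for MMP with scaling to the K\"ahler setting, using the assumed cone theorem and the length bound on extremal rays (Theorem~\ref{thm:length-of-extremal-rays}).

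First, I would establish that $\lambda\in[0,\infty)$ and that the extremal value is achieved. Since $K_X+\Delta$ is not nef, pick any curve $C$ with $(K_X+\Delta)\cdot C<0$; then $(D+t(K_X+\Delta))\cdot C<0$ once $t>(D\cdot C)/(-(K_X+\Delta)\cdot C)$, so $\lambda<\infty$. To see that $D+\lambda(K_X+\Delta)$ is itself nef, take $s_n\nearrow\lambda$ with $D+s_n(K_X+\Delta)$ nef (possible by the definition of the supremum); since the nef cone is closed in $N^1(X)$, the limit is nef. A convex-combination argument then shows $S:=\{t\geq 0:D+t(K_X+\Delta)\text{ nef}\}=[0,\lambda]$.

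Second, for each $\epsilon>0$ the class $D+(\lambda+\epsilon)(K_X+\Delta)$ fails to be nef, so by the assumed cone theorem there exists a $(K_X+\Delta)$-negative extremal ray $R_\epsilon$ of $\NE(X)$ on which $D+(\lambda+\epsilon)(K_X+\Delta)$ is strictly negative. By the length bound (Theorem~\ref{thm:length-of-extremal-rays}), $R_\epsilon$ is generated by a rational curve $C_\epsilon$ with $0<-(K_X+\Delta)\cdot C_\epsilon\leq 2\dim X=6$. Since $D$ is nef and $(D+(\lambda+\epsilon)(K_X+\Delta))\cdot C_\epsilon<0$, we also obtain the uniform bound $0\leq D\cdot C_\epsilon<6(\lambda+\epsilon)$.

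Third (the crux), I would argue that $\{R_\epsilon\}$ is contained in a finite set of extremal rays. The cone theorem assumed in the statement gives that the $(K_X+\Delta)$-negative extremal rays only accumulate on the hyperplane $\{K_X+\Delta=0\}$; combined with the uniform bound $-(K_X+\Delta)\cdot C_\epsilon\leq 6$, this forces local discreteness in the relevant region of $\NE(X)$. Passing to a subsequence $\epsilon_n\searrow 0$, we may assume $R_{\epsilon_n}=R$ for all $n$. Then $(D+\lambda(K_X+\Delta))\cdot R\geq 0$ by nefness from Step~1, while $(D+\lambda(K_X+\Delta))\cdot R\leq 0$ by taking $\epsilon_n\to 0$ in $(D+(\lambda+\epsilon_n)(K_X+\Delta))\cdot R<0$, giving the desired equality. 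Rationality of $\lambda$ is then immediate from $\lambda=(D\cdot C)/(-(K_X+\Delta)\cdot C)$ for a rational curve $C$ generating $R$, since both $D$ and $K_X+\Delta$ are $\mbQ$-Cartier.

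The main obstacle is the third step: one must verify that the version of the cone theorem being assumed is strong enough to yield the finiteness of $(K_X+\Delta)$-negative extremal rays in the bounded region carved out by the length bound. This is automatic in the projective case, and should follow from the K\"ahler cone theorem as developed in \cite{HP15,HP16,CHP16}, but the precise formulation needs to be pinned down carefully in the compact K\"ahler setting, since $N_1(X)$ is infinite-dimensional in general.
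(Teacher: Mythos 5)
Your Steps 1 and 2 are fine, but Step 3 as written is a genuine gap, and you have correctly identified it as such: the finiteness (or even local discreteness plus compactness) of the family $\{R_\epsilon\}$ does not follow from the cone theorem in the form it is assumed or proved in this paper. Theorem~\ref{thm-NA-cone-adjoint-pair} only provides a \emph{countable} collection of rays $\mbR^+[\Gamma_i]$ with $0<-(K_X+\Delta)\cdot\Gamma_i\<d$; it contains no statement of local discreteness away from the hyperplane $(K_X+\Delta)^\perp$, and the two linear bounds you extract ($-(K_X+\Delta)\cdot C_\epsilon\<6$ and $D\cdot C_\epsilon<6(\lambda+\epsilon)$) do not confine the classes $[C_\epsilon]$ to a compact subset of $N_1(X)$, since $D+\lambda(K_X+\Delta)$ is only nef, not strictly positive on $\NA(X)\setminus\{0\}$. (Incidentally, your worry that $N_1(X)$ is infinite-dimensional is misplaced for this paper's definitions: $N_1(X)$ is dual to $N^1(X)=H^{1,1}_{\rm BC}(X)$, which is finite-dimensional; the real issue is the absence of a strictly positive supporting functional, not the dimension.) So your subsequence/pigeonhole step is unsupported as it stands.

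The paper closes exactly this hole by a different, more elementary device that uses only ingredients you already have on the table. Fix $k$ with $kD$ and $k(K_X+\Delta)$ Cartier. For \emph{every} $(K_X+\Delta)$-negative extremal ray $R_i=\mbR_{\>0}[C_i]$ of the cone decomposition, set $\mu_i=\frac{D\cdot C_i}{-(K_X+\Delta)\cdot C_i}$; the cone theorem gives $\lambda=\inf_i\mu_i$ directly (no limiting family $R_\epsilon$ is needed, since $D+t(K_X+\Delta)$ is automatically nonnegative on $\NA(X)_{(K_X+\Delta)\>0}$ for $t\>0$). The length bound forces $-k(K_X+\Delta)\cdot C_i$ to be an integer in $\{1,\dots,2nk\}$, while $kD\cdot C_i$ is a nonnegative integer, so all the $\mu_i$ lie in the discrete set $\{m/l:\ m\in\mbZ_{\>0},\ 1\<l\<2nk\}\subset\frac{1}{\mathrm{lcm}(1,\dots,2nk)}\mbZ_{\>0}$. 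Hence the infimum is attained at some $i_0$, is rational, and $(D+\lambda(K_X+\Delta))\cdot R_{i_0}=0$. You already computed the bound $-(K_X+\Delta)\cdot C\<2n$ in Step 2; replacing your accumulation argument by this integrality argument repairs the proof without any appeal to discreteness of rays.
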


\begin{proof}
        Let $k$ be a natural number such that $k(K_X+\Delta)$ and $kD$ are both Cartier. Then by Theorem \ref{thm:length-of-extremal-rays}, for each $(K_X+\Delta)$-negative extremal ray $R_i$ for $i\in I$, there is a rational curve $C_i$ such that $R_i=\mbR_{\>0}[C_i]$ and $-(K_X+\Delta)\cdot C_i\<2n$, where $n$ is the dimension of $X$. In particular, we have       
\begin{equation}\label{eqn:bounded-length}
\mu_i=\frac{D\cdot C_i}{-(K_X+\Delta)\cdot C_i}=\frac{kD\cdot C_i}{-k(K_X+\Delta)\cdot C_i}\>\frac{(kD)\cdot C_i}{2nk}\mbox{ for all } i\in I.
\end{equation}
Note that $-k(K_X+\Delta)\cdot C_i$ is an integer contained in the set $\{1, 2,\ldots, 2nk\}$ for all $i$, and the $(kD)\cdot C_i$'s are also integers for all $i$. Therefore the $\mu_i$'s are contained in the discrete set $\mathcal{S}=\{\frac{m}{l}:m, l\in\mbZ, m\>0, 1\<l\<2nk\}$. In particular, $\mu:=\inf\{\mu_i: i\in I\}$ is contained in $\{\mu_i: i\in I\}$, i.e., $\mu$ is rational, and there is an extremal ray $R=R_{i_0}$ for some $i_0\in I$ such that
\[
(D+\mu(K_X+\Delta))\cdot R=0.
\]
From the cone theorem it follows that $D+\mu(K_X+\Delta)$ is nef and $D+s(K_X+\Delta)$ is not nef for any $s>\mu$, since $\mu$ is the minimum of all $\mu_i$. Therefore $\mu$ is the nef threshold, and we set $\lambda=\mu$.

\end{proof}

\begin{corollary}\label{cor:mmp-with-scaling}
Let $(X, \Delta)$ be a $\mbQ$-factorial compact K\"ahler $3$-fold klt pair. Suppose that $K_X+\Delta$ is not nef and there is an effective $\mbQ$-divisor $H$ on $X$ such that $K_X+\Delta+H$ is nef. Then there is a $(K_X+\Delta)$-negative extremal ray $R$ and a rational number $0<\lambda\<1$ such that $K_X+\Delta+\lambda H$ is nef and $(K_X+\Delta+\lambda H)\cdot R=0$.
\end{corollary}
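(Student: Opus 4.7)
The plan is to obtain this as an immediate rescaling of Proposition \ref{pro:mmp-with-scaling}. The key observation is that the proposition takes \emph{any} nef $\mathbb{Q}$-Cartier divisor $D$ and finds the largest $t$ for which $D + t(K_X+\Delta)$ is still nef; here we want to vary the coefficient of $H$ (not of $K_X+\Delta$), so we should simply reparametrize.

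First I would set $D := K_X + \Delta + H$. By hypothesis $D$ is nef, so Proposition \ref{pro:mmp-with-scaling} applies (the cone/contraction assumption is inherited from the hypotheses under which this corollary is being stated, and $K_X+\Delta$ is not nef). This produces a rational number
\[
\lambda' = \sup\{t \geq 0 : D + t(K_X+\Delta) \text{ is nef}\}
\]
together with a $(K_X+\Delta)$-negative extremal ray $R$ on which $D + \lambda'(K_X+\Delta)$ vanishes. Since $t=0$ works, $\lambda' \geq 0$; since $K_X+\Delta$ is not nef, there is a curve $C$ with $(K_X+\Delta)\cdot C < 0$, and then $(D + t(K_X+\Delta))\cdot C < 0$ once $t > D\cdot C / (-(K_X+\Delta)\cdot C)$, so $\lambda' < \infty$.

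Next I would do the elementary rescaling. Define $\lambda := 1/(1+\lambda') \in (0,1]$, which is rational. A direct computation gives
\[
(1+\lambda')\bigl(K_X + \Delta + \lambda H\bigr) = (1+\lambda')(K_X+\Delta) + H = D + \lambda'(K_X+\Delta),
\]
so $K_X + \Delta + \lambda H$ is a positive rational multiple of the nef class produced by the proposition. Hence it is nef, and its intersection with $R$ is zero, exactly as required.

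The argument is essentially a one-line reduction, so there is no genuine obstacle; the only thing to be careful about is the bookkeeping that ensures $\lambda$ lies in $(0,1]$, which uses both that $D$ is nef (giving $\lambda' \geq 0$, hence $\lambda \leq 1$) and that $K_X+\Delta$ is not nef (giving $\lambda' < \infty$, hence $\lambda > 0$).
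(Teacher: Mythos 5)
Your proposal is correct and is essentially identical to the paper's own (one-line) proof, which also applies Proposition \ref{pro:mmp-with-scaling} with $D=K_X+\Delta+H$; you have merely made explicit the rescaling $\lambda=1/(1+\lambda')$ and the bookkeeping showing $\lambda\in(0,1]$, which the paper leaves implicit.
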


\begin{proof}
        It follows directly from Proposition \ref{pro:mmp-with-scaling} by setting $D=K_X+\Delta+H$.
\end{proof}~\\

\subsection{Running MMP with scaling}\label{subsec:mmp-with-scaling}
Let $(X, \Delta)$ be a $\mbQ$-factorial compact K\"ahler $3$-fold klt pair. Suppose that $K_X+\Delta$ is not nef and there is an effective $\mbQ$-divisor $H$ on $X$ such that $K_X+\Delta+H$ is nef. We will run a $(K_X+\Delta)$-MMP with the scaling of $H$. Set $(X_0, \Delta_0)=(X, \Delta)$, $H_0=H$, and let $\lambda_0=\inf\{t\>0: K_X+\Delta+tH\mbox{ is nef }\}$. If $\lambda_0=0$, then $K_X+\Delta$ is nef and we stop. Otherwise, by Corollary \ref{cor:mmp-with-scaling} there exists a $(K_{X_0}+\Delta_0)$-negative extremal ray, say $R_0$, such that $(K_{X_0}+\Delta_0+\lambda_0H)\cdot R_0=0$. Let $\vphi_0:X_0\to Y$ be the associated contraction. Then $K_{X_0}+\Delta_0+\lambda_0H=\vphi^*L$, for some $\mbQ$-Cartier divisor $L$ on $Y$. There are two cases now
\begin{enumerate}
        \item $\vphi:X_0\to Y$ is a divisorial contraction, or
        \item $\vphi$ is a flipping contraction.
\end{enumerate} 
If $\vphi$ is a divisorial contraction, then $Y$ is a normal $\mbQ$-factorial compact K\"ahler $3$-fold. In this case wet set $X_1=Y, \Delta_1=\vphi_*\Delta_0$ and $H_1=\vphi_*H_0$. Then we have $(X_1, \Delta_1)$ a klt pair such that $K_{X_1}+\Delta_1+\lambda_0H_1$ is nef. We then set $\lambda_1=\inf\{t\>0: K_{X_1}+\Delta_1+tH_1\mbox{ is nef }\}$ and continue the program.\\
If $\vphi$ is a flipping contraction, then $Y$ is not $\mbQ$-factorial, so we take a flip $\vphi^+:(X^+_0, \Delta^+_0)\to Y$ of $\vphi$, where $\Delta^+_0=\phi_*\Delta$ and $ H^+=\phi_*H$, and $\phi:X\to X^+$ is the induced birational map. Then $X^+$ is a normal $\mbQ$-factorial compact K\"ahler $3$-fold. Set $X_1=X^+, \Delta_1=\Delta^+$ and $H_1=H^+$. Then we have $(X_1, \Delta_1)$ a klt pair such that $K_{X_1}+\Delta_1+\lambda_0H_1$ is nef. We then set $\lambda_1=\inf\{t\>0: K_{X_1}+\Delta_1+tH_1\mbox{ is nef }\}$ and continue the program.\\
Note that in way we get a monotonically decreasing sequence $\lambda_0\>\lambda_1\>\cdots$. This program will stop if either $\lambda_i=0$ for some $i$, in which case $K_{X_i}+\Delta_i$ is nef, or there is a contraction $\vphi:X_i\to Y_i$ such that $\dim Y_i<\dim X_i$ and $-(K_{X_i}+\Delta_i)$ is $\vphi_i$-ample; this is called a $(K_{X_i}+\Delta_i)$-Mori fiber space.\\

\begin{theorem}[Special MMP]\label{subsec:special-mmp} Let $(X, \Delta)$ be a $\mbQ$-factorial compact K\"ahler $3$-fold klt pair. Suppose that $K_X+\Delta$ is not nef and there is an effective $\mbQ$-divisor $H$ on $X$ such that $K_X+\Delta+H$ is nef. Then we can run a $(K_X+\Delta)$-MMP with the scaling of $H$ which satisfies exactly one of following two properties:
 \[
        (X, \Delta)=(X_0, \Delta_0)\bir (X_1, \Delta_1)\bir\cdots (X_i, \Delta_i)\bir\cdots
 \]
\begin{enumerate}
        \item  Every step of this MMP is $(K_X+\Delta+H)$-trivial, i.e., $(K_{X_i}+\Delta_i+H_i)\cdot R_i=0$ for all $i\>0$, where $R_i$ is the corresponding extremal ray. In this case the program terminates with either a $(K_X+\Delta)$-minimal model or $(K_X+\Delta)$-Mori fiber space,
         \begin{center}
                or
         \end{center}
        \item there is a smallest $i\>0$ such that $K_{X_i}+\Delta_i+(1-\ve)H_i$ is nef for all $0<\ve\ll 1$. In this case we stop the program at this stage.
\end{enumerate}
\end{theorem}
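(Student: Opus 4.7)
The plan is to carry out the procedure described in Subsection \ref{subsec:mmp-with-scaling} while verifying that at every stage the required inputs are in place, that the two stated alternatives exhaust all possibilities, and that case (1) actually terminates. At the $i$-th stage we inductively have a $\mbQ$-factorial compact K\"ahler $3$-fold klt pair $(X_i,\Delta_i)$ equipped with an effective $\mbQ$-divisor $H_i$ such that $K_{X_i}+\Delta_i+\lambda_{i-1}H_i$ is nef (with the convention $\lambda_{-1}=1$). Set $\lambda_i := \inf\{t\geq 0 : K_{X_i}+\Delta_i+tH_i\text{ is nef}\}$. If $\lambda_i>0$, Corollary \ref{cor:mmp-with-scaling} supplies a $(K_{X_i}+\Delta_i)$-negative extremal ray $R_i$ with $(K_{X_i}+\Delta_i+\lambda_iH_i)\cdot R_i=0$; the associated contraction $\varphi_i:X_i\to Y_i$ exists by the cone and contraction theorem for $\mbQ$-factorial klt K\"ahler $3$-folds, and flips (if needed) are supplied by Theorem \ref{thm:existence-of-flips}. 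If $\varphi_i$ is of fibering type, the program stops with a Mori fiber space; otherwise one passes to $(X_{i+1},\Delta_{i+1})$ as described in Subsection \ref{subsec:mmp-with-scaling}. The new pair is again $\mbQ$-factorial klt and K\"ahler, and by construction $K_{X_{i+1}}+\Delta_{i+1}+\lambda_iH_{i+1}$ is nef, yielding the monotonicity $\lambda_{i+1}\leq \lambda_i$.

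Since $K_X+\Delta+H$ is nef by hypothesis, $\lambda_0\leq 1$; by the monotonicity just established, $\lambda_i\leq 1$ for every $i$ for which the MMP continues. Hence we arrive at a sharp dichotomy: either $\lambda_i=1$ for all $i$, in which case every $R_i$ is $(K_X+\Delta+H)$-trivial, which is case (1); or there is a smallest index $i$ with $\lambda_i<1$, and then by definition of $\lambda_i$ the divisor $K_{X_i}+\Delta_i+(1-\varepsilon)H_i$ is nef for every $0<\varepsilon<1-\lambda_i$, so we stop, obtaining case (2). These two alternatives are mutually exclusive and together exhaust all possibilities.

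It remains to prove termination in case (1). Divisorial contractions can occur only finitely many times, since each strictly drops the Picard number. After finitely many steps the tail of the MMP therefore consists entirely of $(K_{X_i}+\Delta_i)$-flips between $\mbQ$-factorial klt compact K\"ahler $3$-folds, and such a sequence terminates by Theorem \ref{thm:termination}. At termination, either no $(K_{X_N}+\Delta_N)$-negative extremal ray exists, so $K_{X_N}+\Delta_N$ is nef and we have a minimal model, or the last extremal contraction is of fibering type, giving a $(K_X+\Delta)$-Mori fiber space.

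The main obstacle is essentially bookkeeping: one must verify at every step that $\mbQ$-factoriality, klt singularities, and the K\"ahler property are preserved so that the cone/contraction theorem, Theorem \ref{thm:existence-of-flips}, and Theorem \ref{thm:termination} remain applicable after each divisorial contraction or flip. The conceptual content is packaged in these earlier results; once they are in hand, the MMP with scaling in the analytic setting proceeds along the familiar algebraic template.
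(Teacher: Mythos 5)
Your proposal is correct and follows essentially the same route as the paper: the argument is entirely governed by the sequence of nef thresholds $\lambda_i$, with case (1) corresponding to $\lambda_i=1$ at every step where a contraction is performed and case (2) to the first index where $\lambda_i<1$. You additionally spell out the termination of case (1) (finitely many divisorial contractions by the drop in Picard number, then Theorem \ref{thm:termination} for the tail of flips) and the preservation of the standing hypotheses, which the paper's own one-paragraph proof leaves implicit; note only that, as in Proposition \ref{pro:mmp-with-scaling}, the existence of the contraction of each extremal ray is really an input (established for pseudo-effective pairs only later in the paper) rather than something already available unconditionally for all klt K\"ahler $3$-folds.
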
   

\begin{proof}
        Using the same notations as in the beginning of Subsection \ref{subsec:mmp-with-scaling} we see that the property $(1)$ holds if $\lambda_i=1$ for all $i\< k-1$ and $\lambda_k=0$ for some $k\>1$, such that $K_{X_k}+\Delta_k$ is either nef or there is a $(K_{X_k}+\Delta_k)$-Mori fiber space $g:X_k\to Z$. Otherwise, assume that $0<\lambda_i<1$ with the smallest index $i\>0$. Then $\lambda_{i-1}=1$ and $K_{X_i}+\Delta_i+(1-\ve)H_i$ is nef for all $0<\ve\ll 1$ (we choose $\lambda_{-1}=1$).
\end{proof}

We will also need the following version of relative directed MMP.
\begin{lemma}\label{lem:directed-relative-mmp}
	Let $(X, \D)$ be a $\mbQ$-factorial compact K\"ahler $3$-fold lc pair such that $X$ has klt singularities. Let $f:X\to Y$ be a projective contraction of relative dimension $1$ such that the general fibers of $f$ are isomorphic to $\mbP^1$. Assume that $K_X+\D$ is nef over $Y$ and $(K_X+\D)\cdot F=0$ for general fibers $F$ or $f$. We run a relative $K_X$-MMP over $Y$ with the scaling of $\Delta$
	\[
		(X, \Delta)=:(X_0, \Delta_0)\bir (X_1, \D_1)\bir\cdots \bir (X_n, \D_n)=:X'\to Y'
	\]
and end with a Mori fiber space $f':X'\to Y'$ over $Y$.\\
Then every step of this MMP is $(K_X+\Delta)$-trivial.	
\end{lemma}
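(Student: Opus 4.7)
The plan is to prove by induction on $i \geq 0$ that $K_{X_i} + \D_i$ is nef over $Y$ and has zero intersection with every curve contracted by the induced morphism $f_i: X_i \to Y$. Once this is known, the $K_{X_i}$-negative extremal ray $R_i \in \NE(X_i/Y)$ contracted at step $i$ (whether divisorial, flipping, or Mori-fibering) is generated by an $f_i$-contracted curve and therefore satisfies $(K_{X_i} + \D_i) \cdot R_i = 0$. This means the scaling threshold
\[
\lambda_i := \inf\{\, t \geq 0 : K_{X_i} + t\D_i \mbox{ is nef over } Y \,\}
\]
equals $1$ at every step, so every step is $(K_X + \D)$-trivial in the sense of Theorem \ref{subsec:special-mmp}(1).

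For the base case, I would first strengthen the hypothesis to show that $K_X + \D$ is numerically trivial on \emph{every} $f$-contracted curve, not merely on general fibers. Since $f$ has relative dimension $1$, any fiber cycle $F_0 = \sum m_j C_j$ of $f$ (with $m_j > 0$ and $C_j$ irreducible) is numerically equivalent over $Y$ to a general fiber $F \cong \mbP^1$, hence $(K_X + \D) \cdot F_0 = (K_X + \D) \cdot F = 0$. Since $K_X + \D$ is $f$-nef, each summand $(K_X + \D) \cdot C_j \geq 0$, forcing each to vanish.

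For the inductive step, assume $K_{X_i} + \D_i$ is nef over $Y$ and numerically trivial on every $f_i$-contracted curve. By the previous paragraph the contracted ray $R_i$ is $(K_{X_i}+\D_i)$-trivial, so the step is crepant. For a divisorial contraction $\phi_i: X_i \to X_{i+1}$, crepancy yields $K_{X_i} + \D_i = \phi_i^*(K_{X_{i+1}} + \D_{i+1})$, and lifting any $f_{i+1}$-contracted curve on $X_{i+1}$ to $X_i$ transfers both inductive properties via the projection formula. For a flip $X_i \bir X_{i+1}$ over a flipping base $Z_i$, crepancy combined with relative Picard number one gives $K_{X_i} + \D_i \sim_{\mbQ, Z_i} 0$ via the relative base-point-free theorem \cite[Theorem 4.8]{Nak87}; since $X_i \bir X_{i+1}$ is an isomorphism in codimension $1$ over $Z_i$, we also have $K_{X_{i+1}} + \D_{i+1} \sim_{\mbQ, Z_i} 0$, which yields both properties on $(X_{i+1}, \D_{i+1})$ over $Y$.

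The main technical hurdle is the base case: rigorously showing that every fiber cycle of the projective contraction $f$ is numerically equivalent over $Y$ to a general fiber as a $1$-cycle in the compact K\"ahler setting. Locally on $Y$ one can embed $X$ into a relative projective space over a Stein open $U \subset Y$ and use a relative hyperplane class together with the projection formula to see that the intersection number of $K_X + \D$ with any fiber cycle over $U$ is constant; combined with $f$-nef-ness this will pin down the intersection on every irreducible component, after which the induction runs smoothly.
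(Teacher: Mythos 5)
Your inductive framework would work if the base case held, but the base case --- that $(K_X+\D)\cdot C=0$ for \emph{every} $f$-vertical curve $C$ --- does not follow from the hypotheses, and the sketch you give for it does not close the gap. The lemma only assumes $f$ is a projective contraction of relative dimension $1$ with general fiber $\mbP^1$; it is not assumed equidimensional, so special fibers may be $2$-dimensional and the notion of a fiber $1$-cycle $\sum m_jC_j$ numerically equivalent to a general fiber breaks down for curves in such fibers. Concretely, if $\mu:Z\to Y$ is the blow-up of a point with exceptional curve $e$, $g:X\to Z$ is a $\mbP^1$-bundle and $f=\mu\circ g$, then for any class $A$ on $Z$ that is nef over $Y$ with $A\cdot e>0$, the class $g^*A$ is $f$-nef and trivial on all general fibers of $f$ yet strictly positive on a section of $g$ over $e$. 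So ``$f$-nef plus degree $0$ on general fibers'' does not numerically propagate to all vertical curves. Even in the equidimensional case, the constancy of the fiber class in $N_1(X/Y)$ requires an argument (an analytic family of $1$-cycles via the cycle space), and your proposed repair --- a relative hyperplane class plus the projection formula over a Stein open set --- computes degrees against the hyperplane, not the class of $K_X+\D$ on components of special fibers. Since the flipping and divisorial rays arising in the MMP are typically supported over special points of $Y$, general-fiber information alone cannot reach them, and your induction cannot start. Note also that the lemma is applied in the paper to morphisms (e.g.\ $g':Z'\to Y'$ in Lemma \ref{lem-base-pure-sub-boundary}) that are outputs of relative MMPs and need not be equidimensional, so this is not a removable hypothesis.

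The paper's proof sidesteps the issue entirely with a monotonicity trick: the nef thresholds $\lambda_i=\inf\{t\>0: K_{X_i}+t\D_i \mbox{ is nef over } Y\}$ satisfy $1\>\lambda_0\>\lambda_1\>\cdots\>\lambda_n$, so it suffices to prove $\lambda_n=1$ at the \emph{last} step. That step is the Mori fiber space $f':X'\to Y'$, whose general fibers coincide with general fibers of $X\to Y$ because the MMP is an isomorphism over a dense Zariski open subset of $Y$; on such a fiber $F\cong\mbP^1$ one gets $\D'\cdot F=2$ from $(K_{X'}+\D')\cdot F=0$ and $\D'\cdot F=2/\lambda_n$ from $(K_{X'}+\lambda_n\D')\cdot F=0$, whence $\lambda_n=1$ and hence $\lambda_i=1$ for all $i$. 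You should either adopt this argument or restrict your base-case claim to a setting where it can actually be proved.
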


\begin{proof}
 From the construction of MMP with scaling we understand that it is enough to show that the nef thresholds 
 \[\lambda_i:=\inf\{t\>0: K_{X_i}+t\D \mbox{ is nef$/Y$}\}=1,\]
  for all $0\<i\<n$. Note that $\{\lambda_i\}$ form a monotonically decreasing sequence and $\lambda_i\<1$ for all $0\<i\<n$. Therefore it is enough to show that $\lambda_n=1$. To that end, first observe that the composite bimeromorphic map $\phi:X\bir X'$ is an isomorphism over a dense Zariski open subset of $Y$. In particular, if $F(\cong\mbP^1)$ is a general fiber of $X\to Y$, then $(K_{X'}+\Delta')\cdot F=(K_X+\Delta)\cdot F=0$, i.e. $\Delta'\cdot F=2$. Note that the general fibers of $X\to Y$ are also general fibers of $f':X'\to Y'$, and according to the construction of MMP with scaling, $f'$ is a $(K_{X'}+\lambda_n\Delta')$-trivial contraction, i.e. $(K_{X'}+\lambda_n\Delta')\cdot F=0$ for a general fiber $F$ of $f'$. Thus $\Delta'\cdot F=\frac{2}{\lambda_n}$ and therefore $\lambda_n=1$.	
\end{proof}

\section{Relative MMP for dlt pairs and existence of dlt model}
In this section we will prove a relative cone theorem for dlt pairs and then establish the existence of dlt models for log canonical pairs.

\begin{theorem}\label{thm:dlt-cone}
	Let $(X, \Delta)$ be a $\mbQ$-factorial compact K\"ahler dlt pair of arbitrary dimension, and $f:X\to Y$ is a projective surjective morphism to a compact analytic variety $Y$. Then there are at most countable number of curves $\{C_i\}_{i\in I}$ such that $f(C_i)=\pt$ for all $i\in I$, and 
	\[
		\NE(X/Y)=\NE(X/Y)_{(K_X+\Delta)\>0}+\sum_{i\in I}\mbR^+\cdot[C_i].
	\] 
\end{theorem}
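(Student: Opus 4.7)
The plan is to reduce the dlt cone theorem to the relative cone theorem for klt pairs with projective morphisms, available from Nakayama \cite[Theorem 4.12]{Nak87} and already used in Proposition \ref{pro:relative-projective-mmp}; the reduction is via a standard perturbation of the boundary.

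Write $\Delta = S + B$ with $S = \lfloor \Delta \rfloor$ and $B = \{\Delta\}$. Since $(X, \Delta)$ is dlt and $X$ is $\mbQ$-factorial, the pair $(X, \Delta_n) := (X, \Delta - \tfrac{1}{n} S)$ is klt for every integer $n \geq 2$: all coefficients of $\Delta_n$ lie in $[0, 1)$, and the discrepancies of $(X, \Delta_n)$ on a log-resolution of $(X, \Delta)$ strictly exceed $-1$. For each $n$, applying Nakayama's relative cone theorem to $(X, \Delta_n)$ yields a countable set of rational curves $\{C_{i,n}\}_{i \in I_n}$ contracted by $f$ with $(K_X + \Delta_n) \cdot C_{i,n} < 0$ and
\[
\NE(X/Y) = \NE(X/Y)_{(K_X + \Delta_n) \geq 0} + \sum_{i \in I_n} \mbR^+ \cdot [C_{i,n}].
\]
Let $\{C_j\}_{j \in J}$ be the countable subfamily consisting of those $C_{i,n}$ with $(K_X + \Delta) \cdot C_{i,n} < 0$.

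I would next verify that every $(K_X + \Delta)$-negative extremal ray $R = \mbR^+ \cdot [C]$ of $\NE(X/Y)$ is represented by some $[C_j]$: if $S \cdot C \geq 0$ then $(K_X + \Delta_n) \cdot C \leq (K_X + \Delta) \cdot C < 0$ for every $n$; and if $S \cdot C < 0$ then $C$ must be contained in $\Supp(S)$, and since
\[
(K_X + \Delta_n) \cdot C = (K_X + \Delta) \cdot C - \tfrac{1}{n} S \cdot C \;\longrightarrow\; (K_X + \Delta) \cdot C < 0
\]
as $n \to \infty$, the intersection is negative for all sufficiently large $n$. With this list of rays in hand, the remaining task is the cone decomposition
\[
\NE(X/Y) = \NE(X/Y)_{(K_X + \Delta) \geq 0} + \sum_{j \in J} \mbR^+ \cdot [C_j].
\]

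The hard part will be this last decomposition. For $v \in \NE(X/Y)$ with $(K_X + \Delta) \cdot v < 0$, the natural approach is to pick $n$ large enough that $(K_X + \Delta_n) \cdot v < 0$, use the $n$-th klt decomposition to write $v = v_n + \sum_i a_{i,n} [C_{i,n}]$, and absorb the summands with $(K_X + \Delta) \cdot C_{i,n} \geq 0$ into the residual term. The obstacle is that the half-spaces $\{(K_X + \Delta_n) \geq 0\}$ do not monotonically approach $\{(K_X + \Delta) \geq 0\}$ as $n \to \infty$, because $S \cdot v$ can take either sign on $\NE(X/Y)$, so a naive limit argument fails. Instead, one exploits the local finiteness of the $(K_X + \Delta_n)$-negative rays away from the hyperplane $(K_X + \Delta_n) = 0$ to reduce the absorption step to a finite-dimensional convex-geometric argument in each Mori slice. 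This mirrors the algebraic treatment in \cite[Section 3]{KM98}, transported to the K\"ahler setting once Nakayama's klt cone theorem is taken as input.
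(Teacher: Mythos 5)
Your identification of the candidate rays is fine, but the proof stops exactly where the theorem starts: the decomposition $\NE(X/Y)=\NE(X/Y)_{(K_X+\Delta)\>0}+\sum_{j}\mbR^+[C_j]$ is the entire content of the statement, and the final paragraph of your proposal does not prove it — "exploit local finiteness \dots reduce to a finite-dimensional convex-geometric argument in each Mori slice" is a gesture, not an argument. Worse, the obstruction you correctly diagnose is fatal to the specific perturbation you chose. With $\Delta_n=\Delta-\tfrac1n\lrd\Delta\rrd$, the residual term $v_n$ in the $n$-th klt decomposition lies in $\NE(X/Y)_{(K_X+\Delta_n)\>0}$, and since $\lrd\Delta\rrd\cdot v_n$ can be negative this set is \emph{not} contained in $\NE(X/Y)_{(K_X+\Delta)\>0}$; nor are the half-spaces nested in $n$, so neither a fixed large $n$ nor a limit over $n$ places $v_n$ where you need it. Iterating the absorption step only reproduces the same problem, so the reduction as written does not close.

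The paper sidesteps this by perturbing with an $f$-ample class rather than by shrinking the boundary: fix $H$ $f$-ample and write $K_X+\Delta+\tfrac1nH=K_X+(1-\ve)\Delta+(\tfrac1nH+\ve\Delta)$ with $\ve$ small depending on $n$, so that Nakayama's cone theorem for the klt pair $(X,(1-\ve)\Delta)$ with the $f$-ample twist $\tfrac1nH+\ve\Delta$ produces \emph{finitely} many negative rays for each $n$. The point of this choice is monotonicity: since $H\cdot v\>0$ for every $v\in\NE(X/Y)$, the sets $\NE(X/Y)_{(K_X+\Delta+\frac1nH)\>0}$ decrease with $n$ and their intersection is exactly $\NE(X/Y)_{(K_X+\Delta)\>0}$, so the residual terms of the approximating decompositions automatically converge into the correct half-cone; a compactness argument on an affine slice, together with the closedness of $\NE(X/Y)_{(K_X+\Delta)\>0}+\sum_i\mbR^+[C_i]$, then yields the decomposition. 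If you want to salvage your boundary-perturbation route you would have to combine it with such an ample perturbation anyway (as the paper in effect does, since it also rescales $\Delta$ to $(1-\ve)\Delta$ to reach the klt case), so the honest fix is to replace $\Delta-\tfrac1n\lrd\Delta\rrd$ by $K_X+\Delta+\tfrac1nH$ from the outset and carry out the limit argument in detail.
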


\begin{proof}
	Fix a $f$-ample divisor $H$ on $X$. Then for any $n\in\mbN$ we can write $K_X+\Delta+\frac 1n H=K_X+(1-\ve)\Delta+(\frac 1n H+\ve \Delta)$ such that $\frac 1n H+\ve \Delta$ is $f$-ample for $\ve\in\mbQ^+$ sufficiently small (depending on $n$). Note that $(X, (1-\ve)\Delta)$ is a klt pair. Thus by \cite[Theorem 4.12]{Nak87}, there are finitely many $(K_X+\Delta+\frac 1n H)$-negative extremal rays generated by curves $\{C_i\}_{i\in I_n}$ such that 
	\begin{equation}\label{eqn:perturbed-cone}
		\NE(X/Y)=\NE(X/Y)_{(K_X+\Delta+\frac 1n H)\>0}+\sum_{i\in I_n}\mbR^+\cdot [C_i].
	\end{equation}
Define $I:=\cup_{n\>1} I_n$. Then clearly $\NE(X/Y)=\NE(X/Y)_{(K_X+\Delta+\frac 1n H)\>0}+\sum_{i\in I}\mbR^+\cdot [C_i]$. Note that we also have 

\[\NE(X/Y)_{(K_X+\Delta)\>0}=\cap_{n=1}^\infty \NE(X/Y)_{(K_X+\Delta+\frac 1n H)\>0}.\]
 Therefore from \eqref{eqn:perturbed-cone} we have

\begin{align*}
	\NE(X/Y) &=\cap_{n=1}^\infty \left(\NE(X/Y)_{(K_X+\Delta+\frac 1n H)\>0}+\sum_{i\in I}\mbR^+\cdot [C_i]\right)\\
			 &\supset\cap_{n=1}^\infty \left(\NE(X/Y)_{(K_X+\Delta+\frac 1n H)\>0}\right)+\sum_{i\in I}\mbR^+\cdot [C_i]\\
			 &=\NE(X/Y)_{(K_X+\Delta)\>0}+\sum_{i\in I}\mbR^+\cdot [C_i].
\end{align*}
Suppose now that the inclusion is strict and so we have an element $v\in \cap_{n=1}^\infty \left(\NE(X/Y;W)_{(K_X+\Delta+\frac 1n H)\>0}+\sum_{i\in I}\mbR^+\cdot [C_i]\right)$ not contained in 
	\[\NE(X/Y;W)_{(K_X+\Delta)\>0}+\sum_{i\in I}\mbR^+\cdot [C_i].\]
	Intersecting $\NE(X/Y;W)$ with an appropriate affine hyperplane $\mathcal H$ we may assume that $\NE(X/Y;W)\cap \mathcal H$ is compact and convex and $v\in \NE(X/Y;W)\cap \mathcal H$. For each $n\>1$, we can write $v=v_n+w_n$, where $v_n\in \NE(X/Y;W)_{(K_X+\Delta+\frac 1n H)\>0}\cap \mathcal H$ and $w_n\in \sum_{i\in I}\mbR^+\cdot [C_i]\cap \mathcal H$. By compactness, passing to a subsequence, we may assume that limits exist, and  $v_\infty =\lim v_i$ and $w_\infty =\lim w_i$ such that $v=v_\infty +w_\infty$. Since $\NE(X/Y;W)_{(K_X+\Delta)\>0}= \cap_{n=1}^\infty \NE(X/Y;W)_{(K_X+\Delta+\frac 1n H)\>0}$ is closed, $v_\infty\in \NE(X/Y;W)_{(K_X+\Delta)\>0}\cap \mathcal H$.
	Since $\overline {\sum_{i\in I}\mbR^+\cdot [C_i]}\cap \mathcal H$ is compact, $w_\infty\in\overline {\sum_{i\in I}\mbR^+\cdot [C_i]}\cap \mathcal H$. 
	By standard arguments (see the end of the proof of \cite[Theorem III.1.2]{Kol96}) one sees that $\NE(X/Y;W)_{(K_X+\Delta)\>0}+{\sum_{i\in I}\mbR^+\cdot [C_i]}$ is closed and hence \[\overline {\sum_{i\in I}\mbR^+\cdot [C_i]}\subset \NE(X/Y;W)_{(K_X+\Delta)\>0}+{\sum_{i\in I}\mbR^+\cdot [C_i]}.\]
	Thus $w_\infty =v_0+w'_\infty$, where $v_0\in \NE(X/Y;W)_{(K_X+\Delta)\>0}$ and $w'_\infty\in {\sum_{i\in I}\mbR^+\cdot [C_i]}$. Finally, since $v=(v_\infty +v_0)+w'_\infty$, we obtain the required contradiction.
 
\end{proof}~\\

\begin{theorem}[DLT Modification]\label{thm:dlt-model}
	Let $(X, \Delta)$ be a compact K\"ahler $3$-fold log canonical pair. Then there exists a projective bimeromorphic morphism $f:(X', \Delta')\to (X, \Delta)$ from a $\mbQ$-factorial dlt pair $(X', \Delta')$ such that $K_{X'}+\Delta'=f^*(K_{X'}+\Delta')$.
\end{theorem}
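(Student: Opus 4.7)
The plan is to take a log resolution of $(X,\Delta)$, equip it with the natural log-smooth dlt boundary (the strict transform of $\Delta$ together with every $g$-exceptional divisor taken with coefficient $1$), and then run a relative MMP over $X$ to contract precisely the exceptional divisors with strictly positive discrepancy coefficient $+1$.

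First, I would choose a log resolution $g:Y\to X$ of $(X,\Delta)$; by Chow's lemma for bimeromorphic morphisms (see \cite[Corollary 2]{Hir75}, as used in Corollary \ref{c-rccres} above), after passing to a higher resolution we may assume $g$ is a projective morphism. Let $E_1,\dots,E_r$ be the $g$-exceptional prime divisors and set $\Gamma:=g^{-1}_*\Delta+\sum_{i=1}^{r}E_i$. Since $(X,\Delta)$ is lc, the coefficients of $\Delta$ lie in $[0,1]$, so those of $\Gamma$ do too; combined with log smoothness, this makes $(Y,\Gamma)$ a $\mbQ$-factorial dlt pair. Writing $a_i:=a(E_i,X,\Delta)\>-1$, one has
\[
K_Y+\Gamma=g^*(K_X+\Delta)+F,\qquad F:=\sum_{i=1}^{r}(a_i+1)E_i,
\]
and $F$ is an effective $g$-exceptional $\mbQ$-divisor.

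Next, I would run a relative $(K_Y+\Gamma)$-MMP over $X$ via Proposition \ref{pro:relative-projective-mmp}; this is available because $g$ is projective, $Y$ is a compact K\"ahler $3$-fold, and $(Y,\Gamma)$ is $\mbQ$-factorial dlt. Since $K_Y+\Gamma\sim_{\mbQ,g}F\>0$ is $g$-pseudo-effective, this MMP cannot end with a Mori fiber space over $X$, and hence terminates with a minimal model $f:X'\to X$ together with $\Gamma':=\phi_*\Gamma$, where $\phi:Y\bir X'$ is the induced bimeromorphic map, such that $(X',\Gamma')$ is $\mbQ$-factorial dlt and $K_{X'}+\Gamma'$ is $f$-nef.

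Finally, setting $F':=\phi_*F\>0$, every component of $F'$ is $f$-exceptional: if $E_i$ survives on $X'$ as $E'_i:=\phi_*E_i$, then $f(E'_i)=g(E_i)$ has codimension $\>2$ in $X$. Pushing forward the displayed identity gives $K_{X'}+\Gamma'=f^*(K_X+\Delta)+F'$, so $F'$ is $f$-nef. Applying Lemma \ref{lem:negativity} with $B:=-F'$ (so $-B=F'$ is $f$-nef and $f_*B=0$ is effective) yields $-F'\>0$, hence $F'=0$ and $K_{X'}+\Gamma'=f^*(K_X+\Delta)$; setting $\Delta':=\Gamma'$ then gives the desired dlt modification. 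The main technical hurdle is the existence and termination of the relative MMP for a projective morphism onto a non-projective K\"ahler base, which is precisely what Proposition \ref{pro:relative-projective-mmp} supplies by combining Nakayama's relative cone/contraction theorems with the existence of flips (Theorem \ref{thm:existence-of-flips}) and termination (Theorem \ref{thm:termination}); the remaining minor point, the projectivity of the log resolution, is handled by Chow's lemma.
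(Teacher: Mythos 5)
Your proposal is correct and follows essentially the same route as the paper: log resolution, the dlt boundary $g^{-1}_*\Delta+\sum E_i$, a relative $(K_Y+\Gamma)$-MMP over $X$ (the paper unpacks the cone/contraction/K\"ahlerness ingredients inline, whereas you invoke Proposition \ref{pro:relative-projective-mmp}, which packages the same tools), and finally the negativity lemma to kill the effective exceptional discrepancy divisor. Your last paragraph is in fact slightly more explicit than the paper's one-line appeal to the negativity lemma, but the argument is the same.
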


\begin{proof}
	Let $g:Y\to X$ be the log resolution of $(X, \Delta)$. Define $\Delta_Y:=f^{-1}_*\Delta+E$, where $E=\Ex(f)$ is the reduced $f$-exceptional divisor. Then $(Y, \Delta_Y)$ has dlt singularities.  We will run a $(K_Y+\Delta_Y)$-MMP over $X$. Let $C\subset Y$ be a curve generating a $(K_Y+\Delta)$-negative extremal ray of $\NE(Y/X)$ as in Theorem \ref{thm:dlt-cone}. Then for some $0<\ve\ll 1$ we still have $(K_Y+(1-\ve)\Delta_Y)\cdot C<0$ and $(Y, (1-\ve\Delta_Y))$ is klt. Therefore by \cite[Theorem 4.12(2)]{Nak87} there exist projective bimeromorphic morphisms $\vphi:Y\to Z$ and $h:Z\to X$ such that $\vphi$ contracts the extremal ray $R=\mbR^{\>0}[C]$ and $g=h\circ\vphi$. Now since $X$ is K\"ahler and $h:Z\to X$ is a projective morphism, from \cite[Pro. 1.3.1, page 24]{Var89} it follows that $Z$ is K\"ahler. Now if $\vphi$ is a flipping contraction, then the flip, say $\vphi^+:Y^+\to Z$ exists by Theorem \ref{thm:existence-of-flips}. The fact that $Y^+$ is K\"ahler again follows from \cite[Pro. 1.3.1, page 24]{Var89}, since $Z$ is K\"ahler and $\vphi^+$ is projective. The termination of flips follows from Theorem \ref{thm:termination}. Finally, since $f$ is bimeromorphic, $K_Y+\Delta_Y$ is $f$-big, and consequently this MMP terminates with a minimal model over $X$. Then applying the negativity lemma we get our required dlt model.

\end{proof}

\begin{corollary}
\label{cor:exist-terminal-dlt-modification}
Let  $(X, \Delta)$ be a compact K\"ahler $3$-fold lc pair. Then there exists a bimeromorphic model  $f : (X', \Delta') \to (X,\Delta)$ such that 
\begin{enumerate}
\item  $X'$ has $\mathbb{Q}$-factorial terminal singularities,
\item $(X',\Delta')$ is a dlt pair, and
\item $K_{X'}+\Delta'=f^*(K_X+\Delta)$.
\end{enumerate}
\end{corollary}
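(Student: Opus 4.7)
The plan is to combine the dlt modification of Theorem~\ref{thm:dlt-model} with the terminalization of Theorem~\ref{thm:terminal-model}, the latter applied to the klt part of the resulting dlt pair, and to verify that the crepant log pullback of the dlt boundary remains dlt on the terminal model.

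First, I would apply Theorem~\ref{thm:dlt-model} to obtain a projective bimeromorphic morphism $g:(Y,\Delta_Y)\to(X,\Delta)$ with $Y$ a $\mathbb{Q}$-factorial compact K\"ahler $3$-fold, $(Y,\Delta_Y)$ dlt, and $K_Y+\Delta_Y=g^*(K_X+\Delta)$. Decomposing $\Delta_Y=S+B$ with $S=\lfloor\Delta_Y\rfloor$ and $B=\{\Delta_Y\}$, the pair $(Y,B)$ is klt since $\lfloor B\rfloor=0$, so Theorem~\ref{thm:terminal-model} yields a projective bimeromorphic morphism $h:(W,B_W)\to(Y,B)$ with $W$ a $\mathbb{Q}$-factorial compact K\"ahler $3$-fold with terminal singularities, $K_W+B_W=h^*(K_Y+B)$, and whose exceptional divisors are precisely the divisors $E$ over $Y$ with $a(E,Y,B)\le 0$.

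Since $Y$ is $\mathbb{Q}$-factorial, $S$ is $\mathbb{Q}$-Cartier, so I set $\Delta_W := h^*S+B_W$. Then
\[
K_W+\Delta_W = h^*(K_Y+B)+h^*S = h^*(K_Y+\Delta_Y) = (g\circ h)^*(K_X+\Delta),
\]
and putting $X'=W$, $\Delta'=\Delta_W$, and $f=g\circ h$ gives condition~(3). That $\Delta_W$ is a boundary follows from computing coefficients on a common log resolution of $(Y,B)$ and $(Y,\Delta_Y)$: the coefficient of an $h$-exceptional prime $E$ in $\Delta_W$ equals $-a(E,Y,\Delta_Y)$, which lies in $[0,1]$ because $(Y,\Delta_Y)$ is lc and $a(E,Y,\Delta_Y)\le a(E,Y,B)\le 0$ by the choice of $h$, while the strict transforms of components of $\Delta_Y$ inherit their coefficients in $[0,1]$.

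The main obstacle is verifying that $(W,\Delta_W)$ is dlt. The key point is that the lc centers of a dlt pair $(Y,\Delta_Y)$ are the irreducible components of intersections of components of $S=\lfloor\Delta_Y\rfloor$; since no component of $B$ appears among these, $B$ vanishes locally near the generic point of every such lc center, and combined with the log smoothness of $(Y,\Delta_Y)$ there, this shows that $(Y,B)$ is terminal in an open neighborhood of the generic point of every lc center of $(Y,\Delta_Y)$. Hence $h$ is an isomorphism over such neighborhoods. Because $h$ is crepant, every lc center of $(W,\Delta_W)$ is either the strict transform of an lc center of $(Y,\Delta_Y)$, around which $h$ is an isomorphism and log smoothness is inherited from $Y$, or an $h$-exceptional divisor $E$ with $a(E,Y,\Delta_Y)=-1$: at the generic point of such an $E$ the threefold $W$ is smooth (terminal singularities are isolated in codimension three) and only $E$ passes through with coefficient one, so log smoothness holds trivially. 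This establishes the dlt property of $(W,\Delta_W)$ and completes the construction.
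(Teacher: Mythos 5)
Your proof is correct, but it diverges from the paper's at the terminalization step, and the divergence changes where the work lies. The paper also first reduces to a dlt pair $(X,\Delta)$ via Theorem~\ref{thm:dlt-model}, but then applies Theorem~\ref{thm:terminal-model} to the pair $(X,0)$ rather than to the klt pair $(X,\{\Delta\})$: the resulting $f:X'\to X$ is then an isomorphism over the \emph{entire} smooth locus of $X$, hence over the whole SNC locus $U$ certifying dlt-ness, and setting $\Delta'=\Theta'+f^*\Delta$ the dlt property of $(X',\Delta')$ follows in one line from Definition 2.8 of \cite{Kol13} with $Z'=X'\setminus f^{-1}(U)$, since any lc place with center in $Z'$ has center in $X\setminus U$. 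Your terminalization of $(Y,B)$ with $B=\{\Delta_Y\}$ extracts fewer divisors but may modify $Y$ over parts of the SNC locus (e.g.\ over codimension-two strata of $B$ where the coefficients sum to at least $1$, the pair $(Y,B)$ is log smooth yet not terminal), so you are forced into the finer argument that $h$ is an isomorphism only near the generic points of the lc centers of $(Y,\Delta_Y)$; this does work, because at the generic point of a codimension-$k$ stratum of $S=\lfloor\Delta_Y\rfloor$ the SNC condition forbids any component of $B$ from passing through, so $(Y,B)$ is smooth with empty boundary there, $h$ is small over such a neighborhood, and purity of the exceptional locus over a smooth base makes it an isomorphism. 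You then need the Szab\'o-type characterization (lc plus SNC at generic points of all lc centers implies dlt) together with the observation that no lc place of $(W,\Delta_W)$ can have center inside $\mathrm{Ex}(h)$ — otherwise its image would be an lc center of $(Y,\Delta_Y)$ contained in $h(\mathrm{Ex}(h))$, contradicting that $h$ is an isomorphism near that center's generic point; in particular your second case (an $h$-exceptional $E$ with $a(E,Y,\Delta_Y)=-1$) is actually vacuous. In short: both routes produce the same crepant boundary $\Delta'=h_*^{-1}\Delta_Y+\sum(-a(E,Y,\Delta_Y))E$, but the paper's choice of auxiliary pair makes the dlt verification immediate from the definition, whereas yours trades that for a correct but more delicate analysis of lc centers that relies on standard facts about dlt pairs not proved in the paper.
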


\begin{proof}
By Theorem \ref{thm:dlt-model},  there is a dlt  model $g: (\hat{X}, \hat{\Delta}) \to (X, \Delta)$  such that  $K_{\hat{X}}+\hat{\Delta}=g^*(K_X+\Delta)$. Hence, by replacing $(X,\Delta)$ by $(\hat{X}, \hat{\Delta})$, we may assume that $(X,\Delta)$ is dlt. Let $U\subset X$ be the largest open set such that $(U, \Delta|_U)$ is a SNC pair. Then $\codim_X (X \backslash U)\>2$. Let $f: (X',\Theta')\to (X, 0)$ be a terminal model of $(X,0)$ as in Theorem \ref{thm:terminal-model} such that $K_{X'}+\Theta'=f^*K_X$. Then $f$ is an isomorphism over the smooth locus of $X$; in particular $f$ is an isomorphism over $U$. Let $Z=X\backslash U$. Define $\Delta':=\Theta'+f^*\Delta$ on $X'$ so that 
\[K_{X'}+\Delta'=f^*(K_X+\Delta),\]
and $(X', \Delta')$ is lc.\\
It remains to show that $(X',\Delta')$ is a dlt pair.  Let $U'=f^{-1}(U)$ and $Z' = X' \backslash U'$. Then $(U',\D'|_{U'})$ is a SNC pair. If $E$ is an exceptional divisor with center in  $Z'$, then its center in $X$ is contained in $Z$. Hence $a(E; X', \Delta')=a(E; X, \Delta)>-1$. This completes the proof.
\end{proof}


 \section{LMMP and Abundance for K\"ahler surfaces}\label{sec:surface-mmp}

In  this section, we will recall the log minimal model programs for dlt pair $(X,\D)$, where $X$ is a compact K\"ahler surface. Since $(X,\D)$ is dlt, $X$ has $\mathbb{Q}$-factorial rational singularity. Now if $X$ is uniruled, then from the classification of compact complex surfaces it follows that $X$ is algebraic. So we will only focus on the case when $X$ is not uniruled.  Let $\vphi:\tilde{X} \to X$ be the minimal resolution of $X$. Then there is an effective $\mbQ$-divisor $E\> 0$ such that \[K_{\tilde{X}} \sim_{\mathbb{Q}} \vphi^*K_X-E.\] Since $\tilde{X}$ is not uniruled, by abundance for smooth compact K\"ahler surfaces it follows that $K_{\tilde{X}}$ is $\mathbb{Q}$-effective. As a consequence, $K_X$ is also $\mathbb{Q}$-effective. We can write  
\begin{equation}\label{eqn:surface-non-vanishing}
K_X\sim_{\mathbb{Q}} \sum_{\mathrm{finite}} \lambda_i C_i,
\end{equation}
where $C_i$ are irreducible curves and $\lambda_i\>0$. By \cite[Proposition 3.4]{Bou04}, $\vphi^*(K_X+\Delta)$ is nef if and only if it has non-negative intersection with every curve in $\tilde X$. Thus from the proof of \cite[Lemma 3.13]{HP16} it follows that $K_X+\Delta$ is nef if and only if it intersects every curve in $X$ non-negatively. In this case, we have the  following cone theorem.

\begin{theorem}
\label{thm:surface-cone-thm}
Let $(X,\D)$ be a dlt pair such that $X$ is a non-uniruled compact K\"ahler surface.  If $K_X+\D$ is not nef, then there are finitely many irreducible curves $\Gamma_i$ such that \[-(K_X+\D)\cdot \Gamma_i > 0 \] and that \[\overline{\mathrm{NA}}(X) = \overline{\mathrm{NA}}(X)_{(K_X+\D)\>0} + \sum_\textsubscript{\mbox{finite}} \mathbb{R}^{+} [\Gamma_i].\]
\end{theorem}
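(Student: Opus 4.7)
The plan is to combine the $\mbQ$-effectivity of $K_X+\D$, established just before the statement, with Boucksom's divisorial Zariski decomposition for pseudo-effective classes on smooth compact K\"ahler surfaces \cite{Bou04}, applied on the minimal resolution $\vphi:\tilde X\to X$.

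First I would check that the set of $(K_X+\D)$-negative irreducible curves is finite. Combining $\D\geq 0$ with \eqref{eqn:surface-non-vanishing} gives a $\mbQ$-effective representative $K_X+\D\sim_{\mbQ} D=\sum_i\mu_i F_i$ with $\mu_i>0$. Any irreducible curve $C$ distinct from every $F_i$ satisfies $F_i\cdot C\geq 0$ for all $i$ (distinct irreducible curves on a normal surface meet non-negatively, as one checks after pulling back to $\tilde X$), so $(K_X+\D)\cdot C=D\cdot C\geq 0$. Hence the $(K_X+\D)$-negative irreducible curves form a finite subset $\{\Gamma_1,\ldots,\Gamma_k\}\subseteq\{F_i\}$, which automatically satisfies $-(K_X+\D)\cdot\Gamma_i>0$.

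For the cone decomposition, take any $\alpha\in\NA(X)$ and lift it to $\tilde\alpha=\vphi^*\alpha\in\NA(\tilde X)$. Applying Boucksom's divisorial Zariski decomposition on the smooth K\"ahler surface $\tilde X$ writes $\tilde\alpha=P+\sum_j d_j[G_j]$ with $P$ nef, $d_j>0$, and the $G_j$ distinct irreducible curves whose intersection matrix is negative-definite. Pushing forward to $X$ collapses the $\vphi$-exceptional $G_j$'s, leaving $\alpha=\vphi_*P+\sum_{j\in J}d_j\,[\vphi(G_j)]$, where $J$ indexes the non-exceptional summands. Partition $J=J_1\sqcup J_2$ according to whether $\vphi(G_j)\in\{\Gamma_1,\ldots,\Gamma_k\}$ or not, and set $\gamma=\sum_{j\in J_1}d_j[\vphi(G_j)]\in\sum_i\mbR^+[\Gamma_i]$ and $\beta=\alpha-\gamma$. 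Then $\beta=\vphi_*P+\sum_{j\in J_2}d_j[\vphi(G_j)]\in\NA(X)$, and by the projection formula $(K_X+\D)\cdot\vphi_*P=\vphi^*(K_X+\D)\cdot P\geq 0$ (nef times pseudo-effective on $\tilde X$), while each $\vphi(G_j)$ for $j\in J_2$ pairs non-negatively with $K_X+\D$ by Step~1. Hence $\beta\in\NA(X)_{(K_X+\D)\geq 0}$, as required.

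The main obstacle will be the bookkeeping of the Zariski decomposition when passing through the bimeromorphic $\vphi$. The key technical point is that every $\vphi$-exceptional curve $F$ on $\tilde X$ satisfies $\vphi^*(K_X+\D)\cdot F=0$, immediately by the projection formula since $\vphi_*F=0$, so the $\vphi$-exceptional components of the Zariski decomposition contribute nothing to the $(K_X+\D)$-pairing after push-forward and can be dropped. With this granted, the argument reduces to clean bookkeeping between the cones on $\tilde X$ and $X$, and one should obtain the cone theorem as stated.
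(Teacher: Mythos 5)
Your argument is essentially correct and reaches the same conclusion, but via a different key input than the paper. You both start from the $\mbQ$-effectivity $K_X+\D\sim_{\mbQ}\sum\mu_iF_i$ to conclude that the $(K_X+\D)$-negative curves are finitely many curves of negative self-intersection in $\Supp(\D+\sum\lambda_iC_i)$; that part matches the paper. For the cone decomposition, however, the paper does not use the Zariski decomposition at all: it invokes \cite[Corollary 0.4]{DP04} to say that $\NA(\hat X)$ on a resolution $\hat X$ is generated by K\"ahler classes and classes of curves, uses the surjectivity $\phi_*\NA(\hat X)=\NA(X)$ from \cite[Proposition 3.14]{HP16}, and then simply checks that each pushed-forward generator lies in $\NA(X)_{(K_X+\D)\>0}+\sum\mbR^+[\Gamma_i]$. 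Your route instead decomposes an arbitrary class via Boucksom's divisorial Zariski decomposition on the minimal resolution and sorts the negative part by hand. Both work; the paper's version avoids any decomposition of individual classes and is slightly more economical, while yours makes the mechanism (nef part pairs non-negatively, negative part is supported on finitely many curves) more explicit.

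One technical point you should repair: you cannot write $\tilde\alpha=\vphi^*\alpha$ for a general $\alpha\in\NA(X)$. In the paper's framework $\NA(X)$ sits inside $N_1(X)$, the space of closed currents of bi-dimension $(1,1)$ on the possibly singular surface $X$, and pullback of such currents under the bimeromorphic $\vphi$ is not defined (only pushforward is). The fix is exactly the surjectivity statement the paper uses: since $\vphi_*\NA(\tilde X)=\NA(X)$, choose \emph{some} $\tilde\alpha\in\NA(\tilde X)$ with $\vphi_*\tilde\alpha=\alpha$ and run your Zariski-decomposition argument on that lift; everything downstream of that choice (pushforward of the nef part, projection formula, sorting of the non-exceptional negative components) goes through unchanged. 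With that substitution your proof is complete.
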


\begin{proof}
 Since $K_X+\Delta$ is not nef, as explained above, there is a curve $\Gamma\subset X$ such that $(K_X+\Delta)\cdot\Gamma<0$. In particular, from \eqref{eqn:surface-non-vanishing} it follows that $\Gamma\subset\Supp(\Delta+\sum\lambda_iC_i)$ and $\Gamma^2<0$. Let $\{\Gamma_i\}_{i\in I}$ be a finite set of curves contained in the support of $\Delta+\sum\lambda_iC_i$ such that $\Gamma_i^2<0$. 
 We claim that the following holds
 \begin{equation}\label{eqn:generalized-mori-cone}
     \NA(X)=\NA(X)+\sum_{i\in I}\mbR^+[\Gamma_i].
 \end{equation}
 To see this, let $\phi:\hat X\to X$ be a resolution of singularities of $X$. Then from \cite[Corollary 0.4]{DP04} it follows that $\NA(\hat X)$ is generated by the set of all K\"ahler classes $\hat\omega$ and classes of curves $\hat C$ on $\hat X$. Since $\phi_*\NA(\hat X)=\NA(X)$ (cf. \cite[Proposition 3.14]{HP16}), it follows that classes $\phi_*\hat\omega$ and $\phi_*[\hat C]$ generate $\NA(X)$. So it is enough to show that these classes are contained in the closed cone $V:=\NA(X)_{(K_X+\Delta)\>0}+\sum_{i\in I}\mbR^+[\Gamma_i]$. To that end, first observe that $(K_X+\Delta)\cdot\phi_*\hat\omega=\phi^*(K_X+\Delta)\cdot\hat\omega\>0$, since $\hat\omega$ is K\"ahler and $K_X+\Delta$ is effective. Thus $\phi_*\hat\omega\in V$. Now let $C:=\phi_*\hat C$. If $(K_X+\Delta)\cdot C\>0$, then clearly $[C]\in V$. If $(K_X+\Delta)\cdot C<0$, the from the argument above it follows that $C\num a\Gamma_i$ for some $i\in I$ and $a\in\mbR^+$; hence $[C]\in V$. This completes the proof.
 
\end{proof}

In order to establish the MMP, we still need the following contraction theorem. 

\begin{theorem}
\label{thm:surface-contraction-thm}
With the notation and hypothesis as in Theorem \ref{thm:surface-cone-thm}, for each irreducible curve $\Gamma$ with $-(K_X+\D)\cdot \Gamma > 0$, there is a bimeromorphic morphism $f:X\to Y$ which contracts precisely the curve $\Gamma$ and $Y$ is a compact K\"ahler surface.
\end{theorem}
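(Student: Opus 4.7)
The plan is to construct $f$ via Grauert's contractibility theorem and to verify that $Y$ is K\"ahler by descending a modified K\"ahler class from $X$.

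As already observed in the proof of Theorem \ref{thm:surface-cone-thm}, any irreducible curve $\Gamma$ with $(K_X+\D)\cdot\Gamma<0$ satisfies $\Gamma^2<0$: indeed $\Gamma$ is a component of the effective divisor $\D+\sum_i\lambda_i C_i\sim_{\mbQ}K_X+\D$, so being an irreducible curve on which this effective divisor is negative forces $\Gamma^2<0$. Hence the intersection form of the single irreducible curve $\Gamma$ is a $1\times 1$ negative-definite matrix, and Grauert's contractibility criterion yields a bimeromorphic morphism $f:X\to Y$ onto a normal compact complex surface $Y$ that contracts $\Gamma$ to a single point $p\in Y$ and is an isomorphism elsewhere.

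Next I would show that $Y$ is K\"ahler by producing a K\"ahler class. Fix a K\"ahler class $\omega\in H^{1,1}_{\rm BC}(X)$ and set $t_0:=-(\omega\cdot\Gamma)/\Gamma^2>0$, so that the class $\alpha:=\omega+t_0[\Gamma]$ satisfies $\alpha\cdot\Gamma=0$ by construction. For any other irreducible curve $C\neq\Gamma$ we have $\omega\cdot C>0$ and $\Gamma\cdot C\>0$, hence $\alpha\cdot C>0$; moreover $\alpha^2=\omega^2+t_0(\omega\cdot\Gamma)>0$. Thus $\alpha$ is nef, big, and zero on the unique $f$-contracted curve, so it descends to a class $\beta\in H^{1,1}_{\rm BC}(Y)$ with $f^*\beta=\alpha$ (the descent follows because $\alpha$ is orthogonal to the one-dimensional subspace spanned by $[\Gamma]$ in the kernel of $f_*$). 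By construction $\beta$ is positive on every curve of $Y$ and satisfies $\beta^2>0$.

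Finally, to realize $\beta$ by an actual K\"ahler form I would use a direct analytic construction near $p$: if $\sigma$ is a local defining section of $\Gamma$ on a neighborhood $U$ of $\Gamma$, then $\omega+t_0\,i\partial\bar\partial\log\|\sigma\|^2$ defines a closed positive $(1,1)$-current with local potentials cohomologous to $\alpha|_U$, and since its restriction to the contracted curve vanishes, it pushes forward to a K\"ahler form in a neighborhood of $p\in Y$; gluing this local form to $f_*\omega$ away from $\Gamma$ produces a K\"ahler form on $Y$ representing $\beta$. The main obstacle, in my view, is precisely this final step: in the absence of a K\"ahler analogue of the base-point free theorem, one must upgrade the positivity of the cohomology class $\beta$ to the existence of a smooth K\"ahler representative by exploiting the local $1$-convex structure of the Grauert contraction, rather than relying only on intersection-theoretic positivity as in the projective case.
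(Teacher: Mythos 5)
Your first half is fine and coincides with the paper's argument: $\Gamma$ must be a component of the effective $\mbQ$-divisor representing $K_X+\D$, whence $\Gamma^2<0$, and the contraction $f:X\to Y$ onto a normal compact surface exists by Grauert's criterion (the paper cites Sakai's normal-surface version of the same statement). The divergence --- and the gap --- is in how you establish that $Y$ is K\"ahler. The paper first observes by adjunction that $\Gamma$ is a rational curve and then invokes the argument of \cite[Corollary 3.1]{CHP16}, which produces a K\"ahler form on $Y$ directly from the structure of the contraction of a rational curve; it does not pass through a Nakai--Moishezon-type criterion on $Y$.

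Your final analytic step does not work as written. By the Lelong--Poincar\'e equation, $i\partial\bar\partial\log\|\sigma\|^2$ equals (a multiple of) the current of integration along $\Gamma$ minus a smooth curvature term, so $\omega+t_0\,i\partial\bar\partial\log\|\sigma\|^2$ is a closed positive current with Lelong number $t_0>0$ along $\Gamma$: it is singular precisely along the curve you want to contract, its ``restriction to $\Gamma$'' is not zero (it is not defined), and its pushforward to $Y$ is a current singular at $p$, not a K\"ahler form near $p$. Thus the key upgrade from the numerically positive class $\beta$ (with $\beta^2>0$ and $\beta\cdot C>0$ for every curve $C\subset Y$) to an actual K\"ahler class is missing --- and that upgrade is itself a nontrivial Nakai--Moishezon statement on the possibly singular, possibly non-algebraic surface $Y$: the Buchdahl--Lamari/Demailly--Paun criterion is proved for smooth surfaces, and pulling $\beta$ back to a resolution of $Y$ only returns a nef-and-big class such as $\alpha$, so that route is circular. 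You correctly identified this as the main obstacle, but resolving it requires either carrying out the gluing of $f_*\omega$ with a strictly plurisubharmonic potential near the (rational) singular point $p$ --- which is essentially what the cited argument of \cite[Corollary 3.1]{CHP16} does, and which uses the rationality of $\Gamma$ --- or proving a singular Nakai--Moishezon criterion; your sketch supplies neither. (The descent of $\alpha$ to a class $\beta\in H^{1,1}_{\rm BC}(Y)$ with $f^*\beta=\alpha$ also needs justification at the level of Bott--Chern classes rather than N\'eron--Severi groups, but that is a smaller point.)
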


\begin{proof}
Since $K_X$ is $\mbQ$-effective as discussed above, it follows that $\Gamma \cdot \Gamma <0$. Then by \cite[Theorem 1.2]{Sak84}, there is a contraction $f:X\to Y$ to a normal surface $Y$ such that $f(\Gamma)=\pt$.

Note that since $\Gamma^2<0$, from adjunction it follows that $\Gamma$ is a rational curve. Then by a similar argument as in proof of \cite[Corollary 3.1]{CHP16} it follows that $Y$ is K\"ahler.
\end{proof}~\\

\begin{theorem}[Abundance]\label{thm:surface-abundance}
        Let $X$ be a normal compact K\"ahler space of dimension $2$ and $(X, \Delta)$ a lc pair. If $K_X+\Delta$ is nef, then it is semi-ample.
\end{theorem}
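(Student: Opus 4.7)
The plan is to reduce to a $\mbQ$-factorial dlt pair via a dlt modification $\mu:(\tilde X,\tilde\Delta)\to(X,\Delta)$ with $K_{\tilde X}+\tilde\Delta=\mu^*(K_X+\D)$. For surfaces such a modification is standard, and can be obtained from the minimal resolution together with the cone and contraction theorems proved in Theorems \ref{thm:surface-cone-thm} and \ref{thm:surface-contraction-thm}; semi-ampleness then descends by Lemma \ref{lem:semiample-preserved}. Hence assume $(X,\Delta)$ is $\mbQ$-factorial dlt, and split on whether $X$ is uniruled.

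If $X$ is uniruled, the Enriques--Kodaira classification of compact complex surfaces forces $X$ to be projective, and the statement reduces to the classical log abundance theorem for projective lc surface pairs \cite{KMM94}. Otherwise $X$ is non-uniruled, so by the discussion preceding Theorem \ref{thm:surface-cone-thm} the canonical class $K_X$ is $\mbQ$-effective, and therefore so is $D:=K_X+\D$. Write $D\sim_{\mbQ} E$ with $E\>0$, and distinguish the three possibilities for the numerical dimension $\nu(D)$.

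If $\nu(D)=0$, then $E\cdot\omega=0$ for any K\"ahler form $\omega$ on $X$, which together with $E\>0$ forces $E=0$, so $D\sim_{\mbQ}0$ is trivially semi-ample. If $\nu(D)=2$, then $D^2>0$ makes $D$ big, so $X$ carries a big $\mbQ$-divisor and is Moishezon; being K\"ahler, it is then projective by Moishezon's theorem, and the projective case applies again. The remaining case is $\nu(D)=1$, where $D^2=0$ but $D\not\equiv 0$; here every component $D_i$ of $\Supp E$ must satisfy $D\cdot D_i=0$ by nefness of $D$ together with $D\cdot E=0$, so $\Supp E$ has a fiber-like negative semidefinite intersection structure. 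My plan for this case is to pass to a log resolution $\pi:Y\to X$, invoke the classical abundance theorem for smooth compact K\"ahler surfaces (already used in the discussion opening Section \ref{sec:surface-mmp}) to produce a fibration $f:Y\to C$ onto a smooth curve contracting $\Supp\pi^*E$, and then run a canonical-bundle-formula argument along $f$ to realize $\pi^*D\sim_{\mbQ}f^*M$ for some $\mbQ$-divisor $M$ on $C$. Since $\dim C=1$, $M$ is automatically semi-ample and hence so is $\pi^*D$; Lemma \ref{lem:semiample-preserved} then gives semi-ampleness of $D$ on $X$.

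I expect the $\nu(D)=1$ case to be the main obstacle: exhibiting the fibration in the non-algebraic K\"ahler setting and pushing through a canonical-bundle-formula for a sub-dlt surface pair without projectivity of the base require care, particularly in controlling the reduced components of $\D$ and in ensuring that the Iitaka-type reduction lifts to an honest morphism rather than merely a meromorphic map.
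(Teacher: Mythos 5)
Your overall architecture (reduce to a good model, dispose of the uniruled case by projectivity, then split on $\nu$) matches the paper for $\nu=0$ and $\nu=2$: the paper likewise passes to the minimal resolution, and for $\nu=2$ uses exactly your chain big $\Rightarrow$ Moishezon $\Rightarrow$ projective $\Rightarrow$ \cite{Kol92}. The problem is the case $\nu(D)=1$, which is the actual content of the theorem and which the paper disposes of by citing \cite[Prop.~5.1 and Lemma~5.1]{CHP16}. Your replacement argument has a genuine gap at its first step: you propose to ``invoke the classical abundance theorem for smooth compact K\"ahler surfaces to produce a fibration $f:Y\to C$ contracting $\Supp\pi^*E$.'' Classical abundance applies to $K_Y$, not to $D=K_X+\Delta$. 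On a log resolution $Y$ the canonical class $K_Y$ need not be nef or effective; and even after passing to the minimal model of $X$, if $\kappa(X)=1$ the elliptic fibration given by $|mK_X|$ has no reason to contract $\Supp E$ (its fibres are cut out by $K_X$, not by $D$), while if $X$ is a torus, a K3 or bielliptic there is no fibration coming from $K_X$ at all. So no fibration is actually produced, and the canonical-bundle-formula step has nothing to run along.

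What is really needed — and what \cite[Lemma~5.1]{CHP16} supplies — is the statement $\kappa(X,D)\geq 1$ for the nef, $\mbQ$-effective class $D$ itself with $D^2=0$, $D\not\equiv 0$. This splits into two sub-cases: if $D\cdot K_X<0$ (equivalently $D\cdot\Delta>0$), Riemann--Roch gives $\chi(mD)=\chi(\mcO_X)-\tfrac12 mD\cdot K_X\to\infty$ while $h^2(mD)=h^0(K_X-mD)=0$ for $m\gg 0$, so $h^0(mD)\geq 2$ and the Iitaka map of $D$ is the desired fibration; if $D\cdot K_X=0$ then $\chi(mD)=\chi(\mcO_X)$ may vanish and one must argue through the Enriques--Kodaira classification of non-uniruled K\"ahler surfaces (torus, K3, bielliptic, properly elliptic) to see that a nef $\mbQ$-effective class with $D^2=0$ moves. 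Neither sub-argument appears in your sketch, and your own caveat that this case is ``the main obstacle'' is accurate: as written the proposal does not prove the $\nu=1$ case. The remaining ingredients (dlt/minimal-resolution reduction, descent of semi-ampleness via Lemma~\ref{lem:semiample-preserved}, $\nu=0$ forcing $D\sim_{\mbQ}0$, and the $\nu=2$ case) are all correct and consistent with the paper.
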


\begin{proof}
        Replacing $X$ by its minimal resolution we may assume that $X$ is smooth. If $\nu(K_X+\Delta)=0 \text{ or } 1$, then from \cite[Pro. 5.1 and Lemma 5.1]{CHP16} it follows that $K_X+\Delta$ is semi-ample.\\
        
         If $\nu(K_X+\Delta)=2$, then by \cite[Corollary 6.8]{Dem96} $K_X+\Delta$ is big; in particular $X$ is a Moishezon manifold. Then by \cite[Theorem 1.6]{Nam02} $X$ is projective, and the semi-ampleness of $K_X+\Delta$ follows from \cite[Theorem 11.1.3]{Kol92}.

\end{proof}

\section{Abundance for 3-fold pairs $(X, \Delta)$ with $\kappa(X, K_X+\Delta)\>1$}
In this section we will prove the log abundance for a $3$-fold log canonical pairs $(X, \Delta)$ such that $\kappa(X, K_X+\Delta)\>1$. The proof is based on the proof of \cite[Theorem 3.1]{DW19} which is originally inspired from the proof of \cite[Theorem 7.8]{Kaw85}. 
\begin{lemma}\label{lem:generic-isomorphism}
Let $f:X\to Y$ be a proper surjective morphism from a normal compact K\"ahler variety $X$ to a normal projective variety $Y$ of $\dim Y>0$. Suppose that $\msL$ is a line bundle on $X$ such that $H^0(X, \msL)\neq 0$ and $\msL|_F\lin \mcO_F$ for general fibers $F$ of $f$. Let $D$ be a non-zero effective Cartier divisor on $X$ given by the zeros of a non-zero global section of $\msL$. Then there exists an effective ample Cartier divisor $H$ on $Y$ such that $f^*H-D$ is effective.

\end{lemma}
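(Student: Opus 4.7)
The plan is to first show that $D$ must be vertical with respect to $f$, and then construct $H$ on the projective base $Y$ whose pullback dominates $D$ componentwise. For a general fiber $F$ of $f$ the restriction $D|_F$ is a well-defined effective Cartier divisor (since general fibers are not contained in the fixed divisor $D$) with class $\msL|_F\lin \mcO_F$. Because any effective divisor linearly equivalent to zero on a compact variety must vanish, we obtain $D|_F=0$. Consequently $D\cap F=\emptyset$ for general $F$, and $Z:=f(\Supp D)$ is a proper closed analytic subset of $Y$.

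Next I would use projectivity of $Y$ to produce an effective ample Cartier divisor $A$ on $Y$ whose support contains $Z$. Fix a very ample line bundle $\mcO_Y(1)$ and let $\msI\subset\mcO_Y$ be the ideal sheaf of the reduced closed subscheme $Z\subset Y$. By Serre vanishing, $H^0(Y,\msI(m))\neq 0$ for all $m\gg 0$, and any nonzero global section provides a divisor $A\in|\mcO_Y(m)|$ with $\Supp A\supset Z$.

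Finally I would compare multiplicities. Write $D=\sum a_i D_i$ with $D_i$ prime and $a_i\in\mbZ_{>0}$. Since $f(D_i)\subset Z\subset\Supp A$, every $D_i$ is contained in $\Supp(f^*A)$, so $b_i:=\mult_{D_i}(f^*A)\ge 1$ for each $i$. Taking $H:=NA$ for any integer $N\ge\max_i a_i$ yields an effective ample Cartier divisor on $Y$ with $\mult_{D_i}(f^*H)=Nb_i\ge a_i$ for every $i$, so $f^*H-D$ is effective, as required.

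I expect no serious obstacle: the only mild subtlety is that $X$ is merely K\"ahler rather than projective, but the argument only uses functoriality of pullback of Cartier divisors under the holomorphic map $f$ together with the projectivity of the base $Y$, which is what supplies the ample divisor $A$ containing the prescribed closed subset $Z$.
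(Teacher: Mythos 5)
Your proposal is correct and follows essentially the same route as the paper: show $D|_F\sim 0$ and effective forces $D|_F=0$ on the general fiber, so $f(\Supp D)$ is a proper closed subset of $Y$, then take an effective ample divisor through it and pass to a high multiple so that $f^*H$ dominates $D$. Your Serre-vanishing construction of $A$ and the explicit componentwise multiplicity comparison are just a more detailed version of the paper's "replace $H$ by a sufficiently high multiple" step.
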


\begin{proof}
	Since $\msL|_F\cong\mcO_F$ for general fibers $F$ of $f$, it follows that $D|_F\sim 0$. We claim that $\Supp (D)\cap F=\emptyset$ for general fibers $F$ of $f$. Indeed, since $D$ is effective, for a general fiber $F$, $D|_F$ is an effective divisor on $F$. Since $D|_F\sim 0$ and $F$ is a normal compact K\"ahler variety, it follows that $D|_F=0$, i.e. $\Supp D\cap F=\emptyset$ for general fibers $F$ of $f$. In particular, $f(D)$ does not dominate $Y$. Let $H$ be an effective ample Cartier divisor $Y$ such that $\Supp H$ contains $f(D)$. Then $\Supp D$ is contained in the $\Supp f^*H$. Thus replacing $H$ by a sufficiently high multiple we may assume that $f^*H-D$ is effective.

\end{proof}

\subsection{Iitaka Fibration}\label{subc:iitaka-fibration}
Let $(X, \Delta)$ be a terminal pair, where $X$ is compact K\"ahler 3-fold. Assume that with $\kappa(X, K_X+\Delta)\geq 0$. Then the Iitaka fibration as in \cite[Theorem 2.1.33]{Laz04a} gives a diagram
                \[
                \xymatrixcolsep{3pc}\xymatrixrowsep{3pc}\xymatrix{  &Y\ar[dl]_\mu\ar[dr]^f&\\
                X\ar@{-->}[rr] && Z}
                \]
satisfying the following conditions:
\begin{enumerate}
        \item $Y$ is a smooth compact K\"ahler $3$-fold, and $Z$ is a smooth projective variety with $\dim Z=\kappa(X, K_X+\Delta)$.
        \item $\mu:Y\to X$ is a log resolution of $(X, \Delta)$ and $f$ is a surjective morphism with connected fibers.
        \item $\kappa(W, (K_Y+\Delta_Y)|_W)=0$, where $W$ is a general fiber of $f$, and $\Delta_Y=\mu^{-1}_*\Delta$ (see \cite[Lemma 2.3]{LP18}).\\
        \end{enumerate}

\begin{theorem}
\label{thm-kappa>0-implies-semiample}
        Let $(X, \Delta)$ be a compact K\"ahler $3$-fold dlt pair. If $\kappa(X, K_X+\Delta)\>1$ and $K_X+\Delta$ is nef, then it is semi-ample. 
\end{theorem}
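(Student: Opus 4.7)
The plan is to exploit the Iitaka fibration of $K_X+\Delta$ to reduce semi-ampleness to Kawamata's base-point free theorem on a lower-dimensional projective variety.

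First, by Corollary \ref{cor:exist-terminal-dlt-modification} combined with Lemma \ref{lem:semiample-preserved}, I may replace $(X,\Delta)$ by a bimeromorphic model and assume that $X$ has $\mbQ$-factorial terminal singularities and $(X,\Delta)$ is dlt. Choose a log resolution $\mu : Y \to X$ of $(X,\Delta)$ with $Y$ K\"ahler such that the Iitaka fibration $f : Y \to Z$ of the line bundle $\mcO_Y(m\mu^*(K_X+\Delta))$ is a morphism, as in Subsection \ref{subc:iitaka-fibration}. Then $Z$ is a smooth projective variety with $\dim Z = \kappa(X, K_X+\Delta) \in \{1,2,3\}$, and writing $\mu^*(K_X+\Delta) = K_Y + \Gamma$ exhibits $(Y,\Gamma)$ as a sub-dlt sub-pair whose negative coefficients occur only on $\mu$-exceptional divisors.

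The core technical step is to show that on a very general fiber $W$ of $f$, the nef restriction $L_W := \mu^*(K_X+\Delta)|_W$ satisfies $L_W \sim_{\mbQ} 0$. By the argument of Subsection \ref{subc:iitaka-fibration} (following \cite[Lemma 2.3]{LP18}) one has $\kappa(W, L_W) = 0$, while $\dim W = 3 - \dim Z \le 2$. The cases $\dim W \in \{0,1\}$ are immediate, since a nef $\mbQ$-divisor with Iitaka dimension zero on a point or on a smooth projective curve is $\mbQ$-linearly trivial. For $\dim W = 2$ I would use adjunction to write $L_W = K_W + \Gamma|_W$ with $(W, \Gamma|_W)$ sub-dlt, absorb the negative (necessarily $\mu$-exceptional) part of $\Gamma|_W$ into a genuine lc boundary on an appropriate bimeromorphic model of $W$, and then invoke the surface abundance theorem (Theorem \ref{thm:surface-abundance}) to conclude $\mbQ$-linear triviality of $L_W$.

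Next I would replace $m$ by a further multiple so that $m(K_X+\Delta)$ is Cartier, $H^0(X, \mcO_X(m(K_X+\Delta))) \ne 0$, and $mL_W \sim \mcO_W$ on general fibers. For any $D \in |m\mu^*(K_X+\Delta)|$ (pulled back to $Y$), Lemma \ref{lem:generic-isomorphism} applied to $f : Y \to Z$ supplies an effective ample Cartier divisor $H$ on $Z$ with $f^*H \geq D$; in particular every irreducible component of $D$ is $f$-vertical, so the entire linear system $|m\mu^*(K_X+\Delta)|$ consists of $f$-vertical effective divisors. Combined with the nefness of $\mu^*(K_X+\Delta)$ and the relative base-point free theorem \cite[Theorem 4.8]{Nak87} applied to the projective morphism $f$ and the sub-dlt pair $(Y, \Gamma)$ (or, equivalently, a Zariski-type lemma for vertical nef divisors over $f$), this produces a $\mbQ$-Cartier divisor $L$ on $Z$ with $\mu^*(K_X+\Delta) \sim_{\mbQ} f^*L$. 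By construction $\kappa(Z, L) = \dim Z$, so $L$ is big; nefness of $f^*L$ forces $L$ to be nef on $Z$; hence $L$ is big and nef on the smooth projective variety $Z$. Kawamata's classical base-point free theorem then shows $L$ is semi-ample on $Z$, whence $\mu^*(K_X+\Delta)$ is semi-ample on $Y$, and Lemma \ref{lem:semiample-preserved} finally yields semi-ampleness of $K_X+\Delta$ on $X$.

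The main obstacle I anticipate is the descent step identifying $\mu^*(K_X+\Delta)$ with $f^*L$: in the projective category this is clean via Koll\'ar's canonical bundle formula, but in the K\"ahler setting one must carefully combine the relative base-point free theorem for the projective morphism $f$ with the sub-dlt structure of $(Y, \Gamma)$ (or invoke a relative MMP over $Z$) in order to promote $f$-numerical triviality into an honest $\mbQ$-linear descent. A secondary technical point is the surface case of the general-fiber analysis, where one must convert the sub-dlt restriction $(W, \Gamma|_W)$ into a genuine lc surface pair before legitimately invoking Theorem \ref{thm:surface-abundance}.
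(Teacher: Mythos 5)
Your setup (terminal/dlt model, Iitaka fibration $f:Y\to Z$, and the key claim that $\mu^*(K_X+\Delta)|_W\sim_{\mbQ}0$ on general fibers $W$) matches the paper's proof of Theorem \ref{thm-kappa>0-implies-semiample}, but your endgame diverges and is where the real gap lies. You propose to descend $\mu^*(K_X+\Delta)$ to a big and nef $\mbQ$-divisor $L$ on $Z$ and then apply Kawamata's base-point-free theorem. Neither half of this works as stated. For the descent, the relative base-point-free theorem \cite[Theorem 4.8]{Nak87} requires $a\,\mu^*(K_X+\Delta)-(K_Y+\Gamma)$ to be nef \emph{and big} over $Z$; here that difference is $(a-1)\mu^*(K_X+\Delta)$, which is numerically trivial over $Z$, so the hypothesis fails — $f$-verticality of the members of the linear system does not by itself produce $\mu^*(K_X+\Delta)\sim_{\mbQ}f^*L$. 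And even granting the descent, ``big and nef on a smooth projective $Z$'' does not imply semi-ample: Kawamata's theorem needs $L-(K_Z+B)$ big and nef for some klt $(Z,B)$, i.e.\ a canonical bundle formula for the K\"ahler fibration $f$, which is precisely the machinery the paper does not have and deliberately avoids. The paper's route is shorter: from Lemma \ref{lem:generic-isomorphism} one gets $0\le D\le f^*H$ with $H$ ample on $Z$, hence $\nu(X,K_X+\Delta)\le\dim Z=\kappa(X,K_X+\Delta)$, so $\nu=\kappa$ by \cite[Proposition 6.10]{Dem96}, and semi-ampleness is then the ``nef and abundant'' criterion \cite[Theorem 4.8]{Fuj11}. (For $\kappa=3$ the paper simply notes $X$ is Moishezon with rational singularities, hence projective by \cite{Nam02}, and quotes \cite{KMM94}.)

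There is also a gap in your surface case of the key claim. You want to ``absorb the negative ($\mu$-exceptional) part of $\Gamma|_W$ into a genuine lc boundary on a bimeromorphic model of $W$,'' but the divisors $E_i|_W$ are merely restrictions to $W$ of $\mu$-exceptional divisors of the $3$-fold; they need not be contractible on the surface $W$, so there is no model of $W$ on which they disappear. The paper's Claim \ref{clm:trivial-restriction} instead runs the $(K_W+\Delta_W)$-MMP with the \emph{effective} boundary $\Delta_W=\Delta_Y|_W$, uses $\kappa=0$ plus surface abundance (Theorem \ref{thm:surface-abundance}) to get $K_W+\Delta_W\sim_{\mbQ}\sum s_jF_j$ with the $F_j$ exceptional for that surface MMP, writes $\mu^*(K_X+\Delta)|_W\sim_{\mbQ}G^+-G^-$, and kills $G^+$ using the negative definiteness of the $F_j$'s against the nefness of $\mu^*(K_X+\Delta)|_W$, then kills $G^-$ by nefness again. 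Your curve case is fine (and slightly more direct than the paper's). To repair your proposal with minimal change, replace both the descent-plus-BPF endgame by the $\nu=\kappa$ argument and the ``absorption'' step by the $G^+ - G^-$ comparison.
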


\begin{proof}
        If $\kappa(X, K_X+\Delta)=3$, then $X$ is a Moishezon space. Since dlt singularities are rational, it follows from \cite[Theorem 1.6]{Nam02} that $X$ is projective algebraic varieties. Then semi-ampleness follows from \cite[Theorem 1.1]{KMM94}.\\

In the following discussion we will consider the remaining cases where $\kappa(X, K_X+\Delta)=1\text{ or } 2$. 
By passing to a terminal model by Theorem \ref{thm:terminal-model} we may assume that $(X, \Delta)$ has $\mbQ$-factorial terminal singularities.
        
                Let the following diagram be the Iitaka fibration of $K_X+\Delta$ as defined above
                \[
                \xymatrixcolsep{3pc}\xymatrixrowsep{3pc}\xymatrix{  &Y\ar[dl]_\mu\ar[dr]^f&\\
                X\ar@{-->}[rr] && Z.}
                \]

Let $\{E_i\}$ be the exceptional divisors of $\mu$, and  set $\Delta_Y=\mu^{-1}_*\Delta$. Then we have    
\begin{equation}\label{eqn:mu-log}
        K_Y+\Delta_Y=\mu^*(K_X+\Delta)+\sum r_iE_i, \quad r_i>0 \text{ for all } i.
        \end{equation}  
        
Let $W$ be a general fiber of $f: Y\to Z$. Since $Y$ is smooth, by generic smoothness it follows that $W$ is smooth and $(W, \Delta_W)$ is a SNC pair, where $\Delta_W=\Delta_Y|_W$. In particular, $(W, K_W+\Delta_W)$ has dlt singularities.

\begin{claim}\label{clm:trivial-restriction}
         $\mu^*(K_X+\Delta)|_W\sim_\mbQ 0$.
\end{claim}

Assuming the claim for the time being first we will complete the proof here. Let $m>0$ be an integer such that $\mu^*(m(K_X+\Delta))$ is a Cartier divisor. Let $D$ be an effective Cartier divisor on $Y$ given by the zeros of a non-zero global section of the line bundle $\mcO_Y(m(\mu^*(K_X+\Delta)))$. Then by Lemma \ref{lem:generic-isomorphism}, there exists an ample Cartier divisor $H$ on $Z$ such that $D\<f^*H$. Therefore $\nu(X, K_X+\Delta)=\nu(Y, \mu^*(m(K_X+\Delta)))=\nu(Y, D)\<\nu(Y, f^*H_2)=\kappa(Y, f^*H_2)=\kappa(Z, H_2)=\dim Z=\kappa(X, K_X+\Delta)$. Thus $\kappa(X, K_X+\Delta)=\nu(X, K_X+\Delta)$ by \cite[Proposition 6.10]{Dem96}. Then by \cite[Theorem 4.8]{Fuj11} $K_X+\Delta$ is semi-ample.\\
\end{proof}

\begin{proof}[Proof of Claim \ref{clm:trivial-restriction}]
We spilt the proof into two cases depending on the dimension of $W$.\\

\textbf{Case I:} $\dim W=1$. In this case $W$ is a smooth proper curve, hence it is a smooth projective algebraic curve. Since $\kappa(W, K_W+\Delta)=0$, it follows that $K_W+\Delta_W\sim_\mbQ 0$. Restricting both side of \eqref{eqn:mu-log} to $W$ we get
\[
        \mu^*(K_X+\Delta)|_W\sim_\mbQ -\sum r_iE_i|_W.
\]
 Since $K_X+\Delta$ is nef and $r_i>0$ for all $i$, we conclude that $E_i|_W=0$ for all $i$, i.e.,  $\mu^*(K_X+\Delta)|_W\sim_\mbQ 0$.\\

 \textbf{Case II:} $\dim W=2$. In this case $W$ is a smooth K\"ahler surface and $(W, \Delta_W)$ is a dlt pair. Now by Theorem \ref{thm:surface-cone-thm}, \ref{thm:surface-contraction-thm} and \ref{thm:surface-abundance} we run the $(K_W+\Delta_W)$-MMP which terminates with a dlt pair $(W', \Delta_{W'})$ such that $K_{W'}+\Delta_{W'}$ is semi-ample. Let $\vphi:W\to W'$ be the induced bimeromorphic morphism such that $K_{W'}+\Delta_{W'}=\vphi_*(K_W+\Delta_W)$. Then we have
\begin{equation}\label{eqn:sigma-log} 
                K_W+\Delta_W=\vphi^*(K_{W'}+\Delta_{W'})+\sum s_jF_j, \mbox{ where } s_j>0 \text{ for all } j,
        \end{equation}  
        where $F_j$'s are the $\vphi$-exceptional divisors.\\ 
        
Since $\kappa(W', K_{W'}+\Delta_{W'})=\kappa(W, K_W+\Delta_W)=0$ and $K_{W'}+\Delta_{W'}$ is semi-ample, $K_{W'}+\Delta_{W'}\sim_\mbQ 0$.         
Therefore we have 
\begin{equation}\label{eqn:sigma-log-trivial}
        K_W+\Delta_W\sim_\mbQ\sum s_jF_j, \mbox{ where } s_j>0 \text{ for all } j.
        \end{equation}          
From adjunction on $W$ we also get that
\begin{equation}\label{eqn:mu-log-adj}
        K_W+\Delta_W\sim_\mbQ\mu^*(K_X+\Delta)|_W+\sum r_iE_i|_W
        \end{equation}
Thus we have
\begin{equation}\label{eqn:mu-sigma-relation}
        \mu^*(K_X+\Delta)|_W\sim_\mbQ\sum s_jF_j-\sum r_iE_i|_W=G^+-G^-,
        \end{equation}          
such that $G^+\>0$ and $G^-\>0$ are two effective $\mbQ$-divisors on $W$ with no common irreducible components.\\

We will show that $G^+=G^-=0$. On the contrary first assume that $G^+\neq 0$. It is clear that $G^+$ is $\vphi$-exceptional. Now by \cite[Theorem III.2.1]{BHPV04} the intersection matrix of the exceptional divisors of $\vphi: W\to W'$ is a negative definite matrix. Thus in particular $(G^+)^2<0$.\\
On the other hand, since $\mu^*(K_X+\Delta)|_W$ is nef, from relation \eqref{eqn:mu-sigma-relation} we get $G^+\cdot (G^+-G^-)\>0$, which is a contradiction.       Therefore $G^+=0$ and $\mu^*(K_X+\Delta)|_W\sim_\mbQ -G^-$. Again since $\mu^*(K_X+\Delta)|_W$ is nef, $G^-=0$. Therefore we have
\begin{equation}\label{eqn:mu-linearly-0}
        \mu^*(K_X+\Delta)|_W\sim_\mbQ 0.
        \end{equation}

\end{proof}

\begin{corollary}\label{cor:lc-abundance}
        Let $(X, \Delta)$ be a compact K\"ahler $3$-fold lc pair. If $\kappa(X, K_X+\Delta)\>1$ and $K_X+\Delta$ is nef, then it is semi-ample.
\end{corollary}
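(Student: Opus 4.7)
The plan is to reduce the log canonical case to the dlt case already settled in Theorem \ref{thm-kappa>0-implies-semiample}, by passing to a dlt modification and then descending semi-ampleness. Since $(X,\Delta)$ is lc, some positive integer multiple $m(K_X+\Delta)$ is a Cartier divisor, and in particular a line bundle. So the question reduces to showing that $m(K_X+\Delta)$ is base-point free for some such $m$.

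First I would invoke Theorem \ref{thm:dlt-model} to obtain a projective bimeromorphic morphism $f\colon (X',\Delta')\to(X,\Delta)$ from a $\mathbb{Q}$-factorial compact K\"ahler $3$-fold dlt pair $(X',\Delta')$ such that
\[
K_{X'}+\Delta' \;=\; f^*(K_X+\Delta).
\]
(The K\"ahler property of $X'$ is guaranteed by the construction of the dlt modification: every step of the relative MMP used in the proof of Theorem \ref{thm:dlt-model} preserves the K\"ahler property via \cite[Prop. 1.3.1]{Var89}.) Since the pullback of a nef class by a proper morphism is nef, $K_{X'}+\Delta'$ is nef. Moreover, because $f$ is bimeromorphic and $K_{X'}+\Delta'=f^*(K_X+\Delta)$, we have
\[
\kappa(X',K_{X'}+\Delta') \;=\; \kappa(X,K_X+\Delta) \;\geq\; 1.
\]

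Next, I would apply Theorem \ref{thm-kappa>0-implies-semiample} to the dlt pair $(X',\Delta')$ to conclude that $K_{X'}+\Delta'$ is semi-ample. Choosing $m$ sufficiently divisible so that $m(K_X+\Delta)$ is a genuine Cartier divisor (hence a line bundle $\mathcal L$) and $m(K_{X'}+\Delta')=f^*\mathcal L$ is base-point free, we are in position to descend semi-ampleness.

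Finally, I would apply Lemma \ref{lem:semiample-preserved} to the proper surjective morphism $f\colon X'\to X$ and the line bundle $\mathcal L=\mathcal O_X(m(K_X+\Delta))$: since $f^*\mathcal L$ is semi-ample, so is $\mathcal L$, i.e.\ $K_X+\Delta$ is semi-ample. There is essentially no obstacle in this proof — the work has been absorbed into the dlt case and the descent lemma; the only point that requires minimal care is to make sure $m$ is chosen divisible enough so that $m(K_X+\Delta)$ is actually Cartier on $X$, which is possible because $(X,\Delta)$ is lc and $K_X+\Delta$ is $\mathbb{Q}$-Cartier by hypothesis.
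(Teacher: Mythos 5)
Your proposal is correct and follows exactly the paper's route: pass to a dlt modification via Theorem \ref{thm:dlt-model} (which is crepant, so nefness and Kodaira dimension are preserved), apply Theorem \ref{thm-kappa>0-implies-semiample}, and descend semi-ampleness along the proper bimeromorphic morphism using Lemma \ref{lem:semiample-preserved}. The paper compresses this into one line, but the steps you spell out are precisely the intended ones.
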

\begin{proof}
        It result follows immediately from Theorem \ref{thm-kappa>0-implies-semiample} by passing to a dlt model using Theorem \ref{thm:dlt-model}. 
\end{proof}

\section{ $\mathbb{Q}$-linear system}

Let $X$ be a normal compact analytic variety and $L$ a $\mbQ$-Cartier divisor on $X$. Then the $\mathbb{Q}$-linear system $|L|_{\mathbb{Q}}$ is defined as the set of all effective $\mbQ$-Cartier divisors $L'\>0$ such that $m(L-L')$ is an integral Weil divisor for some positive integer $m\in\mbN$ and that $m(L-L')\sim 0$. The \textit{stable of base locus} of $L$ is defined as 
\[\SBs|L|:=\bigcap_{m\in\mbN}\Bs|mkL|,\] 
where $k$ is the small positive integer such that $kL$ is Cartier.\\
Note that since $X$ is compact, it follows that $\SBs|L|$ is an analytic subset of $X$, in fact $\SBs|L|=\Bs|mkL|$ for some $m\in\mbN$ (see \cite[Pro. 2.1.21]{Laz04a}).
The \emph{base locus} of the $\mbQ$-linear system $|L|_{\mbQ}$ is defined as
\[
        \Bs|L|_{\mbQ}=\bigcap_{L'\in|L|_{\mbQ}}\Supp L'.
\]
Note that $\SBs|L|=\Bs|L|_\mbQ$.\\
For a $\mbQ$-Cartier divisor $D$ on $X$, we define 
\[D+|L|_{\mathbb{Q}} = \{D+P \ | \ P\in |L|_{\mathbb{Q}} \}.\] 
 Assume that $|L|_{\mathbb{Q}}$ is not empty. Then  for any prime Weil divisor $E$ on $X$, we define the \emph{multiplicity} of $D+|L|_{\mathbb{Q}}$ along $E$ as 
 \[\mult_E \, (D+|L|_{\mathbb{Q}}) = \inf\{\mult_E \, (D+P) \ | \   P \in |L|_{\mathbb{Q}}  \}.\]   
We call $D+|L|_{\mathbb{Q}}$  a  \emph{boundary} (resp. \emph{pure boundary}, resp. \emph{sub-boundary}, and resp. \emph{pure sub-boundary}) if its multiplicity along every prime divisor lies in the interval $[0,1]$ (resp. $[0,1)$, resp. $(-\infty,1]$, and resp.$(-\infty, 1)$).\\
 We also consider the pair $(X, D+|L|_{\mathbb{Q}})$.  For a proper \emph{generically finite} morphism $f:X'\to X$ from a normal variety $X'$, we define the log pullback of $K_X+D+|L|_{\mathbb{Q}}$ as $K_{X'}+D'+|L'|_{\mathbb{Q}}$, where  \[L'=f^*L  \quad\mbox{and } K_{X'}+D'=f^*(K_X+D).\]    

We then define the discrepancy of a divisorial valuation $\nu$ on $X$ for the pair $(X, D+|L|_{\mathbb{Q}})$ as follows: choose a proper bimeromorphic morphism  $f:X'\to X$ from a smooth variety $X'$ such that $\nu$ corresponds to a prime divisor $E$ on $X'$. Then the \emph{discrepancy} of $\nu$ with respect to $(X, D+|L|_{\mathbb{Q}})$ is defined as  \[a(\nu; X, D+|L|_{\mathbb{Q}})=a(E; X, D+|L|_{\mathbb{Q}}):= - \mathrm{mult}_E\, (D'+|L'|_{\mathbb{Q}}).\]
  The pair $(X, D+|L|_{\mathbb{Q}})$ is called \emph{sub-klt} if the discrepancies of all divisorial valuations are strictly greater than $-1$.  It is called \emph{klt} if it is sub-klt and $D$ is effective. Other singularities are defined similarly.

Now suppose that $L$ is a Cartier divisor. The we can decompose it as: \[|L|=|M|+F,\] where $|M|$ is the movable part, i.e. $\Bs|M|$ does not have any codimension $1$ component, and $F\>0$ is the fixed part. Note that the divisor $M$ may not be Cartier here, so in general $|M|$ is a linear system of Weil divisors.\\
If $D$ is another $\mbQ$-Cartier divisor, then we define the \emph{fixed part} of $D+|L|$  as $D+F$, and the \emph{movable part} as $|M|$.\\

In the remainder of this section we establish some properties of $\mathbb{Q}$-linear systems of the form  $D+|L|_{\mathbb{Q}}$ and the corresponding pairs $(X, D+|L|_\mbQ)$.

\begin{lemma}
\label{lem-sub-boundary-system-implies-sub-boundary}
Let $X$ be normal $\mbQ$-factorial compact analytic variety, $|L|_\mbQ$ a $\mbQ$-linear system on $X$, and $D$ is a $\mbQ$-divisor on $X$. If $D+|L|_\mbQ$ is a pure sub-boundary, then there exist an element $N\in |L|_{\mathbb{Q}}$ such that $D+N$ is a pure sub-boundary.
\end{lemma}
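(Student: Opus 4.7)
The plan is to decompose $|L|_\mbQ$ into its fixed divisorial part plus a ``movable'' $\mbQ$-linear system with no divisorial base locus, and then use Bertini applied to a sufficiently divisible Cartier multiple to find a generic member whose coefficients are small. First, for each prime divisor $E$ on $X$ set $g_E := \mult_E(|L|_\mbQ)$; only finitely many primes have $g_E>0$, since any such $E$ must appear in the support of every (hence of a single fixed) element of $|L|_\mbQ$. The effective $\mbQ$-divisor $A := \sum_E g_E\cdot E$ therefore has finite support, and with $P := L-A$ one verifies directly that $|L|_\mbQ = A + |P|_\mbQ$ and $\mult_E(|P|_\mbQ)=0$ for every prime $E$. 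Writing $D' := D+A$, the pure sub-boundary hypothesis becomes $\mult_E(D')<1$ for every prime $E$, and the finite set $\mcE := \{E : \mult_E(D')\neq 0\}$ consists of the components of $D$ together with the divisorial part of the stable base locus of $|L|_\mbQ$. It suffices to produce $B\in |P|_\mbQ$ with $\mult_E(D'+B)<1$ for every prime $E$, as then $N := A+B$ works.

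Fix $m_0\in\mbN$ with $m_0 P$ Cartier and $|m_0 P|\neq\emptyset$, and for each $m$ divisible by $m_0$ decompose the complete linear system $|mP|$ of integral Weil divisors as $|M_m|+F_m$, where $F_m$ is the fixed divisorial part. Two structural observations drive the argument. First, if $N\in|mP|$ avoids a prime $E$ then $kN\in|kmP|$ also avoids $E$, so $\Supp(F_{km})\subseteq \Supp(F_m)$; hence as $m$ ranges over positive multiples of $m_0$ the support of $F_m$ stabilizes inside the finite set $\Supp(F_{m_0})$. Second, the sequence $m\mapsto \mult_E(F_m)/m$ is non-increasing (along multiples of $m_0$) with infimum equal to $\mult_E(|P|_\mbQ)=0$, so $\mult_E(F_m)/m\to 0$ for every prime $E$. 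Since $\mcE\cup\Supp(F_{m_0})$ is finite, we may choose $m$ divisible by $m_0$ large enough that $\mult_E(F_m) < m-1$ for every $E\in\Supp(F_m)$ and $\mult_{E_i}(F_m)/m < 1-\mult_{E_i}(D')$ for every $E_i\in\mcE$. Since $|M_m|$ has no fixed divisorial component, for each $E_i\in\mcE$ the condition $E_i\not\subset \Supp(M)$ cuts out a non-empty Zariski open subset of $|M_m|$; intersecting these finitely many open conditions with the locus of reduced divisors (Bertini's theorem, valid for normal compact analytic varieties after passage to a desingularization, cf.\ \cite[Cor.~II.5]{Man82}) yields $M\in|M_m|$ with $\mult_E(M)\in\{0,1\}$ for every prime $E$ and $\mult_{E_i}(M)=0$ for every $E_i\in\mcE$.

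Set $B := (M+F_m)/m\in|P|_\mbQ$ and $N := A+B\in|L|_\mbQ$. The verification is immediate: for $E_i\in\mcE$, using $\mult_{E_i}(M)=0$, one has $\mult_{E_i}(D+N) = \mult_{E_i}(D') + \mult_{E_i}(F_m)/m < \mult_{E_i}(D') + (1-\mult_{E_i}(D')) = 1$; for every prime $E\notin\mcE$, the reducedness of $M$ gives $\mult_E(D+N) = \mult_E(B) \leq (1+\mult_E(F_m))/m < 1$ by the first bound on $m$ (and trivially when $E\notin\Supp(F_m)$, provided $m\geq 2$). Thus $D+N$ is a pure sub-boundary. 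The main obstacle is that one must control the coefficients of $N$ at every one of the infinitely many prime divisors of $X$, not merely at the finitely many components of $D$; this is precisely what is achieved by the two ingredients above, namely the stabilization of $\Supp(F_m)$ inside a fixed finite set (which handles the primes that sit in the non-stabilized fixed part of $|mP|$) together with Bertini reducedness of the mobile part (which bounds all remaining coefficients, after division by the large integer $m$, strictly below $1$).
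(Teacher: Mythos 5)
Your proof follows essentially the same route as the paper's: decompose the integral systems $|mP|$ (the paper uses $|mkL|$ directly) into movable and fixed parts, use the fact that the normalized fixed-part multiplicities $\mult_E(F_m)/m$ decrease along divisibility chains to the asymptotic multiplicity, choose $m$ large enough that the fixed part contributes strictly less than $1-\mult_E(D)$ along each of the finitely many relevant primes, and then invoke a Bertini-type theorem (the paper likewise cites Manaresi, working on the smooth locus away from the base locus rather than on a desingularization) to produce a reduced member of the movable part avoiding those primes. The final bookkeeping, including the observation that the remaining components of the Bertini member contribute only $1/m<1$, is the same.

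The one step where you genuinely deviate is the preliminary extraction $L=A+P$ with $A=\sum_E g_E\cdot E$, $g_E=\mult_E(|L|_{\mbQ})$, and this step is not quite justified: each $g_E$ is an infimum of a set of rational numbers and you give no argument that it is rational (asymptotic orders of vanishing of $\mbQ$-divisors need not be), so $P=L-A$ need not be a $\mbQ$-divisor and the instruction ``fix $m_0$ with $m_0P$ Cartier'' may be vacuous. Fortunately the detour is unnecessary: running your conditions (a) and (b) directly on $|L|_{\mbQ}$ — requiring $\mult_E(D)+\mult_E(F_m)/m<1$ for the finitely many primes in $\Supp D\cup\Supp F_{m_0}$, which holds for $m$ large precisely because $\mult_E(D+|L|_{\mbQ})<1$, and noting that every other prime receives multiplicity at most $1/m$ from the reduced mobile member — gives the same conclusion and is exactly what the paper does. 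So the argument is correct once the $A$/$P$ decomposition is either removed or replaced by a rational approximation.
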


\begin{proof}

 First replacing $L$ by an effective $\mbQ$-Cartier divisor in $|L|_{\mbQ}$ we may assume that $L$ is effective. Let $k$ be the smallest positive integer such that $kL$ is Cartier. Then for any $m\>1$ we decompose $|mkL|=|M_m|+F_m$ into movable and fixed parts. Note that these fixed parts give a monotonically decreasing sequence of divisors $\{\frac{1}{mk}F_m\}$ on $X$, i.e., $\frac{1}{mk}F_m\>\frac{1}{(m+1)k}F_{m+1}$ for all $m\>1$. Now since $D+|L|_\mbQ$ is a pure sub-boundary, there is a $m_0\gg 0$ such that the coefficients of the divisor $D+\frac{1}{m_0k}F_{m_0}$ along each of its irreducible component is strictly less than $1$. Now we focus on the movable part $\frac{1}{m_0k}|M_{m_0}|$. We make the following claim.
 \begin{claim}\label{clm:bertini}
        There is an effective reduced divisor $N'\>0$ in the linear system of $|M_{m_0}|$, i.e., the coefficients of $N'$ are all $1$, such that the support of $N'$ does not contain any irreducible components of $D+\frac{1}{m_0k}F_{m_0}$. 
 \end{claim}
 Assuming the claim for the time being we will complete the proof first. Set $N:=\frac{1}{m_0k}N'+\frac{1}{m_0k}F_{m_0}\in|L|_\mbQ$; then clearly $D+N$ is a pure sub-boundary.  
 
 \begin{proof}[Proof of Claim \ref{clm:bertini}]
        First note that, since $X$ is compact, the linear system $|mkL|$ is finite dimensional for all $m\>1$; more specifically, $|mkL|$ is given by the finite dimensional $\mbC$-vector space $H^0(X, \mcO_X(kL))$ for all $m\>1$. In particular, $|M_m|$ is a finite dimensional linear system of Weil divisors on $X$, for all $m\>1$. Let $X_{\sing}$ be the singular locus of $X$; note that $\codim_X X_{\sing}\>2$, since $X$ is normal. Now consider the closed subset $Y=\Supp(D+\frac{1}{m_0k}F_{m_0})\cup\Bs|M_{m_0}|\cup X_{\sing}\subset X$ and set $U=X\setminus Y$. Then $U$ is a smooth complex space. Notice that the restricted linear system $|M_{m_0}|_U$ is a finite dimensional linear sub-system of $|(M_{m_0}|_U)|$ of Cartier divisors (since $U$ is smooth). Furthermore, $|M_{m_0}|_U$ is base-point free on $U$. Then by a Bertini type theorem for complex spaces \cite[Theorem II.5]{Man82}, a general enough member $N'_U$ of $|M_{m_0}|_U$ is a reduced divisor on $U$. Note that a very general member of $|M_{m_0}|$ does not contain any codimension $1$ component of $X\setminus U$, and thus $N'_U$ extends to a reduced divisor $N'$ on $X$ such that $N'\in|M_{m_0}|$. 
 \end{proof}

\end{proof}~\\

\begin{lemma}
\label{lem-klt-check-boundary}
Let $X$ be a normal $\mathbb{Q}$-factorial compact  analytic variety, $|L|_\mbQ$ a $\mbQ$-linear system on $X$, and $D$ is a $\mbQ$-divisor on $X$. Then the following assertions are all equivalent:
\begin{enumerate}
\item $(X,D+|L|_{\mathbb{Q}})$ is sub-klt;
\item For any projective bimeromorphic morphism $f:X'\to X$ with $X'$ normal, $ D'+|L'|_{\mathbb{Q}}$  is a pure sub-boundary, where $D'+|L'|_{\mathbb{Q}}$ is the log pullback of $D+|L|_{\mathbb{Q}}$, i.e., $L'=f^*L$ and $K_{X'}+D'=f^*(K_X+D)$. 

\end{enumerate}
\end{lemma}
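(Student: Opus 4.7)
The plan is to prove both implications by reducing to a smooth bimeromorphic model where discrepancies are computed directly, and tracking how the multiplicity of $D'+|L'|_\mbQ$ changes under further bimeromorphic pullback. The central technical fact I would establish and then use repeatedly is that if $\pi:\tilde X' \to X'$ is a projective bimeromorphic morphism between normal varieties which is an isomorphism at the generic point of a prime divisor $E\subset X'$ (with strict transform $\tilde E$), then
\[
\mult_E(D'+|L'|_\mbQ) \;=\; \mult_{\tilde E}(\tilde D + |\tilde L|_\mbQ),
\]
where $K_{\tilde X'}+\tilde D = \pi^*(K_{X'}+D')$ and $\tilde L = \pi^*L'$.

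For $(1)\Rightarrow (2)$, I would fix an arbitrary projective bimeromorphic $f:X'\to X$ with $X'$ normal and a prime divisor $E$ on $X'$. Using Hironaka's resolution of singularities I obtain a projective bimeromorphic $\pi:\tilde X'\to X'$ with $\tilde X'$ smooth such that $\pi$ is an isomorphism at the generic point of $E$; let $\tilde E$ denote its strict transform. The composition $g=f\circ\pi$ realizes the divisorial valuation $\nu_{\tilde E}$ on $X$, so assumption (1) gives $a(\nu_{\tilde E};X,D+|L|_\mbQ)=-\mult_{\tilde E}(\tilde D+|\tilde L|_\mbQ)>-1$, where $K_{\tilde X'}+\tilde D = g^*(K_X+D)$ and $\tilde L=g^*L$. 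Combined with the identity above, this yields $\mult_E(D'+|L'|_\mbQ)<1$ for every prime $E$, so $D'+|L'|_\mbQ$ is a pure sub-boundary. For the converse $(2)\Rightarrow(1)$, any divisorial valuation $\nu$ on $X$ is realized as a prime divisor $E$ on a smooth projective bimeromorphic model $f:X'\to X$ via Hironaka's resolution; hypothesis (2) applied to this $f$ gives $\mult_E(D'+|L'|_\mbQ)<1$, hence $a(\nu;X,D+|L|_\mbQ)>-1$, and sub-kltness follows.

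The main obstacle is verifying the multiplicity identity. I would split both sides as $\mult_E D' + \inf_{P'\in|L'|_\mbQ}\mult_E P'$ and $\mult_{\tilde E}\tilde D + \inf_{\tilde P\in|\tilde L|_\mbQ}\mult_{\tilde E}\tilde P$. The equality of the first pieces is immediate because the difference $\tilde D-\pi^*D'=\pi^*K_{X'}-K_{\tilde X'}$ is $\pi$-exceptional and therefore has coefficient zero along $\tilde E$, while $\mult_{\tilde E}(\pi^*D')=\mult_E D'$. For the infima I would check that $P'\mapsto\pi^*P'$ and $\tilde P\mapsto\pi_*\tilde P$ are mutually inverse bijections between $|L'|_\mbQ$ and $|\tilde L|_\mbQ$ preserving the respective coefficients along $E$ and $\tilde E$; this uses that $\pi_*\pi^*$ is the identity on $\mbQ$-Cartier $\mbQ$-divisors, so the push-pull operations respect $\mbQ$-linear equivalence, and the fact that only the strict transform $\tilde E$ contributes to the coefficient of $E$ after $\pi_*$.
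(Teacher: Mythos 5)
Your argument is correct and is exactly the unwinding of the definitions that the paper's one-line proof (``It follows easily from the definition above'') alludes to: the whole content is the invariance of $\mult_E(D'+|L'|_{\mbQ})$ under further bimeromorphic pullback, and your verification of it --- including the correspondence between $|L'|_{\mbQ}$ and $|\tilde L|_{\mbQ}$ via $\pi^*$ and $\pi_*$ and the passage to a smooth model computing the discrepancy --- is sound. One cosmetic point: since $X'$ is only assumed normal, $D'$ and $K_{X'}$ need not individually be $\mbQ$-Cartier, so the expression $\tilde D-\pi^*D'=\pi^*K_{X'}-K_{\tilde X'}$ is not literally defined; the equality $\mult_{\tilde E}\tilde D=\mult_E D'$ you need follows instead directly from the fact that $\pi$ is an isomorphism over a neighbourhood of the generic point of $E$, where the two log pullbacks of $K_X+D$ coincide.
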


\begin{proof}
	It follows easily from the defintion above.

\end{proof}

\begin{lemma}
\label{lem-subklt-system-implies-subklt}
Let $X$ be a normal $\mathbb{Q}$-factorial compact analytic surface, $|L|_\mbQ$ a $\mbQ$-linear system on $X$, and $D$ is a $\mbQ$-divisor on $X$. If $(X, D+|L|_\mbQ)$ is sub-klt, then there is an element $N\in |L|_{\mathbb{Q}}$ such that $(X,D+N)$ is sub-klt.
\end{lemma}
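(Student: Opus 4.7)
My plan is to reduce the assertion to Lemma~\ref{lem-sub-boundary-system-implies-sub-boundary} by passing to an appropriately chosen log resolution, and then transfer the conclusion back to $X$ using the surface-specific fact that the intersection matrix of the exceptional components of a bimeromorphic morphism is negative definite.

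First, I would pick $k\in\mbN$ so that $kL$ is Cartier, and decompose $|kL|=|M|+F$ into mobile and fixed parts. Then I would choose a projective bimeromorphic morphism $\mu:Y\to X$ from a smooth surface $Y$ which is simultaneously a log resolution of $(X,D+F)$ and a principalization of the base ideal of $|M|$, so that $\mu^*|M|=|M_Y|+F'$ with $|M_Y|$ base-point free, and for a general $M'_Y\in|M_Y|$ the divisor $D_Y+\mu^*F+F'+M'_Y+\Ex(\mu)$ has simple normal crossing support; here $D_Y$ is defined by $K_Y+D_Y=\mu^*(K_X+D)$. Set $L_Y:=\mu^*L$. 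By Lemma~\ref{lem-klt-check-boundary}, the hypothesis that $(X,D+|L|_\mbQ)$ is sub-klt is equivalent to $D_Y+|L_Y|_\mbQ$ being a pure sub-boundary on $Y$.

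Second, I would run the argument of the proof of Lemma~\ref{lem-sub-boundary-system-implies-sub-boundary} on $Y$. After replacing $k$ by a sufficiently large multiple, a general reduced member $M'_Y\in|M_Y|$ (obtained via the Bertini theorem \cite[Theorem II.5]{Man82}) produces
\[
N_Y := \frac{1}{k}\bigl(\mu^*F+F'+M'_Y\bigr)\in|L_Y|_\mbQ
\]
such that $D_Y+N_Y$ has simple normal crossing support with all coefficients strictly less than $1$. Hence $(Y,D_Y+N_Y)$ is sub-klt.

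Finally, I would set $N:=\mu_*N_Y$, an effective $\mbQ$-divisor $\mbQ$-linearly equivalent to $L$, and hence in $|L|_\mbQ$. The crucial identity $\mu^*N=N_Y$ holds because (a) the difference $\mu^*N-N_Y$ is $\mu$-exceptional, since the non-exceptional parts of both divisors coincide with $\mu^{-1}_*N$, (b) it is $\mbQ$-linearly equivalent to zero since $\mu^*N\sim_{\mbQ}\mu^*L\sim_{\mbQ} N_Y$, and (c) on the smooth surface $Y$ the exceptional intersection form is negative definite, so a $\mbQ$-linearly trivial $\mu$-exceptional divisor must vanish. Consequently $\mu^*(K_X+D+N)=K_Y+D_Y+N_Y$, and the discrepancy of every divisorial valuation on $(X,D+N)$ equals that on $(Y,D_Y+N_Y)$, hence is strictly greater than $-1$. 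This shows that $(X,D+N)$ is sub-klt.

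The main obstacle I expect is precisely the descent step, namely justifying $\mu^*N=N_Y$. This identity relies essentially on the negative definiteness of the exceptional intersection lattice on a surface and would fail in higher dimensions; it is exactly where the surface hypothesis gets used, since otherwise pushing forward a well-chosen $N_Y$ would introduce exceptional corrections that could destroy the sub-klt property.
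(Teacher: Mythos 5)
Your overall strategy --- pass to a resolution, produce a member of the pulled-back $\mbQ$-linear system whose sum with the log pullback of $D$ is an SNC pure sub-boundary, then push forward --- is the same as the paper's, but your second step contains a genuine circularity. The coefficient of $D_Y+\tfrac1k(\mu^*F+F')$ along a prime divisor $E$ of $Y$ is only guaranteed to be $<1$ if $\tfrac1k\mult_E(\mu^*F+F')$ is sufficiently close to $\inf\{\mult_E P : P\in|L_Y|_{\mbQ}\}$, an infimum that is in general only approached as $k\to\infty$; this is exactly why you must ``replace $k$ by a sufficiently large multiple.'' But your resolution $\mu$ is a principalization of the mobile part of $|kL|$ for that particular $k$: enlarging $k$ changes the decomposition $|kL|=|M|+F$, forces a new principalization, and hence creates new exceptional divisors along which the coefficient bound must again be checked --- and for those the chosen $k$ may again be too small. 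As written the quantifiers cannot be ordered consistently. The paper avoids this by reversing the order of operations: it fixes one log resolution $f:X'\to X$ adapted to $D$, $\Bs|L|_{\mbQ}$ and $\Ex(f)$, decomposes $|mL'|$ into mobile and fixed parts \emph{on $X'$} for $m\gg 0$, and then invokes Zariski--Fujita (\cite[Corollary 1.14]{Fuj83}): on a fixed smooth compact surface a movable linear system has finite base locus and is therefore base point free, so Bertini applies with no further blow-up. Your argument becomes correct if you adopt this order; as long as the decomposition is taken on $X$ first and the resolution is tied to it, the step fails.

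Relatedly, your diagnosis of where the surface hypothesis enters is misplaced. The descent identity $\mu^*N=N_Y$ does not require negative definiteness: since $mL$ is Cartier and $\mu_*\mcO_Y=\mcO_X$, one has $H^0(Y,\mcO_Y(m\mu^*L))=H^0(X,\mcO_X(mL))$, so every element of $|L_Y|_{\mbQ}$ is already the pullback of an element of $|L|_{\mbQ}$; this holds in any dimension. (Your negative-definiteness argument is a correct alternative, and it is to your credit that you made the descent explicit at all --- the paper leaves it implicit.) The genuinely two-dimensional input is the Zariski--Fujita step above, which is what allows the mobile part to be made base point free on a \emph{fixed} resolution; in higher dimensions that fails, and that, not the descent, is where the surface hypothesis does its work.
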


\begin{proof}
Let $f: X' \to X$ be a resolution of singularities of $X$  such that $\Ex(f)$, $f^*D, f^*L$ and  $f^{-1}\Bs(|L|_{\mathbb{Q}})$ are all simple normal  crossing divisors and they intersect each other transversally. Let $D'+|L'|_\mbQ$ be the log pullback of $D+|L|_\mbQ$, i.e.,
\[
        K_{X'}+D'=f^*(K_X+D)\quad\mbox{and}\quad L'=f^*L.
\]
Then by Lemma \ref{lem-klt-check-boundary}, $D'+|L'|_\mbQ$ is a pure sub-boundary. We need to show that there is some $N'\in |L'|_{\mathbb{Q}}$ such that $D'+N'$ is a pure sub-boundary and it also has simple normal crossing support.\\
By a similar argument as in the proof of Lemma \ref{lem-sub-boundary-system-implies-sub-boundary} we see that there is a sufficiently large and divisible positive integer $m$ such that $D'+\frac{1}{m}F'$ is a pure sub-boundary, where $|mL|=|M'|+F'$ is the decomposition of $mL$ into movable and fixed parts. Indeed, since $m$ is sufficiently large and divisible and $X$ is compact, it follows that $\Bs|mL|=\Bs|L|_\mbQ$. Then from the construction of $D'$ and $L'$ above it follows that $D'+\frac{1}{m}F'$ is a simple normal crossing divisor. Note that since $(X, D+|L|_{\mbQ})$ is sub-klt and $m$ is sufficiently large, it follows from the arguments in the proof of Lemma \ref{lem-sub-boundary-system-implies-sub-boundary} that $D'+\frac{1}{m}F'$ is a pure sub-boundary. Now $M'$ is a  movable Cartier divisor on the compact complex surface $X'$. In particular,   $\Bs|M'|$ is a finite set. Thus by \cite[Corollary 1.14]{Fuj83}, $\Bs|M'|=\emptyset$, i.e., $|M'|$ is base point free. Then by a Bertini type theorem \cite[Corollary II.7]{Man82}, there is a smooth divisor $N''\in|M'|$ such that the support of $N''$ does not contain any component of $D+\frac{1}{m}F'$ and   $D+\frac{1}{m}N''+\frac{1}{m}F'$ has simple normal crossing support. Set $N':=\frac{1}{m}(N''+F')\in|L'|_\mbQ$; then clearly $D+N'$ is a pure sub-boundary.      

 \end{proof}

In the following two lemmas we compare $D+|L|_{\mathbb{Q}}$ with its log pullback by a generically finite morphism.

\begin{lemma}
\label{lem-finite-sub-boundary}
Let $X$ be a normal $\mathbb{Q}$-factorial compact analytic variety, $|L|_\mbQ$ a $\mbQ$-linear system on $X$, and $D$ is a $\mbQ$-divisor on $X$. Let $p:X'\to X$ be a proper finite morphism with $X'$ normal. Let $D'+|L'|_{\mathbb{Q}}$  be  the log pullback of $D+|L|_{\mathbb{Q}}$ under $p$.  Then $ D+|L|_{\mathbb{Q}}$ is a pure sub-boundary if and only if $ D'+|L'|_{\mathbb{Q}} $ is a pure sub-boundary.
\end{lemma}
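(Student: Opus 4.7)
The strategy is to prove both implications by combining the ramification formula of Lemma~\ref{lem:ramification-formula} with Lemma~\ref{lem-sub-boundary-system-implies-sub-boundary}, which extracts from a pure sub-boundary $\mathbb Q$-linear system a single representative divisor that is itself a pure sub-boundary. I first note that although Lemma~\ref{lem-sub-boundary-system-implies-sub-boundary} is stated under a $\mathbb Q$-factoriality hypothesis, an inspection of its proof shows only compactness, normality, and the $\mathbb Q$-Cartier property of $L$ are used (the movable/fixed decomposition of a Cartier linear system and the Bertini-type argument). Hence that lemma applies to $X'$ as well, since $L'=p^*L$ is $\mathbb Q$-Cartier.

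For the direct implication, I apply Lemma~\ref{lem-sub-boundary-system-implies-sub-boundary} on $X$ to produce $P\in|L|_{\mathbb Q}$ for which $D+P$ is a pure sub-boundary. Setting $P':=p^*P\in|L'|_{\mathbb Q}$, the ramification formula yields, for every prime divisor $E'$ on $X'$ lying over $E:=p(E')$ with ramification index $e=e_{E'}\geq 1$,
\[
1-\mult_{E'}(D'+P')\;=\;e\bigl(1-\mult_E(D+P)\bigr)\;>\;0,
\]
so $D'+P'$, and hence $D'+|L'|_{\mathbb Q}$, is a pure sub-boundary.

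For the reverse implication, Lemma~\ref{lem-sub-boundary-system-implies-sub-boundary} applied on $X'$ yields $P'\in|L'|_{\mathbb Q}$ with $D'+P'$ a pure sub-boundary. Set $P:=\tfrac{1}{d}\,p_*P'$ where $d=\deg p$; applying $p_*$ to a relation $m(L'-P')\sim 0$ and using $p_*p^*L=dL$ gives $md(L-P)\sim 0$, so $P\in|L|_{\mathbb Q}$ ($\mathbb Q$-Cartierness of $P$ follows from $\mathbb Q$-factoriality of $X$). For a prime $E$ on $X$ with preimages $E'_1,\dots,E'_r$ of ramification indices $e_i$ and residue degrees $f_i$ (so $\sum_i e_if_i=d$), the pushforward identity $\mult_E(p_*P')=\sum_i f_i\mult_{E'_i}(P')$ combined with $\mult_{E'_i}(D')=e_i\mult_E(D)-(e_i-1)$ collapses, after a short computation using $\sum_i e_if_i=d$, to
\[
1-\mult_E(D+P)\;=\;\frac{1}{d}\sum_{i=1}^{r}f_i\bigl(1-\mult_{E'_i}(D'+P')\bigr).
\]
Each summand on the right is positive, so $\mult_E(D+P)<1$ for every prime $E$ on $X$; hence $D+P$ and therefore $D+|L|_{\mathbb Q}$ is a pure sub-boundary.

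The principal obstacle is in the reverse direction, specifically the need to invoke Lemma~\ref{lem-sub-boundary-system-implies-sub-boundary} on $X'$ when $X'$ is only assumed normal. This is exactly what the preliminary observation handles: the $\mathbb Q$-factoriality hypothesis in Lemma~\ref{lem-sub-boundary-system-implies-sub-boundary} is never actually used in its proof. Once this is granted, the rest is a direct bookkeeping exercise with the ramification formula and the definition of pushforward of Weil divisors under a finite surjective map.
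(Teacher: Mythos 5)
Your proof is correct. The forward implication is identical to the paper's: pick a single representative $N\in|L|_{\mbQ}$ via Lemma \ref{lem-sub-boundary-system-implies-sub-boundary}, pull it back, and run the multiplicity identity $1-\mult_{E'}(D'+p^*N)=e\,(1-\mult_E(D+N))$. Where you genuinely diverge is the reverse implication. The paper first uses the forward direction to replace $p$ by its Galois closure (Lemma \ref{lem:galois-closure}), then averages a good representative $B'$ over the Galois group to obtain a $G$-invariant $N'$ that descends to $X$; your argument instead averages over the fibers directly, taking $P=\tfrac1d p_*P'$, checking $P\in|L|_{\mbQ}$ via the norm of a defining meromorphic function and $p_*p^*L=dL$, and verifying the identity $1-\mult_E(D+P)=\tfrac1d\sum_i f_i\bigl(1-\mult_{E'_i}(D'+P')\bigr)$ (which I checked: it follows from $\mult_{E'_i}(D')=e_i\mult_E(D)-(e_i-1)$ and $\sum_i e_if_i=d$). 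Your route buys you independence from the Galois closure machinery at the cost of the norm/pushforward bookkeeping; the paper's route keeps the multiplicity computation to the pullback case only but needs Lemma \ref{lem:galois-closure} and the already-proved forward direction. One point you handle more carefully than the paper does: both arguments must invoke Lemma \ref{lem-sub-boundary-system-implies-sub-boundary} on $X'$, which is only assumed normal, whereas that lemma is stated for $\mbQ$-factorial varieties (its $\mbQ$-factoriality is needed only so that the constructed representative is $\mbQ$-Cartier and hence literally an element of $|L'|_{\mbQ}$). You note this explicitly and correctly observe that your argument only needs the effective $\mbQ$-Weil representative, since $\mbQ$-Cartierness is restored on $X$ after pushing forward; the paper applies the lemma to the Galois closure without comment. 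It would be slightly cleaner to phrase your preliminary observation as ``the construction produces an effective $\mbQ$-Weil divisor $P'$ with $m(L'-P')\sim 0$ and $D'+P'$ a pure sub-boundary, which suffices,'' rather than asserting that the lemma as stated applies verbatim.
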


\begin{proof}
Assume that $ D+|L|_{\mathbb{Q}}$ is a  pure sub-boundary. Then by Lemma \ref{lem-sub-boundary-system-implies-sub-boundary}, there is some $N\in |L|_{\mathbb{Q}}$ such that $D+N$ is a pure sub-boundary. We claim that $D'+N'$ is a pure sub-boundary, where $N'=p^*N$. Let $E'$ be a prime divisor on $X'$ and set $E:=p(E')$. We will compute the multiplicity of $D'+N'$ along $E'$. Assume that $p^*E$ has multiplicity $m$ along $E'$. Let $d$ and $n$  be the multiplicities of $D$ and $N$ respectively along $E$. Then  we have \[\mathrm{mult}_{E}\, (D+N) -1  = d+n-1<0.\] 
From the ramification formula we have 
\[\mathrm{mult}_{E'}\, N' = nm, \  \mathrm{mult}_{E'}\, D' = dm-(m-1). \] 
 
 Thus
\begin{eqnarray*}
 \mathrm{mult}_{E'}\, (D'+N') -1 &=&  dm-(m-1) + nm  -1\\
  &=&   m(d+n-1)\\ 
  &=&   m(\mathrm{mult}_{E}\, (D+N) -1)  \\
  &<& 0.
\end{eqnarray*}
This shows that $ D'+N'$ is a  pure sub-boundary.\\

 Now we assume that $ D'+|L'|_{\mathbb{Q}}$ is a  pure sub-boundary. Using the result we just proved above we may replace $X'$ by the Galois closure of the morphism $p$ (see Lemma \ref{lem:galois-closure}), and thus assume that $p:X'\to X$ is a Galois morphism with Galois group $G$. Now since $D'+|L'|$ is a pure sub-boundary, by Lemma \ref{lem-sub-boundary-system-implies-sub-boundary}
 there is an element $B'\in |L'|_{\mathbb{Q}}$ such that $D'+B'$ is a pure sub-boundary. Now set $N':=\frac{1}{\#G}\sum_{g\in G} g(B')$.  Then $N'\in |L'|_{\mathbb{Q}}$ and $D'+N'$ is also a pure sub-boundary. Moreover, since $N'$ is $G$-invariant,  there is an element $N\in |L|_{\mathbb{Q}}$  such that $N'=p^* N$.   We claim that $D+N$ is a pure  sub-boundary. Let $E$ be a prime Weil divisor on $X$ and let $E'$ be an irreducible component of $p^*E$.   Then as in the computation above we have 
 \[ \mathrm{mult}_{E}\, (D+N) -1  = \frac{1}{m} (\mathrm{mult}_{E'}\, (D'+N') -1) <0.\]
   Therefore $D+N$ is a pure sub-boundary, and so is $ D+|L|_{\mathbb{Q}}$.
\end{proof}

 \begin{lemma}
\label{lem-gen-finite-sub-boundary}
Let $X$ be a normal $\mathbb{Q}$-factorial compact analytic variety, $|L|_{\mathbb{Q}}$ a $\mathbb{Q}$-linear system on $X$, and $D$ is a $\mbQ$-divisor on $X$. Let $p:X'\to X$ be a  generically finite proper morphism from a normal variety $X'$.  Let $D'+|L'|_{\mathbb{Q}}$  be  the log pullback of $D+|L|_{\mathbb{Q}}$ under $p$.  Then $ D+|L|_{\mathbb{Q}}$ is a  pure sub-boundary if   $ D'+|L'|_{\mathbb{Q}} $ is a pure sub-boundary.
\end{lemma}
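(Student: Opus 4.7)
The plan is to reduce the generically finite case to the finite case (Lemma \ref{lem-finite-sub-boundary}) via the Stein factorization of $p$.

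First, I would apply Stein factorization to $p : X' \to X$ to obtain a factorization $p = q \circ g$, where $g : X' \to Z$ is proper with connected fibers and $q : Z \to X$ is finite (and $Z$ is normal). Since $p$ is generically finite, so is $g$, and having connected fibers forces $g$ to be bimeromorphic. Set $L_Z := q^*L$ and let $D_Z$ be the $\mbQ$-divisor on $Z$ defined by $K_Z + D_Z = q^*(K_X + D)$, so that $D_Z + |L_Z|_\mbQ$ is the log pullback of $D + |L|_\mbQ$ under $q$. By the chain rule, the log pullback of $D_Z + |L_Z|_\mbQ$ under $g$ is precisely $D' + |L'|_\mbQ$.

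Next, I would show that $D_Z + |L_Z|_\mbQ$ is itself a pure sub-boundary; Lemma \ref{lem-finite-sub-boundary} applied to the finite morphism $q$ then yields the desired conclusion on $X$. Fix a prime Weil divisor $E$ on $Z$ and let $E'$ denote its strict transform on $X'$. By hypothesis there exists $N' \in |L'|_\mbQ$ with $\mult_{E'}(D' + N') < 1$. Let $N_Z := g_*N'$. Then $N_Z$ is effective, and since $g$ is bimeromorphic and $L' = g^*L_Z$, we have $N_Z \sim_\mbQ L_Z$, so $N_Z \in |L_Z|_\mbQ$. Because $g$ is an isomorphism at the generic point of $E'$, the difference $g^*K_Z - K_{X'}$ is $g$-exceptional and in particular has multiplicity zero along $E'$; combined with $K_{X'} + D' = g^*(K_Z + D_Z)$ this gives $\mult_{E'}(D') = \mult_E(D_Z)$. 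Similarly $N' - g^*N_Z$ is $g$-exceptional (its pushforward vanishes and $g$ is bimeromorphic), so $\mult_{E'}(N') = \mult_E(N_Z)$. Therefore $\mult_E(D_Z + N_Z) = \mult_{E'}(D' + N') < 1$, proving that $D_Z + |L_Z|_\mbQ$ is a pure sub-boundary.

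The main technical point is the multiplicity transfer in the previous paragraph, which relies on $g$ being an isomorphism in codimension one on $Z$. This follows from the analytic Zariski Main Theorem: a proper bimeromorphic morphism between normal analytic varieties with connected fibers is an isomorphism away from a subset of the base of codimension at least two, so every prime divisor on $Z$ has a well-defined strict transform on $X'$ and pullbacks of $\mbQ$-Cartier divisors behave correctly at its generic point. Once this codimension-one isomorphism is in hand, the computation at $E'$ is routine and the proof concludes by invoking the finite case.
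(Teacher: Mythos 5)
Your proposal is correct and follows essentially the same route as the paper: Stein factorization reduces to the bimeromorphic case (which the paper dismisses as trivial and you spell out via the codimension-one isomorphism and multiplicity transfer) and the finite case, which is Lemma \ref{lem-finite-sub-boundary}. The only minor imprecision is the phrase ``$g^*K_Z - K_{X'}$ is $g$-exceptional,'' since $K_Z$ alone need not be $\mathbb{Q}$-Cartier; but this is harmless because only $g^*(K_Z+D_Z)=K_{X'}+D'$ is used, and the multiplicity comparison along $E$ and $E'$ holds simply because $g$ is an isomorphism near the generic point of $E$.
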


\begin{proof}
By Stein factorization, we only need to prove the lemma in the following two cases: either $p$ is a proper bimeromorphic morphism, or $p$ is a finite proper morphism. The case when $p$ is bimeromorphic is trivial.  The case when $\pi$ is finite follows from Lemma \ref{lem-finite-sub-boundary}. 
\end{proof}

 \part{Construction of LMMP for pseudo-effective pair}

 \section{Non-vanishing theorem}

 The objective of this section is to prove the following non-vanishing theorem.

 \begin{theorem}
 \label{thm-non-vanishing}
 Let $(X, \Delta)$ be a $\mbQ$-factorial compact K\"ahler $3$-fold dlt pair. Assume that $X$ is non-algebraic and that the base of the MRC fibration for $X$ has dimension $2$.  Then the following assertions are equivalent:
 \begin{enumerate}
 \item $K_X+\D$ is $\mathbb{Q}$-effective;
 \item $K_X+\D$ is pseudo-effective;
 \item $(K_X+\D)\cdot F\>0$ for general fibers $F$ of the MRC fibration of $X$.
 \end{enumerate} 
 \end{theorem}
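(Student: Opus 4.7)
The implication $(1)\Rightarrow(2)$ is immediate. For $(2)\Rightarrow(3)$, I would apply the duality between pseudo-effective divisors and movable curves on compact K\"ahler varieties (Boucksom-Demailly-P\u{a}un-Peternell): the general fibres of the MRC fibration sweep out a Zariski dense open subset of $X$, so their numerical class is movable, and a pseudo-effective class intersects such a class non-negatively, giving $(K_X+\D)\cdot F\>0$.

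The real content is $(3)\Rightarrow(1)$. My plan is to resolve the MRC fibration, run a relative MMP, and then descend to the base surface via the canonical bundle formula. Concretely, take a log resolution $\mu\colon Y\to X$ of $(X,\D)$ such that the MRC map lifts to a morphism $f\colon Y\to Z$ onto a smooth compact K\"ahler surface $Z$, which is non-uniruled because it is the base of an MRC fibration; the general fibres of $f$ are $\mbP^{1}$'s since $Y$ is smooth (MRC and MRCC coincide). Set $\D_Y:=\mu^{-1}_{*}\D+\Ex(\mu)_{\mathrm{red}}$ so that $(Y,\D_Y)$ is dlt and $K_Y+\D_Y=\mu^{*}(K_X+\D)+G$ with $G\>0$ exceptional. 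Then the $\mbQ$-effectivity of $K_X+\D$ is equivalent to that of $K_Y+\D_Y$, and condition (3) transfers to $(K_Y+\D_Y)\cdot F\>0$ for a general fibre $F$.

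Next I would reduce to the equality case $(K_Y+\D_Y)\cdot F=0$. If the intersection is strictly positive, decompose $\D_Y=(\D_Y)_v+(\D_Y)_h$ over $Z$, set $\alpha:=\bigl((K_Y+\D_Y)\cdot F\bigr)/\bigl((\D_Y)_h\cdot F\bigr)\in(0,1)$, and replace $\D_Y$ by the still-effective dlt boundary $\D_Y-\alpha(\D_Y)_h$; the $\mbQ$-effectivity of the modified log canonical divisor implies that of the original. Now run a relative $(K_Y+\D_Y)$-MMP over $Z$ using Proposition~\ref{pro:relative-projective-mmp}. Since every step is an isomorphism over a Zariski dense open subset of $Z$, the vanishing $(K_Y+\D_Y)\cdot F=0$ is preserved on general fibres, and no relative Mori fibre space can occur, as it would force $K+\D$ to be negative on such a general fibre. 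The MMP therefore terminates with a relative minimal model $f^{*}\colon(Y^{*},\D^{*})\to Z$ where $K_{Y^{*}}+\D^{*}$ is nef over $Z$ and numerically trivial on general fibres.

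Finally, I would apply the canonical bundle formula to the $\mbP^{1}$-fibration $f^{*}$ to obtain $K_{Y^{*}}+\D^{*}\sim_\mbQ (f^{*})^{*}(K_Z+B+M)$, where $B\>0$ is the effective discriminant and $M$ is the nef moduli part. Since $Z$ is a non-uniruled smooth compact K\"ahler surface, $K_Z$ is $\mbQ$-effective by surface abundance (Theorem~\ref{thm:surface-abundance}), so $K_Z+B+M$ is pseudo-effective on $Z$. On a non-uniruled smooth compact K\"ahler surface, every pseudo-effective $\mbQ$-divisor is $\mbQ$-effective, via Zariski decomposition together with surface abundance applied to the nef part; hence $K_Z+B+M$ is $\mbQ$-effective, and pulling back yields $\mbQ$-effectivity of $K_{Y^{*}}+\D^{*}$, whence of $K_X+\D$. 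The step I expect to be the main obstacle is the canonical bundle formula with the required positivity of the moduli part for a $\mbP^{1}$-fibration over a non-projective compact K\"ahler surface: the standard algebraic proofs (Kawamata, Ambro, Fujino) do not directly carry over, and either an analytic Hodge-theoretic argument or a careful reduction using the structure of Fujiki class $\mcC$ will be needed.
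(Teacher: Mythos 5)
Your overall strategy — reduce to a relative-dimension-one fibration over the non-uniruled base surface $Z$, arrange $(K+\D)\cdot F=0$ on general fibres, descend $K+\D$ to $K_Z+(\text{effective})$, and finish by surface abundance — is exactly the paper's strategy (Remark \ref{rmk:non-vanishing} and Lemma \ref{lem-psef-pair-over-base-surface-eff}). But the step you defer, the descent $K_{Y^*}+\D^*\sim_{\mbQ}(f^*)^*(K_Z+B+M)$ with $B\>0$ effective and $M$ controlled, is not a black box you can invoke: it is the main technical content of this part of the paper, namely Theorem \ref{thm-base-boundary-effective}, and it occupies most of the section. The paper does not stop at a relative minimal model as you do; it runs a further $K$-MMP over $Z$ with scaling of $\D$ (every step $(K+\D)$-trivial) to reach a genuine $K_X$-Mori fiber space of relative Picard number one. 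That extra reduction is essential: it is what allows numerically $f$-trivial divisors to be pushed down to $\mbQ$-Cartier divisors on the base (Lemma \ref{lem:vertical-pullback}, via \cite[Prop.\ 3.1(6)]{CHP16}), and the effective divisor $B$ is then constructed by hand as $\frac12\sum b_{ij}f_*(\D_i\cdot\D_j)$ using the Keel--Matsuki--McKernan combinatorial Lemma \ref{lem:combinatorix}, after a covering trick (Lemma \ref{lem-Stein-factorization-base-change}) makes all horizontal components of $\D$ bimeromorphic to the base. No appeal to the Kawamata--Ambro--Fujino moduli-part machinery is made, precisely because it is not available for compact K\"ahler fibrations. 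Since you yourself identify this as "the main obstacle" and offer no argument for it, the proposal has a genuine gap at its central step.

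A secondary error: your closing claim that on a non-uniruled smooth compact K\"ahler surface every pseudo-effective $\mbQ$-divisor is $\mbQ$-effective (via Zariski decomposition plus abundance applied to the nef part) is false — a non-torsion numerically trivial line bundle on a two-dimensional complex torus is nef, hence pseudo-effective, with no sections of any multiple. Abundance applies only to nef \emph{adjoint} divisors. What saves the argument in the paper is that the divisor on the base is literally of the form $K_{Y'}+B'$ with $B'\>0$ effective and $(Y',B')$ klt, so one may run the surface log MMP and apply Theorem \ref{thm:surface-abundance} to the resulting nef adjoint divisor; to use this you would first need to know $M$ is itself $\mbQ$-effective (or $\mbQ$-trivial), which again circles back to the missing canonical bundle formula.
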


\begin{remark}\label{rmk:non-vanishing}
        Note that if $(K_X+\D)\cdot F>0$ for a general fiber $F$ of the MRC fibration, then $\D$ has at least one component which dominates the base of the MRC fibration. Reducing the coefficients of these components appropriately we may assume that $(K_X+\D)\cdot F=0$. Thus in order to prove the above theorem it is enough to show that, if $(K_X+\D)\cdot F=0$, then $K_X+\D$ is $\mathbb{Q}$-effective.
\end{remark}

Using Lemma \ref{l-proj} we will reduce the general non-vanishing Theorem \ref{thm-non-vanishing-general-setting} to Theorem \ref{thm-non-vanishing}.

 \begin{theorem}
 \label{thm-non-vanishing-general-setting}
 Let $(X, \Delta)$ be a $\mbQ$-factorial compact K\"ahler $3$-fold dlt pair. If $K_X+\D$ is pseudo-effective, then it is $\mathbb{Q}$-effective.
 \end{theorem}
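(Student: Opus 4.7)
The plan is to carry out a case analysis based on the uniruledness and algebraicity of $X$, using Theorem \ref{thm-non-vanishing} as the main input in the hardest case and reducing the other cases to previously established non-vanishing results. Note that since $(X,\Delta)$ is dlt with $\Delta\geq 0$, for every exceptional divisor $E$ over $X$ we have $a(E;X,0)\geq a(E;X,\Delta)>-1$, so $X$ itself has klt (hence rational) singularities.

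First I would dispose of the case when $X$ is \emph{not} uniruled. Here the K\"ahler analog of the BDPP theorem (Brunella in the smooth case, extended to compact K\"ahler $3$-folds with mild singularities in \cite{HP16, CHP16}) implies that $K_X$ is pseudo-effective, and the non-vanishing for non-uniruled K\"ahler $3$-folds established in \cite{HP16} then gives that $K_X$ is $\mbQ$-effective; since $\Delta\geq 0$, this forces $K_X+\Delta$ to be $\mbQ$-effective as well. Next, if $X$ is uniruled and algebraic (equivalently projective, since $X$ has rational singularities, as observed in the proof of Lemma \ref{l-proj}), then the classical non-vanishing theorem for projective $3$-folds \cite{KMM94} applies directly to the dlt pair $(X,\Delta)$ and yields the desired conclusion.

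It remains to treat the case that $X$ is uniruled and non-algebraic. Here Lemma \ref{l-proj} is decisive: applying its ``In particular'' clause (with $X'=X$ and $\phi=\mathrm{id}$), were the base of the MRC(C) fibration of $X$ of dimension strictly less than $2$, then $X$ would be forced to be projective, contradicting our assumption. On the other hand, since $X$ is a uniruled $3$-fold, the base of its MRC(C) fibration has dimension at most $2$, so the base must have dimension exactly $2$. We are then precisely in the hypotheses of Theorem \ref{thm-non-vanishing}, which delivers the $\mbQ$-effectivity of $K_X+\Delta$ and completes the argument.

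The main (and essentially only non-formal) obstacle is Theorem \ref{thm-non-vanishing} itself, whose proof occupies the present section; once that theorem is in hand, together with the classical projective non-vanishing and the non-vanishing for non-uniruled K\"ahler $3$-folds, the reduction sketched above is formal.
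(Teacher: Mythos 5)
Your proposal is correct and follows essentially the same route as the paper: dispose of the projective case via \cite{KMM94}, the non-uniruled case via non-vanishing for $K_X$ plus $\Delta\geq 0$, and reduce the uniruled non-algebraic case to Theorem \ref{thm-non-vanishing} through the ``in particular'' clause of Lemma \ref{l-proj}. The only cosmetic difference is that in the non-uniruled case the paper first passes to a terminal model (Corollary \ref{cor:exist-terminal-dlt-modification}) before running the $K_X$-MMP of \cite{HP16} and invoking \cite[Theorem 0.3]{DP03}; you should insert that reduction so the MMP machinery (stated for terminal singularities) actually applies.
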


 \begin{proof}
 If $X$ is projective, then the theorem follows from \cite[Theorem 1.1]{KMM94} after running a $(K_X+\Delta)$-MMP. Hence we may assume that $X$ is not algebraic.
 
   If $X$ is not uniruled, then first using Corollary \ref{cor:exist-terminal-dlt-modification} we may assume that $X$ has terminal singularity. Then the non-vanishing follows from \cite[Theorem 0.3]{DP03} after running a $K_X$-MMP as in \cite[Theorem 1.1]{HP16}. It remains to treat the case when $X$ is uniruled and non-algebraic. In this case the base of the MRC fibration of $X$ has dimension $2$, by Lemma \ref{l-proj}, and thus our result follows from Theorem \ref{thm-non-vanishing}.
 \end{proof}~\\

 \subsection{Case of Mori fiber space}

 The aim of this subsection is to prove the Theorem \ref{thm-base-boundary-effective} below, which is a generalization of \cite[Corollary 3.8]{KMM94}. First we state a combinatorial result from \cite{KMM94} which will be very useful to us in this section.\\
 
 Let $I_1, I_2,\ldots, I_p$ be a partition of the set $\{1, 2,\ldots, k\}$. Let $a_1, a_2,\ldots, a_k$ be a sequence of rational numbers such that $0\<a_i\<1$ and $\sum_{i=1}^ka_i=2$. Set $\alpha_q=\sum_{i\in I_q}a_i$.
 \begin{lemma}\cite[Lemma 3.2]{KMM94}\label{lem:combinatorix}
        With the notations and hypothesis as above assume that $\alpha_q$ satisfies $0\<\alpha_q\<1$ for all $q\in\{1,\ldots, p\}$. Then the system of equations
\[\sum_{j=1}^k b_{ij}=a_i\quad \mbox{for all } i=1, 2\ldots, k\] 
has a non-negative rational solution $b_{ij}\>0$ such that $b_{ij}=b_{ji}$ for all $i, j$, and $b_{ij}=0$ if $i$ and $j$ belong to the same indexing set $I_q$.
 \end{lemma}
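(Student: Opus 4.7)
The plan is to reduce this vertex-level pairing problem to a much smaller cluster-level flow problem, and then solve the cluster-level problem by induction on the number of non-empty clusters. First I would look for symmetric non-negative rationals $f_{qr}$ (indexed by pairs of distinct cluster indices $q, r \in \{1,\ldots,p\}$) satisfying the ``cluster degree'' constraint $\sum_{r\neq q} f_{qr}=\alpha_q$ for every $q$. Once such $f_{qr}$ are in hand, I would define
\[
b_{ij}\;=\;\frac{f_{qr}\, a_i\, a_j}{\alpha_q\,\alpha_r}\qquad\text{for } i\in I_q,\; j\in I_r,\; q\neq r,
\]
and $b_{ij}=0$ when $i,j$ lie in the same part $I_q$ (with the convention that whenever some $\alpha_q=0$ all $a_i$ for $i\in I_q$ vanish, so the apparent division by zero is vacuous). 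A direct check then gives symmetry $b_{ij}=b_{ji}$, non-negativity, and the row-sum identity $\sum_j b_{ij}=\frac{a_i}{\alpha_q}\sum_{r\neq q} f_{qr}=a_i$.

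The core of the proof therefore reduces to producing the symmetric matrix $(f_{qr})$. This is the classical problem of realising $(\alpha_1,\ldots,\alpha_p)$ as the degree sequence of a weighted graph on $p$ vertices with no loops; the standard existence criterion is $\max_q \alpha_q\leq \tfrac12\sum_q\alpha_q$, which under our hypotheses reads $\alpha_q\leq 1=\tfrac12\cdot 2$. I would establish this by induction on the number of indices $q$ with $\alpha_q>0$ (discarding zero clusters). The base case is $p=2$: then $\alpha_1+\alpha_2=2$ together with $\alpha_q\leq 1$ forces $\alpha_1=\alpha_2=1$, and we set $f_{12}=1$. For $p\geq 3$, I would order $\alpha_1\geq\cdots\geq\alpha_p>0$ and put a single flow value on one edge, say $f_{1p}=t$ with $t>0$ to be chosen, so that the residual data $\alpha_1-t,\alpha_2,\ldots,\alpha_p-t$ still satisfies the corresponding condition $\max\alpha_q'\leq S'/2$ with $S'=2-2t$; this requires $t\leq S/2-\alpha_2$ (plus the trivial $t\leq\alpha_p$), both of which leave a positive interval of admissible $t$. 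Choosing $t$ equal to $\alpha_p$ whenever possible zeros out one cluster and strictly decreases $p$, otherwise choosing $t=S/2-\alpha_2$ strictly decreases the number of clusters attaining the new maximum; either way an obvious termination argument applies, and the induction closes.

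The only slightly delicate point is the bookkeeping in the inductive step: one must verify that the chosen flow value $t$ is strictly positive, is a rational number (which is automatic since $t$ is always the min of a finite set of rational linear expressions in the data), and produces a strictly smaller instance of the same problem. Once the $f_{qr}$ are constructed, rationality of the $b_{ij}$ is immediate from rationality of the $a_i,\alpha_q,f_{qr}$. I expect no further obstacle, since the construction follows the outline of \cite[Lemma 3.2]{KMM94} and is purely combinatorial.
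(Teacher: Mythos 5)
The paper itself gives no proof of this lemma --- it is imported verbatim from \cite[Lemma 3.2]{KMM94} and used as a black box --- so there is no in-paper argument to compare against. Taken on its own merits, your two-step architecture is correct. The reduction to the cluster-level flow problem works exactly as you say: given symmetric non-negative rationals $f_{qr}$ with $\sum_{r\neq q}f_{qr}=\alpha_q$, the formula $b_{ij}=f_{qr}a_ia_j/(\alpha_q\alpha_r)$ for $i\in I_q$, $j\in I_r$, $q\neq r$ (and $b_{ij}=0$ inside a block) is symmetric, non-negative, rational, and has row sums $\sum_j b_{ij}=(a_i/\alpha_q)\sum_{r\neq q}f_{qr}=a_i$; blocks with $\alpha_q=0$ are harmless since all their $a_i$ vanish. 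The realizability criterion $\max_q\alpha_q\leq\tfrac12\sum_q\alpha_q$ for loopless non-negatively weighted degree sequences is the right one, and the hypotheses supply it since $\alpha_q\leq 1=\tfrac12\cdot 2$.

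The one genuine flaw is the termination claim in the second branch of your induction. When $t=S/2-\alpha_2<\alpha_p$, it is \emph{not} true in general that the number of clusters attaining the new maximum strictly decreases: before the step the maximum $\alpha_1$ might be attained only once, while afterwards the new maximum is $\alpha_2$, which may be attained by several clusters. The algorithm does terminate, but for a different reason: after such a step the new total is $S'=2\alpha_2$, so the new maximum equals $S'/2$ exactly, and whenever the largest degree $d_1$ equals half the total one has $d_1=\sum_{r\geq 2}d_r$, so the star graph $f_{1r}=d_r$ (and $f_{qr}=0$ for $q,r\geq 2$) solves the problem outright; equivalently, from such a state $S/2-d_2=\sum_{r\geq 3}d_r\geq d_p$, so every subsequent step falls into your first branch and strictly decreases the number of non-zero clusters. (In fact, with the degrees sorted, the second branch can only occur when exactly three non-zero clusters remain, so it happens at most once.) With that repair, and with the $p=2$ base case restated as ``$\max\leq S/2$ forces $\alpha_1=\alpha_2$, set $f_{12}=\alpha_1$'' rather than tied to $S=2$ (the recursion changes the total), the induction closes and the proof is complete.
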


 \begin{theorem}
 \label{thm-base-boundary-effective}
 Let $(X, \Delta)$ be a $\mbQ$-factorial compact K\"ahler $3$-fold dlt pair. Assume that $X$ is non-algebraic and that the base of the MRC fibration of $X$ has dimension $2$.
Let $f:X\to Y$ be a Mori fibration of relative dimension $1$ such that $K_X+\D\sim_{\mbQ, f} 0$. Then there is an effective $\mathbb{Q}$-divisor $B\>0$ on $Y$ such that  $K_X+\D \sim_{\mathbb{Q}} f^*(K_Y+B)$. Moreover, if $(X,\D)$ is klt, then $(Y,B)$ is klt.
 \end{theorem}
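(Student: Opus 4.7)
The plan is to adapt the proof of \cite[Corollary 3.8]{KMM94} to the compact K\"ahler setting, using the Galois closure construction (Lemma \ref{lem:galois-closure}), the ramification formula (Lemma \ref{lem:ramification-formula}), and the combinatorial Lemma \ref{lem:combinatorix}.

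Since $f:X\to Y$ is a Mori fibration of relative dimension $1$ and $-K_X$ is $f$-ample, general fibers $F$ are isomorphic to $\mbP^1$, and the assumptions $(K_X+\Delta)\cdot F=0$ and $K_X\cdot F=-2$ force $\Delta_{\hor}\cdot F=2$, where $\Delta=\Delta_{\hor}+\Delta_{\ver}$. Write $\Delta_{\hor}=\sum_i a_i D_i$ with $0<a_i\leq 1$ and relative degrees $d_i=D_i\cdot F$, so that $\sum a_i d_i=2$. Since $K_X+\Delta\sim_{\mbQ,f}0$, there is a $\mbQ$-Cartier divisor $L$ on $Y$ with $K_X+\Delta\sim_\mbQ f^*L$, and the task reduces to showing $L\sim_\mbQ K_Y+B$ for some effective $\mbQ$-divisor $B$ on $Y$.

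The first step is a base change that splits each horizontal component into sections. Applying Lemma \ref{lem:galois-closure} to the generically finite morphism $\bigsqcup_i D_i\to Y$, one obtains a finite Galois cover $\pi:\tilde Y\to Y$ with Galois group $G$. Let $\tilde X$ be the normalization of $X\times_Y\tilde Y$, with induced morphisms $\pi_X:\tilde X\to X$ and $\tilde f:\tilde X\to\tilde Y$. By Lemma \ref{lem:ramification-formula}, the log pullback defined by $K_{\tilde X}+\tilde\Delta=\pi_X^*(K_X+\Delta)$ is sub-lc (resp.\ sub-klt if $(X,\Delta)$ is klt), satisfies $K_{\tilde X}+\tilde\Delta\sim_{\mbQ,\tilde f}0$, and by construction each horizontal component of $\tilde\Delta$ is a section of $\tilde f$. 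Applying the combinatorial Lemma \ref{lem:combinatorix} to $\tilde\Delta_{\hor}|_{F}=\sum \tilde a_\alpha \tilde P_\alpha$ then yields a decomposition $\tilde\Delta_{\hor}=\sum_{\alpha<\beta} b_{\alpha\beta}(\tilde D_\alpha+\tilde D_\beta)$ into symmetric pairs of sections each summing to total degree $2$ on a general fiber.

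For each such pair, since $\tilde D_\alpha$ is a section of $\tilde f$, relative adjunction gives
\[
(K_{\tilde X}+\tilde D_\alpha+\tilde D_\beta)|_{\tilde D_\alpha}=K_{\tilde D_\alpha}+\tilde D_\beta\cdot\tilde D_\alpha,
\]
and via the bimeromorphic identification $\tilde f|_{\tilde D_\alpha}:\tilde D_\alpha\to\tilde Y$ the right-hand side identifies with $K_{\tilde Y}+E_{\alpha\beta}$ for an effective $\mbQ$-divisor $E_{\alpha\beta}$ on $\tilde Y$. Using $\rho(\tilde X/\tilde Y)=1$ together with $K_{\tilde X}+\tilde D_\alpha+\tilde D_\beta\sim_{\mbQ,\tilde f}0$, this promotes to $K_{\tilde X}+\tilde D_\alpha+\tilde D_\beta\sim_\mbQ\tilde f^*(K_{\tilde Y}+E_{\alpha\beta})$. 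Summing these identities weighted by the $b_{\alpha\beta}$ and incorporating the vertical part $\tilde\Delta_{\ver}$ (whose coefficients in the base divisor come from log canonical thresholds of $\tilde f^*P$ with respect to $(\tilde X,\tilde\Delta)$, hence lie in $[0,1]$, and strictly in $[0,1)$ in the klt case) produces an effective $\mbQ$-divisor $\tilde B$ on $\tilde Y$ with $K_{\tilde X}+\tilde\Delta\sim_\mbQ\tilde f^*(K_{\tilde Y}+\tilde B)$. Averaging $\tilde B$ over $G$ and using uniqueness of $L$ up to $\mbQ$-linear equivalence, the $G$-invariant divisor descends to the required effective $B$ on $Y$. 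For the klt claim, all coefficients of $B$ are then strictly less than $1$, and since $Y$ is $\mbQ$-factorial by the Mori fiber space definition, $K_Y+B$ is automatically $\mbQ$-Cartier, so $(Y,B)$ is klt.

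The main obstacle will be verifying that the combinatorial decomposition is compatible with the $G$-action (so that $G$-averaging produces a genuinely descended effective divisor) and that the vertical contribution combines with the horizontal adjunction output to give $\mbQ$-linear equivalence rather than merely numerical equivalence of $K_X+\Delta$ and $f^*(K_Y+B)$. A related technical point is that the hypothesis $\alpha_q\leq 1$ in Lemma \ref{lem:combinatorix} must be re-checked after base change, which may force one to split the argument into subcases according to the original relative degrees $d_i$, handling the extremal case of a single horizontal multi-section with $a_i d_i > 1$ directly by adjunction on the multi-section (identified as a cover of $Y$ via Lemma \ref{lem:ramification-formula}) and using the negativity lemma (Lemma \ref{lem:negativity}) to guarantee that the correction from $\tilde\Delta_{\ver}$ remains effective.
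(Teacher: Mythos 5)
Your overall strategy --- base change to split the horizontal components of $\Delta$ into pieces bimeromorphic to the base, apply Lemma \ref{lem:combinatorix} to pair them up, use adjunction on each pair, and descend --- is the same strategy the paper follows (via Lemmas \ref{lem-all-horizontal-bimeromorphic}, \ref{lem-horizontal-bimeromorphic}, \ref{lem-Stein-factorization-base-change} and \ref{lem-base-pure-sub-boundary}), with the cosmetic difference that you take a Galois closure of $\bigsqcup_i D_i\to Y$ at once while the paper iterates the Stein-factorization covering trick. However, two of your steps have genuine gaps. First, you invoke $\rho(\tilde X/\tilde Y)=1$ to upgrade fibrewise triviality of $K_{\tilde X}+\tilde D_\alpha+\tilde D_\beta$ to $\sim_{\mbQ,\tilde f}0$ and then to a pullback from $\tilde Y$. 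But the normalized fibre product $\tilde X\to\tilde Y$ is no longer a Mori fibre space: reducible fibres acquire independent classes after base change, so the relative Picard number can jump and \cite[Proposition 3.1(6)]{CHP16} is no longer available. The paper repairs this by resolving, running a relative $(K_Z+\Theta_{\hor})$-MMP and then a $K_{Z'}$-MMP over $Y'$ to reach a genuine Mori fibre space $X'\to Y''$ with $Y''\to Y'$ bimeromorphic, applying the section-by-section adjunction there, and then descending back through $Y''\to Y'\to Y$. That descent is where the real work is: one must show that the divisor $F=(D'-\pi^*D)_{\nex}+R_{\nex}$ is effective (Claim \ref{clm:effectivity}, a multiplicity computation mixing ramification indices of $\pi$ and of $p$), and only then does the negativity lemma control the exceptional part $E$. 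Your proposal has no substitute for this computation; $G$-averaging on $\tilde Y$ does not remove the need for it because the intermediate models are only bimeromorphic, not finite, over $\tilde Y$.

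Second, the final sentence of your klt argument --- that coefficients of $B$ strictly less than $1$ plus $\mbQ$-Cartierness imply $(Y,B)$ is klt --- is false on a surface (a cuspidal curve with coefficient $5/6<1$ already gives a non-klt pair). The entire point of the paper's $\mbQ$-linear system formalism ($\Bs|L|_\mbQ$, sub-klt pairs $(Y,D+|L|_\mbQ)$, Lemmas \ref{lem-klt-check-boundary}, \ref{lem-finite-sub-boundary}, \ref{lem-gen-finite-sub-boundary}, \ref{lem-subklt-system-implies-subklt}) is to get around this: one shows that $|L|_\mbQ$ has no divisorial base component --- which requires a \emph{second}, different application of Lemma \ref{lem:combinatorix}, with the partition of $\{1,\dots,k\}$ indexed by the curves in $\Supp(\Delta_i\cdot\Delta_j)$ lying over a fixed prime divisor $E_p\subset Y$, so as to produce an alternative member $L'\sim_\mbQ L$ avoiding $E_p$ --- and then that $(Y,D+|L|_\mbQ)$ is sub-klt as a pair with a moving linear system, after which a Bertini argument selects an honest klt member $B$. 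Your fixed divisor $\tilde B$ gives no such freedom. A smaller point, worth flagging because the paper devotes Claim \ref{clm:cbf} to it: the identification of $(K_{\tilde X}+\tilde D_\alpha+\tilde D_\beta)|_{\tilde D_\alpha}$ with $K_{\tilde Y}+E_{\alpha\beta}$ cannot be done by pushing forward canonical \emph{divisors}, since on singular analytic varieties $\omega$ need not be represented by a Weil divisor; one must pass to a resolution of $\tilde D_\alpha$ and track the exceptional corrections there.
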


We first deal with the case where every component of $\Delta$ is bimeromorphic to the base $Y$. \\

The following elementary lemmas will be useful in our proof.

\begin{lemma}\label{lem:canonical-pushforward}
	Let $f:Y\to X$ be a proper bimeromorphic morphism between compact analytic varieties and $X$ is normal.  Let $\omega_Y$ be the canonical sheaf of $Y$ such that $\omega_Y^{[m]}$ is a line bundle for some $m\in\mbN$. Let $g:Z\to Y$ be a resolution of singularities of $Y$ and $\omega_Z^m\cong g^*\omega^{[m]}_Y(E)$ for some Weil divisor $E$ on $Z$. Then $E$ is $(f\circ g)$-exceptional.
\end{lemma}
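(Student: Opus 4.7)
The strategy is to show that $E$ can be taken with $\Supp E\subseteq g^{-1}(Y_{\sing})$, and then to deduce $(f\circ g)$-exceptionality by a dimension count using the fact that $f$ is a proper bimeromorphic morphism. The argument is short; the whole content is reflexive-sheaf bookkeeping plus the observation that dimensions of images cannot increase under a morphism.

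\textbf{Step 1: localize to the smooth locus of $Y$.} Since the canonical sheaf $\omega_Y$ is only defined in this paper when $Y$ is normal, $Y$ is normal, and hence $Y_{\sing}$ has codimension at least $2$ in $Y$. The resolution $g:Z\to Y$ is an isomorphism over $Y_{\sm}:=Y\setminus Y_{\sing}$. Put $V:=g^{-1}(Y_{\sm})$. Since $\omega_Y^{[m]}|_{Y_{\sm}} = \omega_{Y_{\sm}}^m$ is a line bundle, the natural map
\[
g^{*}\omega_Y^{[m]}\big|_V \;=\; (g|_V)^{*}\omega_{Y_{\sm}}^m \;\cong\; \omega_V^m \;=\; \omega_Z^m\big|_V
\]
is a canonical isomorphism of line bundles on $V$. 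The isomorphism $\omega_Z^m\cong g^{*}\omega_Y^{[m]}(E)$ is then the rational extension of this canonical isomorphism across $Z\setminus V$, and the divisor $E$ is precisely the divisor of its zeros and poles. In particular $E|_V=0$, so $\Supp E\subseteq Z\setminus V \subseteq g^{-1}(Y_{\sing})$. Thus $E$ is already $g$-exceptional.

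\textbf{Step 2: pass from $g$-exceptional to $(f\circ g)$-exceptional.} Let $D$ be any prime component of $E$. By Step 1, $g(D)\subseteq Y_{\sing}$, so $\dim g(D)\leq \dim Y-2$. Since $f$ is bimeromorphic, $\dim Y=\dim X$, and since $f$ is a morphism, the image of a subvariety has dimension at most that of the subvariety. Therefore
\[
\dim\bigl((f\circ g)(D)\bigr) \;=\; \dim f(g(D)) \;\leq\; \dim g(D) \;\leq\; \dim X - 2,
\]
which is exactly the statement that $D$ is $(f\circ g)$-exceptional. Running this over all prime components shows that $E$ is $(f\circ g)$-exceptional.

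\textbf{Expected obstacle.} There is no serious obstacle. The only conceptually delicate point is recognising $E$ as the canonical representative supported in $g^{-1}(Y_{\sing})$ (as opposed to an arbitrary choice differing by a principal divisor); once this is made precise as in Step 1, the conclusion is just the dimension inequality of Step 2.
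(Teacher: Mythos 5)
Your Step 1 rests on the inference that ``$Y$ is normal,'' but the lemma only assumes that $X$ is normal; $Y$ is just a compact analytic variety, and in the paper's actual application (the proof of Claim \ref{clm:cbf}) the role of $Y$ is played by a prime Weil divisor $\Delta_i$ on a threefold, which in general is \emph{not} normal. This is precisely why the surrounding text stresses that $\omega_{\Delta_i}$ may not correspond to a Weil divisor, and why the paper's proof begins by passing to the normalization $\nu:\overline{Y}\to Y$. Once $Y$ may be non-normal, your argument breaks at two points: the singular locus $Y_{\sing}$ (which contains the non-normal locus) can have codimension $1$, so a component of $E$ lying over it need not be $g$-exceptional, and the dimension count in Step 2 then only gives $\dim\bigl((f\circ g)(D)\bigr)\le\dim X-1$, which is not exceptionality.

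The missing idea is the one the paper uses: write $\omega_{\overline{Y}}^{[m]}\cong\nu^*\omega_Y^{[m]}(B)$ with $B$ the conductor divisor, factor $g$ through $\overline{Y}$, and observe that every component of $E$ is either exceptional over $\overline{Y}$ (these are handled by your dimension count) or the strict transform of a component of $B$. For the latter one needs that $\Supp(\nu_*B)\subset\Ex(f)$ — which holds because $Y$ is isomorphic to the normal variety $X$ wherever $f$ is an isomorphism, so the non-normal locus of $Y$ sits inside $\Ex(f)$ — together with the fact that $f(\Ex(f))$ has codimension at least $2$ in the normal $X$. Without this step your proof establishes only the (essentially trivial) case where $Y$ is normal, which is not the case the lemma is invoked for.
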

 
 \begin{proof}
 	Let $\nu:\overline{Y}\to Y$ be the normalization of $Y$. Then $\omega^{[m]}_{\overline{Y}}\cong\nu^*\omega^{[m]}_Y(B)$ for some effective Weil divisor $B$ on $\overline{Y}$. The divisor $B$ is determined by the conductor ideal of the normalization. Since $f:Y\to X$ is bimeromorphic and $X$ is normal, $\Supp (\nu_*B)\subset\Ex(f)$. Now since the resolution $g:Z\to Y$ factors through the normalization $\overline{Y}$ and $\omega_Z^m\cong g^*\omega^{[m]}_Y(E)$, it follows that the components of $E$ are either exceptional for the induced morphism $Z\to\overline{Y}$ or they are strict transform of the components of the conductor divisor $B$. In particular, we have $\Supp E\subset \Ex(f\circ g)$.

 \end{proof}~\\

 In the next two lemmas we will deal with case when the horizontal components of $\Delta$ are bimeromorphic to the base of the MRC fibration. 
 
 \begin{lemma}\label{lem:vertical-pullback}
 	Let $(X, \Delta)$ be a $\mbQ$-factorial compact K\"ahler $3$-fold dlt pair. Let $f:X\to Y$ be a Mori fiber space contracting a $(K_X+\Delta)$-negative extremal ray of $\NA(X)$. Let $D$ be a $\mbQ$-divisor on $X$ such that $D$ is vertical over $Y$, i.e. $f(\Supp D)$ does not dominate $Y$. Then there is a $\mbQ$-Cartier $\mbQ$-divisor $B$ on $Y$ such that $D=f^*B$.
 \end{lemma}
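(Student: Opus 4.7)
My plan rests on the fact that, for a Mori fiber space, $\rho(X/Y)=1$, so every vertical $\mbQ$-Cartier divisor is $f$-numerically trivial and hence descends to $Y$ up to $\mbQ$-linear equivalence; a separate descent argument for meromorphic functions then upgrades this to an actual equality of $\mbQ$-divisors.

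First I would check numerical triviality over $Y$. Since $X$ is $\mbQ$-factorial, $D$ is $\mbQ$-Cartier. Because $D$ is vertical, $f(\Supp D)\subsetneq Y$, so for any curve $C$ contained in a fiber of $f$ we have $D\cdot C=0$. In particular, $D\cdot R=0$, where $R$ is the $(K_X+\Delta)$-negative extremal ray contracted by $f$. Next I would invoke the relative cone and contraction theorem for projective K\"ahler morphisms (Nakayama \cite[Theorem 4.12]{Nak87}, applied to the Mori contraction $f$ of $R$) to conclude that the $\mbQ$-Cartier divisor $D$ satisfies $D\sim_{\mbQ,f}0$; equivalently, there exists a $\mbQ$-Cartier $\mbQ$-divisor $B'$ on $Y$ such that $D\sim_{\mbQ}f^*B'$.

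To upgrade this $\mbQ$-linear equivalence to an equality of $\mbQ$-divisors I would use descent of meromorphic functions. Choose $m\in\mbN$ with $m(D-f^*B')=(g)$ for a meromorphic function $g$ on $X$. Since $D$ and $f^*B'$ are both vertical, the principal divisor $(g)$ is vertical. Hence on a general fiber $F$ of $f$, $g|_F$ has neither zeros nor poles and is therefore a nowhere-zero holomorphic function on the compact connected analytic variety $F$, so $g|_F$ is a nonzero constant. Since $f$ is a contraction (in particular $f_*\mcO_X=\mcO_Y$), the assignment $F\mapsto g|_F$ yields a meromorphic function $h$ on $Y$ with $g=f^*h$, and thus $(g)=f^*(h)$. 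Setting $B:=B'+\tfrac{1}{m}(h)$ gives $D=f^*B$ as $\mbQ$-divisors.

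The only real point requiring care is invoking the relative contraction theorem in the analytic K\"ahler setting (step two above); this is furnished by \cite[Theorem 4.12]{Nak87}, which is already being used repeatedly throughout this paper for projective K\"ahler morphisms, so it applies here without issue. Every other step is formal, and no new hypotheses beyond those already present in the lemma are needed.
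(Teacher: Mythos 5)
Your overall strategy is viable and its second half genuinely differs from the paper's, but there is one gap in the middle step, and it sits exactly at the point the paper's proof is structured to handle. When you pass from ``$D\cdot R=0$'' to ``there exists a $\mbQ$-Cartier $\mbQ$-divisor $B'$ on $Y$ with $D\sim_{\mbQ}f^*B'$,'' you are implicitly asserting two things: (a) the line bundle $\mcO_X(mD)$ descends to a line bundle $\msL$ on $Y$, and (b) $\msL$ is of the form $\mcO_Y(mB')$ for an actual divisor $mB'$. Point (a) is fine (the paper gets it from \cite[Proposition 3.1(6)]{CHP16}, which is the correct reference here rather than Nakayama's relative theorem, since $f$ is the contraction of an extremal ray of the absolute cone $\NA(X)$ of a compact K\"ahler, possibly non-projective, threefold). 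Point (b) is not automatic: on a compact K\"ahler non-algebraic surface $Y$ a line bundle need not admit any nonzero meromorphic section, hence need not correspond to any divisor, so ``descends as a line bundle'' does not immediately give ``descends as a divisor class.'' The repair is precisely the paper's device: write $D=D^{+}-D^{-}$ with $D^{\pm}\>0$ vertical and without common components; then $\mcO_X(mD^{\pm})$ each carry the tautological global section with divisor $mD^{\pm}$, the projection formula $H^0(X,f^*\msL^{\pm})=H^0(Y,\msL^{\pm})$ produces sections $s^{\pm}$ downstairs, and $mD^{\pm}=f^*\div(s^{\pm})_0$ gives effective Cartier divisors $L,M$ on $Y$ with $D=f^*\bigl(\tfrac1m(L-M)\bigr)$ directly — no $\mbQ$-linear-equivalence correction needed. (A smaller imprecision: verticality of $D$ gives $D\cdot C=0$ only for curves in fibers over points outside $f(\Supp D)$; you then get $D\cdot R=0$ and hence triviality on \emph{all} fiber curves because they all lie in the single ray $R$. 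State it in that order.)

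Granting the existence of $B'$, your final step — descending the meromorphic function $g$ with $(g)=m(D-f^*B')$ by observing it is constant on general fibers and using $f_*\mcO_X=\mcO_Y$ — is correct and is a genuinely different way to upgrade $\mbQ$-linear equivalence to equality of divisors; it does require the standard (but not entirely free, in the analytic category) fact that a meromorphic function constant on the general fibers of a proper fibration factors meromorphically through the base. The paper's route via sections of the effective parts avoids both this descent and the gap in (b) simultaneously, which is why it splits $D$ from the outset.
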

 
 \begin{proof}
 	Let $m$ be a positive integer such that $mD$ is a Cartier divisor. Write $D=D^+-D^-$, where $D^+$ and $D^-$ effective $\mbQ$-divisors without common components. Note that by hypothesis $D^+$ and $D^-$ are both vertical over $Y$. Thus $mD^+\cdot C=0$ for curves $C\subset X$ contained in the fibers of $f$. Then by \cite[Proposition 3.1(6)]{CHP16} there exists a line bundle $\msL$ on $Y$ such that $\mcO_X(mD^+)\cong f^*\msL$. Now since $f_*\mcO_X=\mcO_Y$, from the projection formula it follows that $\{0\}\neq H^0(X, \mcO_X(mD^+))=H^0(X, f^*\msL)=H^0(Y, \msL)$. Therefore there exists a non-zero section $s\in H^0(Y, \msL)$ such that the divisor of zeros of $f^*s$ is the divisor $mD^+$, i.e. $mD^+=\div(f^*s)_0$. Set $L:=\div(s)_0$. Then $L$ is an effective Cartier divisor on $Y$ such that $mD^+=f^*L$. Similarly, there exists an effective Cartier divisor $M$ on $Y$ such that $mD^-=f^*M$. Define $B:=\frac{1}{m}(L-M)$. Thus we have $D=(D^+-D^-)=f^*B$.

 \end{proof}~\\

 \begin{lemma}
 \label{lem-all-horizontal-bimeromorphic}
 Let $(X, \Delta)$ be a  $\mathbb{Q}$-factorial compact K\"ahler $3$-fold pair with canonical singularities such that $X$ has terminal singularities.  Let $f:X\to Y$ be a $K_X$-Mori fibration of relative dimension $1$ such that $K_X+\D\equiv_f 0$. We assume that every component of $\D$ is bimeromorphic to $Y$ \textit{via} $f$.   Then there is an effective $\mbQ$-divisor $L\>0$ on $Y$ such that $K_X+\D \sim_{\mathbb{Q}} f^*(K_Y+L)$ and that the base locus of the $\mbQ$-linear system $|L|_{\mathbb{Q}}$ has no divisorial component.
 \end{lemma}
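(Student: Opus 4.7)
The strategy adapts the proof of \cite[Cor.\ 3.8]{KMM94} to the K\"ahler setting. First I would analyze the structure of $\Delta$: since $X$ has terminal singularities and $f$ is a Mori fibration of relative dimension one, a general fiber $F$ is isomorphic to $\mathbb{P}^1$, and $K_X+\Delta\equiv_f 0$ together with $K_X\cdot F=-2$ force $\Delta\cdot F=2$. Writing $\Delta=\sum_{i=1}^k a_iS_i$, each component $S_i$ is bimeromorphic to $Y$, so $S_i\cdot F=1$; this gives $\sum_i a_i=2$ with $0\le a_i\le 1$ (from $(X,\Delta)$ canonical). Applying Lemma \ref{lem:combinatorix} with the trivial partition $I_q=\{q\}$ produces symmetric non-negative rationals $b_{ij}$ with $b_{ii}=0$, $\sum_j b_{ij}=a_i$, and $\sum_{i<j} b_{ij}=1$. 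Hence
\[
K_X+\Delta=\sum_{i<j} b_{ij}(K_X+S_i+S_j),
\]
reducing the problem to constructing an effective $L_{ij}$ for each pair with $K_X+S_i+S_j\sim_{\mathbb{Q}} f^*(K_Y+L_{ij})$ and then setting $L:=\sum_{i<j} b_{ij} L_{ij}$.

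For each pair, since $(K_X+S_i+S_j)\cdot F=0$ and $\rho(X/Y)=1$, the relative base-point free theorem of Nakayama (\cite[Thm.\ 4.10]{Nak87}) applied to $f$ lets us upgrade the numerical triviality over $Y$ to a $\mathbb{Q}$-linear equivalence $K_X+S_i+S_j\sim_{\mathbb{Q}} f^*M_{ij}$ for some $\mathbb{Q}$-divisor $M_{ij}$ on $Y$; define $L_{ij}:=M_{ij}-K_Y$.

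For the effectivity of $L_{ij}$, I would use adjunction along $S_j$ (replacing $S_j$ by its normalization $\nu_j\colon \bar S_j\to S_j$ if $S_j$ is not normal, and tracking the canonical via Lemma \ref{lem:canonical-pushforward}). This yields $(K_X+S_j)|_{\bar S_j}=K_{\bar S_j}+C_j$ with $C_j\ge 0$ representing the conductor/different. Restricting $K_X+S_i+S_j\sim_{\mathbb{Q}} f^*M_{ij}$ to $\bar S_j$ gives
\[
K_{\bar S_j}+C_j+S_i|_{\bar S_j}\sim_{\mathbb{Q}} (f\circ\nu_j)^*M_{ij},
\]
and pushing forward along the bimeromorphic morphism $f\circ\nu_j\colon\bar S_j\to Y$, combined with the negativity Lemma \ref{lem:negativity} applied to any exceptional correction, yields $M_{ij}=K_Y+D_{ij}$ where $D_{ij}=(f\circ\nu_j)_*(S_i|_{\bar S_j}+C_j)\ge 0$. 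Hence $L_{ij}=D_{ij}\ge 0$, and summing gives $L\ge 0$.

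Finally, to rule out divisorial components in $\Bs|L|_{\mathbb{Q}}$, I would exploit the flexibility in the decomposition: for each ordered choice of a section $S_j$ in a pair, one obtains a distinct effective representative $D_{ij}=(f\circ\nu_j)_*(S_i|_{\bar S_j}+C_j)$ of $L_{ij}$, and as $(i,j)$ varies the horizontal supports move; any prime divisor common to all such representatives would have to be $f$-vertical (supported on the discriminant of $f$ over $Y$) and of a special type, which a Bertini-type argument as in Lemma \ref{lem-sub-boundary-system-implies-sub-boundary} allows us to avoid after passing to a sufficiently high multiple. The main obstacle will be twofold: first, rigorously establishing the $\equiv_f 0\Rightarrow\sim_{\mathbb{Q},f} 0$ upgrade for $K_X+S_i+S_j$ in the analytic setting where the generic fiber is not a classical scheme-theoretic object; second, executing the adjunction-and-pushforward computation carefully when the sections $S_i,S_j$ are non-normal, where one must track both the conductor on $\bar S_j$ and the pushforward of the canonical sheaf of a normal bimeromorphic cover, making precise use of Lemma \ref{lem:canonical-pushforward} and the negativity lemma.
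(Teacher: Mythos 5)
The first half of your proposal — the decomposition $K_X+\Delta=\sum_{i<j}b_{ij}(K_X+S_i+S_j)$ via Lemma \ref{lem:combinatorix} with the trivial partition, the descent of each $K_X+S_i+S_j$ to $Y$ (the paper cites \cite[Proposition 3.1(6)]{CHP16} rather than relative base-point freeness, but the mechanism is the same: a line bundle trivial on the contracted extremal ray descends along the Mori fibration), and the identification $L_{ij}\sim_{\mbQ}f_*(S_i\cdot S_j)\geq 0$ by adjunction, resolution of $S_j$, Lemma \ref{lem:canonical-pushforward} and pushforward — is essentially the paper's argument, and the obstacles you flag (the descent and the non-normality of the sections) are exactly the ones the paper addresses.

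The second half has a genuine gap. The divisors $D_{ij}=f_*(S_i\cdot S_j)$ live on $Y$, so "a prime divisor common to all representatives would have to be $f$-vertical" is not meaningful, and a Bertini-type argument cannot help: the problem is not to move members of a base-point-free system but to exhibit, for each prime component $E_p$ of $L$, \emph{some} effective member of the fixed class $|L|_{\mbQ}$ avoiding $E_p$. The only freedom in the construction is the choice of the coefficients $b_{ij}$ subject to $\sum_{j}b_{ij}=a_i$, and one must choose them so that $b_{ij}=0$ whenever $S_i\cap S_j$ contains a curve lying over $E_p$. This is where the hypothesis that $(X,\Delta)$ is \emph{canonical} enters, and your proposal never uses it: for each curve $C_r$ over $E_p$, canonicity at the (codimension-two, generically smooth) center $C_r$ forces $\sum_{i\in I_r}a_i\leq 1$, where $I_r=\{i:C_r\subset S_i\}$; the sets $I_r$ partition $\{1,\dots,k\}$, and it is precisely the full strength of Lemma \ref{lem:combinatorix} applied to this \emph{nontrivial} partition that produces coefficients $c_{ij}$ with $\sum_j c_{ij}=a_i$ and $c_{ij}=0$ for $i,j$ in the same $I_r$. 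The resulting $L'=\tfrac12\sum c_{ij}f_*(S_i\cdot S_j)$ is then $\mbQ$-linearly equivalent to $L$ (both pull back to $K_{X/Y}+\Delta$) and avoids $E_p$. Without the bound $\sum_{i\in I_r}a_i\leq 1$ the required coefficients need not exist, so this step cannot be bypassed.
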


 \begin{proof}
Write $\D=\sum_{i=1}^k a_i\D_i$ such that $\D_i$'s are the irreducible components of $\D$ and $a_i\in (0, 1]$ for all $i$. Since $f$ is a $K_X$-Mori fiber space of relative dimension $1$, the general fibers of $f$ are isomorphic to $\mbP^1$; in particular, from $K_X+\Delta\num_f 0$ we obtain that $\sum_{i=1}^k a_i =2$. Then by Lemma \ref{lem:combinatorix} there exist nonnegative rational numbers $b_{ij}\>0$ with $b_{ij}=b_{ji}$ for $1\leq i,j \leq k$ such that 
\[  a_i=\sum_{j;\; j\neq i}b_{ij}.\] Then we have
 \begin{equation}\label{eqn:cbf}
        \sum_{i, j;\; i\neq j} \frac{1}{2} b_{ij}(K_{X/Y}+\D_i+\D_j) =  K_{X/Y}+\sum_{i=1}^k a_i\D_i.
 \end{equation}
 
Set $B_{ij}:=f_*(\D_i \cdot \D_j)$ for any $i \neq j$, and let $L=\frac{1}{2}\sum_{i\neq j}  b_{ij}B_{ij}\>0$. We make the following claim.
\begin{claim}\label{clm:cbf}
        The effective $\mbQ$-divisor $L\>0$ on $Y$ satisfies $(K_{X/Y}+\Delta)\sim_\mbQ f^*L$. 
\end{claim}

\begin{proof}[Proof of Claim \ref{clm:cbf}]
Let $F$ be a general fiber of $f$; then $F\cong\mbP^1$. Since $\Delta_i$'s are bimeromorphic to $Y$ via $f$, we have $(K_{X/Y}+\Delta_i+\Delta_j)\cdot F=0$. Now since $f$ is a contraction of a $K_X$-negative extremal ray, by \cite[Proposition 3.1(6)]{CHP16} there exists a positive integer $m>0$ and a line bundle $\msL_{ij}$ on $Y$ such that 
\begin{equation}\label{eqn:partial-pullback}
	\mcO_X(m(K_{X/Y}+\Delta_i+\Delta_j))\cong f^*\msL_{ij}
\end{equation}
 is an isomorphism of line bundles.\\
Now since $X$ has terminal singularities, $X$ is smooth outside a set of finitely many points. Thus by adjunction we have $(K_X+\Delta_i+\Delta_j)|_{\Delta_i}=K_{\Delta_i}+\Delta_j|_{\Delta_i}$.\\
Recall that $g=f|_{\Delta_i}:\Delta_i\to Y$ is bimeromorphic according to our hypothesis. Now observe that if $K_{\Delta_i}$ and $K_Y$ were given by Weil divisors, then we would 
have $g_*{K_{\Delta_i}}=K_Y$. This would have made our proof significantly simpler by simply restricting both sides of \eqref{eqn:partial-pullback} onto $\Delta_i$, and then pushing forward by $f|_{\Delta_i}$ would show that $\msL_{ij}$ is given by a Cartier divisor on $Y$. However, the canonical bundle of an analytic variety only exists as a sheaf and it may not correspond to any Weil divisor in general.\\
 To remedy this problem we pullback everything to a resolution of $\Delta_i$. Let $h:Z\to \Delta_i$ be a resolution of singularities of $\Delta_i$. Then by Lemma \ref{lem:canonical-pushforward} there exists a $(g\circ h)$-exceptional Weil divisor $E$ on $Z$ such that $h^*\omega_{\D_i}^{[m]}\cong\omega_Z^m(E)$. Moreover, since $Y$ is $\mbQ$-factorial, by our defintion of $\mbQ$-factoriality, $\omega_Y^{[m]}$ is a line bundle for $m\in\mbN$ sufficiently divisible. Thus there exists a $(g\circ h)$-exceptional divisor $F$ on $Z$ such that $\omega_Z^m\cong(g\circ h)^*\omega_Y^{[m]}(F)$. Now restricting both sides of \eqref{eqn:partial-pullback} to $\Delta_i$ and then pulling back to $Z$ we get
\begin{equation}\label{eqn:on-Z}
	h^*(\mcO_X(m(K_{X/Y}+\Delta_i+\Delta_j))|_{\Delta_i})\cong h^*((f^*\msL_{ij})|_{\Delta_i})\cong (g\circ h)^*\msL_{ij}.
\end{equation}
Now
\begin{align*}
    h^*(\mcO_X(m(K_{X/Y}+\Delta_i+\Delta_j))|_{\Delta_i}) & \cong\omega^m_Z(E)\otimes (g\circ h)^*\omega^{[-m]}_Y(mh^*(\Delta_j|_{\Delta_i}))\\
														  & \cong (g\circ h)^*\omega^{[m]}_Y(E+F)\otimes (g\circ h)^*\omega^{[-m]}_Y(mh^*(\Delta_j|_{\Delta_i}))\\
														  & \cong \mcO_Z(E+F+mh^*(\Delta_j|_{\Delta_i})).
\end{align*}
 Therefore from \eqref{eqn:on-Z} it follows that the line bundle $(g\circ h)^*\msL_{ij}$ is given by a Cartier divisor on $Z$. Since $g\circ h$ is bimeromorphic, it follows that $\msL_{ij}$ corresponds to a Cartier divisor, say $M_{ij}$ on $Y$. Let $L_{ij}:=\frac{1}{m}M_{ij}$; then from \eqref{eqn:on-Z} we have
 \begin{equation}\label{eqn:divisor-form}
 	\frac{1}{m}(E+F)+h^*(\Delta_j|_{\Delta_i})\sim_{\mbQ} (g\circ h)^*L_{ij}.
 \end{equation}
 Now according to Lemma \ref{lem:canonical-pushforward}, $E$, $F$ and the conductor divisor of the normalization of $\Delta_i$ are all exceptional over $Y$. Thus pushing forward both sudes of \eqref{eqn:divisor-form} we get $L_{ij}\sim_{\mbQ} (g\circ h)_*(h^*(\Delta_j)|_{\Delta_i})=f_*(\Delta_i\cdot \Delta_j)=B_{ij}$. Then from \eqref{eqn:cbf} we have $K_{X/Y}+\Delta\sim_\mbQ f^*L$.         
\end{proof}

In the following we will show that $L$ satisfies the other two properties.\\ 
 Let $E_1,...,E_s$ be the distinct irreducible components of $L$. We will show  that the base locus of $|L|_{\mathbb{Q}}$ does not have a divisorial component. To that end it is enough to show that for each $p\in\{1,\ldots,s\}$, there exists an effective $\mbQ$-divisor $L'\>0$ on $Y$ such that $L'\sim_\mbQ L$ and $E_p$ is not contained in the support of $L'$.
 We fix some $p\in\{1, 2,\ldots, s\}$, and let $C_1,...,C_q$ be the curves in $X$ contained in the support of the $1$-cycles $\Delta_i\cdot\Delta_j$ for all $i\neq j\in\{1,\ldots, k\}$ and dominate $E_p$, i.e., $f(C_i)=E_p$ for all $i=1,\ldots, q$. Now for each $r\in\{1,\ldots, q\}$ we define 
 \[I_r:=\{i\ |\  C_r \subseteq \Supp\D_i\}.\] 
Since $\Delta_i\to Y$ is bimeromorphic for each $i$,  the sets $I_1,\ldots, I_q$ form a partition of the set $\{1,\ldots, k\}$ (recall that $\Delta=\sum_{i}^ka_i\Delta_i$).\\
 Now since $X$ has  terminal singularities, $X$ is smooth in a neighborhood of the general points of each $C_r$. Moreover, since $(X,\D)$ is canonical, by passing to a log resolution of $(X, \Delta)$ which extracts an exceptional divisor dominate $C_r$, it is easy to see that $\sum_{i\in I_r} a_i\leqslant 1$. Then by Lemma \ref{lem:combinatorix} there exist non-negative rational numbers $c_{ij}\>0$ such that $c_{ij}=c_{ji}$ for all $i, j$, and that $c_{ij}=0$  whenever $i,j$ belong to the same indexing set $I_r$. 
Following a similar construction as $L$ given above we define $L':=\frac{1}{2}\sum_{i\neq j}c_{ij}f_*(\Delta_i\cdot\Delta_j)$. Then it is clear that $E_p$ is not contained in the support of $L'$. Finally, from the construction of $L$ and $L'$ as above it also follows that $0\<L'\sim_\mbQ L$.
 \end{proof}~\\

 \begin{lemma}
 \label{lem-horizontal-bimeromorphic}
 Let $(X, \Delta)$ be a $\mathbb{Q}$-factorial compact K\"ahler $3$-fold sub-lc pair such that $X$ has  terminal singularities. Let $f:X\to Y$ be a $K_X$-Mori fiber space of relative dimension $1$ such that $K_X+\D\equiv_f 0$.  We can decompose $\D=\D_{\hor}+\D_{\ver}$ into horizontal and vertical parts over $Y$, and assume that $\Delta_{\hor}$ is an effective $\mbQ$-divisor. Assume that $(X, \D_{\hor})$ is a canonical pair and that all components of $\D_{\hor}$ are bimeromorphic to $Y$ \textit{via} $f$.  Then there exist $\mbQ$-divisors $D$ and $L$ on $Y$ such that
 \begin{enumerate}
 \item $f^*D=\D_{\ver}$,
 \item $L$ is effective and $\Bs|L|_{\mbQ}$ has no divisorial component,
 \item $K_X+\D\sim_{\mathbb{Q}}  f^*(K_Y+D+L)$, and
 \item $D+|L|_{\mathbb{Q}}$ is a pure sub-boundary if $(X,\D)$ is  sub-klt.
 \end{enumerate} 
 \end{lemma}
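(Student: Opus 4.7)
The plan is to peel off the vertical part of $\Delta$ and reduce the statement to Lemma \ref{lem-all-horizontal-bimeromorphic}. I will first invoke Lemma \ref{lem:vertical-pullback}, applied to the vertical $\mbQ$-divisor $\Delta_{\ver}$ and the Mori fibration $f$: this produces a $\mbQ$-Cartier $\mbQ$-divisor $D$ on $Y$ with $f^*D = \Delta_{\ver}$, proving (1). Since both $f^*D$ and $K_X+\Delta$ are numerically $f$-trivial, it follows that $K_X + \Delta_{\hor} \equiv_f 0$.

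Next I will apply Lemma \ref{lem-all-horizontal-bimeromorphic} directly to the pair $(X, \Delta_{\hor})$. All of its hypotheses are met: $X$ has $\mbQ$-factorial terminal singularities, $(X, \Delta_{\hor})$ is canonical with every component of $\Delta_{\hor}$ bimeromorphic to $Y$, and $K_X + \Delta_{\hor} \equiv_f 0$. This yields an effective $\mbQ$-divisor $L \geq 0$ on $Y$ such that $K_X + \Delta_{\hor} \sim_\mbQ f^*(K_Y + L)$ and $\Bs|L|_\mbQ$ has no divisorial component, giving (2). Adding back $\Delta_{\ver} = f^*D$ yields
\[ K_X + \Delta \sim_\mbQ f^*(K_Y + D + L), \]
establishing (3).

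For (4), suppose additionally that $(X, \Delta)$ is sub-klt, and fix a prime Weil divisor $E \subset Y$. Writing $f^*E = \sum_j e_j F_j$ with ramification indices $e_j \geq 1$, each $F_j$ is vertical, so $\mult_{F_j}(\Delta_{\hor}) = 0$ and the sub-klt condition gives
\[ e_j \cdot \mult_E(D) \;=\; \mult_{F_j}(f^*D) \;=\; \mult_{F_j}(\Delta_{\ver}) \;=\; \mult_{F_j}(\Delta) \;<\; 1, \]
whence $\mult_E(D) < 1/e_j \leq 1$. Since $\Bs|L|_\mbQ$ contains no divisor, I can choose $P \in |L|_\mbQ$ with $\mult_E(P) = 0$; therefore $\mult_E(D + |L|_\mbQ) = \mult_E(D) < 1$, showing $D + |L|_\mbQ$ is a pure sub-boundary. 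The only genuine content lies in this last step, where the sub-klt inequality on $X$ is transported to a coefficient bound on $Y$ via the ramification formula combined with the vertical-pullback reduction (which itself rests on $\rho(X/Y) = 1$). I do not anticipate a serious obstacle, since the statement is largely a bookkeeping consequence of the two preceding lemmas.
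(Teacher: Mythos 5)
Your proposal is correct and follows essentially the same route as the paper: extract $D$ from $\Delta_{\ver}$ via Lemma \ref{lem:vertical-pullback}, apply Lemma \ref{lem-all-horizontal-bimeromorphic} to $(X,\Delta_{\hor})$ to obtain $L$, and combine. Your treatment of (4) merely makes explicit, via the ramification indices $e_j$, the paper's one-line assertion that $D$ is a pure sub-boundary because $f^*D=\Delta_{\ver}$ is.
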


 \begin{proof}
 Let $F$ be a general fiber of $f$. Then $\Delta_{\ver}\cdot F=0$. Since $f$ is the contraction of a $K_X$-negative extremal ray, by Lemma \ref{lem:vertical-pullback} there exists a $\mbQ$-Cartier divisor $D$ on $Y$ such that $\Delta_{\ver}=f^*D$. By Lemma \ref{lem-all-horizontal-bimeromorphic}, there is an effective $\mbQ$-divisor $L\>0$ on $Y$ such that $K_X+\D_{\hor}\sim_{\mathbb{Q}}  f^*(K_Y+L)$ and $\Bs|L|_{\mathbb{Q}}$ has no divisorial component.\\
 Now if $(X, \Delta)$ is sub-klt, then $\Delta_{\ver}$ is a pure sub-boundary, and thus so is $D$, since $\Delta_{\ver}=f^*D$. Finally, since $\Bs|L|_{\mathbb{Q}}$ does not have a divisorial component, it follows that $D+|L|_{\mathbb{Q}}$ is a pure sub-boundary.
 \end{proof}~\\

 We will now deal with the case where the components of $\D_{\hor}$ are not all bimeromorphic to the base $Y$. We will use the following covering trick.

 \begin{lemma}[Covering Trick]
 \label{lem-Stein-factorization-base-change}
 Let $f:X\to Y$ be a proper fibration of relative dimension $1$ between normal compact analytic varieties. Let $S$ be a prime Weil divisor on $X$ such that it is horizontal over $Y$. Then there is a commutative diagram 

 \centerline{
 \xymatrix{
 X'\ar[d]_{f'} \ar[r]^p  & X \ar[d]^{f} \\
  Y'\ar[r]_{\pi} & Y
 }
 }
 \noindent such that 
 \begin{enumerate}
 \item $Y'$ is normal and $\pi$ is a proper finite morphism,
 \item $X'$ is the normalization of the main component of $X\times_Y Y'$ which dominates $Y'$,
 \item $p$ is a proper finite morphism such that it is \'etale over a dense Zariski open subset of $X$, 
 \item $S'= p^*S$ is a $\mbZ$-Weil divisor, and every component of $S'$ is bimeromorphic to $Y'$ \textit{via} $f'$.
 \end{enumerate}
 \end{lemma}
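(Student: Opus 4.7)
The plan is to construct $\pi:Y'\to Y$ by combining the Stein factorization of $f$ restricted to (the normalization of) $S$ with the Galois closure of Lemma \ref{lem:galois-closure}. Let $\nu:S^\nu\to S$ be the normalization and set $g:=f\circ\nu:S^\nu\to Y$. Since $S$ is a prime divisor horizontal over $Y$ and $f$ has relative dimension one, $g$ is a proper, surjective, generically finite morphism between normal compact analytic varieties. Stein factorizing $g$ as $S^\nu\xrightarrow{\alpha}T\xrightarrow{\beta}Y$, one obtains a finite morphism $\beta$ and a morphism $\alpha$ with connected fibers; since $\beta$ is finite and $g$ is generically finite, $\alpha$ is also generically finite, so the connected generic fiber of $\alpha$ is a single point, i.e.\ $\alpha$ is bimeromorphic.

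I then apply Lemma \ref{lem:galois-closure} to $\beta:T\to Y$ to produce a finite proper morphism $\pi:Y'\to Y$ with $Y'$ normal, factoring as $Y'\xrightarrow{\phi}T\xrightarrow{\beta}Y$, together with a finite group $G$ acting on $Y'$ such that $Y'/G\cong Y$. I define $X'$ to be the normalization of the unique irreducible component of $X\times_Y Y'$ dominating $Y'$; uniqueness follows because $f$ is a fibration, so $X\times_Y Y'\to Y'$ has connected generic fiber. Let $p:X'\to X$ and $f':X'\to Y'$ be the induced morphisms. Items $(1)$ and $(2)$ are immediate. For $(3)$, $p$ is proper and finite as the composition of a finite base change with a normalization; since $\pi$ is \'etale over a dense Zariski open $U\subset Y$, the base change $X\times_Y Y'\to X$ is \'etale over $f^{-1}(U)$, and after shrinking $U$ so that $f^{-1}(U)$ lies in the smooth locus of $X$, the normalization is the identity over $f^{-1}(U)$, hence $p$ is \'etale there.

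For $(4)$, Lemma \ref{lem:finite-pullback} immediately gives that $S':=p^*S$ is an effective $\mbZ$-Weil divisor whose support equals $p^{-1}(S)$. Since $S$ is horizontal, $S\times_Y Y'\to Y'$ is surjective of generic relative dimension zero, so every irreducible component of $S\times_Y Y'$ dominates $Y'$, lies in the main component of $X\times_Y Y'$, and descends through normalization to an irreducible component of $S'$ in $X'$. Using $\phi$ and the bimeromorphism $\alpha$, I have the bimeromorphic identifications
\[
S\times_Y Y'\ \sim\ S^\nu\times_Y Y'\ =\ S^\nu\times_T(T\times_Y Y')\ \sim\ T\times_Y Y'.
\]
Now $\pi$ is Galois with group $G$ and $H:=\mathrm{Gal}(Y'/T)$ is the stabilizer of $T$; the standard decomposition $\mcM(T)\otimes_{\mcM(Y)}\mcM(Y')\cong\prod_{G/H}\mcM(Y')$ shows that $T\times_Y Y'$ generically splits as a disjoint union of $[G:H]$ copies of $Y'$, each mapping isomorphically to $Y'$ via the second projection. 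Chaining these bimeromorphisms yields that every irreducible component of $S'$ is bimeromorphic to $Y'$ via $f'$.

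The main obstacle lies in item $(4)$: tracking the components of the generically reducible fiber product $S\times_Y Y'$ through the normalization step and confirming that they become distinct components of $S'$ in $X'$, each dominating $Y'$ bimeromorphically. Taking the Galois closure (rather than any finite cover dominating $T$) is precisely what trivializes the cover $T\times_Y Y'\to Y'$ generically into copies of $Y'$, making the bimeromorphism type of each component of $S'$ transparent.
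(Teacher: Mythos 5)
Your proof is correct, but it takes a genuinely different route from the paper's. The paper also starts from the Stein factorization of $f|_S$ (normalized), but it then base changes by that (generally non-Galois) cover directly: it observes that the main component $S''$ of $S\times_{Y''}Y'$ gives \emph{one} component of $S'=p^*S$ that is bimeromorphic to $Y'$, writes $S'=kS''+T'$, and iterates the whole construction on the leftover $T'$, with termination guaranteed because the degree of $T'$ over the general fibre of $f'$ drops strictly at each step. You instead pass to the Galois closure of the Stein factor $\beta:T\to Y$ before base changing; the Galois property forces $T\times_Y Y'$ to split generically into $[G:H]$ copies of $Y'$, so \emph{all} components of $S'$ become bimeromorphic to $Y'$ in a single step and no iteration or degree count is needed. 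This is a clean trade: your argument is shorter and structurally more transparent, at the cost of invoking the Galois closure (Lemma \ref{lem:galois-closure}) and the splitting of the intermediate cover, whereas the paper's inductive version uses only the elementary degree-drop. One presentational caveat: justifying the splitting via $\mcM(T)\otimes_{\mcM(Y)}\mcM(Y')\cong\prod_{G/H}\mcM(Y')$ is delicate in the analytic category, since the meromorphic function field of a compact analytic variety can have transcendence degree strictly less than its dimension (a point the paper itself emphasizes in Remark \ref{rmk:kawamata's-proof}); the splitting should instead be derived from covering-space theory over the \'etale locus, where $Y'\times_Y Y'$ decomposes into the graphs of the elements of $G$ and $T\times_Y Y'\cong (H\backslash G)\times Y'$ after quotienting the first factor by $H$ — this is exactly the setting in which Lemma \ref{lem:galois-closure} is constructed, so the conclusion stands.
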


 \begin{proof}
Note that the induced morphism $f|_S:S\to Y$ is generically finite.  Let $S\to Y''\to Y$ be the Stein factorization of $f|_S:S\to Y$, and $\pi:Y'\to Y''$ the normalization of $Y''$. Then $S\to Y''$ is a bimeromorpic morphism. 
  Let $X'$ be the normalization of the main component of $X\times_Y Y'$ dominating $Y'$, and $f':X'\to Y'$ and $p:X'\to X$ are the projections. Note that since $Y'\to Y$ is \'etale over a dense Zariski open subset of $Y$ and \'etale morphisms are stable under base change, it follows that $p$ is \'etale over a dense Zariski open subset of $X$.\\
  Now let $S':=p^*S$ be defined as in Lemma \ref{lem:finite-pullback}, and $S''$ is the main component of $S\times_{Y''}Y'$ dominating $Y'$. Then $S''\to Y'$ is bimeromorphic. The morphisms $S''\to S\to X\to Y$ and $S''\to Y'\to Y$ induce a morphism $S''\to X'$. Note that the image of $S''$ on $X'$ is an irreducible component of $S'$; we denote this image again by $S''$ in $X'$.
Then we can decompose $S'$ as $S'=kS''+T'$, where $k\in\mbZ^+$ and $S''$ is not contained in the support of $T'$. Thus $S'$ has a component $S''$ which is bimeromorphic to $Y'$. We can continue this process until all the components of $S'$ becomes bimeromorphic to $Y'$. It is easy to see that this process stops after a finitely many steps, since the degree of $T'$ with respect to the general fiber of $f':X'\to Y'$ is strictly smaller than the degree of $S$ with respect to the general fibers of $f:X\to Y$.
 \end{proof}~\\

 \begin{lemma}
 \label{lem-base-pure-sub-boundary}
 Let $(X,\D)$ be a $\mbQ$-factorial compact K\"ahler $3$-fold sub-lc pair such that $X$ has klt singularities. Assume that $X$ is non-algebraic and that the base of the MRC fibration of $X$ has dimension $2$. Let $f:X\to Y$ be a $K_X$-Mori fiber space of relative dimension $1$ such that $K_X+\D\sim_{\mbQ, f} 0$. We decompose $\D=\D_{\hor}+\D_{\ver}$, and assume that $\D_{\hor}$ is effective. Then there exist two divisors $L$ and $D$ on $Y$ such that
 \begin{enumerate}
 \item $f^*D=\D_{\ver}$,
 \item $|L|_\mbQ$ is non-empty, 
 \item $K_X+\D\sim_{\mathbb{Q}}  f^*(K_Y+D+L)$, and
 \item $D+|L|_{\mathbb{Q}}$ is a pure sub-boundary if $(X,\D)$ is  sub-klt.
 \end{enumerate} 
 \end{lemma}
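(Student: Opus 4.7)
The plan is to reduce this lemma to Lemma \ref{lem-horizontal-bimeromorphic} by a finite Galois base change, and then to descend the resulting divisors on the cover back to $Y$ by Galois averaging.

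First I would iteratively apply the covering trick of Lemma \ref{lem-Stein-factorization-base-change} to each horizontal component of $\Delta$ that is not yet bimeromorphic to $Y$ via $f$, and then take a Galois closure via Lemma \ref{lem:galois-closure}, to produce a commutative diagram
\[
\xymatrix{X' \ar[d]_{f'} \ar[r]^p & X \ar[d]^f \\ Y' \ar[r]_\pi & Y}
\]
in which $\pi$ is finite Galois with group $G$, $p$ is finite and \'etale over a dense Zariski open subset of $X$, and every component of the support of $p^{*}\Delta_{\hor}$ is bimeromorphic to $Y'$ via $f'$. Defining $\Delta'$ on $X'$ by $K_{X'}+\Delta' = p^{*}(K_X+\Delta)$, Lemma \ref{lem:ramification-formula} shows that $(X', \Delta')$ is sub-lc (sub-klt if $(X,\Delta)$ is); effectivity of $\Delta'_{\hor}$ is preserved because, by construction of the covering trick, $p$ is \'etale over the generic points of the relevant horizontal components, and $K_{X'}+\Delta' \sim_{\mbQ,f'} 0$.

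To apply Lemma \ref{lem-horizontal-bimeromorphic}, I need $X'$ terminal, $(X', \Delta'_{\hor})$ canonical, and a $K$-Mori fiber space to $Y'$. I achieve these by taking a $\mbQ$-factorial terminalization $\mu : \tilde X \to X'$ via Theorem \ref{thm:terminal-model} of a suitable klt pair dominating $(X', \Delta')$, setting $\tilde\Delta = \mu^{*}\Delta'$ so that the log-pullback relation is preserved, and then running a relative $K_{\tilde X}$-MMP over $Y'$ via Proposition \ref{pro:relative-projective-mmp}. Since a general fiber of $f'\circ\mu$ is rational, $K_{\tilde X}$ is not pseudo-effective over $Y'$, so this MMP terminates with a $K$-Mori fiber space $\hat f : \hat X \to \hat Y$ over $Y'$. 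Because the relative dimension over $Y'$ is $1$, a dimension count forces $\hat Y \to Y'$ to be bimeromorphic, and after absorbing this bimeromorphic morphism into the base change from the outset I may assume $\hat Y = Y'$; the horizontal part of $\hat\Delta$ remains effective, canonical, and with components bimeromorphic to $Y'$. Applying Lemma \ref{lem-horizontal-bimeromorphic} to $(\hat X, \hat\Delta) \to Y'$ then yields $\mbQ$-divisors $D'$ and $L' \geq 0$ on $Y'$ satisfying all four conclusions over $Y'$.

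For the descent, $G$ acts on $Y'$ with quotient $Y$. After replacing $D'$ and a generic element $N' \in |L'|_{\mbQ}$ by their $G$-averages $\frac{1}{|G|}\sum_{g\in G}g_{*}D'$ and $\frac{1}{|G|}\sum_{g\in G}g_{*}N'$, I may assume both are $G$-invariant, hence of the form $\pi^{*}D$ and $\pi^{*}L$ for uniquely determined $\mbQ$-divisors $D, L$ on $Y$. Properties (1)--(3) then follow from the corresponding identities on $Y'$ (together with injectivity of $\pi^{*}$ and $p^{*}$ on $\mbQ$-Weil divisor classes), and property (4) follows from Lemma \ref{lem-gen-finite-sub-boundary} applied to the generically finite morphism $\pi$. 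The main obstacle I anticipate is the modification step: guaranteeing that the relative MMP genuinely lands on a Mori fiber space over $Y'$ itself (rather than over a further contraction of $Y'$), and that the canonicity, effectivity, and bimeromorphic-to-$Y'$ properties of the horizontal part of the boundary survive every extremal contraction and flip in the MMP, so that Lemma \ref{lem-horizontal-bimeromorphic} can be applied as stated.
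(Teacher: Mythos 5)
Your reduction to Lemma \ref{lem-horizontal-bimeromorphic} via the covering trick of Lemma \ref{lem-Stein-factorization-base-change} followed by a relative MMP over $Y'$ is essentially the route the paper takes (the concerns you flag at the end are handled there by a two-stage MMP, first a $(K_Z+\Theta_{\hor})$-MMP and then a $K_{Z'}$-MMP with scaling of $\Theta'_{\hor}$, together with Lemma \ref{lem:directed-relative-mmp} and Lemma \ref{lem:mmp-preserving-terminal-singularities}). The genuine gap is in your descent step, and it is fatal as written.

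Defining $D$ and $L$ by $\pi^{*}D=\tfrac{1}{|G|}\sum_{g}g_{*}D'$ and $\pi^{*}L=\tfrac{1}{|G|}\sum_{g}g_{*}N'$ makes property (2) trivial but destroys property (3), because $K_{Y'}\neq\pi^{*}K_{Y}$. Pulling back your proposed identity and using $K_{Y'}\sim_{\mbQ}\pi^{*}K_{Y}+K_{Y'/Y}$ gives $p^{*}\bigl(K_{X}+\D-f^{*}(K_{Y}+D+L)\bigr)\sim_{\mbQ}f'^{*}K_{Y'/Y}$, and since $p$ is finite this forces $K_{X}+\D-f^{*}(K_{Y}+D+L)$ to be a nonzero effective class whenever $\pi$ is ramified in codimension one --- which is the general case, since the covering trick is invoked precisely when a horizontal component of $\D$ has degree $>1$ over $Y$ and typically produces a branched cover. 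To retain (3) one must instead define $D$ intrinsically on $Y$ by $f^{*}D=\D_{\ver}$ (Lemma \ref{lem:vertical-pullback}) and take $\pi^{*}L\sim_{\mbQ}L'+(D'-\pi^{*}D)+K_{Y'/Y}$; but then the effectivity of $L$, i.e. property (2), which is the whole point of the lemma, is no longer automatic, because $D'-\pi^{*}D$ can have negative coefficients along the branch divisors. Establishing it is exactly the content of the paper's Claim \ref{clm:effectivity} and the inequality \eqref{eqn:anti-effective}: an explicit comparison of the ramification indices of $p$ and $\pi$ with the multiplicities of $\D_{\ver}$, $\Theta_{\ver}$ and $D'$, showing $(D'-\pi^{*}D)_{\nex}+R_{\nex}\geq 0$, followed by the negativity lemma applied to the $\pi$-exceptional part using that $L'$ is algebraically nef. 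Your proposal contains no substitute for this computation. (A secondary issue: even the averaging needs justification, since $g_{*}N'\in|L'|_{\mbQ}$ requires $g_{*}L'\sim_{\mbQ}L'$, and the model on which $L'$ is constructed is produced by an MMP that is not $G$-equivariant.)
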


 \begin{proof}
 Since $f$ is the contraction of a $K_X$-negative extremal ray and $X$ has $\mbQ$-factorial klt singularities, by \cite[Proposition 3.5(6)]{CHP16} there exists a  line bundle $\msL$ on $Y$ such that 
 \begin{equation}\label{eqn:on-domain}
 	\mcO_X(m(K_X+\D_{\hor}))\cong f^*(\mcO_X(mK_Y)\otimes\msL)
 \end{equation}  
 is an isomorphism of lines bundles for some $m\in\mbN$.\\
 Next by Lemma \ref{lem:vertical-pullback} there exists a $\mbQ$-Cartier divisor $D$ on $Y$ such that $\Delta_{\ver}=f^*D$. We will now show that $\msL\cong\mcO_X(mL)$ for some $\mbQ$-Cartier divisor $L$ such that the $\mbQ$-linear system $|L|_{\mbQ}$ is non-empty. To that end, first applying Lemma \ref{lem-Stein-factorization-base-change} to the components of $\Delta_{\hor}$ several times and then taking log resolution at the end we obtain a commutative diagram satisfying the following properties:

\begin{equation}
\xymatrixcolsep{3pc}\xymatrixrowsep{3pc}\xymatrix{
 (Z,\Theta)\ar[d]_{g} \ar[r]^p  & (X,\D) \ar[d]^{f} \\
  Y'\ar[r]_{\pi} & Y
 }
 \end{equation}
 \begin{enumerate}[label=(\roman*)]
 \item $\pi$ and $p$ are generically finite proper morphisms, and $g$ is a projective contraction,
 \item $Y'$ is smooth and $Z$ is a  desingularization of the normalization of the
 main component of $X\times_Y Y'$ dominating $Y'$,
 \item   $K_Z+\Theta \sim_{\mathbb{Q}} p^*(K_{X}+\D)$,
 \item  $(Z, \Theta)$ is a log smooth sub-lc pair such that $\Supp p^{-1}_*\Delta$ is a smooth closed subset of $Z$,
 \item if we decompose $\Theta=\Theta_{\hor}+\Theta_{\ver}$ into horizontal and vertical parts over $Y'$, then $\Theta_{\hor}$ is effective,
 \item every component of $\Theta_{\hor}$ is bimeromorphic to $Y'$ \textit{via} $g$, and
 \item  $(Z,\Theta_{\hor})$ has canonical singularities.
 \end{enumerate}
Note that (i) and (vi) follows from the repeated application of Lemma \ref{lem-Stein-factorization-base-change}; (iii) follows from (ii) and Lemma \ref{lem:ramification-formula}; (iv) holds by \cite[Corollary 2.32(2)]{KM98}.\\
 Now we explain (v) and (vii). Since the desingularization $Z$ is constructed via successive blow ups of a normal variety along smooth centers of codimension at least $2$, the exceptional divisors of $p$ are all vertical over $Y'$. Moreover, from Lemma \ref{lem-Stein-factorization-base-change} it follows that $p$ is \'etale over a dense Zariski open subset of $X$, in particular, the ramification divisors of $p$ are vertical over $Y'$. Therefore we have $\Theta_{\hor}=p^{-1}_*\Delta_{\hor}$, and hence $\Theta_{\hor}$ is an effective boundary divisor; this is (v). Then (vii) follows from \cite[Corollary 2.31(3)]{KM98} using the fact that $\Theta_{\hor}$ is a boundary divisor and $\Supp\Theta_{\hor}$ is smooth.\\

Now we run a $(K_Z+\Theta_{\hor})$-MMP over $Y'$ as in Proposition \ref{pro:relative-projective-mmp}, since $g:X'\to Y'$ is projective. Note that from our construction above it follows that $K_Z+\Theta_{\hor}+\Theta_{\ver}\sim_{\mbQ, g}0$. Thus this MMP yields a pair $(Z',\Theta'_{\hor})$ with $g': Z'\to Y'$ such that $K_{Z'}+\Theta'_{\hor}$ is $g'$-nef. We note that none of the components of $\Theta_{\hor}$ is contracted by this MMP. Consequently, $(Z',\Theta'_{\hor})$ has canonical  singularities, and $Z'$ has terminal singularities by Lemma \ref{lem:mmp-preserving-terminal-singularities}. Let $\Theta'_{\ver}$ be the bimeromorphic transformation of $\Theta_{\ver}$ by the induced bimeromorphic map $Z\bir Z'$. Then we have $K_{Z'}+\Theta'_{\hor}+\Theta'_{\ver}\sim_{\mbQ, g'}0$, and $(Z', \Theta'_{\hor}+\Theta'_{\ver})$ has sub-lc singularities, since every step of this MMP is $(K_Z+\Theta)$-trivial.\\
  
Now since $g':Z'\to Y'$ is a projective morphism, by Proposition \ref{pro:relative-projective-mmp} we can run a $K_{Z'}$-MMP over $Y'$ with the scaling of $\Theta'_{\hor}$. Note that the general fibers of $g'$ are isomorphic to $\mbP^1$ and $(K_{Z'}+\Theta'_{\hor})\cdot F=0$ for a general fiber $F$ of $g'$. Therefore this MMP terminates with a Mori fiber space, say $f': X'\to Y''$ such that $X'$ has $\mbQ$-factorial terminal singularities. Let $\Delta'_{\hor}$ and $\Delta'_{\ver}$ be the bimeromorphic transform of $\Theta'_{\hor}$ and $\Theta_{\ver}$, respectively, and $\Delta':=\D'_{\hor}+\D'_{\ver}$. From Lemma \ref{lem:directed-relative-mmp} it also follows that every step of this MMP is $(K_{Z'}+\Theta'_{\hor})$-trivial. In particular, $(K_{X'}+\Delta'_{\hor})\sim_{\mbQ, \tau\circ f'} 0$ and $(X', \Delta'_{\hor})$ has canonical singularities, where $\tau:Y''\to Y'$ is the induced bimeromorphic morphism. Moreover, we also have $(K_{X'}+\D')\sim_{\mbQ, \tau\circ f'}0$ and $(X', \Delta')$ is sub-lc.\\

\begin{equation}\label{eqn:mori-fiber-diagram}
        \xymatrixcolsep{3pc}\xymatrixrowsep{3pc}\xymatrix{
        (X', \Delta')\ar[d]_{f'} & (Z', \Theta')\ar@{-->}[l]\ar[dr]_{g'} & (Z, \Theta)\ar@{-->}[l]\ar[r]^p\ar[d]_g & (X, \Delta)\ar[d]^f\\
        Y''\ar[rr]^{\tau}&& Y'\ar[r]^{\pi} & Y 
        }
\end{equation}

 Now applying Lemma \ref{lem-horizontal-bimeromorphic} to $f':(X', \Delta')\to Y''$ we see that there exist $\mbQ$-Cartier divisors $L''$ and  $D''$ on $Y''$ such that   
 \begin{enumerate}[label={\alph*)}]
 \item $f'^*D''=\Delta'_{\ver}$,
 \item $L''$ is effective, and $\Bs|L''|_{\mathbb{Q}}$ does not have any divisorial component,
 \item $K_{X'}+\Delta_{\hor}'\sim_{\mathbb{Q}}  f'^*(K_{Y''}+L'')$, and 
 \item $D''+|L''|_{\mathbb{Q}}$ is a pure sub-boundary  if $(X',\Delta')$ is sub-klt.
 \end{enumerate} 
From our hypothesis it follows that $Y'$ is a smooth compact K\"ahler non-uniruled surface, and thus both of the line bundles $\omega_{Y'}^m$ and $\omega_{Y''}^{[m]}$ correspond to effective Cartier divisors for sufficiently divisible $m\in\mbN$.  Combining all of these with the Part (c) above it follows that $K_{Y''}+L''\sim_\mbQ \tau^*(K_{Y'}+L')$, where $L'=\tau_*L''$. Also, since $K_{X'}+\Delta'\sim_{\mbQ, \tau\circ f'}0$ and $K_{X'}+\Delta'_{\hor}\sim_{\mbQ, \tau\circ f'}0$, we have $\Delta'_{\ver}\sim_{\mbQ, \tau\circ f'}0$; in particular, from Part (a) above it follows that $D''=\tau^*D'$, where $D'=\tau_*D''$. Thus we have 
 \[\Delta'_{\ver}=(\tau\circ f')^*D', K_{X'}+\Delta'_{\hor}\sim_\mbQ (\tau\circ f')^*(K_{Y'}+L')\]
 \begin{center}
 	and
 \end{center}
 \[K_{X'}+\Delta'\sim_{\mbQ} (\tau\circ f')^*(K_{Y'}+L'+D').\]
  Finally, from Part (d) it follows that $D'+|L'|_\mbQ$ is a pure sub-boundary if $(X', \Delta')$ is sub-klt. Indeed, if $F'$ is a prime Weil divisor $Y'$ and $F'':=\tau^{-1}_*F'$, then there exists an effective $\mbQ$-Cartier divisor $M''\in|L''|_{\mbQ}$ such that $\mult_{F''}(D''+M'')<1$, since $D''+|L''|_{\mbQ}$ is a pure sub-boundary. Thus by pushing forward (as $\tau$ is bimeromorphic) we get that $\mult_{F'}(D'+M')<1$, where $M':=\tau_*M''\in|L'|$; hence 
  \begin{equation}\label{eqn:Y'-pure-sub-boundary}
  	 D'+|L'|_{\mbQ}\quad \mbox{is a pure sub-boundary.}
  \end{equation}
From \eqref{eqn:on-domain} we also have $\mcO_X(m(K_Z+\Theta))\cong(\pi\circ g)^*(\mcO_Y(m(K_Y+D))\otimes\msL)$ for sufficiently divisible $m\in\mbN$. Let $W$ be the normalization of the graph of the bimeromorphic contraction $\phi:Z\bir X'$ over $Y'$, and $q:W\to Z$ and $r:W\to X'$ are the corresponding projections. Then from our construction above and the negativity lemma it follows that $q^*(K_Z+\Theta)=r^*(K_{X'}+\Delta')$. In particular, we have
 \[
 	(\tau\circ f'\circ r)^*\mcO_{Y'}(m(K_{Y'}+D'+L'))\cong (\pi\circ g\circ q)^*(\mcO_Y(m(K_Y+D))\otimes\msL)
 \]
 for sufficiently divisible $m\in\mbN$.\\
 Pushing forward both side by $\tau\circ f'\circ r=g\circ p$ and then applying the projection formula we get 
\begin{equation}\label{eqn:model-comparison}
     \mcO_{Y'}(m(K_{Y'}+L'+D'))\cong\pi^*(\mcO_Y(m(K_Y+D))\otimes\msL).
\end{equation}
 Now since $Y$ and $Y'$ are both non-uniruled compact K\"ahler surface with klt singularities, it follows that $\mcO_Y(mK_Y)$ and $\mcO_{Y'}(mK_{Y'})$ are both given by Cartier divisors for sufficiently divisible $m\in\mbN$. Thus from \eqref{eqn:model-comparison} we see that $\msL$ correspond to a $\mbQ$-Cartier divisor $L$ on $Y$ such that $\msL\cong\mcO_Y(mL)$ for some sufficiently divisible $m$. Therefore from \eqref{eqn:model-comparison} we have
 \begin{equation}\label{eqn:divisor-form-comparison}
 	K_{Y'}+D'+L'\sim_{\mbQ} \pi^*(K_Y+L+D).
 \end{equation}
 Now our main goal is to show that $|L|_{\mbQ}$ is non-empty. To that end, first note that since $\pi:Y'\to Y$ is a generically finite morphism, we can write
 \begin{equation}\label{eqn:ramification-decomposition}
 K_{Y'/Y}=R_{\nex}+R_{\ex}, 	
 \end{equation} 
 where $R_{\ex}$ is exceptional over $Y$, $R_{\nex}$ is not exceptional over $Y$.\\
 Note that $R_{\nex}$ is effective. Then from \eqref{eqn:divisor-form-comparison} we can write 
 \begin{equation}\label{eqn:L-non-empty}
        \pi^*L \sim_{\mathbb{Q}} L'+E+F, 
 \end{equation}
 where $E=(D'-\pi^*D)_{\ex}+R_{\ex}$ and $F=(D'-\pi^*D)_{\nex}+R_{\nex}$.\\
 
 \begin{claim}\label{clm:effectivity}
        Then divisor $F=(D'-\pi^*D)_{\nex}+R_{\nex}$ is effective.
 \end{claim}
 Assuming the claim for the time being we will first show that $|L|_{\mbQ}$ is non-empty. Indeed, recall that $L''$ is an effective divisor such that the base-locus of $|L''|_\mbQ$ contains only finitely many points, and thus by \cite[Corollary 1.14]{Fuj83} $L''$ is semi-ample. Then from the projection formula it follows that $L'=\tau_*L''$ is algebraically nef on $Y'$, i.e. $L'\cdot C\>0$ for all curves $C\subset Y'$. Now if $F$ is effective, then  it is algebraically $\pi$-nef, i.e., $F\cdot C\>0$ for every $\pi$-exceptional curve $C\subset Y'$, since none of the components of $F$ are $\pi$-exceptional. From \eqref{eqn:L-non-empty}, we can write 
 \[ -E \sim_\mathbb{Q} L' + F -\pi^*L, \] 
 which is algebraically $\pi$-nef. 
 Since $E$ is $\pi$-exceptional, 
 by the negativity lemma it follows   that $E$ is effective. In particular, it again follows from \eqref{eqn:L-non-empty} that $|L|_\mbQ$ is non-empty. \\

\begin{proof}[Proof of Claim \ref{clm:effectivity}]

We will first show  the following fact, which will be useful for the proof, 
\begin{equation}\label{eqn:anti-effective}
        \Theta_{\ver}-g^*D'\<0.
\end{equation}
Recall the resolution $q:W\to Z$ and $r:W\to X'$ of the map $\phi:Z\bir X'$. We can write
\begin{equation}\label{eqn:discrepancy-comparison}
        r^*(K_{X'}+\Delta'_{\hor})=q^*(K_Z+\Theta_{\hor})+q^*(\Theta_{\ver}-g^*D').
\end{equation}
Recall that $\phi:Z\bir X'$ is a composition of a $(K_Z+\Theta_{\hor})$-MMP followed by a $K_{Z'}$-MMP. Since every step of the $K_{Z'}$ was $(K_{Z}+\Theta_{\hor})$-trivial, it follows that the difference $q^*(K_Z+\Theta_{\hor})-r^*(K_{X'}+\Delta'_{\hor})$ is an effective $\mbQ$-divisor. 
Thus we can deduce the inequality (\ref{eqn:anti-effective}).

Let $D'_1$ be a prime Weil divisor on $Y'$ which is not $\pi$-exceptional. We will show that $\mult_{D'_1}(F)\>0$. 
\begin{equation*}
\xymatrixcolsep{3pc}\xymatrixrowsep{3pc}\xymatrix{
 (Z,\Theta)\ar[d]_{g} \ar[r]^p  & (X,\D) \ar[d]^{f}    \\
  Y'\ar[r]_{\pi} & Y  
 }
 \end{equation*}
To that end set $D_1:=\pi(D'_1)$ and let $\Delta_1 = \Supp(f^*D_1)$. Note that $\Delta_1$ is a prime divisor, since the  relative Picard number of $f$ is $1$. Now let $\Theta_1$ be a prime Weil divisor on $Z$ such that   $p(\Theta_1) = \Delta_1$. Let $m, n, m'$ and $n'$ be the coefficients of $D'_1$ in $\pi^*D_1$, $\Delta_1$ in $f^*D_1$, $\Theta_1$ in $p^*\Delta_1$ and $\Theta_1$ in $g^*D'_1$, respectively, i.e., $\mult_{D'_1}(\pi^*D_1)=m, \mult_{\Delta_1}(f^*D_1)=n, \mult_{\Theta_1}(p^*\Delta_1)=m'$ and $\mult_{\Theta_1}(g^*D'_1)=n'$.\\

Next, we will compute $\mult_{D'_1}(F)$ in terms of $m,n,m',n'$ and $\mult_{D_1}(D)$. 
We will also separate our discussion into two cases depending on whether $\mult_{D_1}(D)$ is $0$ or not.\\

\noindent
\textbf{Case I: } $\mult_{D_1}(D)\neq 0$, where $f^*D=\Delta_{\ver}$. In this case $\Delta_1$ is a component of $\Delta_{\ver}$. Let $\mult_{\Delta_1}(\Delta_{\ver})=\lambda$. Then $\mult_{D_1}(D)=\frac{\lambda}{n}$ and $\mult_{D'_1}(\pi^*D)=\frac{m\lambda}{n}$. 

Recall that by our construction the generically finite morphism $p:Z\to X$ is \'etale over a dense Zariski open subset of $X$. In particular, the ramification divisor of $p$ is vertical over $Y'$, and thus $\Theta_{\hor}=p^{-1}_*\Delta_{\hor}$ and $K_Z+\Theta_{\ver}\sim_{\mbQ} p^*(K_X+\Delta_{\ver})$. Therefore $\mult_{\Theta_1}(\Theta_{\ver})=m'\lambda-(m'-1)$. Moreover, from \eqref{eqn:anti-effective} it follows that $\mult_{D'_1}(D')\>\frac{1}{n'}\mult_{\Theta_1}(\Theta_{\ver})=\frac{m'\lambda-(m'-1)}{n'}$. Also note that $\mult_{D'_1}(R_{\nex})=m-1$. 
Therefore we have
\begingroup
\fontsize{10pt}{12pt}\selectfont
\begin{equation*}
        \begin{split}
                \mult_{D'_1}((D'-\pi^*D)_{\nex}+R_{\nex}) &\> \frac{m'\lambda-(m'-1)}{n'}-\frac{m\lambda}{n}+(m-1)\\
                                                                                                 &=\frac{m'\lambda-(m'-1)}{n'}-\frac{m'\lambda}{n'}+(m-1)\quad (\mbox{since } mn'=m'n)\\
                                                                                                 &=(m-1)-\frac{m'-1}{n'}\\
                                                                                                 &=\frac{1}{n'}(mn'-n'-m'+1)\\
                                                                                                 &=\frac{1}{n'}(dm'n'-n'-m'+1)\; (\mbox{since } m'=\frac{m}{d}, \mbox{ where } d=\gcd(m, n))\\
                                                                                                 &\>\frac{1}{n'}(m'-1)(n'-1)\>0.
        \end{split}
\end{equation*}
\endgroup

\noindent
\textbf{Case II: } $\mult_{D_1}(D)=0$. In this case $\Delta_1$ is not contained in the support of $\Delta_{\ver}$, however $\mult_{\Theta_1}(\Theta_{\ver})$ could still be non-zero if $\Theta_1$ is a ramification divisor of $p$. First assume that $\mult_{\Theta_1}(\Theta_{\ver})=0$. Then from \eqref{eqn:anti-effective} it follows that $\mult_{D'_1}(D')\>0$. Thus we have $\mult_{D'_1}((D'-\pi^*D)_{\nex}+R_{\nex})\>0$, since $\mult_{D'_1}(\pi^*D)=0$ and $R_{\nex}\>0$ is an effective divisor.\\

Now we consider the possibility that $\mult_{\Theta_1}(\Theta_{\ver})\neq 0$. In this case we have $\mult_{D'_1}(\pi^*D)=0, \mult_{D'_1}(R_{\nex})=m-1, \mult_{\Delta_1}(\Delta_{\ver})=0$ and $\mult_{\Theta_1}(\Theta_{\ver})=-(m'-1)$. From \eqref{eqn:anti-effective} we have $\mult_{D'_1}(D')\>-\frac{m'-1}{n'}$. Therefore we have
\begingroup
\fontsize{10pt}{12pt}\selectfont
\begin{equation*}
        \begin{split}
                \mult_{D'_1}((D'-\pi^*D)_{\nex}+R_{\nex}) &\> -\frac{m'-1}{n'}+(m-1)\\
                                                                                                  &=\frac{1}{n'}(mn'-n'-m'+1)\\
                                                                                                  &=\frac{1}{n'}(dm'n'-n'-m'+1)\;(\mbox{since } m'=\frac{m}{d}, \mbox{ where } d=\gcd(m, n))\\
                                                                                                  &\>\frac{1}{n'}(m'-1)(n'-1)\>0.
        \end{split}
\end{equation*}
\endgroup
        
\end{proof}  
\vspace{1cm} 

 It remains to show that $D+|L|_{\mathbb{Q}}$ is a pure sub-boundary  if $(X,\D)$ is sub-klt. By Lemma \ref{lem-gen-finite-sub-boundary} we only need to show that 
  \[ (\pi^*D-K_{Y'/Y}) + |\pi^*L|_{\mathbb{Q}}\] is a pure sub-boundary. 
  From \eqref{eqn:ramification-decomposition} and \eqref{eqn:L-non-empty} we see that 
  \begin{align*}
  	(\pi^*D-K_{Y'/Y}) &\sim_{\mbQ} \pi^*D-(R_{\ex}+R_{\nex})=D'-(E+F)\quad\mbox{and}\\
	\pi^*L &\sim_{\mbQ} L'+E+F,
  \end{align*}
 where $E=(D'-\pi^*D)_{\ex}+R_{\ex}$ and $F=(D'-\pi^*D)_{\nex}+R_{\nex}$ are both effective divisors as proved above.\\ 
 Therefore it is enough to show that
 \begin{equation}\label{eqn:Y-pure-sub-boundary}
 	D'-(E+F)+|L'+E+F|
 \end{equation}
  is a pure sub-boundary.\\
  It is easy to see that \eqref{eqn:Y-pure-sub-boundary} is a pure sub-boundary if $D'+|L'|$ is a pure sub-boundary, as $E+F$ is effective. But the fact that $D'+|L'|$ is a pure sub-boundary is already proved in \eqref{eqn:Y'-pure-sub-boundary}, so we are done.
  
 \end{proof}~\\

 We can now prove  Theorem \ref{thm-base-boundary-effective}. We note that a in the projective a much stronger result is proved by Ambro using different techniques, see \cite{Amb05}.

 \begin{proof}[{Proof of Theorem \ref{thm-base-boundary-effective}}]
  Let $D$ and $L$ be the two divisors in $Y$ given by Lemma \ref{lem-base-pure-sub-boundary}.  Since $\D$ is effective, $D$ is also effective.  Hence we can take  $B$ as any element in  $ D+|L|_{\mathbb{Q}}$.   It remains to show that $(Y,B) $ can be chosen to be klt if $(X,\D)$ is klt.  By Lemma \ref{lem-subklt-system-implies-subklt} it is enough to  show that $(Y,  D+|L|_{\mathbb{Q}})$  is sub-klt.  We will check this using Lemma \ref{lem-klt-check-boundary} and \ref{lem-gen-finite-sub-boundary}.  Let $\bar{\pi}:\bar{Y}\to Y$ be any proper bimeromorphic morphism with $\bar{Y}$ smooth. Let $\bar{X}$ be the resolution of singularities of the unique component of $X\times_Y\bar{Y}$ dominating $\bar{Y}$ so that $\bar{p}:\bar{X}\to X$ is a log resolution of the pair $(X, \Delta)$. Let $\bar{f}:\bar{X}\to\bar{Y}$ be the induced morphism and $K_{\bar{X}}+\bar{\D}=\bar{p}^*(K_X+\D)$. Then clearly $\bar{\D}_{\hor}=\bar{p}^{-1}_*\D_{\hor}$ and $K_{\bar{X}}+\bar{\D}_{\ver}=\bar{p}^*(K_X+\D_{\ver})$. Then as in the proof of Lemma \ref{lem-base-pure-sub-boundary} we can construct a diagram like \eqref{eqn:mori-fiber-diagram} such that $\bar{f}:\bar{X}\to\bar{Y}$ sits in between the morphism $g:Z\to Y'$ and $f:X\to Y$ and $Z$ is obtained from $\bar{X}$ via base-change by the components of $\bar{\Delta}_{\hor}$ followed a resolution of singularities.
  \begin{equation}\label{eqn:mori-fiber-diagram-modifieds}
          \xymatrixcolsep{3pc}\xymatrixrowsep{3pc}\xymatrix{
          (X', \Delta')\ar[d]_{f'} & (Z', \Theta')\ar@{-->}[l]\ar[dr]_{g'} & (Z, \Theta)\ar@{-->}[l]\ar@/^1pc/[rr]^p\ar[d]_g\ar[r] & (\bar{X}, \bar{\Delta})\ar[r]_{\bar{p}}\ar[d]_{\bar{f}} & (X, \Delta)\ar[d]_f\\
          Y''\ar[rr]^{\tau}&& Y'\ar@/_1pc/[rr]^{\pi}\ar[r]^\alpha & \bar{Y}\ar[r]^{\bar{\pi}} & Y 
          }
  \end{equation}

   Let $\alpha:Y'\to \bar{Y}$ be generically finite morphism constructed in the process above and $\pi=\bar{\pi}\circ\alpha:Y'\to Y$ is the composite morphism. Then as in the proof of Lemma \ref{lem-base-pure-sub-boundary} we know that 
  \begin{equation}\label{eqn:klt-pure-sub-boundary}
  	(\pi^*D-K_{Y'/Y})+|\pi^*L|\quad\mbox{is a pure sub-boundary}.
  \end{equation}
 Recall that we  want to show that $(Y, D+|L|)$ sub-klt, and by Lemma \ref{lem-klt-check-boundary} it is equivalent to show that the log pullback of $D+|L|$ onto $\bar{Y}$ defined as
 \begin{equation}\label{eqn:klt-log-pullback}
 	(\bar{\pi}^*D-K_{\bar{Y}/Y})+|\bar{\pi}^*L|
 \end{equation}
is a pure sub-boundary.\\
Now since $\alpha:Y'\to \bar{Y}$ is a generically proper morphism, by Lemma \ref{lem-gen-finite-sub-boundary} it is enough to show that the log pullback of \eqref{eqn:klt-log-pullback} onto $Y'$ defined as
\[
	(\alpha^*(\bar{\pi}^*D-K_{\bar{Y}/Y})-K_{Y'/\bar{Y}})+|\alpha^*\bar{\pi}^*L|
\]
is a pure sub-boundary.\\
Note that the above expression is same as $(\pi^*D-K_{Y'/Y})+|\pi^*L|$ and this is a pure sub-boundary by \eqref{eqn:klt-pure-sub-boundary}, so we are done.\\

 \end{proof}~\\

\subsection{Proof of the non-vanishing theorem}

We will first prove the following lemma.

\begin{lemma}
\label{lem-psef-pair-over-base-surface-eff}
Let $(X, \Delta)$ be a $\mbQ$-factorial compact K\"ahler $3$-fold dlt pair. Assume that $X$ is non-algebraic and $f:X\to Y$ is a projective MRC fibration of $X$ to a compact K\"ahler surface $Y$. Assume that every component of $\Delta$ is horizontal over $Y$ and $(K_X+\Delta)\cdot F\>0$ for a general fiber $F$ of $f$. Then $K_X+\D$ is $\mbQ$-effective.

\end{lemma}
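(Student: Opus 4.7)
The plan is to reduce $(X,\D)$ via two successive relative MMPs over $Y$ to a Mori fibration to which Theorem~\ref{thm-base-boundary-effective} applies, then descend the effectivity question to the non-uniruled surface base and appeal to surface non-vanishing. By Remark~\ref{rmk:non-vanishing}, I may first decrease coefficients of certain horizontal components of $\D$ to arrange $(K_X+\D)\cdot F=0$ on a general fiber $F\cong\mbP^1$; since $(K_X+\D)|_F$ then has degree zero on $\mbP^1$, $K_X+\D$ becomes pseudo-effective over $Y$.

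Using Proposition~\ref{pro:relative-projective-mmp} I would run a relative $(K_X+\D)$-MMP over $Y$, terminating with a dlt pair $f':(X',\D')\to Y$ such that $K_{X'}+\D'$ is $f'$-nef; since every component of $\D$ is horizontal, no component is contracted by a MMP over $Y$, so $\D'$ is the strict transform of $\D$ and $(K_{X'}+\D')\cdot F=0$ on general fibers of $f'$. Because $(X',\D')$ is dlt, $X'$ has klt singularities, so Lemma~\ref{lem:directed-relative-mmp} applies: a further $K_{X'}$-MMP with scaling of $\D'$ over $Y$ is $(K_{X'}+\D')$-trivial throughout and terminates with a Mori fibration $f'':(X'',\D'')\to Y''$ over $Y$. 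Comparing the generic fiber of $X''\to Y$ (a connected $\mbP^1$, unchanged by the bimeromorphic maps over $Y$) with that of $X''\to Y''$ (connected by the MFS property) will force the induced surjection $Y''\to Y$ to be bimeromorphic; in particular $Y''$ is a non-uniruled normal compact K\"ahler surface, $f''$ has relative dimension one, and $\rho(X''/Y'')=1$. Combined with $(K_{X''}+\D'')\cdot F''=0$, this yields $K_{X''}+\D''\sim_{\mbQ,f''}0$ via \cite[Proposition~3.1(6)]{CHP16}.

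Theorem~\ref{thm-base-boundary-effective} then produces an effective $\mbQ$-divisor $B''\>0$ on $Y''$ with $K_{X''}+\D''\sim_\mbQ (f'')^*(K_{Y''}+B'')$. Because $Y''$ is non-uniruled, $K_{Y''}$ is $\mbQ$-effective by the discussion around~\eqref{eqn:surface-non-vanishing} in Section~\ref{sec:surface-mmp}; hence $K_{Y''}+B''$ and therefore $K_{X''}+\D''$ are $\mbQ$-effective. A standard MMP pullback argument on a common resolution---using that the first MMP is $(K_X+\D)$-negative and the second is $(K_{X'}+\D')$-trivial---will then transport effectivity back to $K_X+\D$.

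The main technical obstacle is the bookkeeping of the two MMPs and, more delicately, the argument that the MFS base $Y''$ does not drop in dimension below two. Both depend on the fact that the generic fiber of $X\to Y$ is a single irreducible $\mbP^1$ and that this property is preserved under bimeromorphic maps over $Y$; once $Y''\to Y$ is shown to be bimeromorphic, the remaining ingredients---Theorem~\ref{thm-base-boundary-effective} and non-vanishing for non-uniruled compact K\"ahler surfaces---combine cleanly.
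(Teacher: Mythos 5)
Your proposal is correct and follows essentially the same route as the paper's proof: reduce to $(K_X+\Delta)\cdot F=0$, run a relative $(K_X+\Delta)$-MMP over $Y$ to reach a relatively nef model, then a $(K+\Delta)$-trivial relative $K$-MMP to reach a Mori fibre space over a non-uniruled surface, and conclude via Theorem \ref{thm-base-boundary-effective} and surface non-vanishing. The only (harmless) packaging differences are that you invoke Lemma \ref{lem:directed-relative-mmp} and argue directly that $Y''\to Y$ is bimeromorphic, where the paper rules out the alternative case by the fibre-degree computation $K_{X'}\cdot F=-2$ and identifies $f'$ as the MRC fibration to see the base is non-uniruled.
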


\begin{proof}
It is enough to prove the case when   $K_X+\D$  has   intersection number  $0$  with general fibers of $f$. 
Since $f$ is projective, by Proposition \ref{pro:relative-projective-mmp} we can run a relative $(K_X+\D)$-MMP over $Y$. Such a MMP terminates by Theorem \ref{thm:termination}. We  then obtain a dlt  pair  $(Z,D)$ such that $K_{Z}+ D$ is relatively nef over $Y$.  It is enough to prove that $K_{Z}+  D$ is $\mathbb{Q}$-effective.

Let $g: Z\to Y$ be the induced projective contraction. We can run a relative $K_{Z}$-MMP over $Y$ such that every step is $(K_{Z}+  D)$-trivial. Again, by Theorem \ref{thm:termination} this MMP terminates and we can obtain a dlt pair $(X',   \D')$ with a induced projective morphism $h:X'\to Y$ such that  
\begin{enumerate}
\item either there is a $K_{X'}$-Mori fiber space $f': X'\to Y'$ such that  $(K_{X'}+  \D')\sim_{\mbQ, f'} 0$,
\item or every $K_{X'}$-negative extremal ray of $\NE(X'/Y)$  is $(K_{X'}+  \D')$-positive. 
\end{enumerate}
We claim that the case (2) above is not possible; indeed observe that there is a dense Zariski open subset $V\subset Y$ such that $g^{-1}V\cong h^{-1}V$. In particular, $(K_{X'}+\D')\cdot F=(K_Z+D)\cdot F=0$ for general fibers $F$ of $h$. However, $K_{X'}\cdot F=-2$, and thus by case (2) above we must have $(K_{X'}+\Delta')\cdot F>0$, this is a contradiction.\\
Therefore we are in the first case. By Theorem \ref{thm-base-boundary-effective} there is an effective divisor $B'$ in $Y'$ such that    $$K_{X'}+ \D' \sim_{\mathbb{Q}} f'^*(K_{Y'}+B').$$  Since $f'$ is the MRC fibration of $X'$ and $X'$ is non-algebraic,  it follows that $Y'$ is non-uniruled (see \cite[Remark 3.2]{HP15}). Hence $K_{Y'}$ is pseudo-effective and by abundance for K\"ahler surfaces (see Theorem \ref{thm:surface-abundance}) we  obtain that  $ K_{Y'}+B'$ is $\mathbb{Q}$-effective.   This completes the proof of the lemma. 
\end{proof}~\\

Now we can deduce Theorem \ref{thm-non-vanishing}.

\begin{proof}[{Proof of Theorem \ref{thm-non-vanishing}}]
It is enough to prove that (3) implies (1).  By passing to a dlt model as in Corollary  \ref{cor:exist-terminal-dlt-modification} we may assume that $(X, \Delta)$ has $\mbQ$-factorial dlt singularities such that $X$ has terminal singularities. By \cite[Theorem 1.1]{HP15} there is a $K_X$-MMP which terminates with a Mori fiber space $f:X'\to Y$
such that $Y$ is a $\mbQ$-factorial compact K\"ahler surface with klt singularities. Let $p:W\to X$ and $q:W\to X'$ be the resolution of the graph of the bimeromorphic map $\phi:X\bir X'$ such that $p$ is a log resolution of the pair $(X, \Delta)$ and $q$ is a projective morphism. Then we can write 
\[K_W+\D_W=p^*(K_X+\D)+E,\]
 where $E\>0$ and $\Delta_W\>0$ are two effective $\mbQ$-divisors without common components and $p_*\Delta_W=\D$ and $p_*E=0$. Thus it is enough to show that $K_W+\D_W$ is $\mathbb{Q}$-effective. We note that $(W, \D_W)$ is dlt and $f\circ q:W\to Y$ is the MRC fibration of $W$, since $f:X'\to Y$ is the MRC fibration of $X'$ and the fibers of $W\to X'$ are rationally chain connected by Proposition \ref{c-rccres}. Moreover, observe that there is a dense Zariski open subset $V\subset Y$ such that $\phi$ is an isomorphism over $f^{-1}V$. In particular, for general fibers $F$ of $f$ (also of $f\circ q$) we have $(K_W+\D_W)\cdot F=(K_{X'}+\Delta')\cdot F=(K_X+\D)\cdot F\>0$. Thus by Lemma \ref{lem-psef-pair-over-base-surface-eff}, $K_W+\Delta_W$ is $\mbQ$-effective, this completes the proof.\\

\end{proof}~\\

\section{Cone and Contraction Theorems for pseudo-effective pairs}
\label{sect-uniruled-LMMP}

In this section we will establish the cone and contraction theorem for a dlt pair $(X,\D)$ such that $X$ is a $\mathbb{Q}$-factorial uniruled non-algebraic compact K\"ahler threefold such that $K_X+\D$ is pseudo-effective. The proof is based on the same philosophy as in \cite{HP16, CHP16}. We work under the following hypothesis throughout this section.

\begin{assumption}
\label{asmp-base-MRC-dim-2} Let $X$ be a non-algebraic compact K\"ahler threefold with $\mathbb{Q}$-factorial singularities. Assume that $X$ is uniruled. Then the base of the MRC fibration of $X$ is a non-uniruled surface (see \cite[Remark 3.2]{HP15}). Assume that there is a boundary $\D$ such that $(X,\D)$ is dlt, and that $K_X+\D$ is pseudo-effective.  Then by  Theorem \ref{thm-non-vanishing}, $K_X+\D$ is $\mathbb{Q}$-effective and we can write 
\begin{eqnarray}
\label{eq-K_X+D=sum S_i}
K_X+\D \sim_{\mathbb{Q}} \sum_{i=1}^N \lambda_i S_i,
\end{eqnarray}
where $\lambda_i\>0$ are rational numbers and $S_i$ are prime Weil divisors on $X$.
\end{assumption}~\\

\subsection{Negative part of $K_X+\D$}
 
The goal of this subsection is to prove the following lemma, which is a key ingredient for various results in this section.

\begin{lemma}
\label{lem-non-psef-implies-uniruled}
With notations and hypothesis as in Assumption \ref{asmp-base-MRC-dim-2}, let $S\subset X$ be a surface such that $(K_X+\D)|_S$ is not pseudo-effective. Then $S=S_j$ is one of the component of the decomposition \ref{eq-K_X+D=sum S_i}. Moreover, $S$ is  Moishezon and its minimal resolution $\hat{S}$ is projective and uniruled.
\end{lemma}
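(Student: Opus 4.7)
The plan is to first pin down that $S$ must coincide with one of the summands $S_j$ in (\ref{eq-K_X+D=sum S_i}), then to use adjunction on the minimal resolution $\hat{S}$ of $S$ to reduce the non-pseudo-effectivity of $(K_X+\Delta)|_S$ to that of $K_{\hat{S}}$, and finally to invoke abundance for K\"ahler surfaces.

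First, if $S \neq S_i$ for every $i$, then each intersection $S_i \cap S$ is an effective $1$-cycle on $S$, so restricting (\ref{eq-K_X+D=sum S_i}) to $S$ expresses $(K_X+\Delta)|_S$ as a non-negative $\mathbb{Q}$-linear combination of effective classes, hence pseudo-effective --- contradicting the hypothesis. Thus $S = S_j$ for some $j$ with $\lambda_j > 0$. Rearranging the same identity yields
\[
  \lambda_j\, S|_S \;\sim_{\mathbb{Q}}\; (K_X+\Delta)|_S \;-\; \sum_{i\neq j} \lambda_i\, S_i|_S,
\]
so $S|_S$ is itself not pseudo-effective on $S$ (being a non-psef class minus an effective one).

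Second, let $p\colon \hat{S}\to S$ be the minimal resolution (factoring through the normalization of $S$). Write $\Delta = aS + \Delta'$ with $a = \mathrm{mult}_S \Delta \in [0,1]$ and $\Delta'\geq 0$ not containing $S$. Since $(X,\Delta)$ is dlt the pair $(X, S+\Delta')$ is log canonical along $S$, so Shokurov-type adjunction together with $p^{*}K_S = K_{\hat{S}} + E$ ($E\geq 0$ effective, from minimality of the resolution) gives
\[
  p^{*}(K_X+\Delta)|_{\hat{S}} \;=\; K_{\hat{S}} + G \;-\; (1-a)\, p^{*}(S|_S),
\]
where $G\geq 0$ is effective and absorbs $E$, $p^{*}\Delta'|_{\hat S}$, and the Shokurov different (which is effective by dlt). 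Now substitute $\lambda_j \, p^{*}(S|_S) = p^{*}(K_X+\Delta)|_{\hat{S}} - p^{*}\!\big(\sum_{i\neq j}\lambda_i S_i|_S\big)$ from the first step to eliminate $p^{*}(S|_S)$; collecting terms produces
\[
  K_{\hat{S}} + F \;\sim_{\mathbb{Q}}\; \frac{\lambda_j + 1 - a}{\lambda_j}\cdot p^{*}(K_X+\Delta)|_{\hat{S}}
\]
for some effective $F\geq 0$ on $\hat{S}$. The right-hand side is a positive scalar multiple of a non-psef class, hence not pseudo-effective; therefore $K_{\hat{S}}$ is not pseudo-effective either.

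Finally, by abundance for K\"ahler surfaces (Theorem~\ref{thm:surface-abundance}), non-pseudo-effectivity of $K_{\hat{S}}$ forces $\kappa(\hat{S}) = -\infty$, so $\hat{S}$ is uniruled. A smooth compact K\"ahler uniruled surface has $h^{2,0}=0$, and Kodaira's projectivity criterion then makes $\hat{S}$ projective; consequently $S$ is Moishezon since it is bimeromorphic to $\hat{S}$. The main obstacle I anticipate is making the adjunction identity rigorous when $a<1$: here $S$ may fail to be normal and one must work on the normalization while tracking the different coming from the conductor and from singularities of $X$ along $S$; moreover the ``wrong sign'' term $-(1-a)\, p^{*}(S|_S)$ in the adjunction formula is precisely what forces the rearrangement trick, without which one cannot deduce non-pseudo-effectivity of $K_{\hat S}$ directly from that of $(K_X+\Delta)|_S$.
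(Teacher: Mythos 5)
Your proof is correct, but it takes a genuinely different route from the paper's. The paper argues via the MRC fibration: Lemma \ref{lem-horizontal-restriction-psef} shows that a surface horizontal over the (two-dimensional, non-uniruled) MRC base has pseudo-effective $(K_X+\D)|_S$ --- using non-vanishing for $K_X+\lambda\D$, the fact that a horizontal $S$ is generically finite over the non-uniruled base and hence not uniruled, and a convexity argument --- so $S$ must be vertical; Lemma \ref{lem-vertical-uniruled} then shows that \emph{any} vertical surface is Moishezon with projective uniruled minimal resolution, by passing to a Mori fiber space via \cite{HP15} and a relative MMP together with rational chain connectedness of the contractions (Corollary \ref{c-rccres}). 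You instead work purely with the decomposition \eqref{eq-K_X+D=sum S_i}: after identifying $S=S_j$ (the easy step, which the paper leaves implicit), you eliminate the wrong-sign term $-(1-a)\,p^*(S|_S)$ in the adjunction formula by substituting $\lambda_j S|_S \sim_{\mbQ} (K_X+\D)|_S - \sum_{i\neq j}\lambda_i S_i|_S$, arriving at $K_{\hat S}+F\sim_{\mbQ} c\,p^*\bigl((K_X+\D)|_S\bigr)$ with $F\>0$ and $c>0$, so $K_{\hat S}$ is not pseudo-effective and $\hat S$ is uniruled and projective by surface classification and Kodaira's criterion. Your argument is more elementary (no $3$-fold MMP, no MRC machinery) and does not even use that the MRC base is a surface; the paper's route buys the extra geometric fact that $S$ is vertical over the MRC base, and Lemma \ref{lem-vertical-uniruled} applies to all vertical surfaces, not only those with non-pseudo-effective restriction. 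One small caveat: your justification for the effectivity of $G$ (``$(X,S+\Delta')$ is log canonical along $S$ because $(X,\Delta)$ is dlt'') is not quite right when $S$ is not a component of $\D$, but no log canonicity is needed --- the different of an effective boundary on the normalization of a prime divisor in a normal $\mbQ$-factorial variety is always effective, and the paper invokes adjunction in exactly this generality (e.g.\ in the proof of Lemma \ref{lem-pre-b&b-index}).
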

We follow the same strategy as in \cite[Lemma 4.1]{HP16}. We will consider two cases according to whether $S$ is horizontal over the base of the MRC fibration of $X$ or not.

\begin{lemma}
\label{lem-horizontal-restriction-psef}
With notations and hypothesis as in Assumption \ref{asmp-base-MRC-dim-2}, let $S\subset X$ be a surface such that $S$ is horizontal over the base of the MRC fibration of $X$. Then $(K_X+\D)|_S$ is pseudo-effective.
\end{lemma}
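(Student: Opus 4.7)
My plan follows the strategy of the analogous \cite[Lemma 4.1]{HP16}. By Theorem \ref{thm-non-vanishing}, $K_X+\Delta$ is $\mathbb{Q}$-effective, so we may write $K_X+\Delta \sim_{\mathbb{Q}} \sum_{i=1}^N \lambda_i S_i$ with $\lambda_i > 0$. If $S$ is not one of the components $S_i$, then $\sum_i \lambda_i S_i|_S$ is an effective $\mathbb{Q}$-divisor representing $(K_X+\Delta)|_S$, giving pseudo-effectivity immediately. Hence I reduce to the case $S = S_j$ for some $j$.

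Next, I would take a common log resolution $\mu \colon X' \to X$ of $(X,\Delta)$ on which the MRC fibration extends to a morphism $g \colon X' \to Y$, and let $S' := \mu_*^{-1} S$ be the (smooth) strict transform. Horizontality of $S$ means $g|_{S'} \colon S' \to Y$ is generically finite and surjective. Since $Y$ is a non-uniruled compact K\"ahler surface (under Assumption \ref{asmp-base-MRC-dim-2} together with \cite[Remark 3.2]{HP15}), any rational curve through a general point of $S'$ would map to a rational curve through a general point of $Y$ (because $g|_{S'}$ does not contract curves through general points), contradicting the non-uniruledness of $Y$; hence $S'$ is non-uniruled. Running the K\"ahler surface MMP on $S'$ and applying Theorem \ref{thm:surface-abundance} then yields that $K_{S'}$ is pseudo-effective.

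To transfer pseudo-effectivity to $(K_X+\Delta)|_S$, write the dlt log pullback $K_{X'} + \Delta' = \mu^*(K_X+\Delta) + F$ with $F \geq 0$ and $\mu$-exceptional, and decompose $\Delta' = c S' + \Delta''$, where $c \in [0,1]$ is the coefficient of $S$ in $\Delta$ and $S' \not\subset \Supp\Delta''$. The adjunction $K_{S'} = (K_{X'}+S')|_{S'}$ gives
\[
\mu^*(K_X+\Delta)|_{S'} \;=\; K_{S'} + \Delta''|_{S'} - (1-c)\,S'|_{S'} - F|_{S'},
\]
and pushing forward by $\mu|_{S'}$ to $S$ would then yield the conclusion, provided the right-hand side is pseudo-effective on $S'$. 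The main obstacle is controlling the potentially negative correction $-(1-c)S'|_{S'} - F|_{S'}$, since the normal bundle class $S'|_{S'}$ has no a priori positivity and components of $F$ may meet $S'$. To overcome this I would pair the adjunction identity above with the pulled-back effective representation $\mu^*(K_X+\Delta) \sim_{\mathbb{Q}} \lambda_j\,\mu^* S + \sum_{i \neq j} \lambda_i\,\mu^* S_i$ restricted to $S'$ and perform numerical bookkeeping on $S'$, exploiting that $g|_{S'}$ has positive degree onto the non-uniruled base $Y$ so that the psef class $K_{S'}$ combined with intersections coming from horizontal components absorbs the negative contributions. Balancing the dlt coefficients against the exceptional correction terms is where the real work lies.
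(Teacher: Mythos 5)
Your reduction to the case $S=S_j$ and your observation that $S$ (being generically finite over the non-uniruled base of the MRC fibration) is non-uniruled, so that $K_{\hat S}$ is pseudo-effective on the minimal resolution, are both correct and agree with the paper. But the transfer step is a genuine gap, and you have identified it yourself without closing it: in the adjunction identity
\[
\mu^*(K_X+\Delta)|_{S'} \;=\; K_{S'} + \Delta''|_{S'} - (1-c)\,S'|_{S'} - F|_{S'},
\]
the term $-(1-c)S'|_{S'}$ carries no sign information whatsoever (the normal class of $S'$ can be very negative), and the components of $F$ genuinely can meet $S'$. The proposed ``numerical bookkeeping'' using the restriction of $\sum_i\lambda_i\mu^*S_i$ does not obviously absorb these terms; there is no inequality in sight that bounds $(1-c)S'|_{S'}+F|_{S'}$ by $K_{S'}$ plus the horizontal contributions, and as written the argument does not constitute a proof.

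The paper avoids this problem entirely by an interpolation in the coefficient of $\Delta$ rather than a single adjunction computation. Let $\lambda\in(0,1]$ be chosen so that $(K_X+\lambda\Delta)\cdot F=0$ for a general MRC fiber $F$; by the non-vanishing theorem $K_X+\lambda\Delta$ is $\mathbb{Q}$-effective, and since every component of an effective representative must then be vertical while $S$ is horizontal, $(K_X+\lambda\Delta)|_S$ is pseudo-effective \emph{without} any adjunction. Separately, choose $\mu\ge 1-\lambda$ so that the coefficient of $S$ in $(\lambda+\mu)\Delta$ equals exactly $1$; then adjunction to the minimal resolution $\hat S$ produces $K_{\hat S}+E\sim_{\mathbb{Q}}\pi^*\bigl((K_X+(\lambda+\mu)\Delta)|_S\bigr)$ with $E\ge 0$ (no negative normal-bundle term appears precisely because the coefficient is $1$), and pseudo-effectivity follows from non-uniruledness of $S$. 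Since $\lambda\le 1\le\lambda+\mu$, a positive multiple of $K_X+\Delta$ is a convex combination of $K_X+\lambda\Delta$ and $K_X+(\lambda+\mu)\Delta$, and pseudo-effectivity of the restriction follows. To repair your argument you should replace the single adjunction at coefficient $c$ by this two-endpoint convexity device (or find some other way to control $S'|_{S'}$, which I do not see).
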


\begin{proof}
Let $\lambda$ be the positive rational number such that $(K_X+\lambda\D)\cdot F=0$, where $F$ is the general fiber of the base of the MRC fibration $f:X\bir Z$. 
We note that $0<\lambda \< 1$. 
Then $K_X+\lambda\D$  is  $\mathbb{Q}$-effective by Theorem \ref{thm-non-vanishing}.   Moreover, since $S$ is horizontal over $Z$, $S\cdot F>0$ for general fibers $F$ of $f$. In particular, $S$ is not a component of  any  element of $|K_X+\lambda \D|_{\mathbb{Q}}$. Hence $(K_X+\lambda \D)|_{S}$ is pseudo-effective. Thus if $S$ is not a component of $\D$, then it follows that $(K_X+\D)|_S$ is pseudo-effective. So now we assume that $S$ is a component of $\D$. Since $\D$ is a boundary, there is some rational number $\mu \geqslant 1-\lambda>0$ such that the multiplicity of $(\lambda + \mu)\D$ along $S$  is $1$. Let $\pi: \hat{S}\to X$ be the minimal resolution of $S$. Then there is an effective divisor $E$ on $\hat{S}$ such that
 \[K_{\hat{S}}+E \sim_{\mathbb{Q}} \pi^*((K_X+(\lambda+\mu)\D)|_S).\] 
 We note that $S$ is not uniruled, since $f|_S:S\bir Z$ is a generically finite map and $Z$ is not uniruled. Hence $K_{\hat{S}}$ is pseudo-effective and consequently, $(K_X+(\lambda+\mu)\D)|_S$ is pseudo-effective. Since $\mu\geqslant 1-\lambda$, we obtain that $\gamma(K_X+\D)$ is a convex combination of  $K_X+\lambda\D$ and  $K_X+(\lambda+\mu)\D$ for some rational number $\gamma>0$. This proves that $(K_X+\D)|_S$ is pseudo-effective.
\end{proof}

\begin{lemma}
\label{lem-vertical-uniruled}
Let $X$ be a non-algebraic compact K\"ahler $3$-fold with $\mbQ$-factorial klt singularities. Let $S\subset X$ be a surface such that $S$ is vertical over the base of the MRC fibration of $X$. Then $S$ is Moishezon and its minimal resolution $\hat{S}$ is projective and uniruled. 
\end{lemma}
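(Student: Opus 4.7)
The plan is to pass to a model where the MRC fibration becomes a morphism, and then realize the strict transform of $S$ as a ruled surface over a projective algebraic curve. Note that $X$ being uniruled is implicit: otherwise the MRC fibration is trivial and no proper surface is vertical. Then by Lemma \ref{l-proj}, the base $Z$ of the MRC fibration has dimension $2$ and is non-uniruled, while the general fibers of the MRC are isomorphic to $\mbP^1$.

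First I would pick a resolution $\mu : \tilde{X} \to X$ such that the MRC fibration lifts to a holomorphic map $\tilde{f} : \tilde{X} \to Z$, and let $\tilde{S} := \mu^{-1}_*(S)$ be the strict transform of $S$. Verticality gives $C := \tilde{f}(\tilde{S}) \subsetneq Z$, so $\dim C \leq 1$. The key is to argue that $\dim C = 1$: if $\tilde{f}(\tilde{S})$ were a point, then $\tilde{S}$ would sit inside a single fiber of $\tilde{f}$, which, since general fibers of an MRC fibration are $\mbP^1$, could only happen inside a special higher-dimensional rationally chain connected fiber; in that marginal case $\tilde{S}$ is already a rational (hence projective and uniruled) component of such a fiber, so the conclusion holds directly. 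In the main case $\dim C = 1$, the curve $C$ is a compact $1$-dimensional analytic space, hence projective by Lemma \ref{lem:analytic-to-projectivity}. Over a general $c \in C$, the $\mbP^1$ fiber $\tilde{f}^{-1}(c)$ of the MRC is contained in $\tilde{S}$, so the map $\tilde{S} \to C$ has general fibers $\mbP^1$. Thus a smooth bimeromorphic model of $\tilde{S}$ is a ruled surface over a smooth projective curve, and is therefore projective and uniruled.

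Finally, I would conclude using the fact that the minimal resolution $\hat{S}$ of $S$ is bimeromorphic to $\tilde{S}$. Since a smooth compact K\"ahler surface is projective if and only if it is Moishezon, and this property is a bimeromorphic invariant for smooth surfaces, $\hat{S}$ is itself projective and uniruled. Consequently, $S$ is Moishezon. The main obstacle I anticipate is the careful handling of the edge case $\dim C = 0$, which requires knowing that $2$-dimensional fibers of the MRC fibration on a (resolved) uniruled K\"ahler $3$-fold are rationally chain connected; this should follow from standard properties of MRC fibrations in characteristic zero but deserves a short verification in the K\"ahler analytic setting.
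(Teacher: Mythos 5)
There is a genuine gap in both branches of your argument, and it sits exactly where the real content of the lemma lies. In your main case $\dim C=1$, you assert that for general $c\in C$ the fibre $\tilde f^{-1}(c)$ is ``the $\mbP^1$ fibre of the MRC'' and is contained in $\tilde S$. But $C=\tilde f(\tilde S)$ is a \emph{proper} analytic subset of the base $Z$, so every point of $C$ is a special point of the fibration: the fibres of $\tilde f$ over $C$ are degenerate fibres, which may be reducible, non-reduced, or even two-dimensional, and only the general fibre of an MRC fibration is known to be rationally connected. What is true is the reverse containment, $\tilde S\cap\tilde f^{-1}(c)\subset\tilde f^{-1}(c)$; the components of this intersection are components of a \emph{special} fibre, and there is no a priori reason for them to be rational curves. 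Proving that the fibres of $\tilde S\to C$ are rational is essentially equivalent to the lemma itself. The edge case $\dim C=0$ has the same problem in sharper form: a two-dimensional component of a special fibre of a resolved MRC fibration need not be rational (blowing up an elliptic curve sitting inside such a fibre produces a non-rational, elliptically ruled, component), so your claim that $\tilde S$ is then ``a rational component'' is false as stated, and the weaker statement you actually need --- that such components are uniruled, or that special fibres are rationally chain connected --- is not a standard off-the-shelf property of MRC fibrations; it is precisely what has to be proved.

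The paper closes these gaps by routing through the MMP rather than through the raw fibration. After passing to a terminal model, $X$ is bimeromorphic to a Mori fibre space $X'\to Y$ over a surface by \cite[Theorem 1.1]{HP15}; on a common resolution $\tilde X$ one runs a relative $K_{\tilde X}$-MMP over $Y$ ending in a Mori fibre space $\overline X\to T$. Then either $\tilde S$ is contracted by a bimeromorphic step of this MMP --- in which case it lies in a fibre of a proper bimeromorphic morphism onto a $\mbQ$-factorial klt threefold and is rationally chain connected, hence uniruled, by Corollary \ref{c-rccres} --- or its transform $\overline S$ is vertical over $T$ for the Mori fibre space, whose fibres (unlike those of an arbitrary holomorphic model of the MRC fibration) are controlled. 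If you want to rescue your direct approach, you would need a substitute for exactly this input, namely a proof that every component of every fibre of $\tilde f$ is uniruled; without it, the argument does not go through.
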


\begin{proof}
	First note that according to \cite[Remark 3.2]{HP15} the base of the MRC fibration of $X$ is a surface. Now replacing $X$ by a terminal model as in Theorem \ref{thm:terminal-model} we may assume that $X$ has $\mbQ$-factorial terminal singularity. Then by \cite[Theorem 1.1]{HP15} $X$ is bimeromorphic to a Mori fiber space $X'\to Y$ over some compact K\"ahler surface $Y$. Let $\tilde{X}$ be a common relosution of $X$ and $X'$ such that $\tilde{X}\to X'$ is projective. Let $\tilde{S}$ be the strict transform of $S$ in $\tilde{X}$. Then $\tilde{S}$ is vertical over $Y$ for the induced projective fibration $g:\tilde{X}\to Y$.  Now we run a $K_{\tilde{X}}$-MMP over $Y$, and obtain a Mori fiber space $\pi:\overline{X}\to T$ over $Y$. If $\tilde{S}$ is contracted  by this MMP, then it is uniruled by Proposition \ref{c-rccres}. Otherwise, let $\overline{S}$ be the bimeromorphic transform of $\tilde{S}$ onto $\overline{X}$ such that $\overline{S}$ is vertical over $T$. In this case, $\overline{S}$ is again uniruled by Proposition \ref{c-rccres}, since the Mori fiber space  $\pi:\overline{X}\to T$ is the MRC fibration of $\overline{X}$.  We can then conclude that $S$ is uniruled, and so is the minimal resolution $\hat{S}$. This implies that $\hat{S}$ is projective by classification, and thus $S$ is Moishezon.
\end{proof}

Now we are ready to conclude Lemma \ref{lem-non-psef-implies-uniruled}.

\begin{proof}[{Proof of Lemma \ref{lem-non-psef-implies-uniruled}}]
By Lemma \ref{lem-horizontal-restriction-psef}, the surface $S$ is vertical over the base of the MRC fibration.  The lemma then follows from Lemma  \ref{lem-vertical-uniruled}.
\end{proof}~\\

\begin{corollary}
\label{cor-K_X+D-nef-test-curve}
With notations and hypothesis as in Assumption \ref{asmp-base-MRC-dim-2}, $K_X+\D$ is nef if and only if it has non-negative intersection with every curve in $X$.
\end{corollary}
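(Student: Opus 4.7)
The forward direction is immediate from the definition of nef: every irreducible curve $C \subset X$ represents the class of a positive current of integration, which lies in $\NE(X) \subset \NA(X)$. For the converse, assume $(K_X+\D)\cdot C \geq 0$ for every curve $C \subset X$, and consider the decomposition $K_X+\D \sim_{\mbQ} \sum_{i=1}^N \lambda_i S_i$ coming from Assumption \ref{asmp-base-MRC-dim-2}. The plan is to first establish nefness of each restriction $(K_X+\D)|_{S_i}$, and then upgrade this to nefness of $K_X+\D$ on all of $X$.

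For the first step, let $\nu_i : \hat S_i \to S_i$ be the minimal resolution of the normal compact K\"ahler surface $S_i$. Since $X$ is $\mbQ$-factorial, $(K_X+\D)|_{S_i}$ is a well-defined $\mbQ$-Cartier class, and its pullback $\nu_i^*\bigl((K_X+\D)|_{S_i}\bigr)$ intersects every curve in $\hat S_i$ non-negatively: $\nu_i$-exceptional curves pair to zero by the projection formula, while non-exceptional curves push forward to curves of $S_i \subset X$ on which the hypothesis applies. If $(K_X+\D)|_{S_i}$ is pseudo-effective, then \cite[Proposition 3.4]{Bou04} (compare the argument in Theorem \ref{thm:surface-cone-thm}) gives nefness of $\nu_i^*\bigl((K_X+\D)|_{S_i}\bigr)$, hence of $(K_X+\D)|_{S_i}$. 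If instead $(K_X+\D)|_{S_i}$ is not pseudo-effective, then Lemma \ref{lem-non-psef-implies-uniruled} forces $\hat S_i$ to be projective, and Kleiman's criterion on the projective surface $\hat S_i$ again delivers nefness.

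For the second step, by duality it suffices to verify that $(K_X+\D)\cdot T \geq 0$ for every class $T \in \NA(X)$ represented by a positive closed current of bi-dimension $(1,1)$. Applying the Siu decomposition $T = \sum_k c_k [C_k] + R$, with $c_k \geq 0$, $C_k$ irreducible curves, and $R$ a residual positive current carrying no mass on any curve, and using $K_X+\D \sim_{\mbQ} \sum_i \lambda_i S_i$, one obtains
\[
(K_X+\D)\cdot T \;=\; \sum_k c_k\,(K_X+\D)\cdot C_k \;+\; \sum_i \lambda_i\,(S_i \cdot R).
\]
The first sum is non-negative by the hypothesis. For the second, each $S_i \cdot R$ defines a positive current on $S_i$, because $R$ places no mass along the divisor $S_i$; pairing this positive current with the nef class $(K_X+\D)|_{S_i}$ produced in Step~1 yields a non-negative number.

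The hardest part will be Step 2: rigorously performing the Siu decomposition of a positive bi-dimension $(1,1)$ current on the singular K\"ahler threefold $X$, and defining the restriction $S_i \cdot R$ with the requisite positivity. I would handle this by pulling back $T$ to a log resolution of $\bigl(X, \sum S_i\bigr)$, where the Siu decomposition and divisor–current intersections are well-developed, and then pushing the positivity back down; the relevant machinery is the Boucksom--Demailly--P\u{a}un--Peternell formalism as extended to singular K\"ahler settings in \cite{HP16, CHP16}.
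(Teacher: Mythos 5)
Your Step 1 is essentially the paper's own argument. The paper invokes P\u{a}un's criterion \cite[Theorem 3, p.~413]{Paun98}: a pseudo-effective class on a compact K\"ahler $3$-fold is nef once it is non-negative on curves and its restriction to every surface is pseudo-effective. Since $K_X+\D$ is $\mbQ$-effective by Assumption \ref{asmp-base-MRC-dim-2}, the only thing left to check is pseudo-effectivity of $(K_X+\D)|_S$, and Lemma \ref{lem-non-psef-implies-uniruled} says failure can only happen at a component $S_j$ of the decomposition, whose minimal resolution is projective, where \cite[Theorem 0.2]{BDPP13} produces a covering family of $(K_X+\D)$-negative curves and a contradiction. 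Your Step 1 reproves exactly this (for the $S_i$; P\u{a}un needs it for all surfaces, but the same lemma reduces to the $S_i$), so had you cited P\u{a}un's theorem you would be done and Step 2 would be superfluous.

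Step 2, which is where your proof actually tries to close the argument, has a genuine gap. The residual part $R$ of the Siu decomposition of a positive closed current of bi-dimension $(1,1)$ on a $3$-fold only has vanishing generic Lelong numbers along \emph{curves}; nothing prevents $R$ from being entirely concentrated on a \emph{surface}. For example $R=[S_i]\wedge\omega$ (for $\omega$ K\"ahler) is closed, positive, has zero Lelong number at every smooth point of $S_i$, hence survives as ``residual,'' yet all of its mass sits on $S_i$; so your assertion that ``$R$ places no mass along the divisor $S_i$'' is unjustified and in general false. Consequently $S_i\cdot R$ need not be non-negative: in the example it equals $S_i^2\cdot\omega$, which is negative whenever $S_i$ is, say, a contractible divisor. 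The repair is to refine the decomposition further, $R=\sum_i \mathbf{1}_{S_i}R+R''$, where Skoda--El Mir makes each $\mathbf{1}_{S_i}R$ a closed positive current supported on $S_i$ (hence the pushforward of a positive $(1,1)$-current from $S_i$, to be paired with the nef class $(K_X+\D)|_{S_i}$ from Step 1), while only the part $R''$ charging no $S_i$ admits a well-defined positive product $[S_i]\wedge R''$. As written, your justification conflates these two mechanisms, and the phrase ``pairing the positive current $S_i\cdot R$ with $(K_X+\D)|_{S_i}$'' does not even parse dimensionally, since $[S_i]\wedge R$ is a measure of bi-dimension $(0,0)$. Note finally that the duality $\Nef(X)=\NA(X)^{\vee}$ you invoke at the start of Step 2 is itself a Demailly--P\u{a}un-type theorem of the same depth as the criterion the paper uses, so this route buys no gain in elementarity.
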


\begin{proof}
If $K_X+\Delta$ is nef, then it is clear that $(K_X+\Delta)\cdot C\>0$ for all curves $C\subset X$. So now assume that $K_X+\D$ has non-negative intersection with every curve in $X$. Then by \cite[Theorem 3, Page 413]{Paun98} it is enough to prove that $(K_X+\D)|_S$ is pseudo-effective for every surface $S\subset X$. Assume by contradiction that this is not the case for some surface $S$. Then by Lemma \ref{lem-non-psef-implies-uniruled}, the minimal resolution $\hat{S}$ of $S$ is projective. By assumption,  $\pi^*((K_X+\D)|_S)$ is not pseudo-effective, where $\pi:\hat{S} \to X$ is the induced morphism. By \cite[Theorem 0.2]{BDPP13}, $\hat{S}$ is covered by a family of curves which have negative intersection with $\pi^*((K_X+\D)|_S)$. Since $\pi$ is bimeromorphic onto its image, the image of these curves determine a covering family of curves on $S\subset X$ such that they have negative intersection with $K_X+\D$; this is a contradiction.\\
\end{proof}

\subsection{Bend-and-break theorem for pseudo-effective $K_X+\D$}~\\ 

In this subsection, we will show the following bend-and-break theorem.

\begin{theorem}
\label{thm-b&b-dlt-adjoint}
With notations and hypothesis as in Assumption \ref{asmp-base-MRC-dim-2}, $K_X+\D$ has the bend-and-break property, i.e. there is a rational number $d>0$ such that  if $C\subset X$ is a curve with $-(K_X+\D)\cdot C > d$, then \[[C]=[C_1]+[C_2],\] where $C_1,C_2$ are two integral nonzero effective $1$-cycles.
\end{theorem}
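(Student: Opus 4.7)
The plan is to reduce Theorem \ref{thm-b&b-dlt-adjoint}, via the $\mathbb Q$-effective decomposition of $K_X+\Delta$ from Theorem \ref{thm-non-vanishing}, to classical bend-and-break on a smooth projective surface produced by Lemma \ref{lem-non-psef-implies-uniruled}. Fix $K_X+\Delta\sim_{\mathbb Q}\sum_{i=1}^N\lambda_iS_i$ with $\lambda_i>0$ and $S_i$ prime. For an irreducible curve $C$ with $-(K_X+\Delta)\cdot C>0$, the inequality $\sum_i\lambda_i(S_i\cdot C)<0$ forces $S_j\cdot C<0$ and hence $C\subset S_j$ for some index $j$. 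Let $\pi_j:\hat S_j\to S_j$ be the composition of the normalization with the minimal resolution, let $\hat C$ be the strict transform of $C$, and set $\hat D_j:=\pi_j^{\ast}((K_X+\Delta)|_{S_j})$. By the projection formula, $\hat D_j\cdot\hat C=(K_X+\Delta)\cdot C$.

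The second step is to argue, for $d$ sufficiently large, that $\hat D_j$ cannot be pseudo-effective on $\hat S_j$. If it were, then on the smooth compact K\"ahler surface $\hat S_j$ its Zariski decomposition $\hat D_j=\hat P_j+\hat N_j$ would yield $\hat D_j\cdot\hat C\ge -M_j$ for a constant $M_j$ depending only on the (finitely many) components and coefficients of $\hat N_j$. Taking $d>M:=\max_jM_j$ then rules this case out for every $j$. By Lemma \ref{lem-non-psef-implies-uniruled}, the surface $\hat S_j$ is then smooth, projective and uniruled.

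On the smooth projective uniruled surface $\hat S_j$, I would then apply classical bend-and-break to $\hat C$. Adjunction gives $K_{S_j}=(K_X+S_j)|_{S_j}$, and the minimal resolution of the log-terminal surface singularities of the normalization of $S_j$ yields $K_{\hat S_j}=\pi_j^{\ast}K_{S_j}+E_j$ with $-E_j$ an effective exceptional $\mathbb Q$-divisor; hence
\[
-K_{\hat S_j}\cdot\hat C\;\ge\;-K_{S_j}\cdot C\;=\;-(K_X+\Delta)\cdot C+(\Delta-S_j)\cdot C.
\]
The possibly-negative term $(\Delta-S_j)\cdot C$ is bounded below by a universal constant: for $C\subset S_j$ not contained in any other component of $\Delta$, it is in fact strictly positive (since $-S_j\cdot C>0$), while the finitely many curves $C\subset S_j$ lying on a second component of $\Delta$ contribute only finitely many values of $-(K_X+\Delta)\cdot C$, which we exclude by further enlarging $d$. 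Having arranged $-K_{\hat S_j}\cdot\hat C$ large enough, Mori's bend-and-break on the smooth projective surface $\hat S_j$ produces a degeneration of $\hat C$ to a reducible effective $1$-cycle, giving $[\hat C]=[\hat C_1]+[\hat C_2]$ in $N_1(\hat S_j)$ with $\hat C_1,\hat C_2$ effective nonzero integral $1$-cycles. Pushing forward via $\hat S_j\to S_j\hookrightarrow X$ yields the required $[C]=[C_1]+[C_2]$ in $N_1(X)$.

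The main obstacle is quantitative: establishing the uniform lower bound on $(\Delta-S_j)\cdot C$ in the third paragraph requires that the components of $\Delta$ meet $S_j$ along a fixed finite configuration of curves, while the final application of Mori's bend-and-break on $\hat S_j$ requires a Riemann-Roch lower bound on the dimension of the deformation space of $\hat C$, achievable by forcing both $\hat C^2$ and $-K_{\hat S_j}\cdot\hat C$ to be large via a suitable choice of $d$.
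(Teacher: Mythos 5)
Your proposal reaches the same destination as the paper --- classical bend-and-break on the projective uniruled surface $\hat S_j$ supplied by Lemma \ref{lem-non-psef-implies-uniruled} --- but by a recognizably different route. The paper first passes to a terminal dlt modification (Corollary \ref{cor:exist-terminal-dlt-modification}, descending the bend-and-break property afterwards via \cite[Lemma 4.2]{CHP16}), then splits into the cases $\D\cdot C<0$ and $\D\cdot C\ge 0$; in the second case it needs \cite[Theorem 4.5]{HP16} (non-very-rigid curves on terminal $3$-folds) just to locate the surface containing the deformations of $C$, and it controls the bad curves through the explicitly constructed finite sets $\mathscr{A},\mathscr{B},\mathscr{C}$. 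You instead locate $S_j$ immediately from $S_j\cdot C<0$, and you replace the paper's finiteness argument for horizontal components (Lemma \ref{lem-finite-horizontal-negative-curve}) by a Zariski-decomposition bound on $\hat S_j$, which simultaneously rules out the pseudo-effective case and feeds the hypothesis of Lemma \ref{lem-non-psef-implies-uniruled}. That is a legitimate and arguably cleaner organization, and the terminalization step appears dispensable in your setup. Two small corrections: $\hat C^2$ large is not something you need to ``force via $d$'' --- adjunction on $\hat S_j$ gives $\hat C^2=2p_a(\hat C)-2-K_{\hat S_j}\cdot\hat C\ge -2-K_{\hat S_j}\cdot\hat C$, so it comes for free; and if $\hat C\to C$ has degree $e>1$ the projection formula gives $\hat D_j\cdot\hat C=e\,(K_X+\D)\cdot C$, which only helps you.

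The one step that fails as written is the inequality $-K_{\hat S_j}\cdot\hat C\ge -(K_X+S_j)\cdot C$. On a singular $3$-fold the correct adjunction is $(K_X+S_j)|_{S_j^n}=K_{S_j^n}+\mathrm{Diff}$ with $\mathrm{Diff}\ge 0$ an honest effective divisor supported on the conductor of the normalization and on the intersection of $S_j$ with $\mathrm{Sing}\,X$, the other $S_k$, and $\Supp\D$; it is \emph{not} exceptional for the minimal resolution. If $\hat C$ lies in the support of (the pullback of) $\mathrm{Diff}$, the term $\mathrm{Diff}\cdot\hat C$ can be negative and your displayed chain of inequalities breaks. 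The fix is the device you already use elsewhere: these are finitely many curves determined by the fixed data, so they are excluded by enlarging $d$ --- this is precisely the role of the sets $\mathscr{B}$ and $\mathscr{C}$ in Lemma \ref{lem-pre-b&b-index}. With that patch, and granting the final degeneration step (which both you and the paper delegate to the argument of \cite[Lemma 5.7]{HP16}; note one must also check that pushing the broken cycle forward to $X$ yields two \emph{nonzero} cycles, since components of the limit may be contracted by $\hat S_j\to S_j$), your proof goes through.
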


We will first prove this theorem under an additional assumption that $X$ has terminal singularities. The proof of the general case then follows from \cite[Lemma 4.7]{CHP16}.

\begin{lemma}
\label{lem-negative-curve-span-unique-surface}
With notations and hypothesis as in Assumption \ref{asmp-base-MRC-dim-2}, we further assume that $X$ has terminal singularities.   Let $C\subset X$ be a curve such that $(K_X+\D) \cdot C <0$ and $\mathrm{dim}_C\, \mathrm{Chow}(X) >0$. Then there is a unique surface $S_j$ in the decomposition \eqref{eq-K_X+D=sum S_i} such that $C$ and all of its deformations are contained in $S_j$. Moreover, we have $K_{S_j}\cdot C < K_X\cdot C$.
\end{lemma}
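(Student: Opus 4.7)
The plan is to exploit the effective representation $K_X+\D\sim_{\mbQ}\sum \lambda_i S_i$ (with $\lambda_i\> 0$) from Assumption \ref{asmp-base-MRC-dim-2} together with the positive-dimensional deformation hypothesis on $C$.

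First I would observe that since $(K_X+\D)\cdot C<0$ and all $\lambda_i$ are non-negative, at least one component $S_j$ of the decomposition must satisfy $S_j\cdot C<0$, forcing $C\subset S_j$. Because intersection numbers are constant on the connected component $T$ of $\mathrm{Chow}(X)$ containing $[C]$, the same negativity $S_j\cdot C_t<0$ holds for every cycle $C_t$ in the family. For general $t\in T$ the cycle $C_t$ is irreducible and therefore entirely contained in $S_j$; taking closures of supports then forces the support of every deformation to lie in $S_j$.

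Next, let $Y\subset X$ be the closure of the sweep-out of these deformations. Since $\dim T\> 1$, a positive-dimensional family of distinct irreducible curves covers at least a surface, so $\dim Y\> 2$. On the other hand $Y\subset S_j$ caps $\dim Y\< 2$; hence $Y=S_j$ (as $S_j$ is irreducible of dimension $2$). Uniqueness is then automatic: any other component $S_k$ of the decomposition also containing all deformations of $C$ would contain $Y$, and hence equal $S_j$.

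For the inequality $K_{S_j}\cdot C<K_X\cdot C$, I would use adjunction. Since $X$ has terminal singularities, it is smooth outside a finite set; a general deformation $C'$ of $C$ avoids this set and meets $S_j$ only at smooth points. Standard adjunction then yields
\[
K_{S_j}\cdot C'=(K_X+S_j)\cdot C'=K_X\cdot C'+S_j\cdot C'<K_X\cdot C',
\]
using $S_j\cdot C'=S_j\cdot C<0$. Since $C$ and $C'$ are algebraically (hence numerically) equivalent in $S_j$, the inequality transfers back to $C$.

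The main technical obstacle I anticipate is the adjunction step: if $S_j$ fails to be normal along the finite locus where $X$ itself is singular, adjunction carries an effective correction divisor supported at those bad points. Choosing $C'$ to avoid this locus lets us apply adjunction in the usual smooth sense on a Zariski neighborhood of $C'$; alternatively one may pass to the normalization $\nu\colon\bar{S}_j\to S_j$ and check that the correction divisor pairs trivially with the strict transform of $C'$, so that the estimate is unaffected.
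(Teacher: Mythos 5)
Your argument follows essentially the same route as the paper: the effective decomposition $K_X+\D\sim_{\mbQ}\sum\lambda_iS_i$ forces $S_j\cdot C<0$ for some $j$, the deformations of $C$ then sweep out $S_j$ and give uniqueness (the paper obtains the first part of this by citing \cite[Lemma 5.4]{HP16}, whereas you argue it directly, which is fine), and adjunction yields the canonical class inequality. The one imprecision is in the adjunction step: you assert the equality $K_{S_j}\cdot C'=(K_X+S_j)\cdot C'$ and locate the possible failure only at the finitely many singular points of $X$. In fact $S_j$ may well be non-normal, or otherwise singular, along an entire curve lying inside the smooth locus of $X$, and since $C'$ moves in a family covering $S_j$ it cannot be chosen to avoid that curve; the adjunction correction term (conductor/different) is then an effective divisor that $C'$ meets positively, so both your claimed equality and your fallback assertion that the correction "pairs trivially" with the strict transform of $C'$ fail in general. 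What saves the argument is the sign: since the correction is effective one always has $K_{S_j}\cdot C'\leq (K_X+S_j)\cdot C'$, and combined with $S_j\cdot C'<0$ this still gives the strict inequality $K_{S_j}\cdot C<K_X\cdot C$ --- which is exactly how the paper states it, namely $K_X\cdot C>(K_X+S_j)|_{S_j}\cdot C\geq K_{S_j}\cdot C$. You should therefore phrase that step as an inequality rather than an equality; with that change the proof is complete.
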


\begin{proof}
As in the first part of the proof of \cite[Lemma 5.4]{HP16} we see that there is some $S_j$ in the decomposition \eqref{eq-K_X+D=sum S_i} which contains a family of deformations of $C$. Since $C$ deforms, $S_k\cdot C\>0$ for any $k\neq j$. Then from the decomposition \eqref{eq-K_X+D=sum S_i} it follows that $S_j \cdot C<0$. In particular, $S_j$ is the unique surface which contains all of the deformations of $C$. Moreover, by adjunction we have 
\[
	K_X\cdot C=K_X|_{S_j}\cdot C>(K_X+S_j)|_{S_j}\cdot C\>K_{S_j}\cdot C.
\]
\end{proof}~\\

\begin{lemma}
\label{lem-finite-horizontal-negative-curve} 
With notations and hypothesis as in Assumption \ref{asmp-base-MRC-dim-2}, we further assume that $X$ has terminal singularities. Let $\mathscr{A}$ be the collection of curves $C\subset X$ which satisfy  all of the following conditions.
\begin{enumerate}
\item $(K_X+\D) \cdot C <0$,
\item $\D \cdot C < 0$
\item $C$ is contained in a component of $\D$ which is horizontal over the base of the MRC fibration $f:X\bir Z$.
\end{enumerate}
Then $\mathscr{A}$ is a finite set.
\end{lemma}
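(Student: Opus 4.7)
The plan is to bound $\mathscr{A}$ one horizontal component of $\D$ at a time. Since $\D$ has only finitely many irreducible components, it suffices to fix a horizontal component $S$ of $\D$ and to show that the subset $\mathscr{A}_S := \{C \in \mathscr{A} : C \subset S\}$ is finite. Let $\pi : \hat{S} \to X$ denote the composition of the minimal resolution of $S$ with the inclusion $S \injective X$; then $\hat{S}$ is a smooth compact K\"ahler surface.

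By Lemma \ref{lem-horizontal-restriction-psef} applied to $S$, the restriction $(K_X+\D)|_S$ is pseudo-effective, hence $\hat{L} := \pi^*(K_X+\D)$ is a pseudo-effective class on $\hat{S}$. The Zariski decomposition of Boucksom \cite{Bou04} for pseudo-effective classes on smooth compact K\"ahler surfaces gives a decomposition $\hat{L} = P + N$ with $P$ nef, $N = \sum_{j=1}^r n_j N_j$ an effective $\mbR$-divisor (with $n_j > 0$), and with the intersection matrix $(N_i \cdot N_j)$ negative definite. For any $C \in \mathscr{A}_S$, denote by $\hat{C} \subset \hat{S}$ its strict transform; by the projection formula,
\[
\hat{L} \cdot \hat{C} = (K_X+\D) \cdot \pi_* \hat{C} = (K_X+\D) \cdot C < 0.
\]
Since $P$ is nef, $P \cdot \hat{C} \geq 0$, so $N \cdot \hat{C} < 0$. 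If $\hat{C}$ were not among the $N_j$, then $N_j \cdot \hat{C} \geq 0$ for every $j$ (distinct prime divisors on a surface intersect non-negatively), contradicting $N \cdot \hat{C} < 0$. Therefore $\hat{C} \in \{N_1, \ldots, N_r\}$, and since the strict transform induces an injection on prime divisors of $S$, the set $\mathscr{A}_S$ is finite.

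The main obstacle is a technical one: the smooth compact K\"ahler surface $\hat{S}$ need not be projective, so the classical Zariski decomposition is not directly available. This is resolved by Boucksom's divisorial Zariski decomposition for pseudo-effective classes on compact K\"ahler manifolds, which on surfaces specializes to the classical picture of a nef positive part together with an effective negative part whose components have negative definite intersection matrix; in particular, the support of the negative part is a finite union of prime divisors. This is precisely what produces the finitely many candidate curves $N_j$ that can possibly appear as strict transforms of elements of $\mathscr{A}_S$.
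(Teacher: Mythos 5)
Your proof is correct, but it takes a genuinely different route from the paper's. The paper uses hypothesis (2), $\D\cdot C<0$, to pass from $K_X+\D$ to $K_X+(1+\mu)\D$, where the coefficient of the component $S$ has been raised to $1$ while keeping the intersection with $C$ negative; adjunction then gives $K_{\hat S}+E\sim_{\mbQ}\pi^*(K_X+(1+\mu)\D)$ with $E\>0$, and since $S$ is non-uniruled (as $f|_S$ is generically finite onto the non-uniruled base $Z$), abundance for surfaces produces an effective $0\<D\sim_{\mbQ}K_{\hat S}$; every curve meeting $\pi^*(K_X+(1+\mu)\D)$ negatively must then lie in $\Supp(D+E)$. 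You instead invoke Lemma \ref{lem-horizontal-restriction-psef} to get pseudo-effectivity of $\pi^*(K_X+\D)$ directly and locate the offending curves among the finitely many components of the negative part of Boucksom's divisorial Zariski decomposition. Your argument is shorter, never uses hypothesis (2), and therefore proves the stronger statement that each horizontal component of $\D$ carries only finitely many $(K_X+\D)$-negative curves at all; what it costs is reliance on the Zariski decomposition on a possibly non-projective compact K\"ahler surface, whereas the paper stays inside the adjunction-plus-surface-abundance toolkit it has already developed (though $[\mathrm{Bou04}]$ is already cited elsewhere in the paper, so this is a legitimate import). Both arguments use horizontality in the same essential way, since the proof of Lemma \ref{lem-horizontal-restriction-psef} itself rests on the non-uniruledness of $S$. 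One small point to tidy up: if $C$ lies in the non-normal or singular locus of $S$, the ``strict transform'' should be taken to mean an irreducible component of $\pi^{-1}(C)$ dominating $C$; the projection formula then gives $\hat L\cdot\hat C=d\,(K_X+\D)\cdot C<0$ for some integer $d\>1$, and the rest of your argument goes through unchanged.
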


\begin{proof}
Let $S$ be a component of $\D$ which is horizontal over $Z$ and $C\subset S$ a curve from the set $\mathscr A$. Then there is a rational number $\mu\>0$ such that the coefficient of $S$ in  $(1+\mu)\D$ is $1$.  Moreover, we have
  \[K_X+(1+\mu)\D) \cdot C <0.\]
Let $\hat{S} \to S$ be the minimal resolution of $S$ and $\pi:\hat{S}\to X$ is the induced morphism. Observe that in order to prove the lemma it is enough to show that there are only finitely many curves in $\hat{S}$ which have negative intersection with $\pi^*(K_X+(1+\mu)\D)$.

Now by adjunction there is an effective $\mbQ$-divisor $E$ such that 
\[K_{\hat{S}}+E \sim_{\mathbb{Q}} \pi^*(K_X+(1+\mu)\D).\] 
Note that $S$ is not uniruled, since $f|_S:S\bir Z$ is generically finite and $Z$ is not uniruled. Thus $K_{\hat{S}}$ is pseudo-effective and by Theorem \ref{thm:surface-abundance} it follows that there is some effective $\mbQ$-divisor $0\<D\sim_{\mbQ} K_{\hat{S}}$. Then any curve in $\hat{S}$ which has  negative intersection with $\pi^*(K_X+(1+\mu)\Delta)$ must be a component of $D+E$. Hence $\mathscr A$ is a finite set.
\end{proof}~\\

 For every component $\D_i$ of $\D$ which is vertical over the base of the MRC fibration $f:X\bir Z$, there is a rational number $\mu_i\>0$ such that the multiplicity of    $(1+\mu_i)\D$ along $\D_i$ is $1$.  Let $\hat{\D}_i \to \D_i$ be the minimal resolution of $\D_i$ and $\pi:\hat{\Delta}_i\to X$ is the induced morphism. Now by adjunction there is an effective $\mbQ$-divisor $E_i$ such that \[K_{\hat{\D}_i}+E_i \sim_{\mathbb{Q}} \pi_i^*(K_X+(1+\mu_i)\D).\]  We let $\mathscr{B}$ be the collection of all curves in $X$ which are contained in $\pi_{i}(\Supp E_i)$ for some $i$. Then $\mathscr{B}$ is a finite set.

Finally, we let $\mathscr{C}$ be the collection of curves which lies either in the singular locus of the support of $\D$ or in the singular locus of $\sum S_i$, where $S_i$ are the components of the decomposition \eqref{eq-K_X+D=sum S_i}. Then again $\mathscr{C}$ is a finite set.

\begin{lemma}
\label{lem-pre-b&b-index}
With notations and hypothesis as in Assumption \ref{asmp-base-MRC-dim-2} we further assume that $X$ has terminal singularities. Define a rational number $b>0$ as follows:
  \[b= \max \{2,  -(K_X+\D)\cdot Z \ |\ Z \subset X \mbox{ is a curve and }  Z\in \mathscr{A}\cup \mathscr{B} \cup \mathscr{C} \},\]
  where $\mathscr A$ is a finite set defined in Lemma \ref{lem-finite-horizontal-negative-curve}.
Then  for any curve $C\subseteq X$, the inequality $$-(K_X+\D)\cdot C > b$$ implies that $\mathrm{dim}_C\, \mathrm{Chow}(X) > 0$.
\end{lemma}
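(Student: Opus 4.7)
The plan is to argue by contradiction: assume $C$ is a curve with $-(K_X+\D)\cdot C > b$ which does not deform in $X$, and derive a contradiction by producing a non-trivial deformation of $C$. Since $\sum \lambda_i S_i \cdot C = (K_X+\D)\cdot C < 0$, there is an index $j_0$ with $S_{j_0}\cdot C < 0$, so $C\subseteq S_{j_0}$. The hypothesis $C\notin\mathscr{C}$ ensures that at a general point of $C$ neither $\Supp(\D)$ nor $\sum S_i$ is singular, hence $S_{j_0}$ is the unique component of $\sum S_i$ passing through $C$, and at most one component of $\D$ passes through $C$. The overall strategy is to reduce to a smooth projective surface containing the strict transform of $C$ and apply a surface-level deformation argument.

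I split into two cases according to the sign of $\D\cdot C$. If $\D\cdot C<0$, then $C$ is contained in a unique component $\D_k$ of $\D$ and necessarily $\D_k\cdot C<0$. When $\D_k$ is horizontal over the MRC base, $C$ satisfies all three defining conditions of $\mathscr{A}$, contradicting $-(K_X+\D)\cdot C>b$. When $\D_k$ is vertical, I apply the adjunction formula $K_{\hat{\D}_k}+E_k\sim_{\mbQ} \pi_k^*(K_X+(1+\mu_k)\D)$ from the definition of $\mathscr{B}$: since $C\notin\mathscr{B}$, the strict transform $\tilde{C}$ on $\hat{\D}_k$ is not a component of $E_k$, so $E_k\cdot\tilde{C}\geq 0$, yielding
\[
K_{\hat{\D}_k}\cdot\tilde{C}\leq (K_X+\D)\cdot C+\mu_k\D\cdot C<-b\leq -2,
\]
and Lemma~\ref{lem-vertical-uniruled} guarantees $\hat{\D}_k$ is smooth and projective. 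If instead $\D\cdot C\geq 0$, then $K_X\cdot C<-b\leq -2$; combined with $S_{j_0}\cdot C<0$ and adjunction on the minimal resolution $\pi:\hat{S}_{j_0}\to S_{j_0}$ (whose exceptional discrepancies are non-positive), this gives $K_{\hat{S}_{j_0}}\cdot\tilde{C}\leq K_{S_{j_0}}\cdot C=(K_X+S_{j_0})\cdot C<-2$.

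In both cases I have obtained $K_Y\cdot\tilde{C}<-2$ on a smooth projective surface $Y$ with $\tilde{C}\subseteq Y$ an irreducible curve. The arithmetic genus identity $2p_a(\tilde{C})-2=\tilde{C}^2+K_Y\cdot\tilde{C}$ then forces $\tilde{C}^2>0$, and Riemann--Roch gives
\[
\chi(N_{\tilde{C}/Y})=\tfrac{1}{2}\bigl(\tilde{C}^2-K_Y\cdot\tilde{C}\bigr)\geq 2,
\]
so a component of the Hilbert scheme of $Y$ through $[\tilde{C}]$ has positive dimension. Pushing these deformations forward via the resolution map yields a positive-dimensional family of curves in $X$ specializing to $C$, contradicting $\dim_C\mathrm{Chow}(X)=0$.

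The main obstacle is establishing projectivity of $Y$ in the second case, since $\hat{S}_{j_0}$ is a priori only a compact K\"ahler surface. To resolve this I will show that $(K_X+\D)|_{S_{j_0}}$ cannot be pseudo-effective: were it pseudo-effective, then since $\pi^*((K_X+\D)|_{S_{j_0}})\cdot\tilde{C}<0$, the curve $\tilde{C}$ would lie in the support of the negative part of the divisorial Zariski decomposition on $\hat{S}_{j_0}$, forcing $\tilde{C}^2<0$ and contradicting the arithmetic genus inequality established above. Lemma~\ref{lem-non-psef-implies-uniruled} then provides the required projectivity of $\hat{S}_{j_0}$.
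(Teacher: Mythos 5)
Your argument is correct and its skeleton coincides with the paper's: split on the sign of $\D\cdot C$, use $\mathscr A$ to exclude the horizontal case, pass to the minimal resolution of the relevant surface, bound $K_{\hat S}\cdot\tilde C<-b\leq -2$ using $\mathscr B$ (resp.\ $\mathscr C$) to keep the strict transform off the adjunction error term, and then deform the strict transform on a smooth projective surface; your Riemann--Roch computation $\chi(N_{\tilde C/Y})=\tfrac12\left(\tilde C^2-K_Y\cdot\tilde C\right)>0$ is exactly the content of the paper's citation of \cite[Theorem II.1.15]{Kol96}. The genuine divergence is in the case $\D\cdot C\geq 0$. The paper first applies \cite[Theorem 4.5]{HP16} ($K_X\cdot C<-2$ forces some multiple $mC$ to deform) to locate the surface $S_j$ and to see that it is covered by $(K_X+\D)$-negative curves, whence projectivity of $\hat S_j$ via Lemma \ref{lem-non-psef-implies-uniruled}. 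You instead read off $S_{j_0}$ directly from $\sum\lambda_i S_i\cdot C<0$, and you derive non-pseudo-effectivity of $(K_X+\D)|_{S_{j_0}}$ from the a priori inequality $\tilde C^2\geq -2-K_{\hat S_{j_0}}\cdot\tilde C>0$ (adjunction on the smooth compact K\"ahler surface $\hat S_{j_0}$, no projectivity needed), which is incompatible with $\tilde C$ lying in the negative-definite negative part of the divisorial Zariski decomposition of a pseudo-effective class meeting it negatively. This makes the case independent of the rigidity result of \cite{HP16} and makes explicit the projectivity of $\hat S_{j_0}$, which the paper leaves implicit in its second case; the price is an appeal to Boucksom's decomposition on compact K\"ahler surfaces. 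Two harmless imprecisions: the equality $K_{S_{j_0}}\cdot C=(K_X+S_{j_0})\cdot C$ should be an inequality $\leq$ (the different is effective and, since $C\notin\mathscr C$, meets $C$ properly), and it is the same membership $C\notin\mathscr C$ that guarantees $\tilde C\not\subseteq\Supp E$ for the adjunction divisor $E$ in this case.
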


\begin{proof}
	Let $C\subset X$ be a curve such that $-(K_X+\Delta)\cdot C>b$. Then from the defintion of $b$ it follows that $C\not\in\mathscr{A}\cup\mathscr{B}\cup\mathscr{C}$.
Now first we assume that $\D \cdot C <0$. Then there is some component $S$ of $\D$ which contains $C$. Since $C\notin \mathscr{A}$, $S$ is not horizontal over the base of the MRC fibration $f:X\bir Z$.  We also note that $S$ is a unique component of $\Delta$ containing $C$, since otherwise $C\in \mathscr{C}$. Since $(X, \Delta)$ is dlt, there is a rational number $\mu\>0$ such that the multiplicity of $(1+\mu)\D$ along $S$ is $1$.  Then  
\[-(K_X+(1+\mu)\D) \cdot C  > b.\] 
Let $\hat{S} \to S$ be the minimal resolution of $S$ and $\pi:\hat{S}\to X$ is the induced morphism. By adjunction there is an effective $\mbQ$-divisor $E$ such that 
\[K_{\hat{S}}+E \sim_{\mathbb{Q}} \pi^*(K_X+(1+\mu)\D).\]
 Since $C\notin \mathscr{B}$, $C$ is not contained in $\pi(\Supp E)$. Hence the strict transform of $C$ in $\hat{S}$ is a curve $\hat{C}$ and we have \[K_{\hat{S}} \cdot \hat{C} \< (K_{\hat{S}} + E) \cdot \hat{C} = (K_X+(1+\mu)\D) \cdot C < -b.\]
  Since $\hat{S}$ is a smooth projective surface and $b\> 2$, from \cite[Theorem II.1.15]{Kol96} we have 
\[\mathrm{dim}_{\hat{C}}\, \mathrm{Chow}(\hat{S}) > 0.\]
Pushing forward a deformation family of $\hat{C}$ onto $S$ gives deformation family of $C$ on $S$, in particular, $\mathrm{dim}_C\, \mathrm{Chow}(X)>0$.\\

Now we assume that $\D \cdot C \> 0$. Then we have
\[K_X\cdot C \leqslant (K_X+\D)\cdot C < -b \< -2.\]
Then by \cite[Theorem 4.5]{HP16} $C$ is not very rigid and some multiple $mC$ deforms in $X$. Hence there is some component $S=S_j$ in the decomposition \eqref{eq-K_X+D=sum S_i} such that $S \cdot C<0$ and that $S$ contains a family of deformations of $mC$. Moreover, since $C\notin \mathscr{C}$, the component $S=S_j$ is unique. 

Let $\hat{S} \to S$ be the minimal resolution of $S$ and $\pi:\hat{S}\to X$ is the induced morphism. By adjunction there is an effective $\mbQ$-divisor $E$ such that 
\[K_{\hat{S}}+E \sim_{\mathbb{Q}} \pi^* (K_X+S).\] 
Since $C\notin \mathscr{C}$, it is not contained in $\pi(\Supp E)$. Hence the strict transform of $C$ in $\hat{S}$ is a curve $\hat{C}$ and we have 
\[K_{\hat{S}} \cdot \hat{C} \< (K_{\hat{S}} + E) \cdot \hat{C} = (K_X + S) \cdot C < K_X \cdot C <  -b \< -2.\]
 As in the previous case, this implies that  $\mathrm{dim}_C\, \mathrm{Chow}(X) > 0$.
\end{proof}

\begin{lemma}
\label{lem-b&b-index}
With notations and hypothesis as in Assumption \ref{asmp-base-MRC-dim-2} we further assume that $X$ has terminal singularities. Define 
\[d:=\max \{3,b\},\]
 where $b$ is the positive rational number defined in Lemma \ref{lem-pre-b&b-index}. Then if $C\subset X$ is a curve, the inequality  $-(K_X+\D)\cdot C > d$ implies that \[[C]=[C_1]+[C_2],\]
 where $C_1,C_2$ are two non-zero integral effective $1$-cycles. In particular, $K_X+\D$ has the bend-and-break property.
\end{lemma}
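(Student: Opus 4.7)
The plan is to reduce to the classical bend-and-break on the minimal resolution of the unique surface containing all deformations of $C$. First, since $d=\max\{3,b\}\geq b$, the hypothesis $-(K_X+\Delta)\cdot C>d$ allows me to invoke Lemma \ref{lem-pre-b&b-index} and conclude that $\dim_C\mathrm{Chow}(X)\geq 1$, i.e. $C$ admits a nontrivial deformation in $X$. In particular $(K_X+\Delta)\cdot C<0$, so Lemma \ref{lem-negative-curve-span-unique-surface} applies and produces a unique component $S=S_j$ from the decomposition \eqref{eq-K_X+D=sum S_i} that contains $C$ together with all of its deformations, together with the comparison $K_S\cdot C<K_X\cdot C$.

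Since the deformation family of $C$ sweeps out $S$ and has negative intersection with $K_X+\Delta$, the restriction $(K_X+\Delta)|_S$ is not pseudo-effective, and Lemma \ref{lem-non-psef-implies-uniruled} asserts that the minimal resolution $\pi\colon\hat S\to S$ is a smooth projective (uniruled) surface. The condition $C\notin\mathscr{C}$ built into $d$ ensures $C$ avoids the singular locus of $S$, so its strict transform $\hat C\subset\hat S$ is a well-defined curve carrying the pulled-back deformation family. Using that the minimal resolution of a klt surface satisfies $K_{\hat S}=\pi^*K_S-F$ with $F\geq 0$ effective and $\pi$-exceptional, we get $K_{\hat S}\cdot\hat C\leq K_S\cdot C<K_X\cdot C$.

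To bootstrap this to $-K_{\hat S}\cdot\hat C>3$, I would split on the sign of $\Delta\cdot C$ exactly as in Lemma \ref{lem-pre-b&b-index}: if $\Delta\cdot C\geq 0$, then $-K_X\cdot C\geq -(K_X+\Delta)\cdot C>d\geq 3$; if $\Delta\cdot C<0$, the curve $C$ is forced by $C\notin\mathscr{A}$ to lie in a component of $\Delta$ vertical over the MRC fibration, and adjunction applied to $(1+\mu)\Delta$ (with $\mu\geq 0$ chosen so that the coefficient of that component becomes $1$) again gives $-K_{\hat S}\cdot\hat C>d$. With $\hat C$ deforming in the smooth projective surface $\hat S$ and $-K_{\hat S}\cdot\hat C>3$, the classical bend-and-break on surfaces (cf.\ \cite[Ch.\ II]{Kol96}) yields an effective degeneration $\hat C\equiv\hat C_1+\hat C_2$ in $N_1(\hat S)$ with both $\hat C_i$ nonzero. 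Pushing forward by $\pi$ and through the inclusion $S\hookrightarrow X$ yields the desired identity $[C]=[C_1]+[C_2]$ in $X$.

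The main obstacle is twofold: first, obtaining the uniform negativity $-K_{\hat S}\cdot\hat C>3$ in the delicate case $\Delta\cdot C<0$, which is precisely why $d$ is defined as $\max\{3,b\}$ and why the auxiliary sets $\mathscr{A},\mathscr{B},\mathscr{C}$ were introduced in Lemma \ref{lem-pre-b&b-index}; second, ensuring that neither bend-and-break piece $\hat C_i$ pushes to zero on $X$. The latter is handled by observing that $\pi$ is an isomorphism away from isolated points, so a generically moving family of curves on $\hat S$ cannot degenerate entirely into $\pi$-exceptional fibers — the moving component of the degeneration must survive under $\pi_*$.
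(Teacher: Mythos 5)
Your proposal follows essentially the same route as the paper: split on the sign of $\D\cdot C$ exactly as in Lemma \ref{lem-pre-b&b-index}, locate the unique uniruled surface $S$ swept out by the deformations of $C$ (a component of $\D$ in one case, an $S_j$ from the decomposition \eqref{eq-K_X+D=sum S_i} in the other), pass to the minimal resolution to obtain $K_{\hat S}\cdot\hat C<-d\<-3$, and conclude by classical bend-and-break on the smooth projective surface $\hat S$; the paper delegates this last step to the argument of \cite[Lemma 5.7]{HP16}. The one thin spot is your final remark: showing that ``the moving component survives under $\pi_*$'' only guarantees that \emph{one} piece of the degenerate cycle has nonzero image in $X$, whereas the lemma requires \emph{both} $C_1$ and $C_2$ to be nonzero, so you still need to rule out (or absorb, e.g.\ when a component appears with multiplicity $\>2$) the case where all but one component of the limit cycle is contracted by $\hat S\to S$ --- precisely the bookkeeping carried out in \cite[Lemma 5.7]{HP16}, which the paper cites rather than reproduces.
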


\begin{proof}
As in the proof of Lemma \ref{lem-pre-b&b-index}, we need to consider two cases: either $\D \cdot C <0$ or $\D\cdot C \> 0$. As in the proof of Lemma \ref{lem-pre-b&b-index}, in both cases we end up with a unique uniruled surface $S$ such that $C$ is not contained in the singular locus of $S$. Let $\hat{S}\to S$ be the minimal resolution of $S$ and $\pi:\hat{S}\to X$ is the induced morphism. Let $\hat{C}$ be the strict transform of $C$ in $\hat{S}$.  Then as in the proof of Lemma \ref{lem-pre-b&b-index} we have $K_{\hat{S}} \cdot \hat{C} < -d \< -3$ in both cases. The rest of proof follows from a similar argument as in the proof of \cite[Lemma 5.7]{HP16}.
\end{proof}

We can now deduce Theorem \ref{thm-b&b-dlt-adjoint}.

\begin{proof}[{Proof of Theorem \ref{thm-b&b-dlt-adjoint}}]
By Corollary  \ref{cor:exist-terminal-dlt-modification}, there is a bimeromorphic morphism $f:(X', \D')\to (X,\D)$ such that 
\begin{enumerate}
\item $X'$ has  $\mathbb{Q}$-factorial  terminal singularities, and
\item $(X',\D')$ is a dlt pair such that $$K_{X'}+\D' \sim_{\mathbb{Q}} f^*(K_X+\D)$$
\end{enumerate}
By Lemma \ref{lem-b&b-index}, $K_{X'}+\D'$ has the bend-and-break property. Hence by \cite[Lemma 4.2]{CHP16}, $K_X+\D$ also has the bend-and-break property.
\end{proof}~\\

\subsection{Cone theorem for pseudo-effective DLT pairs}
\label{sect-cone-thm}

We recall that the   cone $\mathrm{\overline{NA}}(X)$ for a normal compact K\"ahler variety is  the closed cone generated by positive closed current of bidimension $(1,1)$. It replaces the Mori cone  $\mathrm{\overline{NE}}(X)$ in the non-algebraic setting, and is  dual to the closure of the cone generated by the classes of K\"ahler forms (see  \cite[Section 1]{HP16} for details).

\begin{theorem}\label{thm-NA-cone-uniruled-adjoint-pair}
	 With notations and hypothesis as in Assumption \ref{asmp-base-MRC-dim-2}, there is a rational number $d>0$ and an at most countable set of curves $\{\Gamma_i\}_{i\in I}$ such that  \[0< (K_X+\D) \cdot \Gamma_i \< d\] and that  \[\overline{\mathrm{NA}}(X) = \overline{\mathrm{NA}}(X)_{(K_X+\D) \> 0} + \sum_{i\in I} \mathbb{R}^{+} [\Gamma_i].\]
\end{theorem}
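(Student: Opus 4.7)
The plan is to adapt the strategy used by H\"oring--Peternell for $K_X$ alone (\cite[Theorem 1.3]{HP16}) and by Campana--H\"oring--Peternell in the log setting (\cite[Theorem 1.3]{CHP16}), feeding in the bend-and-break Theorem \ref{thm-b&b-dlt-adjoint} in place of the unconditional one. The rational constant $d > 0$ in the statement will be the same constant produced by Theorem \ref{thm-b&b-dlt-adjoint} (with the sign convention on $(K_X+\Delta)\cdot \Gamma_i$ understood, as $\Gamma_i$ should generate a $(K_X+\Delta)$-negative ray).

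First I would define $\mathcal V$ to be the set of classes $[\Gamma]$ in $N_1(X)$ where $\Gamma\subset X$ is an irreducible curve satisfying $0 < -(K_X+\Delta)\cdot \Gamma \leq d$. Let
\[
  V := \overline{\mathrm{NA}}(X)_{(K_X+\Delta)\geq 0} + \overline{\sum_{[\Gamma]\in\mathcal V} \mathbb R^{+}[\Gamma]},
\]
and the goal is to show $V = \overline{\mathrm{NA}}(X)$. The inclusion $V\subseteq \overline{\mathrm{NA}}(X)$ is trivial, since classes of curves are particular positive currents of bidimension $(1,1)$. For the reverse inclusion I would argue by contradiction: if $V \subsetneq \overline{\mathrm{NA}}(X)$, then by Hahn--Banach there is a class $\eta \in N^1(X)$ with $\eta \geq 0$ on $V$ but $\eta(T_0) < 0$ for some $T_0 \in \overline{\mathrm{NA}}(X)$. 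Duality between the nef cone and $\overline{\mathrm{NA}}(X)$ (\cite[Proposition 6.1]{Dem92}, together with the characterization in Corollary \ref{cor-K_X+D-nef-test-curve}) would then force $\eta$ to be nef up to a positive multiple of $(K_X+\Delta)$, reducing the problem to showing that every $(K_X+\Delta)$-negative curve class can be written as a sum of classes in $\mathcal V$. This is precisely the content of bend-and-break: starting from any curve $C$ with $-(K_X+\Delta)\cdot C > d$, Theorem \ref{thm-b&b-dlt-adjoint} allows us to decompose $[C] = [C_1]+[C_2]$ iteratively; the process must terminate because the cycles $C_j$ are nonzero integral effective and the intersection numbers $-(K_X+\Delta)\cdot C_j$ are strictly decreasing positive rationals with bounded denominator.

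The local finiteness/countability statement is then obtained as follows. By Lemma \ref{lem-non-psef-implies-uniruled}, any $(K_X+\Delta)$-negative curve must lie on one of the finitely many surfaces $S_j$ appearing in the decomposition \eqref{eq-K_X+D=sum S_i}, and on each such $S_j$ the minimal resolution is a projective uniruled surface. On a projective surface the classes of curves of bounded $(K_X+\Delta)$-degree form a discrete, at most countable, subset of the N\'eron--Severi group (by boundedness of low-degree curves on projective surfaces). Pushing these back to $X$ and taking the countable union over the $S_j$'s produces the desired countable family $\{\Gamma_i\}_{i\in I}$.

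The main obstacle, as compared to the algebraic or absolute Kähler cases, is the passage from decomposing \emph{curve classes} in $\overline{\mathrm{NE}}(X)$ to decomposing genuine \emph{positive currents} in $\overline{\mathrm{NA}}(X)$. The separation/duality argument above is the cleanest route, but it rests crucially on Corollary \ref{cor-K_X+D-nef-test-curve}, which guarantees that $K_X+\Delta$ is nef as soon as it is numerically non-negative on every curve --- this in turn is what allows us to replace the use of currents by curves on the $(K_X+\Delta)$-negative side. A secondary (more technical) point is to ensure that the cone $\overline{\sum_{[\Gamma]\in\mathcal V}\mathbb R^+[\Gamma]}$ together with $\overline{\mathrm{NA}}(X)_{(K_X+\Delta)\geq 0}$ is actually closed; this follows from the same compactness argument as in the end of the proof of Theorem \ref{thm:dlt-cone}, intersecting with an appropriate affine hyperplane and using that $\mathcal V$ projects to a bounded set on the chosen slice.
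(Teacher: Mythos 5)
Your overall plan --- feed Lemma \ref{lem-non-psef-implies-uniruled} and the bend-and-break Theorem \ref{thm-b&b-dlt-adjoint} into the cone-theorem machinery of \cite{HP16, CHP16} --- is exactly what the paper does: its proof of Theorem \ref{thm-NA-cone-uniruled-adjoint-pair} consists of running the proof of \cite[Theorem 4.1]{CHP16} verbatim with precisely those two substitutions. So you have identified the right ingredients; the gap is in the step where you actually deploy them.

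The problem is your passage from positive closed currents to curves. You separate $V$ from $\overline{\mathrm{NA}}(X)$ by a class $\eta$ and then assert that duality together with Corollary \ref{cor-K_X+D-nef-test-curve} ``forces $\eta$ to be nef up to a positive multiple of $K_X+\Delta$''. But Corollary \ref{cor-K_X+D-nef-test-curve} is a statement about the single class $K_X+\Delta$: its proof uses that $K_X+\Delta$ is $\mathbb{Q}$-effective with support $\sum \lambda_i S_i$, so that any surface on which it fails to be pseudo-effective has projective uniruled resolution (Lemma \ref{lem-non-psef-implies-uniruled}), where \cite{BDPP13} supplies a covering family of negative curves. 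For the separating class $\eta$ none of this is available: on a non-projective K\"ahler threefold, non-negativity on all curves does not imply nefness (Paun's criterion also requires $\eta|_S$ pseudo-effective for every surface $S$), and if $\eta|_S$ fails to be pseudo-effective on a surface on which $(K_X+\Delta)|_S$ \emph{is} pseudo-effective, you have no projectivity, no BDPP, and hence no curves to test against $V$ --- the contradiction never materializes. The argument of \cite[Theorem 4.1]{CHP16} (and \cite[\S 6--7]{HP16}) closes this gap not by separation but by directly splitting a current $T$ with $(K_X+\Delta)\cdot T<0$: the decomposition \eqref{eq-K_X+D=sum S_i} forces $S_j\cdot T<0$ for some $j$, a support argument peels off from $T$ a positive part carried by $S_j$, and only then --- because $\hat{S}_j$ is projective by Lemma \ref{lem-non-psef-implies-uniruled} --- is one reduced to the Mori cone of a projective surface, where bend-and-break and an ample class yield both the bound $d$ and the termination of your iterative splitting. (On $X$ itself your termination claim also fails: $-(K_X+\Delta)\cdot C_1$ need not be strictly smaller than $-(K_X+\Delta)\cdot C$ when the other piece $C_2$ is $(K_X+\Delta)$-non-negative, and there is no ample class on $X$ to bound the number of splittings.) You should replace the Hahn--Banach step by this current-splitting argument, or cite \cite[Theorem 4.1]{CHP16} for it as the paper does.
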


\begin{proof}
The proof works with almost verbatim argument as in the proof of \cite[Theorem 4.1, page 985]{CHP16}. We note that in \cite[Theorem 4.1]{CHP16} $K_X$ is assumed to be pseudo-effective, which we don't have, but in the proof of \cite[Theorem 4.1, page 985]{CHP16} only $K_X+\Delta$ being pseudo-effective is used, which we have in our case. Moreover, we replace the use of \cite[Lem. 4.1, Pro. 4.1]{CHP16} by Lemma \ref{lem-non-psef-implies-uniruled} and Theorem \ref{thm-b&b-dlt-adjoint}, respectively.
\end{proof}~\\

Together with \cite[Theorem 1.3]{CHP16}, we obtain the following cone theorem for dlt pair $(X, \D)$ with pseudo-effective $K_X+\D$.

\begin{theorem}
\label{thm-NA-cone-adjoint-pair}
Let $(X, \Delta)$ be a  compact K\"ahler $\mbQ$-factorial $3$-fold dlt pair.  Assume that $K_X+\D$ is pseudo-effective. Then there is a rational number $d>0$ and a countable set of curves $\{\Gamma_i\}_{i\in I}$ such that  \[0< (K_X+\D) \cdot \Gamma_i \< d\]
 and 
  \[\overline{\mathrm{NA}}(X) = \overline{\mathrm{NA}}(X)_{(K_X+\D)\> 0} + \sum_{i\in I} \mathbb{R}^{+} [\Gamma_i].\]
\end{theorem}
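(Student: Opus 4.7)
The proof will be a short case analysis reducing the statement to the two cone theorems already at our disposal, namely Theorem \ref{thm-NA-cone-uniruled-adjoint-pair} above and \cite[Theorem 1.3]{CHP16}, together with the classical cone theorem in the projective setting.

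I would first use Corollary \ref{cor:exist-terminal-dlt-modification} to pass to a $\mathbb{Q}$-factorial terminal dlt modification $f\colon (X',\Delta')\to (X,\Delta)$ with $K_{X'}+\Delta'=f^*(K_X+\Delta)$. Since $f_*$ maps a cone decomposition of $\overline{\mathrm{NA}}(X')$ to one of $\overline{\mathrm{NA}}(X)$ (the $f$-exceptional rays are $(K_{X'}+\Delta')$-trivial and thus absorbed into the $(K_X+\Delta)$-nonnegative part), we may assume that $X$ itself has terminal singularities. I would then split according to whether $X$ is uniruled.

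Suppose first that $X$ is uniruled. Then either $X$ is projective, in which case the statement is a direct consequence of the classical Kawamata--Shokurov cone theorem for projective $\mathbb{Q}$-factorial dlt pairs with the uniform bound $d\leq 2\dim X=6$; or $X$ is non-algebraic, in which case Lemma \ref{l-proj} forces the base of the MRC fibration of $X$ to have dimension $2$, placing $(X,\Delta)$ squarely in the setting of Assumption \ref{asmp-base-MRC-dim-2}. Theorem \ref{thm-NA-cone-uniruled-adjoint-pair} then delivers the cone decomposition immediately.

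The remaining case, $X$ non-uniruled, is handled by \cite[Theorem 1.3]{CHP16}: non-uniruledness of $X$ gives pseudo-effectivity of $K_X$ \emph{via} the K\"ahler analogue of the Boucksom--Demailly--Paun--Peternell theorem (cf.~\cite[Theorem 0.3]{DP03}), and the cited result then furnishes a cone decomposition of $\overline{\mathrm{NA}}(X)$ along countably many $K_X$-negative extremal rays of uniformly bounded $K_X$-length. The main obstacle here is to upgrade this $K_X$-cone statement into a $(K_X+\Delta)$-cone statement; for this I would observe that any $(K_X+\Delta)$-negative extremal ray is either itself $K_X$-negative (so handled by \cite[Theorem 1.3]{CHP16}) or generated by a curve along which $\Delta$ is negative, in which case the curve lies on a component of $\Delta$ and the length of the ray is bounded by the bend-and-break Theorem \ref{thm-b&b-dlt-adjoint} (whose proof carries over to the non-uniruled setting because Lemma \ref{lem-non-psef-implies-uniruled} becomes vacuous when $K_X$ itself is pseudo-effective). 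Assembling the two countable families and taking closures as in the final limit argument of \cite[Theorem 4.1]{CHP16} then completes the proof.
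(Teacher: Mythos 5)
Your proposal is correct and is essentially the paper's own (one-line) argument: Theorem \ref{thm-NA-cone-uniruled-adjoint-pair} covers the uniruled non-algebraic case (where Lemma \ref{l-proj} forces the MRC base to be a surface), \cite[Theorem 1.3]{CHP16} covers the case where $K_X$ is pseudo-effective, the projective case is classical, and your initial reduction to terminal singularities is harmless but not needed since both ingredients already apply to dlt pairs. The only adjustments worth making are that \cite[Theorem 1.3]{CHP16} is already a cone theorem for the pair $(X,\Delta)$ under the hypothesis that $K_X$ is pseudo-effective, so the ``upgrade'' you flag as the main obstacle in the non-uniruled case is unnecessary, and that in the projective uniruled case one should remark (as the paper implicitly does) that the classical cone theorem for $\overline{\mathrm{NE}}(X)$ yields the statement for $\overline{\mathrm{NA}}(X)$ via the standard comparison of these cones for projective varieties in Fujiki's class $\mathcal{C}$.
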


\subsection{Contraction theorems}~\\
\label{sect-contr-thm}

We will study the contraction of $(K_X+\D)$-negative extremal rays of $\overline{\mathrm{NA}}(X)$ in this subsection.

\begin{definition}\cite[Def. 7.1, Rmk. 7.2]{HP16}\label{def:extremal-ray}
Let $(X, \Delta)$ be a $\mbQ$-factorial compact K\"ahler $3$-fold dlt pair such that $K_X+\Delta$ is pseudo-effective. 
Let $R$ be a $(K_X+\Delta)$-negative extremal ray of $\NA(X)$. A supporting   class of $R$ is a $(1, 1)$  class $\alpha$ on $X$ such that $R=\{\gamma\in\NA(X): \alpha\cdot\gamma=0\}$.
We say that $R$ is small if every curve $C\subset X$ with $[C]\in R$ is very rigid in the sense of \cite[Definition 4.3]{HP16}. Otherwise we say $R$ is of divisorial-type (See \cite[Remark 7.2]{HP16}).
\end{definition}

\subsubsection{Contraction of small rays} 

\begin{theorem}
\label{thm-contraction-small-ray}
With notations and hypothesis as in Assumption \ref{asmp-base-MRC-dim-2}, let $R$ is a $(K_X+\D)$-negative extremal ray in $\overline{\mathrm{NA}}(X)$. If $R$ is small, then the contraction $c_R:X\to Y$ of $R$ exists.
\end{theorem}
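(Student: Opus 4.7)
The idea is to adapt the proof of \cite[Theorem 7.12]{HP16}, which treats the case when $X$ has terminal singularities and $K_X$ is pseudo-effective, to our setting of dlt pairs with $K_X+\Delta$ pseudo-effective. First I would reduce to the terminal case. By Corollary \ref{cor:exist-terminal-dlt-modification} there is a projective bimeromorphic morphism $\mu:(\tilde X,\tilde\Delta)\to (X,\Delta)$ with $\tilde X$ $\mbQ$-factorial terminal, $(\tilde X,\tilde\Delta)$ dlt, and $K_{\tilde X}+\tilde\Delta=\mu^*(K_X+\Delta)$. The extremal ray $R$ lifts to one (or finitely many) $(K_{\tilde X}+\tilde\Delta)$-negative extremal rays in $\overline{\mathrm{NA}}(\tilde X)$ whose $\mu_*$-image spans $R$; after first contracting the $\mu$-exceptional rays relatively over $X$, we may assume a single small extremal ray $\tilde R$ on $\tilde X$ satisfies $\mu_*\tilde R=R$, and the desired contraction $c_R:X\to Y$ is obtained as the Stein factorization of the composition $\tilde X\to \tilde Y$ with $\mu$.

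Assume now that $X$ has terminal singularities. By the cone theorem (Theorem \ref{thm-NA-cone-uniruled-adjoint-pair}) the ray $R$ is generated by a curve $\Gamma$ with $-(K_X+\Delta)\cdot\Gamma$ bounded above. Combining the bend-and-break property (Theorem \ref{thm-b&b-dlt-adjoint}) with the smallness of $R$ (which prevents any curve in $R$ from moving in a positive-dimensional family), one shows that the exceptional locus $E=\bigcup_i C_i$ formed by the curves with class in $R$ is a finite disjoint union of very rigid rational curves. By Lemma \ref{lem-negative-curve-span-unique-surface}, each $C_i$ is contained in a unique surface $S_{j_i}$ of the decomposition \eqref{eq-K_X+D=sum S_i}, and $K_{S_{j_i}}\cdot C_i<K_X\cdot C_i<0$, so the $C_i$ have negative normal bundle in a sufficiently strong sense.

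Next I would construct a supporting nef class $\alpha\in H^{1,1}_{\rm BC}(X)$ for $R$, i.e.\ a nef class with $\alpha^\perp\cap\overline{\mathrm{NA}}(X)=R$, obtained by perturbing $K_X+\Delta$ by a K\"ahler class $\omega$ in the direction orthogonal to $R$; the boundedness of $-(K_X+\Delta)\cdot\Gamma$ together with Theorem \ref{thm-NA-cone-uniruled-adjoint-pair} guarantees that such an $\alpha$ exists and separates $R$ from the other extremal rays. The contraction is then produced by an analytic contractibility criterion of Grauert/Nakano-Fujiki type: near each $C_i\subset E$, the curve is very rigid with negative normal structure, so the analytic germ of a tubular neighborhood admits a contraction to a point, and these local contractions can be glued to a global proper bimeromorphic morphism $c_R:X\to Y$ onto a normal analytic variety contracting precisely $E$. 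The K\"ahler property of $Y$ is then a standard consequence of the construction, along the lines of \cite[Proposition 6.3]{HP16}.

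The main obstacle is the analytic gluing/contraction step: although the $C_i$ are very rigid and lie on the surfaces $S_j$, one must still verify that the negative normal bundle condition in the K\"ahler (non-algebraic) setting is strong enough to invoke a Grauert-type blowdown criterion, and that the resulting space $Y$ is genuinely K\"ahler rather than merely in Fujiki's class $\mathcal C$. This is where the detailed analysis in \cite[Sections 6--7]{HP16} and \cite[Sections 5--6]{CHP16} has to be carried over; the dlt boundary $\Delta$ interacts with these estimates only through the (already established) bound on $-(K_X+\Delta)\cdot\Gamma$ and the control on $(K_X+\Delta)|_S$ from Lemma \ref{lem-non-psef-implies-uniruled}, so the arguments there adapt without essential change.
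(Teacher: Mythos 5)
There is a genuine gap. The real content of the paper's proof is neither the construction of a supporting nef class nor a local blow-down near the rigid curves; it is the verification that the nef and big supporting class $\alpha$ of $R$ satisfies $\alpha^2\cdot S>0$ for \emph{every} irreducible surface $S\subset X$. That inequality is exactly the hypothesis under which the contraction machinery of \cite[Theorem 4.2, Proposition 4.4]{CHP16} applies, and it is the one place where the boundary $\Delta$ genuinely enters: the paper proves it by showing that $\alpha|_S=0$ would make $-(K_X+\Delta)|_S$ ample and hence $S$ covered by curves in $R$ (contradicting smallness), while $\alpha|_S\neq 0$ with $\alpha^2\cdot S=0$ forces $(K_X+\Delta)\cdot\alpha\cdot S<0$, and then the decomposition \eqref{eq-K_X+D=sum S_i}, adjunction to the minimal resolution $\hat S$, and Lemma \ref{lem-non-psef-implies-uniruled} produce an effective $E$ on $\hat S$ with $(K_{\hat S}+E)\cdot\pi^*\alpha<0$, a contradiction. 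Your proposal never formulates, let alone proves, this statement; your closing remark that $\Delta$ interacts with the estimates ``only through the bound on $-(K_X+\Delta)\cdot\Gamma$ and Lemma \ref{lem-non-psef-implies-uniruled}'' is precisely where the new work is being skipped.

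Two further steps would fail as written. First, the local contraction-plus-gluing picture is not available: Grauert's contractibility criterion is a statement about curves on surfaces (negative definiteness of the intersection matrix), and there is no analogous local criterion for a rigid curve in a $3$-fold with ``negative normal structure''; this is exactly why \cite{HP16, CHP16} construct small contractions globally, from a nef and big class $\alpha$ whose null locus is the union of the curves in $R$, with the condition $\alpha^2\cdot S>0$ ruling out surfaces in that null locus. Second, the preliminary reduction to the terminal case is both unjustified and unnecessary: the pushforward $\mu_*$ of an extremal ray of $\overline{\mathrm{NA}}(\tilde X)$ need not be extremal, a small ray need not lift to a small ray under a terminal dlt modification, and since $X$ is already $\mathbb{Q}$-factorial with klt singularities, ``contracting the $\mu$-exceptional rays over $X$'' simply returns $X$. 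The paper works directly on $X$ under Assumption \ref{asmp-base-MRC-dim-2}.
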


\begin{proof}
	First of all, since $R$ is a $(K_X+\Delta)$-negative extremal ray, there is a supporting nef and big $(1, 1)$  class $\alpha$ of  $R$; note that the existence of $\alpha$ follows similarly as in the proof of \cite[Proposition 7.3]{HP16}, by using Theorem \ref{thm-NA-cone-adjoint-pair}. Also, the bigness of $\alpha$ follows from \cite[Eqn. (10), page 987]{CHP16}. Next we observe that the contraction of $R$ follows exactly as in the proof of \cite[Theorem 4.2]{CHP16} provided we can show that $\alpha^2\cdot S>0$ holds for every irreducible surface $S\subset X$ (\cite[Pro. 4.4]{CHP16}). Finally, this result holds by the following proposition.
\end{proof}~\\

\begin{proposition}
With notations and hypothesis as in Theorem \ref{thm-contraction-small-ray}, if $\alpha$ is a nef supporting class of $R$, then $\alpha^2 \cdot S>0$ for any surface $S\subseteq X$.
\end{proposition}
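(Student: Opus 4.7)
The plan is to argue by contradiction. Suppose that $\alpha^2\cdot S=0$ for some irreducible surface $S\subset X$. Let $\nu:\hat S\to S$ denote the minimal resolution of $S$, and set $\hat\alpha:=\nu^*(\alpha|_S)$. Then $\hat\alpha$ is a nef class on the compact K\"ahler surface $\hat S$ satisfying $\hat\alpha^2=0$. The goal is to produce a positive-dimensional family of irreducible curves $\{C_t\}\subset \hat S$ with $\hat\alpha\cdot C_t=0$. Indeed, the push-forward family $\{\nu_*C_t\}\subset S\subset X$ then moves in $X$, and every member satisfies $\alpha\cdot\nu_*C_t=0$. Since $\alpha$ is a supporting class of the small ray $R$, the numerical classes $[\nu_*C_t]$ must all lie on $R$, and the existence of such a non-trivial deformation directly contradicts the definition of smallness (Definition \ref{def:extremal-ray}), which requires every curve with class in $R$ to be very rigid.

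I will split the analysis according to the position of $S$ relative to the MRC fibration $f:X\dashrightarrow Z$ from Assumption \ref{asmp-base-MRC-dim-2}, whose base $Z$ is a non-uniruled compact K\"ahler surface.

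\emph{Vertical case.} If $S$ is vertical over $Z$, then by Lemma \ref{lem-vertical-uniruled}, $\hat S$ is a smooth projective uniruled surface. The Mori cone of $\hat S$ contains a $K_{\hat S}$-negative extremal ray spanned by a curve $F$ that moves in $\hat S$ and satisfies $F^2\geq 0$ (either a fiber of a ruling, or the pullback of a line in $\mathbb{P}^2$ after MMP reduction). A short numerical argument in $N^1(\hat S)$ combining the nefness of $\hat\alpha$, the vanishing $\hat\alpha^2=0$, and the Hodge index theorem on a smooth projective surface forces $\hat\alpha\cdot F=0$: if instead $\hat\alpha\cdot F>0$, then together with $F^2\geq 0$ the classes $\hat\alpha$ and $F$ would span a sublattice of positive type in $N^1(\hat S)$, contradicting $\hat\alpha^2=0$. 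The moving family of such $F$'s then provides the required contradiction with the smallness of $R$.

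\emph{Horizontal case.} If $S$ is horizontal over $Z$, then $f|_S:S\dashrightarrow Z$ is generically finite, so $\hat S$ is non-uniruled and $K_{\hat S}$ is pseudo-effective by abundance for K\"ahler surfaces (Theorem \ref{thm:surface-abundance}). By Lemma \ref{lem-horizontal-restriction-psef}, the class $(K_X+\Delta)|_S$ is pseudo-effective, and adjunction on the minimal resolution yields an effective $\mathbb{Q}$-divisor $B\geq 0$ with $\nu^*((K_X+\Delta)|_S)\sim_{\mathbb Q}K_{\hat S}+B$. Since the nef, big supporting class $\alpha$ is $(K_X+\Delta)$-trivial on $R$ while $(K_X+\Delta)\cdot R<0$, one can write $\alpha\equiv (K_X+\Delta)+\omega$ for a suitable K\"ahler class $\omega$ on $X$ (via the perturbation standard in the construction of supporting classes, as in the proof of \cite[Proposition 7.3]{HP16}). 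Restricting to $\hat S$ gives $\hat\alpha\equiv K_{\hat S}+B+\nu^*(\omega|_S)$. Expanding $\hat\alpha^2=0$ and using that the nef class $\hat\alpha$ pairs non-negatively with the pseudo-effective class $K_{\hat S}+B$ and strictly positively with the K\"ahler class $\nu^*(\omega|_S)$ (since $\hat S$ is a surface and $\nu^*(\omega|_S)$ is big on the bimeromorphic image of $S$), the Hodge index theorem on the compact K\"ahler surface $\hat S$ yields the desired numerical contradiction.

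The principal obstacle is the horizontal case: $\hat S$ is in general non-projective --- it may be a non-algebraic K3 surface, a complex torus, a bielliptic surface, or a non-projective properly elliptic surface --- so the standard Mori-theoretic tools on $\hat S$ are not directly available. The argument must therefore rely on analytic techniques valid on compact K\"ahler surfaces, principally the Hodge index theorem and abundance for K\"ahler surfaces, combined with a careful accounting of adjunction discrepancies, in order to convert the soft inequalities (nefness of $\hat\alpha$ and pseudo-effectivity of $K_{\hat S}+B$) into a sharp numerical contradiction via $\hat\alpha^2=0$.
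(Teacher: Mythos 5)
Your overall strategy (assume $\alpha^2\cdot S=0$, produce a covering family of $\alpha$-trivial curves on $S$, and contradict the very-rigidity in Definition \ref{def:extremal-ray}) is the right one, and your horizontal case is essentially sound: for $S$ horizontal over the MRC base, Lemma \ref{lem-horizontal-restriction-psef} gives that $(K_X+\D)|_S$ is pseudo-effective, and writing $\alpha=(K_X+\D)+\omega$ with $\omega$ K\"ahler one gets $0=\alpha^2\cdot S=(K_X+\D)\cdot\alpha\cdot S+\omega\cdot\alpha\cdot S$ with the first term $\>0$ and the second $>0$ unless $\alpha|_S=0$ (in which case $(K_X+\D)|_S=-\omega|_S$ is anti-K\"ahler, again contradicting pseudo-effectivity). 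You do not even need the adjunction detour there, which is fortunate because the claimed identity $\nu^*((K_X+\D)|_S)\sim_{\mbQ}K_{\hat S}+B$ with $B\>0$ is not justified when $S$ is not a component of $\D$.

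The genuine gap is the vertical case, which is exactly where the real content lies (by Lemma \ref{lem-horizontal-restriction-psef}, the only surfaces on which $(K_X+\D)$ can fail to be pseudo-effective are vertical ones). Your Hodge-index step is false: if $\hat\alpha$ is nef with $\hat\alpha^2=0$ and $F$ is a moving curve with $F^2\>0$ and $\hat\alpha\cdot F>0$, the Gram matrix of the pair is $\left(\begin{smallmatrix}0&a\\ a&b\end{smallmatrix}\right)$ with $a>0$, $b\>0$, which has signature $(1,1)$ --- perfectly consistent with the Hodge index theorem. Concretely, on $\hat S=\mbP^1\times\mbP^1$ take $\hat\alpha$ the class of a fiber of one ruling and $F$ a fiber of the other: $\hat\alpha$ is nef, $\hat\alpha^2=0$, $F$ moves with $F^2=0$, yet $\hat\alpha\cdot F=1$. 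So nothing forces $\hat\alpha\cdot F=0$, and your vertical case does not close. The paper instead exploits the effectivity of $K_X+\D$: from $\alpha^2\cdot S=0$ and $\alpha|_S\neq 0$ it deduces $(K_X+\D)\cdot\alpha\cdot S=-\omega\cdot\alpha\cdot S<0$, then splits according to the sign of $\D\cdot\alpha\cdot S$. If $\D\cdot\alpha\cdot S<0$ then $S$ is a component of $\D$ and adjunction for $K_X+(1+\mu)\D$ produces an effective $E$ on $\hat S$ with $(K_{\hat S}+E)\cdot\pi^*\alpha<0$; if $\D\cdot\alpha\cdot S\>0$ then $K_X\cdot\alpha\cdot S<0$ forces $S=S_j$ in the decomposition \eqref{eq-K_X+D=sum S_i} with $S\cdot\alpha\cdot S<0$, and adjunction for $K_X+S$ does the same. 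The inequality $(K_{\hat S}+E)\cdot\pi^*\alpha<0$ with $E\>0$ is precisely the input needed to run the endgame of \cite[Proposition 4.4]{CHP16} (it forces $K_{\hat S}\cdot\pi^*\alpha<0$, hence $\hat S$ projective uniruled, and then yields a covering family of $\pi^*\alpha$-trivial curves). You would need to supply this, or an equivalent effective-adjunction step, to repair the vertical case.
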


\begin{proof}
We follow a similar idea as in the proof of \cite[Proposition 4.4]{CHP16}.  By scaling $\alpha$ if necessary we may assume that $\omega=\alpha - (K_X+\D)$ is a K\"ahler class. By contradiction assume that $\alpha^2\cdot S=0$ for some surface $S\subset X$. Now if $\alpha|_S = 0$, then $-(K_X+\D)|_S$ is ample. In particular, $S$ is projective and covered by uncountably many curves.  Since $\alpha|_S =0$, the classes of these curves in $X$ are contained in $R$. This contradicts that $R$ is small.\\

Next assume that $\alpha|_S \neq 0$ and $\alpha^2 \cdot S = 0$. Then we have 
\begin{equation}\label{eqn:contradiction}
(K_X+\D)\cdot \alpha \cdot S=(\alpha - \omega) \cdot \alpha \cdot S =-\omega \cdot \alpha \cdot S < 0,	
\end{equation}
as $\alpha|_S$ is nef and non-zero. Therefore $(K_X+\D)|_S$ is not pseudo-effective. Now let $\hat{S}\to S$ be the minimal resolution of $S$ and $\pi:\hat{S}\to X$ is the induced morphism. Then as in the proof of \cite[Proposition 4.4]{CHP16} we see that in order to conclude,  it is enough to find an effective $\mbQ$-divisor $E$ on $\hat{S}$ such that  \[(K_{\hat{S}}+E)\cdot \pi^*(\alpha) <0.\]
To this end we will consider  two cases related to \eqref{eqn:contradiction}.\\

\textbf{Case I:} Assume that $\D \cdot \alpha \cdot S<0$. Since $\alpha$ is nef, this implies that $S$ is a component of $\D$.  Since $\D$ is a boundary divisor, there is a rational number $\mu\geqslant 0$ such that the multiplicity of $(1+\mu)\D$ along $S$ is $1$. Then \[(K_X+(1+\mu)\D) \cdot \alpha \cdot S \< (K_X+\D) \cdot \alpha \cdot S < 0.\]  By adjunction there is an effective $\mbQ$-divisor $E$ such that \[(K_{\hat{S}}+E) \cdot \pi^* \alpha = \pi^*(K_X+(1+\mu)\D) \cdot \pi^*\alpha = (K_X+(1+\mu)\D) \cdot \alpha \cdot S<0.\]

\textbf{Case II:} Assume that $\D \cdot \alpha \cdot S \geqslant 0$. Then we have  \[K_X\cdot \alpha \cdot S \leqslant (K_X+\D) \cdot  \alpha \cdot S <0.\] Since $(K_X+\D)\cdot \alpha \cdot S <0$,  the surface $S=S_j$ must be a component in the decomposition \eqref{eq-K_X+D=sum S_i}, and we must have $$S \cdot \alpha \cdot S<0.$$ Therefore $(K_X+S)\cdot \alpha \cdot S <0$. Hence by adjunction, there is an effective $\mbQ$-divisor $E$ on $\hat{S}$ such that \[(K_{\hat{S}}+E) \cdot \pi^* \alpha = \pi^*(K_X+S) \cdot \pi^*\alpha = (K_X+S) \cdot \alpha \cdot S<0.\]
\end{proof}~\\

\subsubsection{Contraction of divisorial rays}~\\

With notations and hypothesis as in Assumption \ref{asmp-base-MRC-dim-2}, let $R$ is a $(K_X+\D)$-negative  extremal ray of $\overline{\mathrm{NA}}(X)$ of divisorial type.  Then there is a unique surface $S\subset X$ such that $S$ contains all curves $C\subset X$ such that $[C]\in R$ and also that $S$ is covered by these curves. Let $\nu: \tilde{S} \to S$ be the normalization of $S$. Let $\alpha$ be a  nef supporting class of $R$.  Then the nef reduction of $\nu^*(\alpha|_S)$ gives a fibration \[\tilde{f}:\tilde{S} \to \tilde{T}.\]  We define \[n(\alpha) = \mathrm{dim}\, \tilde{T}.\]
 Since $\alpha|_S$ intersects a covering family of curves in $S$ trivially, it follows that $n(\alpha) \in \{0, 1\}$.

\begin{theorem}
\label{thm-contraction-divisorial-nefdim=0}
With notations and hypothesis as in Assumption \ref{asmp-base-MRC-dim-2}, let $R$ is a $(K_X+\D)$-negative extremal ray of $\overline{\mathrm{NA}}(X)$ of divisorial type. Let $\alpha$ be a nef supporting class of $R$. If $n(\alpha)=0$, then the contraction $c_R:X\to Y$ of $R$ exists.  
\end{theorem}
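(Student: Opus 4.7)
The plan is to first interpret the hypothesis $n(\alpha)=0$ as forcing $\alpha|_S \equiv 0$, then construct a bimeromorphic contraction of $S$ to a point using analytic base-point freeness, and finally verify the target is a compact K\"ahler space. By definition of nef dimension, $n(\alpha)=0$ means $\nu^*(\alpha|_S)$ is numerically trivial on $\tilde S$, and since $\nu$ is finite and surjective this implies $\alpha|_S \equiv 0$. Arguing as in Theorem \ref{thm-contraction-small-ray} we may assume $\alpha$ is nef and big, with $\omega := \alpha - (K_X+\D)$ K\"ahler, and we already have $\alpha^2 \cdot T > 0$ for every irreducible surface $T \neq S$ while $\alpha^2 \cdot S = 0$. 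This identifies $S$ as the unique positive-dimensional fiber of the conjectural contraction $c_R$.

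Next I would produce $c_R$ locally around $S$. After passing to a $\mbQ$-factorial terminal dlt modification (Corollary \ref{cor:exist-terminal-dlt-modification}) and working with a line bundle $\mcL$ representing a sufficiently divisible multiple of $\alpha$, I would apply Nakayama's relative Kawamata--Viehweg vanishing \cite[Theorem 3.7]{Nak87} to the short exact sequence
\begin{equation*}
0 \to \mcL^{\otimes m}\otimes\mcO_X(-S) \to \mcL^{\otimes m} \to \mcL^{\otimes m}|_S \to 0,
\end{equation*}
to obtain surjectivity of the restriction map on global sections. Since $\alpha|_S \equiv 0$, the restriction $\mcL^{\otimes m}|_S$ is numerically trivial for every $m$, so its sections are essentially constant along $S$; combined with the bigness of $\alpha$ off $S$, this produces enough sections of $\mcL^{\otimes m}$ to define a bimeromorphic morphism $c_R : X \to Y$ onto a normal analytic variety $Y$ which contracts $S$ to a single point and is an isomorphism elsewhere. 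The fact that $S$ is the \emph{only} exceptional locus follows from the divisorial nature of $R$ together with the positivity $\alpha^2\cdot T > 0$ for $T\ne S$.

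The last, and I expect most technical, step is to verify that $Y$ is compact K\"ahler and that $c_R$ genuinely contracts the ray $R$ in the sense of the cone theorem. Following the strategy of \cite[Proposition 4.4 and Theorem 4.5]{CHP16}, the K\"ahler class $\omega = \alpha - (K_X+\D)$ should push forward to a class on $Y$ with local $C^\infty$-strictly-plurisubharmonic potentials away from $c_R(S)$, and one needs to glue in a potential near the image point of $S$. This is essentially a Grauert-type statement: because $S$ is covered by curves with $\alpha|_S \equiv 0$ and $(K_X+\D)\cdot R<0$, the normal bundle of $S$ along $R$-curves is negative enough to allow an analytic contraction, and the descended class is a true K\"ahler class on $Y$.

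The main obstacle will be precisely this last construction of the K\"ahler form on $Y$. In the projective setting ampleness furnishes the K\"ahler form for free; here one must construct strictly plurisubharmonic local potentials near the image point $c_R(S)\in Y$, which requires delicate analytic input about the normal geometry of $S$ inside $X$ and the way curves in $R$ deform. In practice this is the step at which one must invoke (or adapt) the K\"ahler-geometric techniques of \cite{HP16, CHP16} rather than transcribe the algebraic proof, and the careful verification that the local potential extends across $c_R(S)$ is the heart of the matter.
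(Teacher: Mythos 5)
The paper's own proof is a one-line citation: the argument of \cite[Corollary 7.7]{HP16} goes through verbatim once the cone theorem is replaced by Theorem \ref{thm-NA-cone-adjoint-pair}. That argument runs: $n(\alpha)=0$ forces $\alpha|_S\equiv 0$ (you get this step right), hence $-(K_X+\D)|_S=\omega|_S$ is a K\"ahler class on $S$, so $S$ is a projective generalized del Pezzo surface with $S\cdot R<0$, and $-S|_S$ is positive enough that \emph{Grauert's contraction criterion} yields a bimeromorphic morphism $c_R:X\to Y$ blowing $S$ down to a point; K\"ahlerness of $Y$ is then obtained by descending $\omega$ and gluing a local potential near the image point, exactly the difficulty you flag at the end.

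The middle step of your proposal, however, contains a genuine gap. You propose to realize a "sufficiently divisible multiple of $\alpha$" by a line bundle $\mcL$ and produce $c_R$ from global sections of $\mcL^{\otimes m}$ via relative Kawamata--Viehweg vanishing. Under Assumption \ref{asmp-base-MRC-dim-2} the variety $X$ is non-algebraic, and the supporting class $\alpha\in H^{1,1}_{\rm BC}(X)$ of an extremal ray is in general a transcendental $(1,1)$-class (it is built from a K\"ahler class on the boundary of the nef cone), so no such line bundle exists. Worse, if a nef and \emph{big} line bundle $\mcL$ did exist on $X$, then $X$ would be Moishezon, hence projective by \cite[Theorem 1.6]{Nam02} since it has rational singularities --- contradicting the standing non-algebraicity hypothesis. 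So the base-point-free/linear-system route cannot even get started here; this is precisely the obstruction that forces \cite{HP16, CHP16} to construct contractions by Grauert-type and deformation-theoretic methods instead. A secondary, repairable point: the inequality $\alpha^2\cdot T>0$ for surfaces $T\neq S$ is not "already" available from the small-ray proposition, whose proof uses smallness of $R$ essentially (the case $\alpha|_T=0$ is excluded there \emph{because} $R$ is small); for a divisorial ray one must rerun that dichotomy using the uniqueness of $S$.
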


\begin{proof}
The same proof as in \cite[Corollary 7.7]{HP16} works here using the Cone Theorem \ref{thm-NA-cone-adjoint-pair}.
\end{proof}~\\

In case of $n(\alpha)=1$, we have the following result. 

\begin{proposition}
\label{prop-contraction-divisorial-nefdim=1}

With notations and hypothesis as in Assumption \ref{asmp-base-MRC-dim-2}, let $R$ is a $(K_X+\D)$-negative  extremal ray of $\overline{\mathrm{NA}}(X)$ of divisorial type.  
Let $S\subset X$ be the unique surface which contains all curves $C\subset X$ such that $[C]\in R$ and also that $S$ is covered by these curves. Let $\alpha$ be a nef supporting class of $R$ and assume that $n(\alpha)=1$.  Then the contraction $c_R:X\to Y$ of $R$ exists if one of the following conditions holds:
\begin{enumerate}
\item $S$ has slc singulairties, or
\item $X$ is terminal and $K_X\cdot R <0$.

\end{enumerate}

\end{proposition}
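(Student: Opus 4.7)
My plan is to construct $c_R:X\to Y$ by exploiting the fibration $\tilde f:\tilde S\to\tilde T$ onto a smooth curve (available because $n(\alpha)=1$), descending it to a morphism $f:S\to T$, and then extending $f$ to an analytic contraction of $X$ that collapses $S$ onto $T$ and is biholomorphic outside $S$.

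For Case (2), I would simply invoke the existing K\"ahler MMP for terminal 3-folds. Since $X$ has terminal singularities, $(X,0)$ is klt, and $K_X\cdot R<0$ makes $R$ a $K_X$-negative extremal ray of divisorial type on the compact K\"ahler 3-fold $X$. The existence of the contraction of such rays follows from \cite[Theorem 1.3]{CHP16} together with its proof, so we invoke that result directly; the boundary $\Delta$ plays no essential role.

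For Case (1), where $S$ has slc singularities, my approach has four steps. First, I would show that $\alpha$ is nef and big on $X$: a variant of the argument in Theorem \ref{thm-contraction-small-ray} gives $\alpha^2\cdot S'>0$ for every surface $S'\neq S$, and bigness $\alpha^3>0$ then follows from $R$ being divisorial by a standard computation. Second, I would descend $\tilde f$ through the normalization $\nu:\tilde S\to S$ to a morphism $f:S\to T$; this uses that the conductor divisor of an slc surface is reduced and respected by the natural involution, combined with the fact that the curves in $R$ on $\tilde S$ are all $\tilde f$-vertical (since $\alpha$ is trivial on them). Third, using abundance for slc K\"ahler surface pairs (obtained by combining Theorem \ref{thm:surface-abundance} with descent along the conductor), I would conclude that $\alpha|_S$ is semi-ample and realizes $f$. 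Fourth, I would extend $f$ to a global contraction $c_R:X\to Y$ by analytic methods: using nef-and-bigness of $\alpha$ together with relative Kawamata-Viehweg vanishing as in \cite[Theorem 3.7]{Nak87} to extend sections of $m\alpha$ from $S$ to an analytic neighborhood, and then applying a Grauert-type contractibility criterion.

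The main obstacle is step four of Case (1) --- globalizing the morphism from the divisor $S$ to the ambient 3-fold $X$ in the analytic K\"ahler setting. Algebraically this would be routine via the base-point-free theorem, but in our context one must combine cohomological section extension with Grauert contractibility, which requires delicate control over the normal bundle of $S$ along the fibers of $f$. A secondary technical point is the descent step (step two), which is nontrivial if the conductor components dominate $T$, but should follow from the uniqueness of the nef reduction and the $R$-triviality of $\alpha|_S$.
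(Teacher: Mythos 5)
Your Case (2) matches the paper in substance: the paper disposes of it by observing that $R$ is a $K_X$-negative extremal ray on a terminal $3$-fold and invoking the argument of \cite[Lem.~7.8, Cor.~7.9]{HP16} (note that those statements assume $K_X$ pseudo-effective, which we do not have here, so strictly one invokes their \emph{proofs} rather than the statements --- a caveat worth making explicit, but not a real obstruction).

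For Case (1), however, there is a genuine gap. The paper's proof is a direct citation: ``the same proof as in \cite[Proposition 4.5]{CHP16} works without any change,'' and that proposition already contains the entire construction you are trying to rebuild. Your four-step outline reproduces the right geometric picture (nef reduction $\tilde f:\tilde S\to\tilde T$, descent to $f:S\to T$ using the slc structure, and a Grauert-type blow-down of $S$ onto $T$ inside $X$), but the step you yourself flag as the main obstacle --- globalizing $f$ to a contraction of $X$ --- is precisely the nontrivial content of the statement, and your proposed mechanism for it does not work as stated: $\alpha$ is a transcendental Bott--Chern $(1,1)$-class, not a $\mathbb{Q}$-line bundle, so there are no ``sections of $m\alpha$'' to extend by relative Kawamata--Viehweg vanishing, and no base-point-free-type argument is available. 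The actual route (in \cite[Prop.~4.5]{CHP16}, building on \cite[\S 7]{HP16}) is purely geometric: one contracts $S$ onto the curve $T$ by applying Grauert's contraction criterion to the fibration $f:S\to T$, the key input being the negativity of $\mathcal{O}_S(S)$ (equivalently $S\cdot R<0$, which holds since $R$ is divisorial and supported on $S$) on the fibres of $f$, together with the verification that the resulting space $Y$ is again a normal compact K\"ahler variety in class $\mathcal{C}$. Your step three (semi-ampleness of $\alpha|_S$ via slc surface abundance) is also a detour --- one only needs the nef reduction to be an honest fibration, which is what $n(\alpha)=1$ and the slc/normality hypotheses on $S$ guarantee. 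So the proposal is a reasonable sketch of a from-scratch argument, but as written it does not close; the efficient and correct move here is the paper's, namely to reduce to the already-established \cite[Proposition 4.5]{CHP16}.
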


\begin{proof}
If the first condition holds, then the same proof as in \cite[Proposition 4.5]{CHP16} works without any change here. In the second case $R$ is a $K_X$-negative extremal ray with $X$ having terminal singularities, so it follows similarly as in the proof of \cite[Lem. 7.8, Cor. 7.9]{HP16}.

\end{proof} ~\\

\part{Log abundance theorem for log canonical pairs}

\section{Reduction step}
In this section we will prove reduction theorem which will split the abundance problem for klt pairs into two cases.

\begin{definition}\label{def:crepant-map}
        Let $\phi:X\bir X'$ be a bimeromorphic map between two normal $\mbQ$-factorial varieties and $D$ is an $\mbR$-Cartier divisor on $X$. Then $\phi$ is called a $D$-crepant if there is a normal variety $W$ and morphisms $p:W\to X$ and $q:W\to X'$ resolving $\phi$, i.e., $\phi\circ p=q$, such that $p^*D=q^*(\phi_*D)$ holds.
\end{definition}

\begin{theorem}
\label{thm-reduction-2-cases}
Let $(X, \Delta)$ be a klt (resp. lc) pair such that $X$ is a $\mathbb{Q}$-factorial compact K\"ahler $3$-fold with terminal singularities. Assume that $K_X+\D$ is nef.  Then there is a $(K_X+\Delta)$-crepant  bimeromorphic map $\phi:(X,\Delta)\bir (X', \Delta')$ such that exactly one of the following two holds:
\begin{enumerate}
\item There is a $K_{X'}$-Mori fiber space $X'\to Y$ such that $K_{X'}+\Delta'$ is numerically trivial over $Y$.
\item $K_{X'}+(1-\epsilon)\Delta'$  is nef for all $0\<\epsilon\ll 1$.
\end{enumerate}
Moreover,  $X'$ has $\mathbb{Q}$-factorial terminal singularities and $(X',\Delta')$  is  klt  (resp. lc).
\end{theorem}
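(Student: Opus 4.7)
The approach is to run a $K_X$-MMP with scaling of $\Delta$. At each stage $(X_i, \Delta_i)$, starting from $(X_0, \Delta_0) = (X, \Delta)$, I define the nef threshold
\[
\lambda_i := \inf\{\, t \geq 0 \;:\; K_{X_i} + t\Delta_i \text{ is nef}\,\}.
\]
Every step of the MMP will be arranged to be $(K_X+\Delta)$-crepant, so $K_{X_i}+\Delta_i$ remains nef at every stage; in particular $\lambda_i \leq 1$ throughout. The dichotomy of the theorem is encoded by whether $\lambda_i$ ever drops below $1$.

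If at some step $\lambda_i < 1$, I stop: for each $\epsilon \in [0, 1-\lambda_i]$ the divisor $K_{X_i}+(1-\epsilon)\Delta_i$ is a convex combination of the nef classes $K_{X_i}+\lambda_i\Delta_i$ and $K_{X_i}+\Delta_i$, hence is nef, giving case~(2) with $(X', \Delta') = (X_i, \Delta_i)$. If instead $\lambda_i = 1$, the cone theorem (Theorem~\ref{thm-NA-cone-adjoint-pair}) together with the bounded-length / rationality argument of Proposition~\ref{pro:mmp-with-scaling} produces a $K_{X_i}$-negative extremal ray $R_i$ on which $K_{X_i}+\Delta_i$ vanishes. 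If the contraction of $R_i$ is a Mori fiber space $X_i \to Y$, I terminate in case~(1) with $X' = X_i$, since $R_i$ spans $\NE(X_i/Y)$, whence $K_{X_i}+\Delta_i$ is numerically trivial over $Y$. Otherwise the contraction is divisorial, or $R_i$ is small and I pass to its flip (Theorem~\ref{thm:existence-of-flips}); in either case I replace $X_i$ by $X_{i+1}$ with $\Delta_{i+1} := \phi_{i*}\Delta_i$ and iterate. Inductive preservation of the hypotheses is routine: $\mathbb{Q}$-factoriality is standard; terminality of $X_{i+1}$ is preserved because each step is $K_{X_i}$-negative on a terminal $X_i$; the pair $(X_{i+1},\Delta_{i+1})$ remains klt (resp.\ lc) by $(K_X+\Delta)$-crepancy; nefness of $K_{X_{i+1}}+\Delta_{i+1}$ holds because this class pulls back from the contraction base; and K\"ahlerness of $X_{i+1}$ follows from \cite[Prop.~1.3.1]{Var89} since each contraction and flip is projective.

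Termination is the last input: since every step is $K_{X_i}$-negative on a $\mathbb{Q}$-factorial terminal 3-fold, Theorem~\ref{thm:terminal-termination} rules out infinite sequences of flips, and divisorial contractions terminate trivially by dropping the Picard number. After finitely many steps I therefore reach either $\lambda_i < 1$ (case~(2)) or a Mori fiber space (case~(1)), and the two outcomes are mutually exclusive because in case~(1) a general fiber $F$ of $X'\to Y$ satisfies $K_{X'}\cdot F < 0$ and $(K_{X'}+\Delta')\cdot F = 0$, forcing $(K_{X'}+(1-\epsilon)\Delta')\cdot F < 0$ for every $\epsilon > 0$. The main obstacle lies in verifying that all the needed ingredients---the cone theorem, the existence of contractions and flips of $(K_X+\Delta)$-trivial $K_X$-negative extremal rays, termination, and preservation of terminality, klt/lc, $\mathbb{Q}$-factoriality, and K\"ahlerness---are genuinely available in the compact K\"ahler adjoint setting; but these are precisely what Sections~\ref{sect-cone-thm}--\ref{sect-contr-thm} (combined, in the uniruled non-algebraic case, with Lemma~\ref{l-proj} and the MMP results of \cite{HP15, HP16, CHP16}) supply, so the argument reduces to careful bookkeeping.
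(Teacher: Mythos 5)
Your overall template---contract $K_{X_i}$-negative, $(K_{X_i}+\Delta_i)$-trivial extremal rays until you hit a Mori fiber space or the nef threshold drops below $1$---is the right shape, and the rays you want to contract are the same ones the paper contracts. But there is a genuine gap in where those rays come from. You invoke Theorem \ref{thm-NA-cone-adjoint-pair} to produce a $K_{X_i}$-negative extremal ray, yet that cone theorem (and all of the contraction results of Sections \ref{sect-cone-thm}--\ref{sect-contr-thm}) applies only to a pair whose adjoint divisor $K_X+\Gamma$ is \emph{pseudo-effective}; it yields $(K_X+\Gamma)$-negative rays, not $K_X$-negative ones. On a uniruled non-algebraic compact K\"ahler $3$-fold there is no cone theorem for $K_X$ alone, and this is exactly the obstruction the introduction flags. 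To extract your rays from Theorem \ref{thm-NA-cone-adjoint-pair} you would have to apply it to $(X_i, t\Delta_i)$ for some $t<1$ with $K_{X_i}+t\Delta_i$ pseudo-effective. By Theorem \ref{thm-non-vanishing} this forces $(K_{X_i}+t\Delta_i)\cdot F\geq 0$ on the general MRC fiber $F$; so in the critical case where $(K_X+\Delta)\cdot F=0$ and $\Delta\cdot F>0$ (precisely the case that is supposed to terminate in outcome (1)), no $t<1$ works, and your argument has no source of extremal rays at all. This is where the paper's proof diverges from yours: it introduces the MRC-threshold $\lambda$ (defined by $(K_X+\lambda\Delta)\cdot F=0$, which is \emph{not} your nef threshold) and, when $\lambda=1$, abandons the adjoint cone theorem entirely in favor of Lemma \ref{lem-existence-Mori-fibration}, which runs a $(K_X+\omega)$-MMP for normalized K\"ahler classes $\omega$ using \cite[Theorem 1.4]{HP15} followed by a relative MMP over the resulting Fano fibration. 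Without that ingredient you cannot reach case (1).

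Two smaller points. First, when your MMP terminates without reaching a Mori fiber space you still owe an argument that the final nef threshold is strictly less than $1$: the absence of $K$-negative $(K+\Delta)$-trivial rays does not by itself give nefness of $K_{X'}+(1-\epsilon)\Delta'$; one needs the boundedness/rationality of extremal-ray lengths (Theorem \ref{thm-NA-cone-uniruled-adjoint-pair} plus the argument of Proposition \ref{pro:mmp-with-scaling}), applied to a pseudo-effective sub-adjoint pair $(X',\lambda\Delta')$ --- so again the MRC-threshold, not the nef threshold, is the object that makes the discreteness argument legal in the K\"ahler setting. Second, the contractions you perform must be justified by Theorems \ref{thm-contraction-small-ray}, \ref{thm-contraction-divisorial-nefdim=0} and Proposition \ref{prop-contraction-divisorial-nefdim=1}, which are stated under Assumption \ref{asmp-base-MRC-dim-2} for $(K_X+\Gamma)$-negative rays of a pseudo-effective pair; citing them for a bare "$K_{X_i}$-negative extremal ray" is not yet licensed. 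In the projective and non-uniruled cases your bookkeeping is fine (and matches the paper, which simply quotes \cite{KMM94} and \cite{HP16} there).
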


We need the following lemma to prove this theorem.

\begin{lemma}
\label{lem-existence-Mori-fibration}
Let $(X, \Delta)$ be a klt (resp. lc) pair such that $X$ is a $\mathbb{Q}$-factorial terminal compact K\"ahler $3$-fold. Assume  that  $X$ is uniruled and non-algebraic.  Further assume that $K_X+\Delta$ is nef and it intersects the general fibers of the MRC fibration of $X$ trivially. Then there is a $(K_X+\Delta)$-crepant  bimeromorphic model $\phi:(X, \Delta)\bir (X',\Delta')$ such that 
\begin{enumerate}
\item $\Delta':=\phi_*\Delta$,
\item $X'$ has $\mathbb{Q}$-factorial terminal singularities,
\item $(X', \Delta')$ is klt (resp. lc),
\item there is a $K_{X'}$-Mori fiber space $X'\to Y'$, and
\item $(K_{X'}+\Delta')\sim_{\mbQ, Y'} 0$.
\end{enumerate}
\end{lemma}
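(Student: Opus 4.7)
Since $X$ is non-algebraic and uniruled with $\mathbb{Q}$-factorial terminal singularities, Lemma \ref{l-proj} tells us that the base of the MRC fibration of $X$ is $2$-dimensional, so a very general fiber $F$ of this almost holomorphic fibration is isomorphic to $\mathbb{P}^1$. By hypothesis $(K_X+\Delta)\cdot F=0$, and $K_X\cdot F=-2$, so $\Delta\cdot F=2$. The plan is to run a $K_X$-MMP with scaling of $\Delta$ on the (klt) pair $(X,0)$, and to verify by induction that every step is $(K_X+\Delta)$-trivial by showing that the scaling threshold stays pinned at $1$ throughout.

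Concretely, set $(X_0,\Delta_0):=(X,\Delta)$ and assume inductively that the bimeromorphic map $X\dashrightarrow X_i$ produced so far is $(K_X+\Delta)$-crepant, so that $(X_i,\Delta_i)$ is klt (resp.\ lc) with $X_i$ terminal and $\mathbb{Q}$-factorial, and $K_{X_i}+\Delta_i$ is nef. Let
\[ \lambda_i:=\inf\{\,t\geq 0\mid K_{X_i}+t\Delta_i\text{ is nef}\,\}. \]
Then $\lambda_i\leq 1$. Choose $F$ very general, so that it avoids the exceptional loci of the (countably many possible) MMP steps, and let $F_i\subset X_i$ be its image, which is still a general MRC fiber of $X_i$ since the MRC fibration is a bimeromorphic invariant (combined with Corollary \ref{c-rccres}). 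By $(K_X+\Delta)$-crepancy and the projection formula on a common resolution of $X\dashrightarrow X_i$, $(K_{X_i}+\Delta_i)\cdot F_i = (K_X+\Delta)\cdot F = 0$; and $K_{X_i}\cdot F_i=-2$, so $\Delta_i\cdot F_i=2$. Hence $(K_{X_i}+t\Delta_i)\cdot F_i<0$ for every $t<1$, forcing $\lambda_i\geq 1$. Therefore $\lambda_i=1$. Applying Corollary \ref{cor:mmp-with-scaling} to $(X_i,0)$ with scaling divisor $\Delta_i$ (possible because the needed cone and contraction theorems on uniruled terminal compact K\"ahler $3$-folds are available by \cite[Theorem 1.1]{HP15}), we contract a $K_{X_i}$-negative extremal ray $R_i$ with $(K_{X_i}+\Delta_i)\cdot R_i=0$, or perform the corresponding flip via Theorem \ref{thm:existence-of-flips}. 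This step is $(K_X+\Delta)$-crepant; it preserves $\mathbb{Q}$-factoriality and terminal singularities of $X_i$ (standard, since it is a step of a $K_{X_i}$-MMP on a terminal variety), and it preserves the klt/lc property of $(X_i,\Delta_i)$ by crepancy.

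Termination is Theorem \ref{thm:termination}. The MMP cannot terminate with a minimal model because uniruledness is a bimeromorphic invariant for $3$-folds, so $X_i$ is uniruled and $K_{X_i}$ is never pseudo-effective. Hence we end with a $K_{X'}$-Mori fiber space $\psi:X'\to Y'$ defined by the final extremal ray $R$. Since every step was $(K_X+\Delta)$-crepant, the composite $\phi:X\dashrightarrow X'$ is $(K_X+\Delta)$-crepant and $\Delta'=\phi_*\Delta$, giving properties (1)--(4). The same MRC fiber argument yields $(K_{X'}+\Delta')\cdot R=0$, so $(K_{X'}+\Delta')$ is $\psi$-numerically trivial; since $\psi$ is projective with $\rho(X'/Y')=1$, the relative base-point free theorem \cite[Theorem 4.8]{Nak87} gives $(K_{X'}+\Delta')\sim_{\mathbb{Q},Y'}0$, establishing (5).

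The main obstacle is the inductive verification that the scaling threshold remains exactly $\lambda_i=1$: if it ever dropped below $1$, the contracted ray would be strictly $(K_X+\Delta)$-positive and the crepancy of $\phi$ would collapse. The non-algebraicity hypothesis is essential here, as it is what allows Lemma \ref{l-proj} to supply a $2$-dimensional MRC base with rational curve fibers; without it there might be no useful test curve to pin $\lambda_i$ down.
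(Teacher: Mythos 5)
Your core idea---pinning the nef threshold of the scaling at $1$ by intersecting with a very general MRC fiber, so that every step of the MMP is $(K_X+\Delta)$-trivial---is sound, and it is essentially the mechanism of Lemma \ref{lem:directed-relative-mmp}, which the paper does use at the final stage of its proof. The gap is in your claim that you can run an \emph{absolute} $K_X$-MMP with scaling of $\Delta$ on $X$. Corollary \ref{cor:mmp-with-scaling} rests on Proposition \ref{pro:mmp-with-scaling}, which explicitly \emph{assumes} that the cone and contraction theorems hold for the MMP in question. For a uniruled, non-algebraic compact K\"ahler $3$-fold no cone theorem for $K_X$ itself (a decomposition of $\overline{\mathrm{NA}}(X)$ into the $K_X$-nonnegative part plus a locally discrete family of rays of bounded length) is available: \cite[Theorem 1.1]{HP15} is an existence statement for a bimeromorphic Mori fiber space model, not a cone theorem, and the cone/contraction results actually proved in \cite{HP15} concern $(K_X+\omega)$-negative extremal rays for a \emph{normalized K\"ahler class} $\omega$. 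The paper's own cone and contraction theorems (Theorems \ref{thm-NA-cone-adjoint-pair}, \ref{thm-contraction-small-ray}, \ref{thm-contraction-divisorial-nefdim=0}) all require the adjoint divisor to be pseudo-effective, and this fails for every pair you could use here: $(K_{X_i}+t\Delta_i)\cdot F_i=2(t-1)<0$ on the MRC fibers for all $t<1$, so $K_{X_i}+t\Delta_i$ is never pseudo-effective below the threshold. Hence neither the existence of your ray $R_i$ as an extremal ray of $\overline{\mathrm{NA}}(X_i)$ nor its contractibility is justified by anything you cite.

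This is precisely the difficulty the paper's proof is built to avoid. It first runs the $(K_X+\omega)$-MMP of \cite{HP15} for normalized K\"ahler classes, contracting only those $(K_X+\omega)$-negative rays on which $D=K_X+\Delta$ is trivial (such steps are automatically crepant); when no such ray remains, it manufactures a normalized K\"ahler class $\omega'$ with $K_{\hat X}+\omega'$ nef and invokes \cite[Theorem 1.4]{HP15} to obtain a projective Fano fibration $\hat f:\hat X\to\hat Y$ onto a surface. Only then does it run the MMP with scaling of $\hat\Delta$ --- but \emph{relatively} over $\hat Y$, where projectivity of $\hat f$ makes Nakayama's relative cone and contraction theorems available (Proposition \ref{pro:relative-projective-mmp}), and Lemma \ref{lem:directed-relative-mmp} supplies exactly your threshold-equals-one argument. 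To repair your proof you must either establish the absolute $K_X$-cone theorem you are implicitly invoking, or reroute through the normalized-K\"ahler-class machinery and a relative MMP as the paper does.
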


\begin{proof}
First of all, since $X$ is uniruled and non-algebraic, from Lemma \ref{l-proj} it follows that the base of the MRC fibration of $X$ has 
dimension $2$. We recall that a  K\"ahler class $\omega$ on $X$ is called normalized if $K_X+\omega$ intersects the general fibers of the MRC fibration of $X$ 
trivially (see \cite[Definition 1.2]{HP15}). Set $D:=K_X+\Delta$; then we claim that one of the following holds:

\begin{enumerate}
\item\label{item:scaling-extremal-ray} either there is a $(K_X+\omega)$-negative extremal ray $R$ such that $D\cdot R =0$, 
\item\label{item:non-zero-sup} or there exist a smallest non-negative real number $\lambda\>0$ such that $K_X+\omega+\lambda D$ is nef.
\end{enumerate}
Indeed, if \eqref{item:scaling-extremal-ray} does not hold, then either $K_X+\omega$ is nef or  every $(K_X+\omega)$-negative extremal ray $R$ satisfies $D\cdot 
R>0$. In the first case we set $\lambda=0$, and in the second case the existence of smallest $\lambda$ can be proved as in Proposition \ref{pro:mmp-with-scaling} using the boundedness of the length of extremal rays from \cite[Theorem 3.6]{HP15}.\\
Now if we are in case \eqref{item:scaling-extremal-ray}, then the $K_X$-negative extremal ray $R$  can be contracted as in \cite[Lem. 3.21, Thm. 3.24]{HP15} by a projective morphism; Note that $D$ is crepant with respect to this morphism.  By induction and \cite{HP15}, we can run a $K_X$-MMP, trivial with respect to $D$, and  obtain a $D$-crepant bimeromorphic map $\psi:(X, \Delta)\bir 
(\hat{X},\hat{\Delta})$ such that for any  normalized K\"ahler  class $\hat{\omega}$ on $\hat{X}$ there exist a smallest non-negative real number $\lambda$ such that 
$K_{\hat{X}}+\hat{\omega}+\lambda \hat{D}$ is nef, where $\hat{D}=K_{\hat{X}}+\hat{\Delta}=\phi_*D=\phi_*(K_{X}+\Delta)$ is nef. Moreover, $\hat{X}$ has 
$\mathbb{Q}$-factorial terminal singularities and $\hat{D}$ intersects the general fibers of the MRC fibration of $\hat{X}$ trivially. Therefore $\hat{\omega} + (K_{\hat{X}}+\hat{\omega}+2 \lambda \hat{D})$ is then a normalized K\"ahler  class on $\hat{X}$. Then \[K_{\hat{X}}+ (\hat{\omega} + K_{\hat{X}}+\hat{\omega}+2 \lambda \hat{D}) = 
2 (K_{\hat{X}}+\hat{\omega}+\lambda \hat{D})\quad\mbox{ is nef.}\] 

If we are in case \eqref{item:non-zero-sup} above, then $K_X+\omega+\lambda D$ is nef for the smallest $\lambda\in\mbR^{\>0}$ and $\omega+\lambda D$ is a normalized K\"ahler class, since $D=K_X+\Delta$ intersects the general fibers of the MRC firbation of $X$ trivially. Renaming $X, \omega$ and $D$ by $\hat{X}, \hat{\omega}$ and $\hat{D}=K_{\hat{X}}+\hat{\Delta}$ we see that, in either case above, by \cite[Theorem 1.4]{HP15} there is a projective Fano fibration $\hat{f}: \hat{X}\to \hat{Y}$ onto a surface $\hat{Y}$. 

We remark that $\hat{f}$ is indeed the MRC fibration, as the base of the MRC fibration of $X$, and hence that of $\hat{X}$, has dimension $2$. Note that $\hat{D}$ is nef on $\hat{X}$ and $\hat{D}\cdot F=(K_{\hat{X}}+\hat{\D})\cdot F=0$ for general fibers of $F(\cong\mbP^1)$ of $\hat{f}$. Now since $\hat{f}$ is a projective morphism, by Proposition \ref{pro:relative-projective-mmp} and Section \ref{subsec:mmp-with-scaling} we run a relative $K_{\hat{X}}$-MMP over $\hat{Y}$ with the scaling of $\hat{\D}$. This MMP terminates with a Mori fiber space, say $f':X'\to Y'$ over $Y$ such that $X'$ has $\mbQ$-factorial terminal singularities. By Lemma \ref{lem:directed-relative-mmp} every step of this MMP is $\hat{D}$-trivial. In particular, the induced bimeromorphic map $\phi:\hat{X}\bir X'$ is $\hat{D}$-crepant. Let $\Delta':=\phi_*\hat{\D}$; then $(K_{X'}+\D')\sim_{\mbQ, f'} 0$. The composite of $\phi$ and $\psi$ gives the required map.  
 Finally, since every step of  $\phi\circ\psi$ is $(K_X+\Delta)$-trivial, the discrepancy of $(X',\Delta')$ is same as that of $(X,\Delta)$, and thus $(X', \Delta')$ is klt (resp. lc). 
This shows (3).
\end{proof}~\\

 Now we conclude Theorem \ref{thm-reduction-2-cases}.

\begin{proof}[{Proof of Theorem \ref{thm-reduction-2-cases}}]
If $X$ is projective, then this is just \cite[Lemma 5.3 and Lemma 7.1]{KMM94}. Assume  that  $X$ is non-algebraic and uniruled. If $(X,\D)$ is lc, by taking a dlt model as in Corollary \ref{cor:exist-terminal-dlt-modification}, we may assume that $(X,\D)$ is a $\mbQ$-factorial dlt pair such that $X$ has terminal singularities.  Let $\lambda$ be the  positive rational number such that $K_X+\lambda\D$ has zero intersection with general fibers of the MRC fibration of $X$.  If $\lambda=1$, then we can apply Lemma \ref{lem-existence-Mori-fibration}. So assume from now on that  $\lambda < 1$. Then $(X,\lambda \D)$ is klt.  By Theorem \ref{thm-non-vanishing}, $K_X+\lambda\D$ is $\mathbb{Q}$-effective. If $K_X+\lambda\D$ is nef, then we can take $(X',\D')=(X,\D)$ and we are in the second situation. Assume that $K_X+\lambda\D$ is not nef.  We will show that there exist  a  terminating $(K_X+\lambda\D)$-MMP, which is trivial with respect to $K_X+\D$. 

We first assume that  there is  a $(K_X+\lambda \D)$-negative extremal ray $R$ of $\overline{\mathrm{NA}}(X)$ which is $(K_X+  \D)$-trivial. Then $R$ is also $K_X$-negative. Hence the contraction of $R$ exists by Theorem \ref{thm-contraction-divisorial-nefdim=0} and Proposition \ref{prop-contraction-divisorial-nefdim=1}, if $R$ is divisorial, and by Theorem \ref{thm-contraction-small-ray}, if $R$ is small. Moreover, if $R$ is small, then the corresponding flip exists by \cite[Theorem 4.19]{CHP16}.  Let $h_1: (X,\D) \dashrightarrow (X_1,\D_1)$ be the divisorial contraction or the flip corresponding to the contraction of $R$. Then $h_1$ is $(K_X+\D)$-crepant and $X_1$ has terminal singularities. By induction, we can run a $(K_X+\lambda \D)$-MMP, trivial with respect to $K_X+\D$.  By Theorem \ref{thm:termination} this MMP terminates. Therefore we obtain a $(K_X+\D)$-crepant  bimeromorphic map $\phi:(X, \D)\bir (X',\D')$ such that $K_{X'}+\Delta'$ is nef and $X'$ has $\mathbb{Q}$-factorial terminal singularities, and    
\begin{enumerate}[label=(\roman*)]
\item either $K_{X'}+\lambda\D'$ is nef,
\item or every $(K_{X'}+\lambda\D')$-negative extremal ray  is $(K_{X'}+ \D')$-positive.  
\end{enumerate}
In the first case, $K_{X'}+(1-\epsilon)\D'$  is nef for all $0\< \epsilon \ll 1$. In the second case, as in the proof of Proposition \ref{pro:mmp-with-scaling} using the boundedness of the lengths of  extremal rays of Theorem \ref{thm-NA-cone-uniruled-adjoint-pair} we see that there is a smallest positive rational number $\mu\in\mbQ^+$ such that $(K_{X'}+\D')+\mu(K_{X'} + \lambda \D')$ is nef. This shows that $K_{X'}+(1-\epsilon)\D'$  is nef for all $0\< \epsilon\ll 1$. Since this MMP is $(K_X+\D)$-trivial, we obtain that $(X',\D')$ is also klt (respectively lc).\\

Finally we assume that $X$ is not uniruled. Then we can run a $K_X$-MMP, trivial with respect to $K_X+\D$, by \cite[Theorem 1.1 and Theorem 1.2]{HP16} and the rest of the arguments work as above.
\end{proof}

\section{Construction of bimeromorphic models}

In this section we will construct several bimeromorphic models which will help us in the next section to show that $\nu(X, K_X+\D)=1$ implies $k(X, K_X+\D)>0$.

\begin{lemma}
\label{lem-support-modification}
Let $(X,\D)$ be a  $\mathbb{Q}$-factorial compact K\"ahler $3$-fold klt pair.  Assume that  $K_{X}+(1-\epsilon)\D$  is nef for all $0\< \epsilon \ll 1$. Then there is a boundary $\D'$ in $X$ such that 
\begin{enumerate}
\item $(X,\D')$ is klt,
\item $K_X+(1-\epsilon)\D'$ is nef for all $0\< \epsilon \ll 1$,
\item there is some $D\in |K_X+\D'|_{\mathbb{Q}}$ such that $\Supp D=\Supp \D'$, and
\item  $\nu(K_{X}+\D)=\nu(K_{X }+\D')$ and $\kappa(K_X+\D)=\kappa(K_{X}+\D')$.
\end{enumerate}
\end{lemma}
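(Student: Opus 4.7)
The plan is to perturb $\D$ by a small multiple of an effective representative of the $\mbQ$-linear system $|K_X+\D|_{\mbQ}$. Since $K_X+\D$ is nef, it is pseudo-effective, so by the non-vanishing Theorem \ref{thm-non-vanishing-general-setting} it is $\mbQ$-effective. Fix an effective $\mbQ$-divisor $D_0\geq 0$ with $D_0\sim_{\mbQ} K_X+\D$. The candidate is then
\[
\D' := \D + \epsilon D_0
\]
for $0 < \epsilon \ll 1$, together with the distinguished element $D := (1+\epsilon)D_0 \in |K_X+\D'|_{\mbQ}$, which exists because $K_X+\D' = K_X+\D+\epsilon D_0 \sim_{\mbQ} (1+\epsilon)(K_X+\D) \sim_{\mbQ} (1+\epsilon)D_0$.

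Conditions (1) and (4) are then essentially automatic. For (1), since $(X,\D)$ is klt and the coefficients of $\D$ are strictly less than $1$, choosing $\epsilon$ sufficiently small (depending on $D_0$) makes $\D'$ a boundary with $(X,\D')$ klt, by continuity of discrepancies on a log resolution. For (4), the relation $K_X+\D' \sim_{\mbQ} (1+\epsilon)(K_X+\D)$ shows that $\nu$ and $\kappa$ are preserved. For (2), I would rewrite, for any small $\delta > 0$,
\[
K_X+(1-\delta)\D' \sim_{\mbQ} \bigl(1+(1-\delta)\epsilon\bigr)\Bigl(K_X + \tfrac{(1-\delta)(1+\epsilon)}{1+(1-\delta)\epsilon}\,\D\Bigr),
\]
and check that the coefficient $s := \tfrac{(1-\delta)(1+\epsilon)}{1+(1-\delta)\epsilon}$ satisfies $1-s = \tfrac{\delta}{1+(1-\delta)\epsilon}$, which is small when $\delta$ is small; the nefness of $K_X+(1-s)\D$ then gives nefness of $K_X+(1-\delta)\D'$ by the hypothesis on $\D$.

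The delicate point is condition (3). We have $\Supp D = \Supp D_0$ while $\Supp \D' = \Supp\D \cup \Supp D_0$, so equality requires $\Supp\D \subseteq \Supp D_0$. To arrange this I would choose $D_0$ specifically so that its support contains every component of $\D$; for components of $\D$ already lying in the stable base locus $\SBs|K_X+\D|$, this is automatic. For a component $E$ of $\D$ not lying in $\Supp D_0$, the idea is to enlarge the coefficient of $E$ in $\D$ slightly: replacing $\D'$ by $\D + \sum_j \mu_j E_j + \epsilon D_0$, where $E_j$ are the offending components and $0 < \mu_j < 1 - c_{E_j}$, one obtains the new element $D = (1+\epsilon)D_0 + \sum_j \mu_j E_j \in |K_X+\D'|_{\mbQ}$, whose support now equals $\Supp D_0 \cup \{E_j\} = \Supp\D'$.

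The main obstacle is that this modification perturbs the class: $K_X+\D' \sim_{\mbQ} (1+\epsilon)(K_X+\D) + \sum_j \mu_j E_j$, so conditions (2) and (4) must be rechecked carefully with this extra effective term. I expect the verification of nefness of $K_X+(1-\delta)\D'$ to go through because $\sum_j \mu_j E_j \leq \D$ (by the choice $\mu_j < 1 - c_{E_j} \leq 1$), so the perturbation is absorbed by the nef-threshold slack in the hypothesis on $\D$; the invariance of $\nu$ and $\kappa$ should follow by the same mechanism, because $\sum_j \mu_j E_j$ is dominated by a multiple of $\D$ which itself contributes the same numerical class as a small fraction of $K_X+\D$ up to $K_X$-corrections. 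Handling this bookkeeping cleanly, especially in the case where $\kappa(K_X+\D)=0$ and $|K_X+\D|_{\mbQ}$ contains essentially a single element, is the technical heart of the argument.
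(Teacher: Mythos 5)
Your first construction ($\D' = \D + \epsilon D_0$ with $D = (1+\epsilon)D_0$) does handle (1), (2) and (4), and you correctly isolate (3) as the delicate point: you need $\Supp\D \subseteq \Supp D_0$, which an arbitrary effective representative of $K_X+\D$ need not satisfy. But your proposed repair --- raising the coefficients of the offending components $E_j$ of $\D$ --- genuinely fails, and the failure is exactly the verification you defer. With $\D' = \D + \sum_j \mu_j E_j + \epsilon D_0$, take a curve $C\subset E_1$ with $(K_X+\D)\cdot C = 0$ and $E_1\cdot C<0$ (a negative curve inside a boundary component; nothing in the hypotheses excludes this). Since $D_0\sim_{\mbQ}K_X+\D$ we get $D_0\cdot C=0$, and the hypothesis that $K_X+(1-\delta)\D$ is nef forces $\D\cdot C\leq 0$, so $\bigl(K_X+(1-\delta)\D'\bigr)\cdot C = -\delta\,\D\cdot C + (1-\delta)\mu_1 E_1\cdot C$, which tends to $\mu_1 E_1\cdot C<0$ as $\delta\to 0$; indeed already $K_X+\D'$ is not nef. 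So condition (2) breaks. (Also, $\mu_j<1-c_{E_j}$ does not give $\sum_j\mu_j E_j\leq\D$: when $c_{E_j}$ is small, $\mu_j$ may be large.) The perturbation is \emph{not} absorbed by the nef-threshold slack, because $\D+\sum_j\mu_j E_j$ is not a rescaling of $\D$.

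The missing idea is to build the effective representative of $K_X+\D$ so that its support contains $\Supp\D$ from the start, rather than trying to enlarge the support afterwards. This is what the paper does, and it is precisely what the hypothesis ``$K_X+(1-\epsilon)\D$ nef for all small $\epsilon$'' is for: pick a rational $\lambda<1$ with $K_X+\lambda\D$ nef, hence pseudo-effective, hence by non-vanishing (Theorem \ref{thm-non-vanishing-general-setting}) there is an effective $P\sim_{\mbQ}K_X+\lambda\D$. Then $Q:=P+(1-\lambda)\D\sim_{\mbQ}K_X+\D$ is effective, nef (its class is the nef class $K_X+\D$), and satisfies $\Supp Q\supseteq\Supp\D$ automatically because of the $(1-\lambda)\D$ summand. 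Setting $\D':=\D+\gamma Q$ for $0<\gamma\ll 1$ (small enough to keep $(X,\D')$ klt) gives $\Supp\D'=\Supp Q$, $K_X+\D'\sim_{\mbQ}(1+\gamma)(K_X+\D)$ (whence (3) and (4)), and $K_X+(1-\epsilon)\D'=\bigl(K_X+(1-\epsilon)\D\bigr)+(1-\epsilon)\gamma Q$ is a sum of two nef classes, so (2) holds with no further repair.
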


\begin{proof}
Since $K_X+\lambda\D$ is nef for some $\lambda<1$, by Theorem \ref{thm-non-vanishing-general-setting} there is an effective $\mbQ$-divisor $P \sim_{\mbQ}(K_X+\lambda\Delta)$. Set $Q:=P+(1-\lambda)\D$; then $Q\sim_{\mbQ} K_X+\D$ is nef and its support contains that of $\D$. Choose a rational number $0<\gamma\ll 1$ such that $(X, \D+\gamma Q)$ is klt. Set $\D'=\D + \gamma Q$; then 
\[K_X+(1-\epsilon)\D' = K_X+(1-\epsilon )\D + (1-\epsilon) \gamma Q\]
 is nef for all $0\< \epsilon \ll 1$.    Moreover,  we note that 
 \[K_X+\D' \sim_{\mathbb{Q}} (1+\gamma) (K_X+\D).\]
  This proves property (4). Let $D=\frac{1}{1+\gamma} Q$, then $D\in |K_X+\D'|_{\mathbb{Q}}$ and its support is the same as that of $\D'$.  
\end{proof}

\begin{lemma}
\label{lem-exist-dlt-minimal-model}
Let $(X,B)$ be a $\mathbb{Q}$-factorial compact K\"ahler $3$-fold dlt pair. Assume that $B$ is reduced, and that there is an element $D \in |K_X+B|_{\mathbb{Q}}$ such that $\mathrm{Supp}\, D = B$. Then there is a $(K_X+B)$-MMP which terminates to a dlt pair $(X',B')$ such that $K_{X'}+B'$ is nef. 
\end{lemma}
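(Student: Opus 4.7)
The strategy is to run a $(K_X+B)$-MMP and apply the special termination theorem (Theorem \ref{thm:special-termination}). Writing the reduced decomposition $B=\sum_{i=1}^r B_i$ and $D=\sum_{i=1}^r d_iB_i$, the hypothesis $\Supp D = B$ gives $d_i>0$ for every $i$. In particular $K_X+B\sim_{\mathbb{Q}} D$ is $\mathbb{Q}$-effective, hence pseudo-effective, so the cone theorem (Theorem \ref{thm-NA-cone-adjoint-pair}) applies and produces countably many curves generating the $(K_X+B)$-negative part of $\NA(X)$.

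The crucial observation is that every $(K_X+B)$-negative extremal ray $R$ is generated by a curve contained in $\lfloor B\rfloor = B$. Indeed, if $C$ generates $R$, then $D\cdot C=(K_X+B)\cdot C<0$, so there must exist some $i_0$ with $B_{i_0}\cdot C<0$, which forces $C\subset B_{i_0}$. Thus at every step $n$ of the MMP, the flipping (or flipped) locus, as well as any divisor contracted, lies inside $\Supp\lfloor B_n\rfloor$, where $B_n$ denotes the strict transform of $B$ on $X_n$. Note that $B_n$ remains reduced and the pushforward of $D$ still has support equal to $B_n$, so the hypothesis is preserved throughout the MMP.

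The main step is to take each MMP step. At each stage pick a $(K_{X_n}+B_n)$-negative extremal ray $R_n$ by the cone theorem, and contract it: in the projective case by standard algebraic MMP; in the non-algebraic uniruled case by Theorems \ref{thm-contraction-small-ray}, \ref{thm-contraction-divisorial-nefdim=0}, and Proposition \ref{prop-contraction-divisorial-nefdim=1}; and in the non-algebraic non-uniruled case via \cite{HP16,CHP16}. Small contractions are followed by flips supplied by Theorem \ref{thm:existence-of-flips}, and the dlt property is preserved by standard arguments. Only finitely many divisorial contractions can occur, since each strictly decreases the number of irreducible components of $B_n$. Consequently, after finitely many steps the MMP consists entirely of flips whose flipping loci sit inside $\lfloor B_n\rfloor$; then special termination (Theorem \ref{thm:special-termination}) forces the sequence to stop, yielding the required dlt minimal model $(X',B')$ with $K_{X'}+B'$ nef.

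The principal obstacle will be justifying the contraction of divisorial rays in the K\"ahler case via Proposition \ref{prop-contraction-divisorial-nefdim=1}. The supporting surface $S$ of such a ray is a component $B_{i_0}$ of $\lfloor B\rfloor$ in a dlt pair, hence normal; by dlt adjunction the different $\mathrm{Diff}_S(B-B_{i_0})$ gives $(S,\mathrm{Diff}_S)$ an lc (in fact klt) structure, so $S$ has lc, and hence slc, singularities, and condition (1) of Proposition \ref{prop-contraction-divisorial-nefdim=1} is met. The remaining verifications (preservation of dlt, decrease of the MMP invariants, and the reductions between the algebraic, uniruled, and non-uniruled cases) are routine given the machinery already developed in Sections \ref{sect-uniruled-LMMP} and \ref{sec:surface-mmp}.
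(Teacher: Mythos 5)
Your proposal is correct and follows essentially the same route as the paper: split into the projective, non-algebraic non-uniruled, and non-algebraic uniruled cases, observe that every $(K_X+B)$-negative extremal ray is generated by a curve meeting $D=\Supp B$ negatively and hence lying in a component of $B$, so that the cone and contraction results of Section \ref{sect-uniruled-LMMP} apply (the contracted surface being a normal dlt center whose adjunction gives the slc/klt hypothesis of Proposition \ref{prop-contraction-divisorial-nefdim=1}). The only minor difference is that you conclude via special termination (Theorem \ref{thm:special-termination}) using the containment of the flipping loci in $\lfloor B_n\rfloor$, whereas the paper simply cites the general termination Theorem \ref{thm:termination}; both are valid.
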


\begin{proof}
If $X$ is projective, then we can apply the classical algebraic MMP. If $K_X$ is pseudo-effective, then we can apply \cite[Theorem 4.6]{CHP16}. It remains to deal with the case when $X$ is uniruled and non-algebraic.

Assume that $(K_X+B)$ is not nef. 

We will show that a $(K_X+B)$-MMP exists.  By Theorem \ref{thm-NA-cone-adjoint-pair}, there is a $(K_X+B)$-negative extremal ray $R$. If $R$ is small, then there exist a small contraction $c_R:X\to Y$ by Theorem \ref{thm-contraction-small-ray}. The flip  of $c_R$ exists by Theorem \ref{thm:existence-of-flips}.

Assume that  $R$ is divisorial. Let $S$ be the unique surface containing and covered by all the curves $C\subset X$ such that $[C]\in R$. Then $S$ is a component of $B$, since $ B=\Supp D$ and $D\cdot R<0$. Thus $S$ is normal (being a dlt center) and by adjunction $(S, 0)$ has klt singularities. Hence the contraction exists by Theorem \ref{thm-contraction-divisorial-nefdim=0} ans Proposition \ref{prop-contraction-divisorial-nefdim=1}. Termination of the $(K_X+B)$-MMP follows from Theorem \ref{thm:termination}.
\end{proof}

The following result is an analogue of \cite[Lemma 13.2]{Kol92}.
\begin{proposition}
\label{prop-lc-modification}
Let $(X,\D)$ be a klt pair, where $X$ is a $\mathbb{Q}$-factorial compact K\"ahler $3$-fold with terminal singularities.  Suppose that there is a nef $\mbQ$-divisor $D\in |K_{X}+\D|_{\mathbb{Q}}$ such that $\mathrm{Supp}\, \D = \mathrm{Supp}\, D$.
Then there is a $\mathbb{Q}$-factorial   pair $(\hat{X},\hat{B})$  such that 
\begin{enumerate}
\item $(\hat{X},\hat{B})$ is dlt,
\item $\hat{B}$ is reduced,

\item $\hat{X}\backslash \hat{B}$ has terminal singularities,
\item $K_{\hat{X}}+\hat{B}$ is nef,
\item there is some $\hat{D} \in |K_{\hat{X}}+ \hat{B}|_{\mathbb{Q}}$ such that $\mathrm{Supp}\, \hat{D}= \hat{B}$, and 
\item $\nu(K_{X}+\D)=\nu(K_{\hat{X}}+\hat{B})$ and $\kappa(K_X+\D)=\kappa(K_{\hat{X}}+\hat{B})$.

\end{enumerate}
\end{proposition}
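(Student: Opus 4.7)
The plan is to take a log resolution of $(X,\D)$, enlarge the boundary to a reduced divisor whose support is the support of some element of the relevant $\mbQ$-linear system, apply Lemma \ref{lem-exist-dlt-minimal-model} to run a terminating MMP, and then verify that every step of the MMP preserves the support-identity and takes place inside the boundary (so the outcome is terminal outside the boundary).

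Concretely, I would let $f\colon Y\to X$ be a log resolution of $(X,\D)$ with reduced exceptional divisor $E=\sum_i E_i$, write $f^*\D=\tilde{\D}+\sum_i b_iE_i$ and $K_Y=f^*K_X+\sum_i c_iE_i$ (with $c_i\>1$ since $X$ is terminal), and set $F:=\tilde{\D}+E$. Then
\[
K_Y+F=f^*(K_X+\D)+\sum_i(1+c_i-b_i)E_i,
\]
and $1+c_i-b_i=1+a(E_i;X,\D)>0$ because $(X,\D)$ is klt. Hence $H:=f^*D+\sum_i(1+c_i-b_i)E_i\in|K_Y+F|_\mbQ$ is effective. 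Since $\Supp D=\Supp\D$, we have $\Supp f^*D=f^{-1}(\Supp\D)$, which contains $\tilde{\D}$ and every $E_i$ lying over $\Supp\D$; the remaining exceptional divisors appear in $H$ with strictly positive coefficient, so $\Supp H=\Supp F=F$. Moreover $(Y,F)$ is log smooth dlt and $F$ is reduced, so Lemma \ref{lem-exist-dlt-minimal-model} applies and produces a terminating $(K_Y+F)$-MMP ending at a $\mbQ$-factorial dlt pair $(\hat{X},\hat{B})$ with $\hat{B}$ reduced and $K_{\hat{X}}+\hat{B}$ nef; this handles (1), (2) and (4).

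For (5), note that at every step of the MMP a curve $C$ in the contracted extremal ray satisfies $H\cdot C=(K+F)\cdot C<0$, so $C$ must lie in some component of $\Supp H$, i.e.\ in $\Supp F$ at that stage. In particular the contracted divisor of a divisorial step lies in $\Supp F$, and the flipping/flipped loci of a flip are contained in the support of the boundary on both sides. Consequently the pushforward $\hat{D}$ of $H$ is in $|K_{\hat{X}}+\hat{B}|_\mbQ$ with $\Supp\hat{D}=\Supp\hat{B}=\hat{B}$. This same observation gives (3): the bimeromorphic map $Y\dashrightarrow\hat{X}$ is an isomorphism off the supports of $F$ and $\hat{B}$, so $\hat{X}\setminus\hat{B}\cong Y\setminus F$ is smooth, and hence terminal. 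Finally for (6), $K_Y+F-f^*(K_X+\D)$ is effective and $f$-exceptional, so $\kappa(Y,K_Y+F)=\kappa(X,K_X+\D)$, and the equality of numerical dimensions follows from the standard invariance of $\nu$ under adding an effective exceptional divisor to the pullback of a pseudo-effective class; invariance of both $\kappa$ and $\nu$ under the subsequent $(K_Y+F)$-MMP is routine.

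The main technical obstacle is the support-preservation argument: it is crucial that the effective element $H\in|K_Y+F|_\mbQ$ has support equal to $F$, for this forces every $(K_Y+F)$-negative curve into $\Supp F$ and thus confines the entire MMP to the boundary. This single observation simultaneously delivers the persistence of the support identity needed for (5) and the isomorphism of complements needed for (3); absent such an $H$, neither conclusion would be automatic. A secondary subtlety is the invariance of the numerical dimension through bimeromorphic pullback and through the MMP in the K\"ahler category, for which one appeals to the K\"ahler analog of Nakayama's results already used elsewhere in the paper.
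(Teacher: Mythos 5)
Your overall architecture is the same as the paper's: resolve, force the boundary to be reduced, exhibit an effective member of the relevant $\mbQ$-linear system whose support equals the boundary, run the MMP of Lemma \ref{lem-exist-dlt-minimal-model}, and use support-confinement to control where the MMP acts. But two steps, as written, do not work. The first concerns conclusion (3). You take a \emph{full} log resolution $f:Y\to X$, so every exceptional divisor of $f$ --- including those lying over the isolated terminal singular points of $X$ that are \emph{not} contained in $\Supp\D$ --- is forced into $F$ and into $\Supp H$ (over such a point $b_i=0$ and the coefficient is $1+c_i>0$). The terminating MMP must eventually contract all of these (a nonzero effective divisor exceptional over $X$ can never be nef over a neighborhood of its image, by the negativity lemma, so it cannot survive into a model where $K_{\hat X}+\hat B\sim_{\mbQ}\hat D$ is nef), and the resulting points of $\hat X$ lie in $\hat X\setminus\hat B$ but are \emph{not} in the image of $Y\setminus F$ and are \emph{not} smooth. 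Hence the claim ``$\hat X\setminus\hat B\cong Y\setminus F$ is smooth, hence terminal'' is false: the complement $\hat X\setminus\hat B$ is strictly larger than the isomorphic image of $Y\setminus F$, and terminality at the extra points needs a separate argument (e.g.\ that what remains over such a point is a small crepant $\mbQ$-factorial modification of the terminal germ, hence isomorphic to it). The paper avoids all of this by using a \emph{partial} log resolution $r:\tilde X\to X$ that is an isomorphism over $X\setminus\Supp\D$ --- possible precisely because terminal $3$-fold singularities are isolated --- so no exceptional divisor sits over $X\setminus\Supp\D$ and the complement of the boundary genuinely remains an open subset of the terminal $X$ throughout.

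The second issue is the boundary itself. For a klt pair the strict transform $f^{-1}_*\D$ has coefficients in $(0,1)$, so your $F=\tilde\D+E$ is \emph{not} reduced, contradicting your own assertion, the hypotheses of Lemma \ref{lem-exist-dlt-minimal-model}, and conclusion (2). If you repair this by setting $F=\lru\tilde\D\rru+E$, then $K_Y+F-f^*(K_X+\D)$ acquires the extra term $\lru\tilde\D\rru-\tilde\D$, which is effective but \emph{not} $f$-exceptional, so your stated argument for $\kappa(Y,K_Y+F)=\kappa(X,K_X+\D)$ collapses. The paper's fix invokes the hypothesis $\Supp D=\Supp\D$ one more time: $\lru r^{-1}_*\D\rru-r^{-1}_*\D\<m\,r^*D$ for some $m>0$, whence $\kappa(D)=\kappa(r^*D)\<\kappa(\tilde D)\<\kappa\bigl((m+1)r^*D+E\bigr)=\kappa(D)$. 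Relatedly, the equality of numerical dimensions is not ``routine'': the MMP steps here are $(K+B)$-\emph{negative}, not crepant, and $H$ itself is not nef, so one cannot simply transport $\nu$ along the MMP; the paper passes to a common resolution and compares the two \emph{nef} classes $(r\circ p)^*D$ and $q^*\hat D$, which have equal support, via \cite[Lemma 11.3.3]{Kol92}. These repairs are all available, but they are exactly the content your sketch omits.
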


\begin{proof} 
Since $X $ has isolated (terminal) singularities, there is a partial log resolution $r: \tilde{X} \to X$  of $(X,\D)$ such that 
\begin{enumerate}
\item[$\bullet$] $r$ is an isomorphism over $X\backslash \Supp\D$,
\item[$\bullet$] $\tilde{X}$ is smooth in a neighborhood of $r^*\D$ and $r^* \D$ has simple normal crossing support.
 \end{enumerate} 
 Since $(X, \D)$ is klt, we can write 
 \[K_{\tilde{X}}+B'=r^*(K_X+\Delta)+E\sim_{\mbQ}r^*D+E,\]
  where $B'=r^{-1}_*\Delta+\Ex(r)$, and $E\>0$ is an effective $\mbQ$-divisor supported on the exceptional locus of $r$.\\
Set $\tilde{B}:=\lru B'\rru=\lru r^{-1}_*\Delta\rru+\Ex(r)$ and $\tilde{D}:=r^*D+E+(\lru r^{-1}_*\Delta\rru-r^{-1}_*\Delta)\>0$. Then $\tilde{D}\in|K_{\tilde{X}}+\tilde{B}|_{\mbQ}$, $\Supp \tilde{D}=\Supp (r^*D+\Ex(r))= \tilde{B}$, since $\Supp D=\Supp \D$.
Moreover, from the construction it also follows that $\tilde{X}\setminus \tilde{B}$ has terminal singularity and $(\tilde{X}, \tilde{B})$ is dlt. We claim that $\tilde{X}$ is $\mbQ$-factorial. Let $F$ be a prime Weil divisor on $\tilde{X}$. If $F$ is $r$-exceptional, then by construction, $\tilde{X}$ is smooth in a neighborhood of $F$ and thus $F$ is Cartier. If $F$ is not $r$-exceptional, then $r_*F$ is a Weil divisor on $X$ and hence it is $\mbQ$-Cartier, since $X$ is $\mbQ$-factorial. Then $r^*r_*F=F+G$, where $G$ is a $r$-exceptional divisor. Since we have just showed that $G$ is $\mbQ$-Cartier, it follows that $F$ is $\mbQ$-Cartier. Finally, it is easy to see that $\omega_{\tilde{X}}^{[k]}$ is a line bundle for some $k\>1$, since the same is true for $X$ and the exceptional divisor of $r$ is $\mbQ$-Cartier.\\
Now by  Lemma \ref{lem-exist-dlt-minimal-model} we can  run a terminating $(K_{\tilde{X}}+\tilde{B})$-MMP to obtain a $\mathbb{Q}$-factorial dlt pair $(\hat{X},\hat{B})$ such that $K_{\hat{X}}+\hat{B}$ is nef. Let $\hat{D}$ be the bimeromorphic transform of $\tilde{D}$ in $\hat{X}$. Then $\hat{D} \in |K_{\hat{X}}+\hat{B}|_{\mathbb{Q}}$ and $\mathrm{Supp}\, \hat{D}= \hat{B}$.  Moreover, since $0\<\tilde{D}\sim_{\mbQ}(K_{\tilde{X}}+\tilde{B})$ and $\Supp\tilde{D}=\Supp B$, all the curves and divisors contracted by the MMP is contained in the $\Supp \tilde{D}=\tilde{B}$. In particular, we have 
\[\tilde{X}\backslash \Supp \tilde{B} \cong \hat{X}\backslash  \Supp \hat{B},\]
 and thus $\hat{X}\backslash  \hat{B}$ has terminal singularities.\\

Now we will show (6). Note that since MMP preserves Kodaira dimension, we have
\[\kappa(K_{\hat{X}}+\hat{B}) = \kappa(K_{\tilde{X}}+\tilde{B}) = \kappa(\tilde{D}).\]
 On the other hand, since $\mathrm{Supp}\, \D = \mathrm{Supp}\, D$, there is a positive number $m$ such that 
\[ \tilde{D} = r^*D+E + (\lceil{r_*^{-1}\D}\rceil -  r_*^{-1} \D)\< r^*D+E+mr^*D.\] 
Now since $E$ is a $r$-exceptional effective divisor, we have 
\[\kappa(D)=\kappa(r^*D)\<\kappa(\tilde{D})\< \kappa ((m+1)r^*D)=\kappa(D).\]
 Therefore $\kappa(K_X+\D)=\kappa(D)=\kappa(\tilde{D})=\kappa(K_{\hat{X}}+\hat{B})$. 

For the equality of numerical dimensions, we take a desingularization $W$ of the graph of $\phi:\tilde{X} \dashrightarrow \hat{X}$, with induced morphisms $p:W \to \tilde{ X}$ and $q:W \to \hat{X}$. Since every step of the MMP $\phi:\tilde{X}\bir \hat{X}$ is $\tilde{D}$-negative and $\hat{D}=\phi_*\tilde{D}$, we obtain that $p^*\tilde{D}$ and $q^*\hat{D}$ have same support. In particular, $p^*(r^*D)$ and $q^*\hat{D}$ have same support. Then the effective divisors 
\[
	(r\circ p)^*D\sim_\mbQ p^*(K_X+\Delta)\mbox{ and } q^*\hat{D}\sim_\mbQ q^*(K_{\hat{X}}+\hat{B})
\]
 are both nef and have same support.

Therefore by \cite[Lemma 11.3.3]{Kol92} (using a K\"ahler class in place of an ample divisor), we obtain that $\nu(K_X+\D)=\nu(K_{\hat{X}}+\hat{B})$.
\end{proof}~\\

The following lemma is an analogue of \cite[Lemma 6.5]{CHP16}.

\begin{lemma}
\label{lem-isolating-trivial-component}
Let $(X,B)$ be a reduced dlt pair such that $X$ is a $\mathbb{Q}$-factorial compact K\"ahler threefold. Assume that
\begin{enumerate}
\item[$\bullet$] there is a nef divisor $D\in |K_X+B|_{\mathbb{Q}}$ such that $\mathrm{Supp}\, D=B$,
\item[$\bullet$] $X\backslash B$ has terminal singularities. 
\end{enumerate}
Choose a rational number $\lambda\in (0, 1)$ such that $D-(1-\lambda)B$ is effective with support equal to $B$. Let $S$ be a reduced Weil divisor contained in the support of $B$ and set $Z:=B-S$. Assume that $(K_X+B)|_T\num 0$ for every component $T$ of $Z$ and that $S\cap Z\neq\emptyset$. Set $X_0:=X$, $S_0:=S$, $Z_0:=Z$ and $B_0:=B$.  Then there is a finite sequence of $(K_X+\lambda S + Z)$-flips and divisorial contractions
 \[\{f_i: X_i \dashrightarrow X_{i+1}\}_{i=0,...,n}\] 
 such that for each $i\> 0$ we have 
\begin{enumerate}
\item $D_{i+1}$, $S_{i+1}$, $Z_{i+1}$ and $B_{i+1}$ are bimeromorphic transform of $D$, $S$, $Z$ and $B$ onto $X_{i+1}$,
\item $f_i$ is $(K_{X_i}+B_i)$-crepant, i.e., for any resolution $p_i:W\to X_i, q_i:W\to X_{i+1}$ of the graph of $f_i$, $p_i^*(K_{X_i}+B_i)=q_i^*(K_{X_{i+1}}+B_{i+1})$,
\item  $K_{X_{i+1}}+B_{i+1}$  has the same numerical dimension and Kodaira dimension as $K_X+B$,
\item  $D_{i+1}\in |K_{X_{i+1}}+B_{i+1}|_{\mathbb{Q}}$ and $\mathrm{Supp}\, D_{i+1}=B_{i+1}$,
\item $D_{i+1}-(1-\lambda)B_{i+1}$ is effective with support equal to $B_{i+1}$,
\item  $({X_{i+1}},\lambda S_{{i+1}} + Z_{{i+1}})$ is $\mbQ$-factorial dlt and $(K_{X_{i+1}}, B_{i+1})$ is lc,
\item   $(K_{X_{i+1}}+B_{i+1})|_{T_{i+1}}\num 0$ for every component $T_{i+1}$ of $Z_{i+1}$, 
\item  $X_{i+1}\backslash B_{i+1}  \cong X\backslash B$, and
\item $S_{n+1}\neq 0$  and  $Z_{n+1}\cap S_{n+1}=\emptyset$.
\end{enumerate}
\end{lemma}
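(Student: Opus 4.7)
The plan is to run a $(K_X+\lambda S+Z)$-MMP with scaling of $H:=(1-\lambda)S$, observe that every step is automatically $(K_X+B)$-crepant thanks to the choice of the scaling divisor, and stop as soon as $Z\cap S$ becomes empty. First I would record the elementary identities $(K_X+\lambda S+Z)+H=K_X+B$ and $K_X+\lambda S+Z\sim_\mbQ D-(1-\lambda)S=(D-(1-\lambda)B)+(1-\lambda)Z\>0$; together with the fact that lowering a reduced boundary coefficient from $1$ to $\lambda\in(0,1)$ preserves dltness, this shows $(X,\lambda S+Z)$ is $\mbQ$-factorial dlt with $K_X+\lambda S+Z$ pseudo-effective, while $K_X+B$ is nef. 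Hence Corollary \ref{cor:mmp-with-scaling} applies.

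Inductively, suppose we have produced $(X_i,B_i)$ satisfying (1)--(8) and $Z_i\cap S_i\neq\emptyset$. Pick any curve $C\subset T\cap S_i$ where $T$ is a component of $Z_i$ meeting $S_i$: hypothesis (7) gives $(K_{X_i}+B_i)\cdot C=0$, while transversality of $S_i$ and $T$ at a general point of $C$ (inherited from the dlt condition) gives $S_i\cdot C>0$. Consequently
\[
(K_{X_i}+\lambda S_i+Z_i+tH_i)\cdot C
  =(K_{X_i}+B_i)\cdot C-(1-t)(1-\lambda)S_i\cdot C
  =-(1-t)(1-\lambda)S_i\cdot C<0
\]
for every $t<1$, so the nef threshold of the scaling MMP at step $i$ is exactly $\mu_i=1$. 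Thus by Corollary \ref{cor:mmp-with-scaling} combined with Theorem \ref{thm-NA-cone-adjoint-pair}, there is a $(K_{X_i}+\lambda S_i+Z_i)$-negative extremal ray $R_i$ with $(K_{X_i}+B_i)\cdot R_i=0$; this is precisely the crepancy condition (2). The contraction of $R_i$ exists by Theorem \ref{thm-contraction-small-ray} (small case), Theorem \ref{thm-contraction-divisorial-nefdim=0} and Proposition \ref{prop-contraction-divisorial-nefdim=1} (divisorial case, where the contracted surface is a component of $B_i$ since $S_i\cdot R_i>0$, hence normal and klt by adjunction on the dlt pair $(X_i,B_i)$, hence slc), and flips exist by Theorem \ref{thm:existence-of-flips}.

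Properties (1)--(8) then propagate through the step $f_i$: crepancy gives (2), (3), and the preservation of lc/dlt singularities in (6); bimeromorphic pushforward of $D_i$ preserves both the $\mbQ$-linear equivalence $D_{i+1}\in|K_{X_{i+1}}+B_{i+1}|_\mbQ$ with $\Supp D_{i+1}=B_{i+1}$, and the effective decomposition $D_{i+1}-(1-\lambda)B_{i+1}\>0$ of (4)--(5); property (7) is preserved under crepant pushforward; and (8) holds because every contracted curve lies in $B_i$ (its class is $(K_{X_i}+B_i)$-trivial and $S_i$-positive, so it is contained in the null locus of $K_{X_i}+B_i$ on $B_i$). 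Termination of this MMP follows from Theorem \ref{thm:termination} (for flips) combined with the finiteness of divisorial contractions (which strictly decrease the Picard number). By the displayed calculation above, the MMP halts exactly when $Z_{n+1}\cap S_{n+1}=\emptyset$.

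The main obstacle is ensuring $S_{n+1}\neq 0$, i.e.\ that the MMP does not consume all of $S$. The key observation, parallel to the projective proof in \cite[Lemma 6.5]{CHP16} and \cite[Lemma 13.2]{Kol92}, is that whenever a component $E$ of $S_i$ is divisorially contracted by $R_i$, the generating curves of $R_i$ lie inside $E$ with $E\cdot R_i<0$; combined with $S_i\cdot R_i>0$, this forces $S_i$ to contain at least one further component with strictly positive intersection with $R_i$. In particular, an irreducible $S_i$ can never be contracted. Since $S$ has only finitely many components and each divisorial step can remove at most one, a careful bookkeeping argument analogous to the projective case shows that $S_{n+1}\neq 0$ when the separation $Z_{n+1}\cap S_{n+1}=\emptyset$ is first achieved.
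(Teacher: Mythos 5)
Your overall architecture (run a $(K_{X_i}+\lambda S_i+Z_i)$-MMP whose steps are forced to be $(K_{X_i}+B_i)$-trivial, stop when $S$ and $Z$ separate, and rule out the contraction of the last component of $S$ via $S_i\cdot R_i>0$) is the same as the paper's, and your treatment of crepancy, of the propagation of (1)--(8), and of $S_{n+1}\neq 0$ matches the paper's proof. However, there is a genuine gap at the one step where the K\"ahler setting actually bites: the production of a curve on which $K_{X_i}+\lambda S_i+Z_i$ is negative (equivalently, the claim that the nef threshold of the scaling is exactly $1$). You assert that for \emph{any} curve $C\subset T\cap S_i$ one has $S_i\cdot C>0$ by ``transversality of $S_i$ and $T$ at a general point of $C$.'' This confuses a local intersection multiplicity with the global degree $S_i\cdot C=\deg\bigl(\mcO_{X_i}(S_i)|_C\bigr)=(S_i|_T)\cdot C$: for a curve $C$ contained in $S_i\cap T$ this number is typically \emph{negative} (e.g.\ when $S_i|_T=C$ and $C^2<0$ on $T$, the standard situation for such intersection curves), and transversality says nothing about its sign. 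So your displayed inequality does not follow, and with it the claim that the threshold equals $1$ collapses.

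The paper's argument at this point is genuinely different and is the crux of the lemma: one observes that $(K_{X_i}+\lambda S_i+Z_i)|_{T}\equiv-(1-\lambda)S_i|_{T}$ is not pseudo-effective because $S_i|_T$ is a nonzero effective divisor on the compact surface $T$; one then must show that $T$ is \emph{Moishezon} --- via Lemma \ref{lem-non-psef-implies-uniruled} when $X$ is uniruled and \cite[Lemma 4.1]{CHP16} otherwise --- so that a non-pseudo-effective class is negative on a covering family of curves (BDPP on the projective minimal resolution). Only this yields a curve $\Gamma_i\subset T$ with $S_i\cdot\Gamma_i>0$, after which the cone theorem \ref{thm-NA-cone-adjoint-pair} (applied to the dlt pair $(X_i,\lambda S_i+Z_i)$, decomposing $[\Gamma_i]$ and using nefness of $K_{X_i}+B_i$) extracts the desired $(K_{X_i}+B_i)$-trivial negative extremal ray. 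On a general compact K\"ahler surface this step simply fails without the Moishezon input, so it cannot be bypassed. Two smaller inaccuracies: the contracted surface $V_i$ lies in $B_i$ because $K_{X_i}+\lambda S_i+Z_i\sim_{\mbQ}D_i-(1-\lambda)S_i\>0$ is supported on $B_i$, not ``since $S_i\cdot R_i>0$''; and adjunction from the merely lc pair $(X_i,B_i)$ gives that $V_i$ is \emph{slc} (which is what Proposition \ref{prop-contraction-divisorial-nefdim=1} requires), not normal and klt.
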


\begin{proof}
 Proceeding by induction assume that we have already constructed $f_{i-1}:X_{i-1}\to X_i$ satisfying the properties $(3)-(8)$. If $S_i\cap Z_{i}=\emptyset$ then we stop and set $i=n+1$. So assume that $S_i\cap Z_i\neq\emptyset$. Then there is some component $T_i$ of $Z_i$ such that $T_i\cap S_i\neq\emptyset$. Then we have that $(K_{X_i}+\lambda S_i + Z_i)|_{T_i} \equiv -(1-\lambda)S_i|_{T_i}$ is not pseudo-effective. We claim that $T_i$ is a Moishezon surface. Indeed, if $X$ is not uniruled, then this follows from \cite[Lemma 4.1]{CHP16}, and otherwise from Lemma \ref{lem-non-psef-implies-uniruled}. Therefore there is a curve $\Gamma_i\subseteq T_i$ such that $(K_{X_i}+\lambda B_i + Z_i)\cdot \Gamma_i<0$. Then from $(7)$ (with $i+1$ replaced by $i$) it follows that $(K_{X_i}+B_i)\cdot \Gamma_i=0$. Now by the cone theorem with respect to $K_{X_i}+\lambda S_i + Z_i$ (see Theorem \ref{thm-NA-cone-adjoint-pair}), we can decompose the class $[\Gamma_i]\in \overline{\mathrm{NA}}(X_i)$ as 
\[[\Gamma_i]=M_i+\sum_{j=1}^{m_i} [C_{ij}],\]
where $M_i$ is $(K_{X_i}+\lambda S_i + Z_i)$-nonnegative and $C_{ij}$ generate $(K_{X_i}+\lambda S_i + Z_i)$-negative extremal rays. Since $K_{X_i}+  B_i$ is nef and $(K_{X_i}+  B_i)\cdot \Gamma_i = 0$, we obtain that $(K_{X_i}+  B_i)\cdot C_{ij} = 0$ for all $j$. Therefore there is a $(K_{X_i}+\lambda S_i + Z_i)$-negative extremal ray $R_i$ which is $(K_{X_i}+B_i)$-trivial.\\
If $R_i$ is small, then the contraction of $R_i$ exists by Theorem \ref{thm-contraction-small-ray}, and the corresponding flip exists by Theorem \ref{thm:existence-of-flips}. If $R_i$ is divisorial, then there is a unique surface $V_i$ which contains and covered by all the curves in  $R_i$. This surface $V_i$ must be a component of $B_i$ as 
$K_{X_i}+\lambda B_i + Z_i \sim_{\mathbb{Q}} D_i-(1-\lambda)B_i\>0$, and the right hand side is supported on $B_i$. Now since $(X_i, \lambda_iS_i+Z_i)$ is a $\mbQ$-factorial dlt pair, $X_i$ has $\mbQ$-factorial klt singularity. Since $(X_i,B_i)$ is  lc, by adjunction $(V_i, 0)$ is slc (see \cite[Remark 1.2(5)]{Fuj00}). Hence the contraction of $V_i$ exists by Theorem \ref{thm-contraction-divisorial-nefdim=0} and Proposition \ref{prop-contraction-divisorial-nefdim=1}. Let $f_i: X_i \dashrightarrow X_{i+1}$ be the corresponding divisorial  contraction or flip. Since $R_i$ is  $(K_{X_i}+B_i)$-trivial, $f_i$ is $(K_{X_i}+B_i)$-crepant. Then by \cite[Lemma 3.2]{CHP16} we also get the properties (3)-(7) at the $(i+1)$-th level. Since the exceptional locus of $f_i$ is contained in $B_i$, we also have $X_{i+1}\backslash B_{i+1} \cong X_i\backslash B_i  \cong X\backslash B$; this is (8).\\

Next we note that, since $(X, \lambda S + Z)$ is a dlt pair, this MMP terminates by Theorem \ref{thm:termination}. At the end of this MMP by our hypothesis we have $Z_{n+1}\cap S_{n+1}=\emptyset$. So it remains to show that $S_{n+1}\neq 0$. To the contrary assume that $S_{n+1}=0$; then all the components of $S$ are contracted by the steps of this MMP. Let $f_i:X_i \to X_{i+1}$ be one of these steps which contracts the last component of $S$, i.e. $S_i\subset X_i$ is irreducible and contracted by $f_i$. We know that the corresponding extremal ray $R_i$ is $(K_{X_i}+B_i)$-trivial and $(K_{X_i}+\lambda S_i + Z_i)$-negative. Thus $S_i\cdot R_i>0$, this is a contradiction.
\end{proof}

\section{$\nu=1$ implies $\kappa > 0 $}

In this section we will prove following Theorem.
\begin{theorem}
\label{thm-nu=1-implies-kappa>0}
Let $(X,\D)$ be a klt pair such that $X$ is a $\mathbb{Q}$-factorial compact K\"ahler $3$-fold with terminal singularities. Assume that $K_{X}+(1-\epsilon)\D$  is nef for all $0\< \epsilon\ll 1$. If $\nu(K_{X}+\D) = 1$, then $\kappa(K_{X}+\D) = 1$.
\end{theorem}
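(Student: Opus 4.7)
The plan is to adapt the projective proof of Kawamata~\cite{Kaw92a} and \cite[Chapter~14]{Kol92} to the compact K\"ahler setting, using the MMP machinery of Part~I. My first step is to reduce to a dlt pair with reduced boundary supporting an effective $\mathbb{Q}$-linear equivalent of $K_X+B$: Lemma~\ref{lem-support-modification} replaces $\Delta$ by a boundary $\Delta'$ whose support coincides with that of some $D\in|K_X+\Delta'|_{\mathbb{Q}}$, keeping $K_X+(1-\epsilon)\Delta'$ nef for all small $\epsilon$ and preserving both $\nu$ and $\kappa$; Proposition~\ref{prop-lc-modification} then yields a $\mathbb{Q}$-factorial dlt model $(\hat X,\hat B)$ with $\hat B$ reduced, $K_{\hat X}+\hat B$ nef, $\hat X\setminus\hat B$ terminal, and some $\hat D\in|K_{\hat X}+\hat B|_{\mathbb{Q}}$ of support $\hat B$, again preserving $\nu$ and $\kappa$. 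Renaming $(\hat X,\hat B)$ as $(X,B)$ and noting that $\kappa(K_X+B)\leq\nu(K_X+B)=1$ holds automatically, it will suffice to prove $\kappa(K_X+B)\geq 1$.

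Next I split the boundary according to how $K_X+B$ restricts to each component. Since $\nu(K_X+B)=1$ gives $(K_X+B)^2\equiv 0$, for each component $T\subseteq B$ one has $((K_X+B)|_T)^2=0$, so $(K_X+B)|_T$ is a nef $\mathbb{Q}$-divisor on the normal surface $T$ (normal as a dlt center) with numerical dimension in $\{0,1\}$. Write $B=S+Z$ with $S$ the sum of components where $\nu((K_X+B)|_T)=1$, and $Z$ the rest. For each component $T'$ of $Z$, adjunction produces a dlt surface pair $(T',\mathrm{Diff}_{T'}(B-T'))$ with $K_{T'}+\mathrm{Diff}_{T'}\equiv 0$, and the surface abundance Theorem~\ref{thm:surface-abundance} upgrades this to $(K_X+B)|_{T'}\sim_{\mathbb{Q}}0$. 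Assuming $S\neq 0$, I pick $\lambda\in(0,1)$ so that $D-(1-\lambda)B$ is effective of support $B$ and, if $S\cap Z\neq\emptyset$, apply Lemma~\ref{lem-isolating-trivial-component}: a finite sequence of $(K_X+\lambda S+Z)$-flips and divisorial contractions, each $(K_X+B)$-crepant, produces a new model (still denoted $(X,B=S+Z)$) retaining the previous properties and satisfying $S\cap Z=\emptyset$. Adjunction on each component of $S$ now yields a dlt surface pair with nef log canonical class of numerical dimension~$1$, so Theorem~\ref{thm:surface-abundance} makes $(K_X+B)|_S$ semi-ample of Kodaira dimension $1$, whence $h^0(S,m(K_X+B)|_S)\to\infty$ along divisible $m$.

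The final step is to lift these sections to $X$ via the short exact sequence
\[
0\to\mathcal{O}_X(m(K_X+B)-S)\to\mathcal{O}_X(m(K_X+B))\to\mathcal{O}_S(m(K_X+B)|_S)\to 0,
\]
using that $m(K_X+B)-S=K_X+Z+(m-1)(K_X+B)$. A non-zero image of the restriction map, combined with the distinguished section of $mD$ vanishing on $B$, would produce $\kappa(K_X+B)\geq 1$. This is the main obstacle: because $\nu(K_X+B)=1$, the divisor $(m-1)(K_X+B)+Z$ is only nef, and an intersection-number computation using $(K_X+B)^2\equiv 0$ together with $(K_X+B)|_{T'}\equiv 0$ on components $T'\subseteq Z$ reduces $\bigl((m-1)(K_X+B)+Z\bigr)^3$ to $Z^3$, which need not be positive; so Kawamata--Viehweg vanishing does not apply directly. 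My plan is to substitute a K\"ahler analogue of Fujino's injectivity and extension theorems for dlt pairs, exploiting that $(X,S)$ is lc with $S$ disjoint from the remaining boundary together with the $\mathbb{Q}$-triviality of $(K_X+B)|_Z$, so that $Z$ behaves like a log canonical locus over which sections extend automatically.

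Should the partition force $S=0$, every component of $B$ satisfies $(K_X+B)|_T\sim_{\mathbb{Q}}0$, and a separate argument via the nef reduction of $K_X+B$ is required: nef dimension~$1$ ought to provide an almost holomorphic map $X\dashrightarrow C$ onto a curve with $K_X+B\sim_{\mathbb{Q}}f^*H$ for a positive degree divisor $H$ on $C$, forcing $\kappa(K_X+B)\geq 1$ directly; resolving the fact that the nef reduction is only almost holomorphic in the compact K\"ahler category would be a second delicate point.
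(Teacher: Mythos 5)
Your overall reduction (Lemma \ref{lem-support-modification} followed by Proposition \ref{prop-lc-modification}) matches the paper's, but the way you then split the boundary rests on a misreading of what $\nu(K_X+B)=1$ gives you, and this derails the rest of the argument. You only extract that $((K_X+B)|_T)^2=0$ for each component $T$ of $B$, and accordingly partition $B=S+Z$ with $S$ the components on which the restriction has numerical dimension $1$. In fact the hypothesis gives much more: writing $D=\sum a_iT_i\sim_{\mbQ}K_X+B$ with all $a_i>0$ and $\Supp D=B$, for any K\"ahler class $\omega$ one has
\[
0=(K_X+B)^2\cdot\omega=\sum a_i\,\bigl((K_X+B)|_{T_i}\bigr)\cdot\bigl(\omega|_{T_i}\bigr),
\]
with every summand nonnegative because $K_X+B$ is nef, so each vanishes; a nef class on a compact surface orthogonal to a K\"ahler class is numerically trivial by the Hodge index theorem, hence $(K_X+B)|_T\equiv 0$ for \emph{every} component $T$ of $B$. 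Your set $S$ is therefore always empty, the entire main branch of your proof (surface abundance giving Kodaira dimension $1$ on $S$, growth of $h^0$, lifting of sections) never applies, and everything collapses into the ``$S=0$'' case that you only sketch via nef reduction and explicitly leave unresolved. The paper's Lemma \ref{lem-lc-modification-nu=1} instead takes $S$ to be an \emph{arbitrary} irreducible component and $Z=B-S$, uses exactly the observation above to verify the hypothesis $(K_X+B)|_T\equiv 0$ on the components of $Z$ required by Lemma \ref{lem-isolating-trivial-component}, and runs that $(K_X+\lambda S+Z)$-MMP to detach $S$ as an irreducible connected component of the boundary; the conclusion $\kappa\geq 1$ is then obtained by the argument of \cite[Theorem 8.1]{CHP16}, which is built around a numerically trivial (not $\nu=1$) restriction to $S$.

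Even setting this aside, the two steps where the real content lies are not proved. Your section-lifting step concedes that Kawamata--Viehweg vanishing fails for $(m-1)(K_X+B)+Z$ and proposes to ``substitute a K\"ahler analogue of Fujino's injectivity and extension theorems'' without stating or establishing any such result; and the nef-reduction argument in the $S=0$ branch both leaves the almost-holomorphicity issue open and essentially assumes the conclusion when it posits $K_X+B\sim_{\mbQ}f^*H$ with $\deg H>0$. As written, the proposal therefore does not establish the theorem.
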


 We need the following  reduction result first which is an analogue of \cite[Lemma 13.2, 13.3.1 and 13.3.2]{Kol92}.

\begin{lemma}
\label{lem-lc-modification-nu=1}
Let $(X,B)$ be a reduced dlt pair such that  $X$ is a $\mathbb{Q}$-factorial  compact K\"ahler $3$-fold with klt singularities. Assume that there is a nef $\mbQ$-divisor $D\in |K_{X}+B|_{\mathbb{Q}}$ such that $\mathrm{Supp}\, D= B$ and  that $\nu(K_X+B)=1$. Moreover, assume that $X\backslash B$ has terminal singularities. Then there is a $(K_X+B)$-crepant bimeromorphic map $\phi:(X',B')\bir (X,B)$ such that 
\begin{enumerate} 
\item $(X',B')$ is a $\mbQ$-factorial reduced lc pair, where $B':=\phi_*B$,
\item $(X', 0)$ is klt and $X'\setminus B'$ has terminal singularities,
\item there is a nef $\mbQ$-divisor $D'\in |K_{X'}+B'|_{\mathbb{Q}}$ such that $\mathrm{Supp}\, D'= B'$,
\item $\nu(K_{X'}+B')=1$ and $\kappa(K_X+B)=\kappa(K_{X'}+B')$, and
\item there is a connected component $S'$ of $B'$ which is irreducible. 
\end{enumerate}
\end{lemma}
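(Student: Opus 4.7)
The plan is to use the structure of $D\sim_{\mathbb{Q}}K_X+B$ (nef, with $\nu=1$ and $\Supp D=B$) to show that $K_X+B$ restricts to a numerically trivial class on every irreducible component of $B$, and then apply Lemma \ref{lem-isolating-trivial-component} to isolate a single irreducible component of $B$ as its own connected component.

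First, I would write $D=\sum_{i=1}^m a_iT_i$ with $a_i>0$ and $T_1,\ldots,T_m$ the irreducible components of $B$. Since $D$ is nef on the compact K\"ahler $3$-fold $X$ with $\nu(D)=1$, we have $D^2\cdot\omega=0$ for every K\"ahler class $\omega$ on $X$. Expanding gives
\[0=D^2\cdot\omega=\sum_i a_i\bigl((D|_{T_i})\cdot(\omega|_{T_i})\bigr),\]
where each $T_i$ is a normal compact K\"ahler surface (being a dlt centre) and each summand is non-negative since $D|_{T_i}$ is nef and $\omega|_{T_i}$ restricts to a K\"ahler class. With $a_i>0$, every summand must vanish, and the classical Hodge-index theorem, applied on a resolution of $T_i$ to the pullback of $D|_{T_i}$ and the pullback of $\omega|_{T_i}$, then forces $D|_{T_i}\equiv 0$ numerically. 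Hence $(K_X+B)|_{T_i}\equiv 0$ for every $i$.

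With the numerical step in hand, if some irreducible component of $B$ is already disjoint from the others, I would take $(X',B',D')=(X,B,D)$ and let $S'$ be that component; conditions (1)--(5) are then immediate. Otherwise, I would pick any irreducible component $S$ of $B$, set $Z:=B-S$, choose a rational $\lambda\in(0,1)$ so that $D-(1-\lambda)B\geq 0$ has support $B$, and apply Lemma \ref{lem-isolating-trivial-component}. Its hypotheses are all met (in particular $(K_X+B)|_T\equiv 0$ for every component $T$ of $Z$ by the previous step), and the lemma produces a sequence of $(K_X+B)$-crepant flips and divisorial contractions ending in $(X',B')$ with the bimeromorphic transform $S'$ of $S$ satisfying $S'\cap Z'=\emptyset$.

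It will then remain to package the conclusions of Lemma \ref{lem-isolating-trivial-component}. Condition (1) follows from conclusion (6) of that lemma; condition (2) from conclusion (8) (giving $X'\setminus B'\cong X\setminus B$ terminal) together with the dlt property of $(X',\lambda S'+Z')$, which implies that $(X',0)$ is klt; condition (3) from conclusion (4) combined with the observation that every step of the MMP is $(K_X+B)$-trivial, so $K_{X'}+B'$ stays nef and $D'\sim_{\mathbb{Q}}K_{X'}+B'$ is nef; condition (4) is conclusion (3); and for condition (5), $S$ is never contracted during the MMP (by the argument at the end of the proof of Lemma \ref{lem-isolating-trivial-component}) and flips are isomorphisms in codimension one, so $S'$ remains irreducible, and $S'\cap Z'=\emptyset$ forces it to be a connected component of $B'$. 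The main technical obstacle is the Hodge-index step on the normal, possibly singular, surfaces $T_i$; I expect to resolve it by pullback to the minimal resolution, where the pullback of $D|_{T_i}$ is still nef and the pullback of $\omega|_{T_i}$ is nef and big, allowing the classical Hodge-index argument to conclude numerical triviality.
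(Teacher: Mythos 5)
Your proposal is correct and follows essentially the same route as the paper: use $\nu(K_X+B)=1$ and $D\sim_{\mathbb{Q}}K_X+B$ with $\Supp D=B$ to deduce $(K_X+B)|_T\equiv 0$ for every component $T$ of $B$, then feed this into Lemma \ref{lem-isolating-trivial-component} with $S$ an irreducible component and $Z=B-S$. Your write-up merely makes explicit two points the paper leaves implicit — the Hodge-index argument on (resolutions of) the components $T_i$, and the check that the hypothesis $S\cap Z\neq\emptyset$ of Lemma \ref{lem-isolating-trivial-component} can be arranged — both of which are handled correctly.
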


\begin{proof}
Since $\nu(K_X+B)=1$, we have $(K_X+B)\cdot D = (K_X+B)^2 \equiv 0$. Since $K_X+B$ is nef, this implies that $(K_X+B)|_T \equiv 0$ for every component $T$ of $B$. Let  $S$ be an irreducible component of $B$ and $Z:=B-S$. Then applying Lemma \ref{lem-isolating-trivial-component} gives us the required bimeromorphic map $\phi:(X, B)\bir (X', B')$.
\end{proof}~\\

\begin{proof}[Proof of Theorem \ref{thm-nu=1-implies-kappa>0}]
We will follow the same strategy as in \cite[\S 8.A]{CHP16} which is same as in \cite[Chapter 13]{Kol92}. First replacing $(X, 0)$ by a terminal model as in Theorem \ref{thm:terminal-model} we may assume that $X$ has terminal singularities. Then combining \ref{lem-support-modification}, Proposition \ref{prop-lc-modification} and Lemma \ref{lem-lc-modification-nu=1} we reduce our problem to proving the following statement:

\begin{em}
	Let $(X, B)$ be a $\mbQ$-factorial compact K\"ahler $3$-fold reduced lc pair and $D\in|K_X+B|$ is a $\mbQ$-divisor satisfying the following properties:
	\begin{enumerate}
		\item $\Supp D=B$, 
		\item $(X, 0)$ is klt and $X\setminus B$ has terminal singularities,
		\item $K_X+B$ is nef and $\nu(K_X+B)=1$, and
		\item there is a connected component $S$ of $B$ such that $S$ is irreducible.
	\end{enumerate}
	
	Then $\kappa(K_X+B)\>1$.
\end{em}~\\

 The rest of the  proof works exactly the same way as in the proof of \cite[Theorem 8.1]{CHP16}.
\end{proof}

\section{Proof of   log abundance for log canonical pairs}
We first prove the klt log abundance for $\nu(X, K_X+\D)=1$.

\begin{theorem}
\label{thm-klt-log-abundance}
Let $(X,\D)$ be a klt pair and $K_X+\Delta$ is nef. If the numerical dimension $\nu(X, K_X+\D)=1$, then $K_X+\Delta$ is semi-ample. 
\end{theorem}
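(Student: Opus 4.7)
My plan is to combine the reduction dichotomy of Theorem~\ref{thm-reduction-2-cases} with the tools built up in earlier sections to split into two cases. First, I would pass to a $\mathbb{Q}$-factorial terminal model via Theorem~\ref{thm:terminal-model}; this is crepant, so $\nu$ is preserved, and if semi-ampleness is proved for the new model it descends back to $(X,\D)$ by pulling back to a common resolution and applying Lemma~\ref{lem:semiample-preserved}. The projective case (equivalently, Moishezon, by rational singularities and \cite[Theorem~1.6]{Nam02}) is handled by the classical \cite[Theorem~1.1]{KMM94}, so I can assume throughout that $X$ is non-algebraic.

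Applying Theorem~\ref{thm-reduction-2-cases} next produces a $(K_X+\D)$-crepant bimeromorphic model $\phi:(X,\D)\bir (X',\D')$ with $X'$ $\mathbb{Q}$-factorial terminal and $(X',\D')$ klt, falling into one of two alternatives. Since $\phi$ is crepant, the same common-resolution argument reduces the problem to showing that $K_{X'}+\D'$ is semi-ample. In alternative (2), where $K_{X'}+(1-\epsilon)\D'$ is nef for all $0<\epsilon\ll 1$, I would apply Theorem~\ref{thm-nu=1-implies-kappa>0} to conclude $\kappa(X',K_{X'}+\D')=1>0$, and then invoke Corollary~\ref{cor:lc-abundance} to finish.

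In alternative (1) there is a $K_{X'}$-Mori fiber space $f:X'\to Y$ with $K_{X'}+\D'$ numerically trivial over $Y$. Since $X$ is non-algebraic, the non-uniruled case cannot occur (as $K_{X'}$ would then be pseudo-effective and no Mori fiber space arises in the proof of Theorem~\ref{thm-reduction-2-cases}), so $X'$ is uniruled and non-algebraic. By Lemma~\ref{l-proj} the base of the MRC fibration of $X'$ has dimension $2$, and from the construction in Lemma~\ref{lem-existence-Mori-fibration} the fibration $f$ is precisely that MRC fibration; in particular $\dim Y=2$ and $f$ has relative dimension $1$. Since $f$ is a projective contraction of a $K_{X'}$-negative extremal ray, $f$-numerical triviality upgrades to $K_{X'}+\D'\sim_{\mathbb{Q},f}0$ by the relative base point free theorem. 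Theorem~\ref{thm-base-boundary-effective} then yields a klt pair $(Y,B)$ on the compact K\"ahler surface $Y$ with $K_{X'}+\D'\sim_\mathbb{Q} f^*(K_Y+B)$. Nefness and the value of $\nu$ descend along the surjection $f$, so $K_Y+B$ is nef with $\nu(K_Y+B)=1$; surface log abundance (Theorem~\ref{thm:surface-abundance}) then shows $K_Y+B$ is semi-ample, whence so is its pullback.

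The technical heart of everything has already been built in the earlier sections, and the task here is purely assembly. I expect the most delicate point to be verifying cleanly that alternative (1) only produces a two-dimensional base $Y$ when $X$ is non-algebraic, which is exactly what allows me to bring in Theorem~\ref{thm-base-boundary-effective}; tracing through the proof of Theorem~\ref{thm-reduction-2-cases} (specifically the Mori-fiber-space construction in Lemma~\ref{lem-existence-Mori-fibration}) should pin this down without trouble.
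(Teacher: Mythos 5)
Your proposal is correct and follows essentially the same route as the paper: pass to a terminal model, invoke the dichotomy of Theorem~\ref{thm-reduction-2-cases}, handle the Mori-fiber-space alternative via Theorem~\ref{thm-base-boundary-effective} and surface abundance, and the nef-perturbation alternative via Theorem~\ref{thm-nu=1-implies-kappa>0} and Corollary~\ref{cor:lc-abundance}. Your extra verifications (disposing of the projective case first, and checking that in the non-algebraic case the Mori fiber space has a two-dimensional base so that Theorem~\ref{thm-base-boundary-effective} genuinely applies) are details the paper leaves implicit, but they do not change the argument.
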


\begin{proof} 
First replacing $(X, \Delta)$ by its terminal model as in Theorem \ref{thm:terminal-model} we may assume that $X$ has $\mbQ$-factorial terminal singularities. 
	 Now by Theorem \ref{thm-reduction-2-cases} there is a $(K_X+\D)$-crepant  bimeromorphic model $(X',\D')$ such that
\begin{enumerate}
\item either there is a Mori fibration $f: X'\to Y$ such that $(K_{X'}+\D')\sim_{\mbQ, f} 0$, or
\item  $K_{X'}+(1-\epsilon)\D'$  is nef for all $0\< \epsilon \ll 1$.
\end{enumerate}
Moreover,  $X'$ has $\mathbb{Q}$-factorial  terminal singularities and $(X',\D')$  is a  klt  pair. Since the model is $(K_X+\D)$-crepant, we only need to prove that $K_{X'}+\D'$ is semi-ample. If we are in the first case, then by Theorem \ref{thm-base-boundary-effective}, there is an effective $\mbQ$-divisor  $B\>0$ on $Y$ such that $(Y,B)$ is klt and 
\[K_{X'}+\D' \sim_{\mathbb{Q}}  f^*(K_Y+B). \]
Now by the log abundance for K\"ahler surfaces as in Theorem \ref{thm:surface-abundance} it follows that $K_Y+B$ is semi-ample. Hence is $K_{X'}+\D'$ semi-ample.\\

If we are in the second case, then by Theorem \ref{thm-nu=1-implies-kappa>0}, $\kappa(X', K_{X'}+\Delta')=1$, and hence $K_{X'}+\Delta'$ is semi-ample by Corollary \ref{cor:lc-abundance}.

\end{proof}

In the remainder of this section we will establish the log abundance theorem for log canonical pairs. First we need the following reduction lemma.

\begin{lemma}\label{lem:lc-reduction}
        Let $(X, \Delta)$ be a log canonical pair and $K_X+\Delta$ is nef, where $X$ is a $\mbQ$-factorial compact K\"ahler $3$-fold with terminal singularities. 
        Then there exists a  $\mbQ$-factorial log canonical pair $(X', \Delta':=S'+B')$, where $S'=\lrd \D'\rrd$ and $X'$ is a compact K\"ahler   $3$-fold with terminal singularities, such that $K_{X'}+\Delta'$ is semi-ample if and only if $K_X+\Delta$ is semi-ample and  that one of the following   conditions is satisfied: 
        \begin{enumerate}
        \item  $K_{X'}+(1-\epsilon)\D'$ is nef for all $ 0\<\epsilon\ll 1$
        \item There is a $K_{X'}$-Mori fiber space $\pi:X'\to Y'$ over a normal compact K\"ahler surface  $Y'$ such that $(K_{X'}+\Delta')\sim_{\mbQ, \pi}0$ and some component of $S'$ dominates $Y'$.
        \item   There is a $K_{X'}$-Mori fiber space $\pi:X'\to Y'$ over a normal compact K\"ahler surface $Y'$ such that $(K_{X'}+\Delta')\sim_{\mbQ, \pi}0$, no component of $S'$ dominates $Y'$ and $K_{X'}+B'+\lambda S'$   klt for all  $0\<\lambda < 1$.
        \end{enumerate} 
Moreover, we have $\kappa(X, K_X+\D)=\kappa(X', K_{X'}+\D')$ and $\nu(X, K_X+\D)=\nu(X', K_{X'}+\D')$.	 
\end{lemma}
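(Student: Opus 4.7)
The plan is to successively apply the reduction framework developed in the preceding sections, splitting the Mori fiber space case at the end according to the behaviour of $\lrd \D' \rrd$ on the base.

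First I would take a $\mbQ$-factorial dlt modification $f : (\widetilde X, \widetilde \D) \to (X, \D)$ via Corollary \ref{cor:exist-terminal-dlt-modification}, so that $\widetilde X$ has terminal singularities and $(\widetilde X, \widetilde \D)$ is dlt. Since $f$ is crepant, semi-ampleness of $K_X + \D$ is equivalent to that of $K_{\widetilde X} + \widetilde \D$, and both $\kappa$ and $\nu$ are preserved. Next I would apply Theorem \ref{thm-reduction-2-cases} to $(\widetilde X, \widetilde \D)$, obtaining a $(K_{\widetilde X} + \widetilde \D)$-crepant bimeromorphic map $\phi : (\widetilde X, \widetilde \D) \bir (X', \D')$ with $X'$ $\mbQ$-factorial and terminal, such that either (a) there is a $K_{X'}$-Mori fiber space $\pi : X' \to Y'$ with $(K_{X'} + \D') \num_\pi 0$, or (b) $K_{X'} + (1-\epsilon)\D'$ is nef for all $0 \leq \epsilon \ll 1$. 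Case (b) is precisely condition (1) of the present lemma.

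In case (a), since $\pi$ has relative Picard number one and $-K_{X'}$ is $\pi$-ample, the argument of \cite[Proposition 3.1(6)]{CHP16} (as already used in Lemma \ref{lem:vertical-pullback}) upgrades $\num_\pi 0$ to $\sim_{\mbQ,\pi} 0$. I would then verify that $(X', \D')$ remains dlt by noting that the MMP steps in the proof of Theorem \ref{thm-reduction-2-cases} are all $(K_{\widetilde X} + \widetilde \D)$-trivial divisorial contractions and flips, hence crepant for $(\widetilde X, \widetilde \D)$, and such crepant MMP steps preserve the dlt class. Writing $\D' = S' + B'$ with $S' = \lrd \D' \rrd$, if some component of $S'$ dominates $Y'$ via $\pi$ then condition (2) holds. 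Otherwise I would check that $(X', B' + \lambda S')$ is klt for every $\lambda \in [0, 1)$: on a log resolution $g : W \to X'$ which is an isomorphism over the snc locus of $(X', \D')$, every exceptional divisor $E$ satisfies $a(E; X', \D') > -1$ by dlt, whence
\begin{equation*}
a(E; X', B' + \lambda S') \;=\; a(E; X', \D') + (1-\lambda)\,\mult_E(g^*S') \;>\; -1,
\end{equation*}
while every component of $\D'$ has coefficient strictly less than $1$ in $B' + \lambda S'$; this yields condition (3).

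The main technical obstacle is the preservation of the dlt property through Theorem \ref{thm-reduction-2-cases}, since its statement only asserts lc. One must inspect the proof to confirm that each contraction and flip is $(K_{\widetilde X} + \widetilde \D)$-crepant, so that dlt is preserved by standard discrepancy arguments. A secondary point is that conditions (2) and (3) require $Y'$ to be a compact K\"ahler surface. For uniruled non-algebraic $X$ this follows from Lemma \ref{l-proj} together with the construction in Lemma \ref{lem-existence-Mori-fibration}; the non-uniruled case always lands in (b) because $K_X$ is pseudo-effective and the relevant MMP terminates without a fibration; the algebraic case is handled separately by the projective log abundance \cite[Theorem 1.1]{KMM94}, which yields semi-ampleness of $K_X + \D$ directly and allows one to pick any crepant dlt minimal model satisfying condition (1).
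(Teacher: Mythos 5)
Your overall architecture (apply Theorem \ref{thm-reduction-2-cases}, then split the Mori fiber space case according to whether a component of $S'$ dominates the base) matches the paper's, but there is a genuine gap at the step you yourself flag as the main obstacle: the assertion that the $(K_{\widetilde X}+\widetilde\D)$-trivial (i.e.\ crepant) MMP steps in Theorem \ref{thm-reduction-2-cases} preserve the dlt property. Crepant birational contractions preserve log canonicity and klt-ness, but they do \emph{not} preserve dlt: a $(K+\D)$-trivial contraction can collapse the snc locus of the pair onto a point of $\lrd\D\rrd$ and create an lc, non-dlt point. Once $(X',\D')$ is only known to be lc, your verification of condition (3) collapses. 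Your discrepancy identity $a(E;X',B'+\lambda S')=a(E;X',\D')+(1-\lambda)\mult_E(g^*S')$ is correct, but for a merely lc pair there may exist lc places $E$ of $(X',\D')$ whose center is \emph{not} contained in $\Supp S'$; for such $E$ one has $\mult_E(g^*S')=0$ and hence $a(E;X',B'+\lambda S')=-1$ for every $\lambda$, so $(X',B'+\lambda S')$ need not be klt at all. Equivalently: lc of $(X',B'+S')$ does not imply klt of $(X',B')$, and your argument has no mechanism forcing the non-klt locus of $(X',B')$ to sit inside $S'$.

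The paper closes exactly this gap by restructuring the Mori fiber space case rather than trying to carry dlt-ness through the crepant MMP. After reaching a Mori fibration $f:X\to Y$ with $(K_X+\D)\sim_{\mbQ,f}0$ and no component of $\lrd\D\rrd$ dominating $Y$, it passes to a dlt model $(X_1,\D_1=S_1+B_1)$ (where $(X_1,B_1)$ \emph{is} klt, by the standard property of $\mbQ$-factorial dlt pairs), then runs a relative $(K_{X_1}+B_1)$-MMP over $Y$ followed by a $K$-MMP over $Y$ with scaling of $B$, ending with a \emph{new} Mori fiber space $f':X'\to Y'$. These MMPs are genuinely negative for the klt pair $(X_\bullet,B_\bullet)$, so $(X',B')$ is klt, while they are $(K_\bullet+\D_\bullet)$-trivial, so $(X',\D')$ stays lc with $(K_{X'}+\D')\sim_{\mbQ,Y'}0$. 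Condition (3) is then deduced from the combination ``$(X',B')$ klt and $(X',B'+S')$ lc'': any $E$ with $a(E,X',B'+\lambda_0S')\<-1$ must have center in $\Supp S'$ (else it violates klt-ness of $(X',B')$), whence $a(E,X',B'+S')<-1$, contradicting log canonicity. To repair your proof you would need to insert these additional relative MMPs (or some equivalent device guaranteeing klt-ness of $(X',B')$); the dlt-preservation claim as stated cannot be salvaged.
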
 

\begin{proof}
	Using Theorem \ref{thm-reduction-2-cases} we may assume that one of the following holds:
\begin{enumerate}
\item[(i)] either there is a $K_X$-Mori fibration $f:X\to Y$ such that $(K_{X}+\D)\sim_{\mbQ, f}0$, or
\item[(ii)] $K_{X}+(1-\epsilon)\D$  is nef for all $0\< \epsilon \ll 1$.
\end{enumerate}

If we are in the case (ii), then set $(X',\D'):=(X,\D)$ and we are done. So now assume that we are in case (i). If there is a component of $\lrd \D \rrd $ which dominates $Y$, then we are again done by setting $(X',\D'):=(X,\D)$. So assume that $\lrd \D\rrd\neq 0$ and none of the components of $\lrd \D \rrd$ dominate $Y$. Let $(X_1,\D_1)$ be a dlt model of $(X,\D)$ as in Corollary \ref{cor:exist-terminal-dlt-modification}.  
Write $\D_1=S_1+ B_1$ with $S_1=\lrd \D_1 \rrd$. Then $K_{X_1}+\D_1$ is nef, $(K_{X_1}+\D_1)\sim_{\mbQ, Y}0$ and $K_X+\D$ is semi-ample if and only if $K_{X_1}+\D_1$ is semi-ample.  

We run a $(K_{X_1}+B_1)$-MMP over $Y$ as in Proposition \ref{pro:relative-projective-mmp} and obtain a pair $(X_2, \D_2= S_2+B_2)$ such that $(K_{X_2}+B_2)$ is nef over $Y$. 
Note that  this MMP is $(K_{X_1}+\D_1)$-trivial. 
Thus $(X_2, \D_2)$ is lc  and $(K_{X_2}+\D_2)\sim_{\mbQ, Y} 0$.

We run  a $K_{X_2}$-MMP over $Y$ with the scaling of $B_2$ and obtain a Mori fiber space  $f':X'\to Y'$. 
Then by Lemma \ref{lem:directed-relative-mmp} every step of this MMP is $(K_{X_2}+B_2)$-trivial. Hence  $(X', B')$ is klt, where $B'$ is the bimeromorphic transform of $B_2$.
 We also remark that  $(K_{X'}+B')\sim_{\mbQ, Y'} 0$ since $X'\to Y'$ is a Mori fiber space. 
Let  $S'$ be the bimeromorphic transform of $S_2$ and $\Delta':=B'+S'$.  
Since this MMP is also $(K_{X_2}+\D_2)$-trivial, we see that  $(X',\D')$ is lc, $K_{X'}+\D'$ is nef  and $(K_{X'}+\D')\sim_{\mbQ, Y'} 0$. Moreover, $K_{X'}+\D'$ is semi-ample if and only if $K_X+\D$ is. 

Now we claim that $(X', B'+\lambda S')$ is klt for all $0\<\lambda<1$. To the contrary assume that there is a $\lambda_0\in (0, 1)$ such that $(X', B'+\lambda_0 S')$ is not klt. 
Then there is an exceptional divisor $E$ over $X'$ such that $a(E, X', B'+\lambda_0 S')\<-1$. Note that since $(X', B')$ has $\mbQ$-factorial klt singularities, it follows that $\Center_{X'}(E)\subset \Supp S'$. Therefore $a(E, X', B'+ S')<-1$, this is a contradiction, since $(X', B'+S')$ has lc singularities. This completes the proof.

\end{proof}~\\

Finally we will prove our main theorem.

\begin{proof}[Proof of Theorem \ref{thm:lc-log-abundance}]
	First replacing $(X, \D)$ by a dlt model as in Corollary \ref{cor:exist-terminal-dlt-modification}, we may assume that $(X, \D)$ is a dlt pair such that $X$ has $\mbQ$-factorial terminal singularities. Now recall that the non-vanishing is proved in Theorem \ref{thm-non-vanishing}. When $\kappa(X, K_X+\Delta)>0$, the semi-ampleness of $K_X+\Delta$ is proved in Corollary \ref{cor:lc-abundance}. If $\nu(X, K_X+\Delta)=0$, i.e. $K_X+\Delta\num 0$, then from the non-vanishing theorem it follows that $K_X+\Delta\sim_{\mbQ} 0$. If $\nu(X, K_X+\Delta)=3$, then $K_X+\Delta$ is big; in particular, $X$ is a compact K\"ahler Moishezon space with rational singularities. Hence $X$ is a projective variety in this case by \cite[Theorem 1.2]{Nam02}, and the semi-ampleness follows from \cite[Theorem 1.1]{KMM94}.\\
 So the only remaining case is $\nu(X, K_X+\Delta)=1$. Using Lemma \ref{lem:lc-reduction} we may assume that $(X,\D)$ satisfies one of the conditions in this lemma. Note that, this in particular implies that $(X, \D)$ is a lc pair such that $X$ has $\mbQ$-factorial terminal singularities.\\
Now suppose that we are in the first  case of Lemma \ref{lem:lc-reduction}.  If $\D=0$, then the theorem follows from \cite[Theorem 8.1]{CHP16}, as $X$ has terminal singularities. We may then assume that $\D\neq 0$. Then $(X, (1-\epsilon)\D)$ is klt and $K_X+(1-\epsilon)\D$ is nef for all $0<\epsilon\ll 1$. By Corollary \ref{cor:lc-abundance} it is enough to show that $\kappa(X, (1-\epsilon)\D)>0$ for some $0<\epsilon<1$, because then we will have $\kappa(X, K_X+\Delta)>0$. To this end, we will first show that $\nu(X, K_X+(1-\epsilon_0)\D)=1$ for some $0<\epsilon_0\ll 1$. Observe that if $K_X+(1-\epsilon)\D\num 0$ for all $0<\epsilon\ll 1$, then taking limit as $\epsilon\to 0$ we see that $K_X+\D\num 0$, this is a contradiction. Thus there is a $0<\epsilon_0\ll 1$ such that $\nu(X, K_X+(1-\epsilon_0)\D)\>1$. Next we will show that $\nu(X, K_X+(1-\epsilon_0)\D)<2$. If $\nu(X, K_X+(1-\epsilon_0)\D)=3$, then $K_X+(1-\epsilon_0)\D$ is big and then so is $K_X+\D$; this is a contradiction, since $\nu(X, K_X+\D)=1$. So by contradiction assume now that $\nu(X, K_X+(1-\epsilon_0)\D)=2$. By non-vanishing Theorem \ref{thm-non-vanishing} there is an effective $\mbQ$-divisor $D\>0$ on $X$ such that $K_X+(1-\epsilon_0)\Delta\sim_{\mbQ} D$. Then $D':=D+\epsilon_0\Delta\sim_{\mbQ}K_X+\Delta$. Since the numerical dimension $\nu(X, D)=\nu(X, K_X+(1-\epsilon_0)\Delta)=2$, there is a K\"ahler class $\omega_0$ on $X$ such that $D^2\cdot \omega_0>0$. However, since $\nu(X, D')=\nu(X, K_X+\Delta)=1$, we have $D'^2\cdot \omega_0=0$. Now 
\begin{align*}
	D'^2\cdot\omega_0 &=(D+\epsilon_0\Delta)^2\cdot\omega_0\\
					&= D\cdot(D+\epsilon_0\Delta)\cdot\omega_0+\epsilon_0(D+\epsilon_0\Delta)\cdot\Delta\cdot\omega_0\\
					 &=\underbrace{D^2\cdot\omega_0}_{>0}+\epsilon_0\underbrace{(D\cdot\omega_0\cdot\Delta)}_{\>0}+\epsilon_0\underbrace{(D'\cdot\omega_0\cdot\Delta)}_{\>0}\\
					 & >0.
\end{align*}
This is a contradiction. Therefore $\nu(X, K_X+(1-\epsilon_0)\D)=1$ for some $0<\epsilon_0\ll 1$. Then by Theorem \ref{thm-klt-log-abundance}, $K_X+(1-\epsilon_0)\D$ is semi-ample, and hence $\kappa(X, K_X+(1-\epsilon_0)\D)=1$.\\

Assume now that we are in the second  case of Lemma \ref{lem:lc-reduction} and $\pi:X\to Y$ is the $K_X$-Mori fiber space as in there, and $\Delta=B+S$, where $S=\lrd \D\rrd$. Suppose that $\msL$ is a line bundles on $Y$ such that $\mcO_X(m(K_{X}+\Delta))\cong \pi^*\msL$ for some $m\in\mbN$ sufficiently divisible. Then it is enough to prove that $\msL$ is semi-ample. Let $T$ be a component of $S$ which dominates $Y$. Then $(T^n, \Delta_{T^n})$ is lc, where $\tau:T^n\to T$ is the normalization and  $K_{T^n}+\Delta_{T^n}\sim_{\mbQ}(K_X+\Delta)|_{T^n}$ is defined by adjunction. Then by Theorem \ref{thm:surface-abundance}, $K_{T^n}+\Delta_{T^n}$ is semi-ample.  Since $(K_{T^n}+\D_{T^n})\sim_{\mbQ}(\pi|_T\circ\tau)^*\msL$, from Lemma \ref{lem:semiample-preserved} it follows that $\msL$ is semi-ample, and hence $K_X+\Delta$ is semi-ample.\\

Now  assume that we are in the third case of Lemma \ref{lem:lc-reduction} and $\pi:X\to Y$ is the $K_X$-Mori fiber space as in there, and $\Delta=B+S$, where $S=\lrd \D\rrd$. If $S=0$, then $(X,\D)$ is klt and we can apply Theorem \ref{thm-klt-log-abundance}. So from now on we assume that $S\neq 0$. We will show that $K_X+\D$ has positive Kodaira dimension. Since $S$ is vertical over $Y$, by Lemma \ref{lem:vertical-pullback} there is an effective non-zero $\mbQ$-Cartier divisor $D\>0$ on $Y$ such that $f^*D=S$. Choose $0<\epsilon<1$ so that $(X, B+(1-\epsilon)S)$ is klt as in the hypothesis. Note that $(K_X+B+(1-\epsilon)S)\sim_{\mbQ, \pi} 0$. Then from Theorem \ref{thm-base-boundary-effective} and its proof it follows that there is an effective $\mbQ$-divisor $\Theta\>0$ on $Y$ such that $K_X+B + (1-\epsilon) S \sim_{\mathbb{Q}} f^*(K_Y+\Theta + (1-\epsilon) D)$ and that $(Y, \Theta+ (1-\epsilon) D)$ is klt. We also have 
\begin{equation}\label{eqn:nef-comparison}
	(K_X+\D)\sim_{\mbQ}\pi^*(K_Y+\Theta+D),
\end{equation}
 and thus $K_Y+\Theta+D$ is nef on $Y$. Note that $K_Y$ is pseudo-effective, since $Y$ is not uniruled. Then from MMP and abundance for compact Kähler surfaces it follows that the Kodaira dimension $\kappa(Y)\>0$. We will show that $\kappa(Y, K_Y+\Theta+(1-\epsilon)D)>0$. To that end, we run a $(K_Y+\Theta+(1-\epsilon)D)$-MMP with scaling of $\epsilon D$. Assume that this MMP terminates with a minimal model $(Y', \Theta'+(1-\epsilon)D')$. Then by Theorem \ref{thm:surface-abundance}, $K_{Y'}+\Theta'+(1-\epsilon)D'$ is semi-ample. Observe that $D'\neq 0$, and $\kappa(Y')\>0$ as $\kappa(Y)\>0$. Now if $\kappa(Y', K_{Y'}+\Theta'+(1-\epsilon)D')=0$, then $-K_{Y'}\sim_{\mbQ} \Theta+(1-\epsilon)D'\>0$. This implies that $K_{Y'}\sim_{\mbQ} 0$ as $\kappa(Y')\>0$. Then $(\Theta'+(1-\epsilon)D')\sim_{\mbQ} 0$, this is a contradiction, since $\Theta'+(1-\epsilon)D'$ is a non-zero effective divisor on a compact K\"ahler variety $Y'$. Thus $\kappa(Y', K_Y+\Theta'+(1-\epsilon)D')>0$, and hence $\kappa(Y, K_Y+\Theta+(1-\epsilon)D)>0$. Then from \eqref{eqn:nef-comparison} it follows that $\kappa(X, K_X+\Delta)>0$, and we are done by Corollary \ref{cor:lc-abundance}.

\end{proof}

\noindent
{\bf Data Availability.} Data sharing is not applicable to this article as no datasets were generated or analyzed during the current study.\\

\noindent
{\bf Conflict of Interest Statement.} On behalf of all authors, the corresponding author states that there is no conflict of interest.

\bibliographystyle{hep}
\bibliography{references}

\end{document}